\DeclareSymbolFont{largesymbols}{OMX}{cmex}{m}{n}
\def\Ddots{\mathinner{\mkern1mu\raise\p@
\vbox{\kern7\p@\hbox{.}}\mkern2mu
\raise4\p@\hbox{.}\mkern2mu\raise7\p@\hbox{.}\mkern1mu}}
\def\XXint#1#2#3{{\setbox0=\hbox{$#1{#2#3}{\int}$}
\vcenter{\hbox{$#2#3$}}\kern-.5\wd0}}
\begin{document}

\newtheorem{hyp}{Hypothesis}

\newtheorem{hyp2}[hyp]{Hypothesis}

\newtheorem{definition}{Definition}
\newtheorem{theorem}[definition]{Theorem}
\newtheorem{proposition}[definition]{Proposition}
\newtheorem{conjecture}[definition]{Conjecture}
\def\theconjecture{\unskip}
\newtheorem{corollary}[definition]{Corollary}
\newtheorem{lemma}[definition]{Lemma}
\newtheorem{claim}[definition]{Claim}
\newtheorem{sublemma}[definition]{Sublemma}
\newtheorem{observation}[definition]{Observation}
\theoremstyle{definition}

\newtheorem{notation}[definition]{Notation}
\newtheorem{remark}[definition]{Remark}
\newtheorem{question}[definition]{Question}

\newtheorem{example}[definition]{Example}
\newtheorem{problem}[definition]{Problem}
\newtheorem{exercise}[definition]{Exercise}
 \newtheorem{thm}{Theorem}
 \newtheorem{cor}[thm]{Corollary}
 \newtheorem{lem}{Lemma}[section]
 \newtheorem{prop}[thm]{Proposition}
 \theoremstyle{definition}
 \newtheorem{dfn}[thm]{Definition}
 \theoremstyle{remark}
 \newtheorem{rem}{Remark}
 \newtheorem{ex}{Example}
 \numberwithin{equation}{section}

\def\C{\mathbb{C}}
\def\R{\mathbb{R}}
\def\Rn{{\mathbb{R}^n}}
\def\Rns{{\mathbb{R}^{n+1}}}
\def\Sn{{{S}^{n-1}}}
\def\M{\mathbb{M}}
\def\N{\mathbb{N}}
\def\Q{{\mathbb{Q}}}
\def\Z{\mathbb{Z}}
\def\X{\mathbb{X}}
\def\Y{\mathbb{Y}}
\def\F{\mathcal{F}}
\def\L{\mathcal{L}}
\def\S{\mathcal{S}}
\def\supp{\operatorname{supp}}
\def\essi{\operatornamewithlimits{ess\,inf}}
\def\esss{\operatornamewithlimits{ess\,sup}}

\numberwithin{equation}{section}
\numberwithin{thm}{section}
\numberwithin{definition}{section}
\numberwithin{equation}{section}

\def\earrow{{\mathbf e}}
\def\rarrow{{\mathbf r}}
\def\uarrow{{\mathbf u}}
\def\varrow{{\mathbf V}}
\def\tpar{T_{\rm par}}
\def\apar{A_{\rm par}}

\def\reals{{\mathbb R}}
\def\torus{{\mathbb T}}
\def\t{{\mathcal T}}
\def\heis{{\mathbb H}}
\def\integers{{\mathbb Z}}
\def\z{{\mathbb Z}}
\def\naturals{{\mathbb N}}
\def\complex{{\mathbb C}\/}
\def\distance{\operatorname{distance}\,}
\def\support{\operatorname{support}\,}
\def\dist{\operatorname{dist}\,}
\def\Span{\operatorname{span}\,}
\def\degree{\operatorname{degree}\,}
\def\kernel{\operatorname{kernel}\,}
\def\dim{\operatorname{dim}\,}
\def\codim{\operatorname{codim}}
\def\trace{\operatorname{trace\,}}
\def\Span{\operatorname{span}\,}
\def\dimension{\operatorname{dimension}\,}
\def\codimension{\operatorname{codimension}\,}
\def\nullspace{\scriptk}
\def\kernel{\operatorname{Ker}}
\def\ZZ{ {\mathbb Z} }
\def\p{\partial}
\def\rp{{ ^{-1} }}
\def\Re{\operatorname{Re\,} }
\def\Im{\operatorname{Im\,} }
\def\ov{\overline}
\def\eps{\varepsilon}
\def\lt{L^2}
\def\diver{\operatorname{div}}
\def\curl{\operatorname{curl}}
\def\etta{\eta}
\newcommand{\norm}[1]{ \|  #1 \|}
\def\expect{\mathbb E}
\def\bull{$\bullet$\ }

\def\blue{\color{blue}}
\def\red{\color{red}}

\def\xone{x_1}
\def\xtwo{x_2}
\def\xq{x_2+x_1^2}
\newcommand{\abr}[1]{ \langle  #1 \rangle}

\newcommand{\Norm}[1]{ \left\|  #1 \right\| }
\newcommand{\set}[1]{ \left\{ #1 \right\} }
\newcommand{\ifou}{\raisebox{-1ex}{$\check{}$}}
\def\one{\mathbf 1}
\def\whole{\mathbf V}
\newcommand{\modulo}[2]{[#1]_{#2}}
\def \essinf{\mathop{\rm essinf}}
\def\scriptf{{\mathcal F}}
\def\scriptg{{\mathcal G}}
\def\m{{\mathcal M}}
\def\scriptb{{\mathcal B}}
\def\scriptc{{\mathcal C}}
\def\scriptt{{\mathcal T}}
\def\scripti{{\mathcal I}}
\def\scripte{{\mathcal E}}
\def\V{{\mathcal V}}
\def\scriptw{{\mathcal W}}
\def\scriptu{{\mathcal U}}
\def\scriptS{{\mathcal S}}
\def\scripta{{\mathcal A}}
\def\scriptr{{\mathcal R}}
\def\scripto{{\mathcal O}}
\def\scripth{{\mathcal H}}
\def\scriptd{{\mathcal D}}
\def\scriptl{{\mathcal L}}
\def\scriptn{{\mathcal N}}
\def\scriptp{{\mathcal P}}
\def\scriptk{{\mathcal K}}
\def\frakv{{\mathfrak V}}
\def\v{{\mathcal V}}
\def\C{\mathbb{C}}
\def\D{\mathcal{D}}
\def\R{\mathbb{R}}
\def\Rn{{\mathbb{R}^n}}
\def\rn{{\mathbb{R}^n}}
\def\Rm{{\mathbb{R}^{2n}}}
\def\r2n{{\mathbb{R}^{2n}}}
\def\Sn{{{S}^{n-1}}}
\def\bbM{\mathbb{M}}
\def\N{\mathbb{N}}
\def\Q{{\mathcal{Q}}}
\def\Z{\mathbb{Z}}
\def\F{\mathcal{F}}
\def\L{\mathcal{L}}
\def\G{\mathscr{G}}
\def\ch{\operatorname{ch}}
\def\supp{\operatorname{supp}}
\def\dist{\operatorname{dist}}
\def\essi{\operatornamewithlimits{ess\,inf}}
\def\esss{\operatornamewithlimits{ess\,sup}}
\def\dis{\displaystyle}
\def\dsum{\displaystyle\sum}
\def\dint{\displaystyle\int}
\def\dfrac{\displaystyle\frac}
\def\dsup{\displaystyle\sup}
\def\dlim{\displaystyle\lim}
\def\bom{\Omega}
\def\om{\omega}

\author[J. Tan]{Jiawei Tan}
\address{Jiawei Tan:
School of Mathematical Sciences \\
Beijing Normal University \\
Laboratory of Mathematics and Complex Systems \\
Ministry of Education \\
Beijing 100875 \\
People's Republic of China}
\email{jwtan@mail.bnu.edu.cn}

\author[Q. Xue]{Qingying Xue$^{*}$}
\address{Qingying Xue:
	School of Mathematical Sciences \\
	Beijing Normal University \\
	Laboratory of Mathematics and Complex Systems \\
	Ministry of Education \\
	Beijing 100875 \\
	People's Republic of China}
\email{qyxue@bnu.edu.cn}

\keywords{multilinear operators, iterated commutators, sparse operators, modular inequalities \\
\indent{{\it {2020 Mathematics Subject Classification.}}} Primary 42B20,
Secondary 42B25.}

\thanks{The authors were partly supported by the National Key R\&D Program of China (No. 2020YFA0712900) and NNSF of China (No. 12271041).
\thanks{$^{*}$ Corresponding author, e-mail address: qyxue@bnu.edu.cn}}

\date{\today}
\title[ SHARP WEIGHTED INEQUALITIES FOR ITERATED COMMUTATORS  ]
{\bf Sharp weighted inequalities for iterated commutators of a class of multilinear operators}

\begin{abstract}
In this paper, the sharp quantitative weighted bounds for the iterated commutators of a class of multilinear operators were systematically studied. This class of operators contains multilinear Calder\'{o}n-Zygmund  operators, multilinear Fourier integral operators, and multilinear Littlewood-Paley square operators as its typical examples. These were done only under two pretty much general assumptions of pointwise sparse domination estimates. We first established local decay estimates and quantitative weak $A_\infty$ decay estimates for iterated commutators of this class of operators. Then, we considered the corresponding Coifman-Fefferman inequalities and the mixed weak type estimates associated with Sawyer's conjecture.  Beyond that, the Fefferman-Stein inequalities with respect to arbitrary weights and weighted modular inequalities were also given. As applications, it was shown that all the conclusions aforementioned can be applied to multilinear $\omega$-Calder\'{o}n-Zygmund  operators, multilinear maximal singular integral operators, multilinear pseudo-differential operators, Stein's square functions, and higher order Calder\'{o}n commutators.
\end{abstract}\maketitle

\section{Introduction and main results}
\subsection{Motivation}\
\par
The main purpose of this paper is to develop a systematic sharp quantitative weighted theory for iterated commutators of a class of multilinear operators, which includes the classical Calder\'{o}n-Zygmund operators as well as numerous operators beyond the multilinear Calder\'{o}n-Zygmund theory. Our motivation lies in three aspects: \par
\textbf{(1).} During the past two decades, the theory of sparse domination has been developing rapidly. It was well known that, the sparse domination approach can significantly simplify the proof of the $A_2$ conjecture \cite{hyt0}, which states that if $T$ is a Calder\'{o}n-Zygmund operator with a H\"{o}lder-Lipschitz kernel, then whether it holds that
$$
\|T f\|_{L^2(w)} \leq c_{n, T}[w]_{A_2}\|f\|_{L^2(w)},
$$
where the definition of $ [w]_{A_2}$ is listed in Section \ref{sub2}. Let $\mathcal{X}$ be a Banach functions space and define
$$
\mathcal{A}_{\mathcal{S}} f(x)=\sum_{Q \in \mathcal{S}} \frac{1}{|Q|} \int_Q|f(x)|dx \chi_Q(x),
$$where each $Q$ is a cube with its sides parallel to the axis and $\mathcal{S}$ is a sparse family of such cubes.
In \cite{ler3}, Lerner proved that any standard Calder\'{o}n-Zygmund operator can be controlled in norm by a family of sparse operators in the way that
$$
\|T f\|_\mathcal{X} \leq \sup _{\mathcal{S}}\left\|\mathcal{A}_{\mathcal{S}} f\right\|_\mathcal{X}.
$$
This estimate combined with the following inequality obtained in \cite{cru2}
$$
\left\|\mathcal{A}_{\mathcal{S}}\right\|_{L^2(w) \rightarrow L^2(w)} \leq c_n[w]_{A_2}
$$
gives an simple proof of the $A_2$ conjecture.

On the other hand, Lerner and  Nazarov \cite{ler2}, Conde-Alonso and Rey \cite{con} independently showed that the Calder\'{o}n-Zygmund operator $T$ can be dominated pointwisely by a finite number of sparse operators, 
$$
|T f(x)| \leq c_{n,T} \sum_{j=1}^{3^n} \mathcal{A}_{\mathcal{S}_j} f(x).
$$ With this estimate in hand, Conde-Alonso and Rey \cite{con} answered an open question originally posed by Lerner \cite{ler3}.

These two examples illustrate that the method of sparse domination plays an important role in modern analysis. In fact, this
method was widely used in the study of several important operators in Harmonic analysis, such as Bochner-Riesz multipliers \cite{ben}, singular integrals satisfying the $L^r$-H\"{o}rmander condition \cite{li2}, rough singular integrals \cite{con1}, as well as singular non-integral type operators \cite{ber}.\par
Due to the importance of pointwise sparse domination and the fact that all the operators mentioned enjoy some kind of sparse domination estimates.  It is quite natural to ask if only with the assumation of pointwise sparse domination estimates, what kind of properties could be obtained  in general for these operators. This is the first motivation and the starting point of this paper.\par
\textbf{(2).} Secondly, multilinear theory is an essential extension for linear theory. Analogous to multivariable functional calculus, multivariable calculus provides a robust approach to the study of functions of more than one variable, and it broadens the narrow perspective of studying a single variable by freezing other variables. Multilinear analysis focuses on the investigation of operators linearly related to more than one function, treating all inputs as variables rather than just dealing with certain parameters. The study of multilinear theories, often based on the simultaneous decomposition of multiple variables, is naturally more complex than linear analyses, but it is more far-reaching and the results are more flexible. In Harmonic analysis, there are numerous examples of linear operators with fixed parameters that can be considered as multilinear operators: multiplier operators, Littlewood-Paley operators, Calder\'{o}n commutators, and Cauchy integrals along Lipschitz curves (see \cite{gra2}).\par
The development of multilinear Calder\'{o}n-Zygmund theory has made great progress in recent decades, see for example  \cite{ler4,gra1}. At the same time, however, it is noted that some important operators beyond the multilinear Calder\'{o}n-Zygmund theory, including multilinear singular integrals with non-smooth kernels \cite{duo}, multilinear pseudo-differential operators \cite{cao}, Calder\'{o}n commutators \cite{DL2019}, Stein's square functions\cite{CJSK}, etc., have various properties that imitate the Calder\'{o}n-Zygmund operators. For example, they enjoy the same sparse domination, $L^p$ boundedness, end-point weak-type estimate, and so on.\par

Based on certain assumptions of sparse domination, our attention has been drawn to the question of how to extract and analyze commonalities among various multilinear operators and their iterated commutators. These properties encompass the weighted local exponential decay property, the Coifman-Fefferman inequality, mixed weighted estimation, and the weighted modular inequality. \par

\textbf{(3).} We note that local decay estimates and mixed weak type inequalities of the commutators of a class of multilinear bounded oscillation operators have been established in \cite{cao1}. The method used to prove the local decay estimates in \cite{cao1} relies heavily on the local Coifman-Fefferman inequality as well as on the Rubio de Francia algorithm, under which the dependence of the constants on the weights cannot be obtained. Then one may wonder whether it is possible to give quantitatively weighted local decay estimates for the multilinear operators and their commutators, and obtain a multilinear version of the mixed weighted inequality with optimized or refined constants. These are the sources of our third motivation.

\subsection{Two fundamental hypotheses}\
\par
We need to present some definitions.
Let us start with the definition of general commutators. Let $\mathcal{T}$ be a $m$-linear operator from $\mathscr{X}_1 \times \cdots \times$ $\mathscr{X}_m$ into $\mathscr{Y}$, where $\mathscr{X}_1, \ldots, \mathscr{X}_m$ are some normed spaces and $\mathscr{Y}$ is a quasi-normed space. In our following statements, $\mathscr{X}_1, \ldots, \mathscr{X}_m$ and $\mathscr{Y}$ will be appropriately weighted Lebesgue spaces.

\begin{definition}[\textbf{$k$-th order commutators}]\label{def1.1}
Given $\vec{f}:=\left(f_1, \ldots, f_m\right) \in \mathscr{X}_1 \times \cdots \times \mathscr{X}_m, \vec{b}=\left(b_{i_1}, \ldots, b_{i_l}\right)$ of measurable functions with $I:=\{i_1,\ldots,i_l\}\subseteq \{1,\ldots,m\}$, and $k \in \mathbb{N}$, we define, whenever it makes sense, the $k$-th order commutator of $\mathcal{T}$ in the $i$-th entry of $\mathcal{T}$ as
$$
[\mathcal{T}, \vec{b}]_{k e_i}(\vec{f})(x):=\mathcal{T}\left(f_1, \ldots,\left(b_i(x)-b_i\right)^k f_i, \ldots, f_m\right)(x),
$$
where $e_i$ is the basis of $\mathbb{R}^n$ with the $i$-th component being 1 and other components being 0.\\
Furthermore, if $k=1$, we write
$$
[\mathcal{T}, \vec{b}]_{e_i}(\vec{f})(x)=b_i(x)\mathcal{T}\left(f_1, \ldots, f_m\right)(x)-\mathcal{T}\left(f_1, \ldots, b_if_i, \ldots, f_m\right)(x).
$$
\end{definition}

 Then, for a multi-index $\vec{\alpha}=\left(\alpha_1, \ldots, \alpha_m\right) \in \mathbb{N}^m$, we denote
$$
[\mathcal{T}, \vec{b}]_{\vec{\alpha}}:=\left[\cdots\left[[\mathcal{T}, \vec{b}]_{\alpha_1 e_1}, \vec{b}\right]_{\alpha_2 e_2} \cdots, \vec{b}\right]_{\alpha_m e_m}.
$$
Using this notation, the iterated commutator of $\mathcal{T}$ is defined as follows.
\begin{definition}[\textbf{iterated commutators}]\label{def1.2}
Given $l\leq m, \vec{f}:=\left(f_1, \ldots, f_m\right) \in \mathscr{X}_1 \times \cdots \times \mathscr{X}_m, \vec{b}=\left(b_{i_1}, \ldots, b_{i_l}\right)$ of measurable functions with $I:=\{i_1,\ldots,i_l\}\subseteq \{1,\ldots,m\}$. The $m$-linear iterated commutator of $\mathcal{T}$ is given by
$$
\mathcal{T}_{\vec{b}}(\vec{f})(x):=\left[\cdots\left[[\mathcal{T}, \vec{b}]_{e_{i_1}}, \vec{b}\right]_{e_{i_2}} \cdots, \vec{b}\right]_{e_{i_l}}.
$$
\end{definition}

When $\vec{b}=(b, \ldots, b)$, we denote $\mathcal{T}_{\vec{b}}:=\mathcal{T}_{b}$. In particular, if $\mathcal{T}$ is an $m$-linear operator with a kernel representation of the form
$$
\mathcal{T}(\vec{f})(x)=\int_{\mathbb{R}^{nm}} K(x, \vec{y}) f_1\left(y_1\right) \cdots f_m\left(y_m\right) d\vec{y}
$$
where $d\vec{y}=dy_1\cdots dy_m,$
then one can easily verify that $\mathcal{T}_{\vec{b}}$ has the following expression:
\begin{equation*}
\mathcal{T}_{\vec{b}}(\vec{f})(x)=\int_{\mathbb{R}^{nm}} \prod_{s=1}^l\left(b_{i_s}(x)-b_{i_s}(y_{i_s})\right) K(x, \vec{y}) \prod_{s=1}^m f_s\left(y_s\right) d\vec{y}.
\end{equation*}\par
We should mention that the commutators given in Definition \ref{def1.1} were originally introduced by P\'{e}rez and Torres
\cite{per4} in the study of the $m$-linear Calder\'{o}n-Zygmund operators. Weighted strong as well as weak type endpoint estimates for the iterated commutators of the Calder\'{o}n-Zygmund operators in Definition \ref{def1.2} were proved in \cite{per5} by P\'{e}rez et al. Recently, using the method of extrapolation, the weighted boundedness results for the general commutator $[\mathcal{T}, \vec{b}]_\alpha$ have also been proved in \cite{beny}. We refer to \cite{cao,cao1,ler1} for more information about commutators.\par
To introduce our hypotheses, we need to fix some notation. Given $r>0,$ we set
$$\langle|f|^{r}\rangle_{Q}=\frac{1}{|Q|} \int_{Q}|f(y)|^{r} d y,$$
particularly, $\langle f\rangle_{Q}=\frac{1}{|Q|} \int_{Q}f(y) d y.$
 Let $l(Q)$ be the side length of a cube $Q$ and $rQ$ be the unique cube with sides parallel to the axes having the same center as $Q$ and having side length $l(rQ)=rl(Q)$.

The following two hypotheses are crucial to our forthcoming discussion of $\mathcal{T}_{\vec{b}}.$
\begin{hyp}\label{hyp1}
Let $I =\{i_1,\ldots,i_l\}\subseteq \{1,\ldots,m\}$ and $\vec{b}=\left(b_{i_1}, \ldots, b_{i_l}\right)$ be locally integrable functions defined on $\mathbb{R}^n$. Let $\mathcal{T}$ be an $m$-linear operator and $\mathcal{T}_{\vec{b}}$ be its commutator given in Definition \ref{def1.2}. Suppose that for all $Q_0$ cubes in $\mathbb{R}^n$ and for any bounded functions $\vec{f}=\left(f_1, \ldots, f_m\right)$ with compact support, there exists a sparse collection $\mathcal{F}\subseteq \mathcal{D}(Q_0)$ (see Section \ref{sub1}) such that
for a.e. $x \in Q_0$,

\begin{equation*}
\begin{aligned}
{\left|\mathcal{T}_{\vec{b}} (f_1\chi_{3Q_{0}},\ldots,f_m\chi_{3Q_{0}} )(x)\right|}&
\leq C \sum_{Q \in {\mathcal{F}}}\left(\sum_{\vec{\gamma} \in {\{1,2\}^{l}}}  \prod_{s=1}^l \mathcal{R}(b_{i_s},f_{i_s},Q,\gamma_{i_s}) \right)\prod_{s\notin I} {\langle |f_s |\rangle}_{3Q}\chi_{Q}(x),\\
\end{aligned}
\end{equation*}
where
\begin{equation*}
\mathcal{R}(b,f,Q,\gamma)= \begin{cases}|b-{\langle b\rangle}_{3Q}|{\langle |f |\rangle}_{3Q}, & \text { if } \gamma=1, \\ {\langle |(b-{\langle b \rangle}_{3Q} )f  |\rangle}_{3Q}, & \text { if } \gamma=2.\end{cases}
\end{equation*}
\end{hyp}
\begin{hyp2}\label{hyp2}
 Let $I =\{i_1,\ldots,i_l\}\subseteq \{1,\ldots,m\}$ and $\vec{b}=\left(b_{i_1}, \ldots, b_{i_l}\right)$ be locally integrable functions defined on $\mathbb{R}^n$. Let $\mathcal{T}$ be an $m$-linear operator and $\mathcal{T}_{\vec{b}}$ be its commutator given in Definition \ref{def1.2}. Suppose that for any bounded functions $\vec{f}=\left(f_1, \ldots, f_m\right)$ with compact support, there exist $3^n$ sparse collections $\left\{\mathcal{S}_j\right\}_{j=1}^{3^n}$ such that
 $$
|\mathcal{T}_{\vec{b}}(\vec{f})(x)| \leq C\left(\sum_{j=1}^{3^n} \sum_{\vec{\gamma} \in\{1,2\}^l} \mathcal{A}_{\mathcal{S}_j, b}^{\vec{\gamma}}(\vec{f})(x)\right), \quad \text { a.e. } x \in \mathbb{R}^n,
$$
where
$$
\begin{gathered}
\mathcal{A}_{\mathcal{S}_j, b}^{\vec{\gamma}}(\vec{f})(x):=\sum_{Q \in \mathcal{S}_j}\left(\prod_{s=1}^l \mathcal{U}\left(b_{i_s}, f_{i_s}, Q, \gamma_{i_s}\right)(x)\right)\left(\prod_{s \notin I}\left\langle\left|f_s\right|\right\rangle_Q\right) \chi_Q(x), \quad \text { with } \\
\mathcal{U}(b, f, Q, \gamma)(x)= \begin{cases}\left|b(x)-\langle b\rangle_Q\right|\langle|f|\rangle_Q & \text { if } \gamma=1, \\
\left\langle\left|\left(b-\langle b\rangle_Q\right) f\right|\right\rangle_Q & \text { if } \gamma=2 .\end{cases}
\end{gathered}
$$

\end{hyp2}
 \par
 \begin{remark}
 We now make some comments on these two hypotheses. First of all, the estimate in the form of Hypothesis \ref{hyp1} holds for many operators, e.g.,  multilinear $w$-Calder\'{o}n-Zygmund  operators, multilinear pseudo-differential operators, etc., for more details see \cite[p. 166]{iba}. Secondly, if $I=\emptyset$, then Hypothesis \ref{hyp2} can be rewritten as
 $$
|\mathcal{T}(\vec{f})(x)| \leq C\sum_{j=1}^{3^n} \sum_{Q \in \mathcal{S}_j}\prod_{s =1 }^m\left\langle\left|f_s\right|\right\rangle_Q\chi_Q(x), \quad \text { a.e. } x \in \mathbb{R}^n,
$$which is consistent with the sparse domination obtained for classical multilinear Calder\'{o}n-Zygmund operators \cite[Theorem 1.4]{dam} and multilinear pseudo-differential operators \cite[Proposition 4.1]{cao}, and so on.
 \end{remark}
 \subsection{Main results}\label{sub1.3}\
\par
The main contributions of this paper are as follows:
\begin{enumerate}
	\item  [	{{\color{black}$\bullet$}}]
Our general framework gives a unified approach to study the quantitatively weighted estimations of commutators for a class of multilinear operators. This class of operators includes the multilinear Calder\'{o}n-Zygmund operators, multilinear Littlewood-Paley square operators, as well as other operators beyond multilinear Calder\'{o}n-Zygmund theory, such as Fourier integral operators and Calder\'{o}n commutators, etc., (cf. Sect. 8). We only assume that some sparse domination estimates holds for this class of operators. It should also be pointed out that all the results in this paper still hold for this class of multilinear operators itself (with no commutators), but one has to modify some places if needed.
	\item  [	{{\color{black}$\bullet$}}]The first main result, Theorem \ref{thm1.1}, gives a sharp weighted local sub-exponential decay estimate for the iterated commutators of a class of multilinear operators which essentially improves the result in \cite{cao1} and it is sharp for sub-exponential decay. These results accurately reflect the extent that an operator is locally controlled by certain maximal operator in the weighted case, thus improving the corresponding good-$\lambda$ inequalities. A quick comparison with \cite{cao1} reveals that the methodology and the whole proof scheme differ in a number of key points. For instance, we take a more direct approach and obtain a quantitatively weighted exponential decay estimate directly via pointwise sparse domination, see Remark \ref{remmark2}.
	\item  [	{{\color{black}$\bullet$}}]The second main result, Theorem \ref{thm1.3}, presents a weighted mixed weak type inequality, which improves the classical endpoint weighted inequality. In order to obtain the exact constant estimate, we used endpoint extrapolation techniques from \cite{li1} and gave a quantitative weighted Coifman-Fefferman inequality (Theorem \ref{thm1.2}).

	\item  [	{{\color{black}$\bullet$}}]Theorem \ref{thm1.4} focuses on the multilinear Fefferman-Stein inequalities with respect to arbitrary weights and Theorem \ref{thm1.5} establishes two weighted modular inequalities. The dependence between the constants and the weight functions is given, respectively.
\end{enumerate}

We are now in a position to state our main theorems. The first one is the local decay estimate of $\mathcal{T}_{\vec{b}}$ as follows:
 \begin{theorem}\label{thm1.1}
Let $I=\{i_1,\ldots,i_l\}=\{1,\ldots,l\}\subseteq \{1,\ldots,m\}.$ Let $Q$ be a cube and $f_s \in L_c^{\infty}\left(\mathbb{R}^n\right)$ such that $\operatorname{supp}\left(f_s\right) \subset Q$ for $1 \leq s\leq m$. If $\vec{b} \in \mathrm{BMO}^l$ and $\mathcal{T}_{\vec{b}}$ satisfies the Hypothesis $\ref{hyp1}$, then there are constants $\alpha_1, c_1>0$ such that
\begin{equation}\label{ie7}
\begin{aligned}
&\left|\left\{x \in Q:\big|\mathcal{T}_{\vec{b}}(\vec{f})(x)\big|>t \min{\{\mathcal{M}_{L(\log L)}^{(1,l)}(\vec{f})(x), \mathcal{M}(\vec{f_0})(x)\}}\right\}\right| \\
&\hspace{5cm} \qquad\leq c_1 e^{-\alpha_1(\frac{t}{\prod_{s=1}^l\|b_s\|_{\mathrm{BMO}}})^{1/(l+1)}}|Q|, \quad t>0
\end{aligned}
\end{equation}
where $\vec{f_0}=(\mathcal{A}_{{\mathcal{S}}^*}f_{1},\ldots,\mathcal{A}_{{\mathcal{S}}^*}f_{l}, f_{l+1},\ldots,f_m)$ with a sparse family ${\mathcal{S}}^*$ and  $$\mathcal{M}_{L(\log L)}^{(1,l)}(\vec{f})(x)=\sup\limits_{x\in Q}\prod\limits_{s =1}\limits^{l} \|f_s\|_{L(\log L),Q}\prod\limits_{s =l+1}\limits^{m}{\langle |f_s|\rangle}_{Q}.$$
 Moreover, the local decay estimate in $(\ref{ie7})$ is sharp in the sense that it does not hold for any $l_0> \frac{1}{1+l}.$
\end{theorem}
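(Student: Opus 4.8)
The plan is to deduce \eqref{ie7} directly from the pointwise sparse bound of Hypothesis \ref{hyp1}, keeping the dependence on $\vec b$ explicit, and then to read off the subexponential decay of the resulting sparse form by combining the John--Nirenberg inequality with the exponential distribution of the counting function of a sparse family. Since $\operatorname{supp}(f_s)\subseteq Q$ for every $s$, Hypothesis \ref{hyp1} applied with $Q_0=Q$ (so that $f_s\chi_{3Q_0}=f_s$) produces a sparse family $\mathcal{F}\subseteq\mathcal{D}(Q)$ with
\[
|\mathcal{T}_{\vec b}(\vec f)(x)|\le C\sum_{Q'\in\mathcal{F}}\Big(\sum_{\vec{\gamma}\in\{1,2\}^l}\prod_{s=1}^l\mathcal{R}(b_s,f_s,Q',\gamma_s)\Big)\prod_{s>l}\langle|f_s|\rangle_{3Q'}\,\chi_{Q'}(x),\qquad \text{a.e. }x\in Q.
\]
A constant $b_s$ makes $\mathcal{T}_{\vec b}$ vanish, so we may assume each $\|b_s\|_{\mathrm{BMO}}>0$; by the homogeneity of $\mathcal{T}_{\vec b}$ in each $b_s$ we normalise $\|b_s\|_{\mathrm{BMO}}=1$, and the case $t\le 1$ is trivial. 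By the generalised Hölder inequality $\langle|(b_s-\langle b_s\rangle_{3Q'})f_s|\rangle_{3Q'}\lesssim\|f_s\|_{L(\log L),3Q'}$, every entry with $\gamma_s=2$ costs only a factor comparable to $\mathcal{M}_{L(\log L)}^{(1,l)}(\vec f)(x)$, which reduces matters to the single term $\vec{\gamma}=(1,\dots,1)$. After bounding $\prod_{s=1}^m\langle|f_s|\rangle_{3Q'}\le C\min\{\mathcal{M}_{L(\log L)}^{(1,l)}(\vec f)(x),\mathcal{M}(\vec{f_0})(x)\}=:C\,G(x)$ for $x\in Q'$ — possible by choosing, via the three-lattice trick (to absorb the non-dyadic dilates $3Q'$), a suitable sparse family $\mathcal{S}^*$ built from $\vec f$ — we are left with $G(x)\,\Psi(x)$ where $\Psi(x):=\sum_{Q'\in\mathcal{F}}\prod_{s=1}^l|b_s(x)-\langle b_s\rangle_{3Q'}|\,\chi_{Q'}(x)$, so that \eqref{ie7} follows once we show
\[
|\{x\in Q:\Psi(x)>t\}|\le c_1\,e^{-\alpha_1 t^{1/(l+1)}}|Q|,\qquad t>0.
\]

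For the core estimate, which I expect to be the main obstacle, put $\beta_s(x):=b_s(x)-\langle b_s\rangle_{3Q}$ and use, for $Q'\in\mathcal{F}$, the splitting $|b_s(x)-\langle b_s\rangle_{3Q'}|\le|\beta_s(x)|+|\langle b_s\rangle_{3Q'}-\langle b_s\rangle_{3Q}|$ together with the standard BMO telescoping estimate $|\langle b_s\rangle_{3Q'}-\langle b_s\rangle_{3Q}|\lesssim 1+\#\{R\in\mathcal{F}:Q'\subseteq R\subseteq Q\}$, valid because $\mathcal{F}$ is a sparse subfamily of the dyadic lattice $\mathcal{D}(Q)$. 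Writing $N(x):=\#\{R\in\mathcal{F}:x\in R\}$ for the counting function and expanding the product gives
\[
\Psi(x)\ \lesssim\ \sum_{A\subseteq\{1,\dots,l\}}\Big(\prod_{s\in A}|\beta_s(x)|\Big)\,N(x)^{\,l-|A|+1},\qquad x\in Q.
\]
Since $\|b_s\|_{\mathrm{BMO}}=1$, John--Nirenberg gives $|\{|\beta_s|>\lambda\}\cap Q|\lesssim e^{-c\lambda}|Q|$, while sparseness of $\mathcal{F}$ gives $|\{N>\lambda\}\cap Q|\lesssim 2^{-c\lambda}|Q|$. Each summand above is a product of $l+1$ such exponentially small quantities (counting $N^{\,l-|A|+1}$ with multiplicity $l-|A|+1$), hence it exceeds $t$ only if one of them exceeds $t^{1/(l+1)}$; summing over the finitely many $A$ yields the displayed bound, the extreme term $A=\varnothing$, that is $N(x)^{\,l+1}$, being exactly what pins the exponent at $1/(l+1)$. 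Running the same computation with $\mathcal{M}(\vec{f_0})$ in place of $\mathcal{M}_{L(\log L)}^{(1,l)}(\vec f)$ and adding the two level sets completes \eqref{ie7}. The delicate points are precisely here: turning $l$ BMO oscillations summed along a sparse family into the $(l+1)$st power of the counting function, correctly handling the non-dyadic dilates $3Q'$ through the three-lattice device, and verifying the $\mathcal{S}^*$-comparison for both majorants.

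For the sharpness it suffices to produce one operator of the class. As in the classical one-dimensional examples for higher-order commutators, take $\mathcal{T}$ an appropriate $m$-linear Calderón--Zygmund operator (whose iterated commutator satisfies Hypothesis \ref{hyp1}), $b_s(y)=\log|y|$, $f_s=\chi_{(0,1)}$, and $Q$ a small interval at the endpoint $0$; a kernel computation then shows $|\mathcal{T}_{\vec b}(\vec f)(x)|\gtrsim\big(\log\tfrac1{|x|}\big)^{\,l+1}$ as $x\to 0^+$, while $\mathcal{M}_{L(\log L)}^{(1,l)}(\vec f)$ — and hence the minimum on the right-hand side of \eqref{ie7} — stays bounded near $0$. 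Consequently the corresponding level set has measure $\gtrsim e^{-c\,t^{1/(l+1)}}$ for large $t$, which is incompatible with a bound $c_1 e^{-\alpha_1 t^{l_0}}|Q|$ for any $l_0>\tfrac1{l+1}$, and this establishes the last assertion of the theorem.
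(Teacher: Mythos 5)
Your overall scheme (pointwise sparse domination, then reduction to the counting function of a sparse family and the exponential estimate (\ref{ie35})) is the same as the paper's, but two steps at the heart of your argument do not hold as written. First, the ``standard BMO telescoping estimate'' you invoke, namely $|\langle b_s\rangle_{3Q'}-\langle b_s\rangle_{3Q}|\lesssim \|b_s\|_{\mathrm{BMO}}\bigl(1+\#\{R\in\mathcal{F}:Q'\subseteq R\subseteq Q\}\bigr)$, is false for a general sparse family $\mathcal{F}$: the drift of averages is controlled by the number of \emph{dyadic generations} between $Q'$ and $Q$, not by the number of $\mathcal{F}$-ancestors. For instance, with $\mathcal{F}=\{Q,Q'\}$, $Q'$ a very deep dyadic descendant of $Q$ and $b_s$ a logarithm, the left-hand side grows like $\log\bigl(\ell(Q)/\ell(Q')\bigr)$ while your right-hand side stays bounded. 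This is exactly the difficulty that the paper resolves with Lemma \ref{lem4} (Lerner--Ombrosi--Rivera-R\'{\i}os): one must \emph{enlarge} $\mathcal{F}$ to an augmented sparse family $\tilde{\mathcal{S}}\supseteq\mathcal{F}$ whose counting function does dominate the pointwise oscillations $|b_s(x)-\langle b_s\rangle_{Q'}|$, and then run the level-set argument for the counting function of $\tilde{\mathcal{S}}$ (giving the power $N_{\tilde{\mathcal{S}}}^{\,l_1+1}$ and hence the exponent $1/(l+1)$). Your target distributional bound for $\Psi$ is true, but your proof of it collapses at this point; some enlargement device of this type is unavoidable and is missing from the proposal.

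Second, the $\mathcal{M}(\vec{f_0})$ half of the minimum is not actually obtained. You treat every entry with $\gamma_s=2$ only through the generalized H\"{o}lder bound $\langle|(b_s-\langle b_s\rangle_{3Q'})f_s|\rangle_{3Q'}\lesssim\|b_s\|_{\mathrm{BMO}}\|f_s\|_{L(\log L),3Q'}$, which yields the $\mathcal{M}_{L(\log L)}^{(1,l)}$ control but not the $\mathcal{M}(\vec{f_0})$ control: $\|f_s\|_{L(\log L),3Q'}$ is \emph{not} dominated by $\langle\mathcal{A}_{\mathcal{S}^*}f_s\rangle_{3Q'}$, so ``running the same computation with $\mathcal{M}(\vec{f_0})$ in place'' does not go through for those terms, and the union-of-level-sets reduction requires bounding $\bigl|\{|\mathcal{T}_{\vec b}(\vec f)|>t\,\mathcal{M}(\vec{f_0})\}\bigr|$ for the \emph{full} sparse form, all $\vec{\gamma}$ included. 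The paper's ``Method two'' supplies the missing mechanism: inserting the sparse domination (\ref{ie1}) of $|b_s-\langle b_s\rangle_{Q'}|$ inside the average to get $\langle|(b_s-\langle b_s\rangle_{3Q'})f_s|\rangle_{3Q'}\lesssim\|b_s\|_{\mathrm{BMO}}\,\langle\mathcal{A}_{\mathcal{S}^*}f_s\rangle_{Q'}$ with $\mathcal{S}^*=\{3R:R\in\tilde{\mathcal{S}}\}$; note that $\mathcal{S}^*$ is built from the augmented family for the $b_s$'s, not ``from $\vec f$'' as you suggest. Finally, your sharpness paragraph asserts a kernel lower bound $|\mathcal{T}_{\vec b}(\vec f)(x)|\gtrsim(\log\frac{1}{|x|})^{l+1}$ without proof; the paper instead verifies sharpness by citing the known lower bound for the first-order commutator of the Hilbert transform with $b=\log|x|$, so at minimum this step needs either that citation or a genuine computation.
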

 \begin{remark}\label{remmark2}
	 We now make some comments on Theorem \ref{thm1.1}. Note that $$\min{\{\mathcal{M}_{L(\log L)}^{(1,l)}(\vec{f})(x), \mathcal{M}(\vec{f_0})(x)\}} \leq \mathcal{M}_{L(\log L)}(\vec{f})(x)\leq \mathcal{M}(Mf_1, \ldots, Mf_m),$$
 then this shows that Theorem \ref{thm1.1} improves Theorem 1.6 in \cite{cao1} substantially. Furthermore, when $m=1$ and $\mathcal{T}$ is a Calder\'{o}n-Zygmund operator, Theorem \ref{thm1.1} coincides with the main conclusion in \cite{ort}.
 \end{remark}
As a corollary of Theorem \ref{thm1.1}, we have the following weighted decay estimates of $\mathcal{T}_{\vec{b}}$.
 \begin{corollary}\label{cor1.1}
Let $w\in A_{\infty}^{\text{weak}},I=\{i_1,\ldots,i_l\}=\{1,\ldots,l\}\subseteq \{1,\ldots,m\}.$ Let $Q$ be a cube and $f_s \in L_c^{\infty}\left(\mathbb{R}^n\right)$ such that $\operatorname{supp}\left(f_s\right) \subset Q$ for $1 \leq s\leq m$. If $\vec{b} \in \mathrm{BMO}^l$ and $\mathcal{T}_{\vec{b}}$ satisfies the Hypothesis $\ref{hyp1}$, then there are constants $\alpha_2, c_2>0$ independent of $w$ such that 
\begin{equation}\label{ie6}
\begin{aligned}
&w\left(\left\{x \in Q:\big|\mathcal{T}_{\vec{b}}(\vec{f})(x)\big|>t \mathcal{M}_{L(\log L)}(\vec{f})(x)\right\}\right) \\
&\hspace{3cm} \qquad\leq c_2 e^{-{\frac{\alpha_2}{[w]_{A_\infty}^{weak}+1}}\left({\frac{t}{\prod_{s =1}^{l} \|b_s\|_{\mathrm{BMO}}}}\right)^{\frac{1}{l+1}}} w(2Q), \quad t>0,
\end{aligned}
\end{equation}
\end{corollary}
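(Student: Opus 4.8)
The plan is to derive \eqref{ie6} from the unweighted local decay of Theorem \ref{thm1.1} by upgrading Lebesgue measure to the weight $w$ through a quantitative weak $A_\infty$ self-improvement property. First I would reduce the level set. By the pointwise comparison recorded in Remark \ref{remmark2}, one has $\min\{\mathcal{M}_{L(\log L)}^{(1,l)}(\vec{f})(x),\mathcal{M}(\vec{f_0})(x)\}\le \mathcal{M}_{L(\log L)}(\vec{f})(x)$, so, abbreviating $\tau:=t\big/\prod_{s=1}^{l}\|b_s\|_{\mathrm{BMO}}$ and
\[
E_t:=\Big\{x\in Q:\ \big|\mathcal{T}_{\vec{b}}(\vec{f})(x)\big|>t\,\mathcal{M}_{L(\log L)}(\vec{f})(x)\Big\},
\]
the set $E_t$ is contained in $\{x\in Q:|\mathcal{T}_{\vec{b}}(\vec{f})(x)|>t\min\{\mathcal{M}_{L(\log L)}^{(1,l)}(\vec{f})(x),\mathcal{M}(\vec{f_0})(x)\}\}$, to which Theorem \ref{thm1.1} applies and yields $|E_t|\le c_1 e^{-\alpha_1\tau^{1/(l+1)}}|Q|$.

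Next I would pass to the $w$-measure by means of a quantitative weak reverse-Hölder (weak $A_\infty$) inequality of the type established earlier in the paper: for $w\in A_\infty^{\text{weak}}$ there is a dimensional constant $c_n>0$ such that
\[
w(E)\le 2\Big(\frac{|E|}{|Q|}\Big)^{\frac{c_n}{[w]_{A_\infty}^{weak}+1}}w(2Q)
\]
for every cube $Q$ and every measurable $E\subseteq Q$. I would either invoke it directly or prove it by running a Calder\'{o}n--Zygmund stopping-time argument adapted to the weak setting, where the enlargement of $Q$ to $2Q$ is the price paid for only assuming membership in $A_\infty^{\text{weak}}$. Applying it with $E=E_t$ and inserting the first-step bound gives
\begin{align*}
w(E_t)&\le 2\Big(c_1 e^{-\alpha_1\tau^{1/(l+1)}}\Big)^{\frac{c_n}{[w]_{A_\infty}^{weak}+1}}w(2Q)\\
&=2\,c_1^{\,\frac{c_n}{[w]_{A_\infty}^{weak}+1}}\,e^{-\frac{\alpha_1 c_n}{[w]_{A_\infty}^{weak}+1}\,\tau^{1/(l+1)}}\,w(2Q).
\end{align*}

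It then remains to clean up the constants. Since $c_n/([w]_{A_\infty}^{weak}+1)\in(0,c_n]$, we have $c_1^{\,c_n/([w]_{A_\infty}^{weak}+1)}\le\max\{1,c_1\}^{c_n}$, so the choices $c_2:=2\max\{1,c_1\}^{c_n}$ and $\alpha_2:=\alpha_1 c_n$, both independent of $w$, reproduce \eqref{ie6} precisely. The reduction and this bookkeeping are routine; the genuinely substantial point, and the main obstacle, is the quantitative weak $A_\infty$ inequality of the second step, namely obtaining it with the dilated cube $2Q$ on the right-hand side and with the sharp linear dependence $1/([w]_{A_\infty}^{weak}+1)$ on the weight characteristic. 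Everything downstream of that is a soft consequence of Theorem \ref{thm1.1} and Remark \ref{remmark2}.
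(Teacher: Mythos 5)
Your proposal is correct and follows essentially the same route as the paper: reduce to Theorem \ref{thm1.1} via $\min\{\mathcal{M}_{L(\log L)}^{(1,l)}(\vec f),\mathcal{M}(\vec{f_0})\}\le\mathcal{M}_{L(\log L)}(\vec f)$, then transfer the Lebesgue-measure decay to the $w$-measure through the quantitative weak $A_\infty$ level-set inequality with $w(2Q)$ on the right and exponent comparable to $1/([w]_{A_\infty}^{weak}+1)$, and your constant bookkeeping is fine. The one clarification is that the step you flag as the main obstacle needs no stopping-time construction: it follows in two lines from Lemma \ref{lem4(1)} (the weak reverse H\"older inequality with $r_w=1+\tfrac{1}{\tau_n[w]_{A_\infty}^{weak}}$, already recorded in Section \ref{sub2}) applied together with H\"older's inequality to $E\subseteq Q$, which is exactly how the paper argues, yielding $w(E)\le 2^{1-n}\bigl(\tfrac{|E|}{|Q|}\bigr)^{1/r_w'}w(2Q)$ with $r_w'=\tau_n[w]_{A_\infty}^{weak}+1$.
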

 \begin{remark}
 Clearly, when $m=1$ and $w\equiv 1,$ Theorem 2.8 in \cite{iba} is just a special case of Corollary \ref{cor1.1}. In addition, since $A_\infty \subsetneq A_{\infty}^{\text{weak}},$  Corollary \ref{cor1.1} is also valid for any $w\in A_\infty$, see Corollary \ref{cor1.1(1)}.
 \end{remark}
 For the Coifman-Fefferman inequality with the multilinear form of the iterated commutator of $\mathcal{T}$, we have
 \begin{theorem}\label{thm1.2}
Let $I=\{i_1,\ldots,i_l\}=\{1,\ldots,l\}\subseteq \{1,\ldots,m\}.$  If $\vec{b} \in \mathrm{BMO}^l$ and $\mathcal{T}_{\vec{b}}$ satisfies the Hypothesis $\ref{hyp2}$, then for any $0<p<\infty, w\in A_\infty,$

\begin{equation}\label{iethm1.2}
\int_{\mathbb{R}^n}\big|\mathcal{T}_{\vec{b}}(\vec{f})(x)\big|^pw(x)dx \lesssim \prod_{s =1}^{l} \|b_s\|_{\mathrm{BMO}}[w]_{A_\infty}^{pl}
[w]_{A_\infty}^{\max\{2,p\}}\int_{\mathbb{R}^n}\left(\mathcal{M}_{L(\log L)}(\vec{f})(x)\right)^pw(x)dx.
\end{equation}

\end{theorem}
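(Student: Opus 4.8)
The plan is to deduce the Coifman–Fefferman inequality from the pointwise sparse domination in Hypothesis \ref{hyp2} by estimating each sparse form $\mathcal{A}_{\mathcal{S}_j,b}^{\vec\gamma}$ in $L^p(w)$ for $w\in A_\infty$. Since there are only finitely many ($3^n$) sparse families and finitely many ($2^l$) choices of $\vec\gamma$, it suffices to bound a single $\mathcal{A}_{\mathcal{S},b}^{\vec\gamma}$ with $\mathcal{S}$ sparse. The two types of factor $\mathcal{U}(b_{i_s},f_{i_s},Q,\gamma_{i_s})$ are handled differently: the $\gamma=2$ factor $\langle|(b-\langle b\rangle_Q)f|\rangle_Q$ is pointwise $\lesssim \|b\|_{\mathrm{BMO}}\,\mathcal{M}_{L(\log L)}f$ by the standard John–Nirenberg/generalized Hölder estimate $\langle|(b-\langle b\rangle_Q)f|\rangle_Q \lesssim \|b\|_{\mathrm{BMO}}\|f\|_{L(\log L),Q}$, so it contributes a clean $\|b\|_{\mathrm{BMO}}$ and merges into the $\mathcal{M}_{L(\log L)}(\vec f)$ on the right-hand side; the $\gamma=1$ factor $|b(x)-\langle b\rangle_Q|\langle|f|\rangle_Q$ carries the "bad" unbounded weight $|b-\langle b\rangle_Q|$ and is the source of the extra $[w]_{A_\infty}^{pl}$.

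First I would dispose of the case $0<p\le 1$ by a direct argument and then pass to general $p$ by the standard $L^p$-to-$L^1$ or interpolation-type reduction available for sparse operators; alternatively, one runs the following dualization uniformly. For $p>1$ fix a nonnegative $g\in L^{p'}(w)$ with $\|g\|_{L^{p'}(w)}=1$ and expand $\int \mathcal{A}_{\mathcal{S},b}^{\vec\gamma}(\vec f)\,g\,w = \sum_{Q\in\mathcal{S}} \big(\prod_s \text{(factor)}_Q\big)\big(\prod_{s\notin I}\langle|f_s|\rangle_Q\big)\int_Q g\,w$. Replace each $\gamma=2$ factor and each $s\notin I$ average by $\mathcal{M}_{L(\log L)}(\vec f)$ evaluated on $Q$ (up to $\|b_s\|_{\mathrm{BMO}}$), and for each $\gamma=1$ index $i_s$ split $|b_{i_s}-\langle b_{i_s}\rangle_Q|$ and absorb $\langle|f_{i_s}|\rangle_Q$ into $\mathcal{M}_{L(\log L)}(\vec f)$ as well. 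One is then left to control $\sum_{Q\in\mathcal S}\big(\prod_{s:\gamma_{i_s}=1}|b_{i_s}-\langle b_{i_s}\rangle_Q|\big)\,h_Q\,\langle g w\rangle_Q|Q|$ where $h_Q$ denotes the value of $\prod_s\|b_s\|_{\mathrm{BMO}}\,\mathcal{M}_{L(\log L)}(\vec f)$ localized to $Q$; here the key is to expand the product $\prod_{j}|b_{i_j}-\langle b_{i_j}\rangle_Q|$ and, for each resulting term, use the sparseness together with the $A_\infty$ "Carleson embedding with BMO symbol" estimate — i.e. for $w\in A_\infty$ and $b\in\mathrm{BMO}$ one has $\sum_{Q\in\mathcal S}\Big(\frac1{w(Q)}\int_Q|b-\langle b\rangle_Q|^{k}\,w\Big)w(Q) \lesssim \|b\|_{\mathrm{BMO}}^{k}[w]_{A_\infty}^{k}\,\|\cdot\|$, which is the mechanism that produces exactly one factor $[w]_{A_\infty}$ per commutator, hence $[w]_{A_\infty}^{pl}$ after unwinding the $p$-th powers and the $l$ symbols. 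The remaining sparse sum without the BMO symbols is the bilinear form for the $A_\infty$ sparse operator, and by the sharp $A_\infty$ estimate for sparse operators (Lerner, Hytönen–Pérez) it costs $[w]_{A_\infty}^{\max\{1,1/p\}}$ on the $L^p(w)$ side, which after the dualization and Hölder in the exponents yields the stated $[w]_{A_\infty}^{\max\{2,p\}}$ (the $\max\{2,p\}$ rather than $\max\{1,p\}$ coming from combining the weight-testing on $gw$ with the $\mathcal M_{L(\log L)}$ side, exactly as in the scalar commutator Coifman–Fefferman bound).

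The main obstacle is bookkeeping the weight dependence sharply when the two "bad" symbols of Hypothesis \ref{hyp2} interact in the product $\prod_{s}\mathcal{U}(b_{i_s},f_{i_s},Q,\gamma_{i_s})$: after expanding $\prod|b_{i_j}(x)-\langle b_{i_j}\rangle_Q|$ one must control cross terms $\prod_j|b_{i_j}(x)-\langle b_{i_j}\rangle_Q|$ against $w$ uniformly in $\mathcal S$, and the clean way is a multilinear Carleson/embedding lemma whose constant is $\prod_j\|b_{i_j}\|_{\mathrm{BMO}}\,[w]_{A_\infty}^{l}$ — proving this lemma with the correct power of $[w]_{A_\infty}$ (via the reverse Hölder inequality for $A_\infty$ weights with exponent $1+c/[w]_{A_\infty}$ and an induction on $l$) is the technical heart; once it is in place, the rest is a routine combination with the known sharp $A_\infty$ bound for sparse operators and finitely many triangle inequalities over $j$ and $\vec\gamma$. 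The endpoint $p\le 1$ requires only that one replaces the dual pairing step by a direct estimate $\int(\mathcal A^{\vec\gamma}_{\mathcal S,b}\vec f)^p w \le \sum_{Q} (\cdots)^p w(Q)$ using $p$-subadditivity and the same Carleson lemma, which is why $\max\{2,p\}$ degenerates correctly there.
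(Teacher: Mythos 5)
Your plan has the same skeleton as the paper's proof: reduce via Hypothesis \ref{hyp2} to the finitely many sparse forms $\mathcal{A}_{\mathcal{S}_j,\vec b}^{\vec\gamma}$, handle the $\gamma=2$ factors by the unweighted generalized H\"older/John--Nirenberg bound $\langle|(b-\langle b\rangle_Q)f|\rangle_Q\lesssim\|b\|_{\mathrm{BMO}}\|f\|_{L(\log L),Q}$, extract one power of $[w]_{A_\infty}$ per $\gamma=1$ symbol, dualize for $p>1$, and argue separately for $p\le1$. However, the step you yourself call the technical heart is not proved, and as stated it has the wrong shape to be used: your ``Carleson embedding with BMO symbol'' estimate $\sum_{Q\in\mathcal S}\bigl(\frac{1}{w(Q)}\int_Q|b-\langle b\rangle_Q|^{k}w\bigr)w(Q)\lesssim\|b\|_{\mathrm{BMO}}^{k}[w]_{A_\infty}^{k}\,\|\cdot\|$ contains no dual function and no specified right-hand side, so it cannot be inserted into the pairing $\sum_{Q}\int_Q\prod_{j}|b_{i_j}(x)-\langle b_{i_j}\rangle_Q|\,g(x)w(x)\,dx$: the symbols depend on $x$ and cannot be decoupled from $g$ by the plain average $\langle gw\rangle_Q$. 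What is actually needed is the Orlicz-space H\"older inequality with respect to the measure $w\,dx$ (Lemma \ref{lem2}) combined with the weighted John--Nirenberg estimate $\|b-\langle b\rangle_Q\|_{\exp L(w),Q}\lesssim[w]_{A_\infty}\|b\|_{\mathrm{BMO}}$ (Lemma \ref{lem3}), which converts the symbol product into $[w]_{A_\infty}^{l_1}\prod_s\|b_s\|_{\mathrm{BMO}}$ times $\|g\|_{L(\log L)^{l_1}(w),Q}$; and then two further ingredients you never supply are essential: the weighted Carleson embedding theorem (the family $\{w(Q)\}_{Q\in\mathcal S}$ has Carleson constant $\simeq[w]_{A_\infty}$, which is the true source of the single extra power of $[w]_{A_\infty}$), and the control $\|g\|_{L(\log L)^{l_1}(w),Q}\le M^{\mathcal D}_{L(\log L)^{l_1}(w)}g\simeq(M^{\mathcal D}_w)^{l_1+1}g$ together with the weight-free boundedness of $M^{\mathcal D}_w$ on $L^{p'}(w)$, which is what lets you return to $\|g\|_{L^{p'}(w)}\le1$. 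Your appeal to a ``sharp $A_\infty$ bound for sparse operators of order $[w]_{A_\infty}^{\max\{1,1/p\}}$'' followed by the assertion that this ``yields $\max\{2,p\}$'' is not a derivation and does not account for these steps.

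Concerning the exponent $\max\{2,p\}$: in the paper it arises only in the range $0<p\le1$, where one dualizes $(\mathcal{A}_{\mathcal{S}_j,\vec b}^{\vec\gamma})^{p/2}$ against $L^{2}(w)$ and reruns the same scheme with powers $p/2$, obtaining $[w]_{A_\infty}^{l+2/p}$ on the norm scale; for $p>1$ the argument gives the better $[w]_{A_\infty}^{l_1+1}$. Your alternative route for $p\le1$ via $p$-subadditivity is plausible, but it again requires the weighted John--Nirenberg bound for $\int_Q\prod_s|b_s-\langle b_s\rangle_Q|^{p}w$ and the same Carleson embedding to sum $w(Q)\inf_{Q}\bigl(\mathcal M_{L(\log L)}(\vec f)\bigr)^{p}$ over the sparse family; note that the pairwise disjointness of the sets $E_Q$ alone does not suffice here, since $w(Q)\lesssim w(E_Q)$ fails with a constant uniform in $[w]_{A_\infty}$. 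As written, both this endpoint argument and the ``reverse H\"older plus induction on $l$'' proof of your key lemma are one-line sketches, so the quantitative core of the theorem --- precisely the part that distinguishes this statement from a qualitative Coifman--Fefferman inequality --- is missing.
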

 \begin{remark}
 In general, the Coifman-Fefferman inequalities are obtained by extrapolation (see for example \cite[Proposition 5.1]{cao}), here we use sparse domination to obtain better estimates of the weight constants. This provides convenience for the following quantitatively weighted mixed weak type estimation.
 \end{remark}
 For the endpoint case, we establish weighted mixed weak type inequalities with a precisely weighted constant.
 \begin{theorem}\label{thm1.3}
Let $I=\{i_1,\ldots,i_l\}=\{1,\ldots,l\}\subseteq \{1,\ldots,m\}.$  If $\vec{b} \in \mathrm{BMO}^l$ and $\mathcal{T}_{\vec{b}}$ satisfies the Hypothesis $\ref{hyp2}.$ Let $\vec{w}=\left(w_1, \ldots, w_m\right)$ and $u=\prod_{i=1}^m w_i^{1 / m}$. If $\vec{w} \in A_{\vec{1}}$ and $v \in A_{\infty},$ then there exists $t>1 $ depending only on $v$, such that
$$
\left\|\frac{\mathcal{T}_{\vec{b}}(\vec{f})}{v}\right\|_{L^{\frac{1}{m}, \infty}(u v^{\frac{1}{m}})} \lesssim K_0^{2l+6m}[v^{\frac{1}{m}}]_{A_t}^{2l+4m}\prod_{s =1}^{l} \|b_s\|_{\mathrm{BMO}}\left\|\frac{ \mathcal{M}_{L(\log L)} (\vec{f})}{v}\right\|_{L^{\frac{1}{m}, \infty}(u v^{\frac{1}{m}})},
$$
where $K_0=4C_np_0p_0^{\prime}([u]_{A_1}+2^{p_0-1}C_n^t[v^{\frac{1}{m}}]_{A_t}^2[u]_{A_1}^{p_0-1})+1$ with $p_0=2^{n+3}(t-1)[u]_{A_1}+1.$

\end{theorem}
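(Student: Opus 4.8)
The plan is to deduce this endpoint mixed estimate from the quantitative Coifman--Fefferman inequality of Theorem~\ref{thm1.2} by running, quantitatively, an endpoint extrapolation scheme of the type introduced in \cite{li1} for Sawyer's conjecture; the whole point is to keep track of every constant. First I would fix the two auxiliary weights. Since $\vec w\in A_{\vec 1}$ each $w_i$ lies in $A_1$, and a short H\"older/Jensen computation gives $u=\prod_{i=1}^m w_i^{1/m}\in A_1$ with $[u]_{A_1}\le\prod_{i=1}^m[w_i]_{A_1}^{1/m}$. Since $v\in A_\infty$ there is $q=q([v]_{A_\infty})$ with $v\in A_q$, and applying Jensen's inequality to the two halves of the $A_t$ condition yields $v^{1/m}\in A_t$ for $t:=1+(q-1)/m$ with $[v^{1/m}]_{A_t}\le[v]_{A_q}^{1/m}$; this produces the $t>1$ depending only on $v$ (and $m$) in the statement. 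From now on I keep $u\in A_1$ and $v^{1/m}\in A_t$ at my disposal.

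Next I would normalise, by homogeneity, so that $\big\|\mathcal{M}_{L(\log L)}(\vec f)/v\big\|_{L^{1/m,\infty}(uv^{1/m})}=1$ and reduce the claim to the single level-set bound $uv^{1/m}(\{x:|\mathcal{T}_{\vec b}(\vec f)(x)|>\lambda v(x)\})\lesssim \big(K_0^{2l+6m}[v^{1/m}]_{A_t}^{2l+4m}\prod_{s=1}^l\|b_s\|_{\mathrm{BMO}}\big)^{1/m}\lambda^{-1/m}$ for every $\lambda>0$. Because $L^{1/m,\infty}$ is only a quasi-norm when $m\ge 2$, this passage will be carried out through a Kolmogorov-type inequality, which is what forces the extra factor $m$ in the exponents and demands some care since the triangle inequality is unavailable; at this stage Hypothesis~\ref{hyp2} lets me replace $\mathcal{T}_{\vec b}(\vec f)$ by the finitely many sparse forms $\mathcal{A}_{\mathcal{S}_j,b}^{\vec\gamma}(\vec f)$.

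The core is the quantitative extrapolation step. For a fixed $\lambda$ I would build, by a Rubio de Francia iteration, an auxiliary weight $W$ out of $u$, $v^{1/m}$ and a maximal operator adapted to the measure $uv^{1/m}$, feed it into Theorem~\ref{thm1.2} in the form $\int|\mathcal{T}_{\vec b}(\vec f)|^pW\lesssim\prod_{s=1}^l\|b_s\|_{\mathrm{BMO}}[W]_{A_\infty}^{pl+\max\{2,p\}}\int(\mathcal{M}_{L(\log L)}(\vec f))^pW$, and arrange $W$ so that $[W]_{A_\infty}$ is dominated by a fixed power of $K_0$. The decisive elementary input is that, for $u\in A_1$ and $v^{1/m}\in A_t$, a suitably normalised weight fabricated from $u$ and $v^{1/m}$ lies in $A_{p_0}$ with $A_{p_0}$-constant controlled by $[u]_{A_1}$, $[v^{1/m}]_{A_t}^{2}$ and $2^{p_0-1}[u]_{A_1}^{p_0-1}$ precisely for the threshold exponent $p_0=2^{n+3}(t-1)[u]_{A_1}+1$, this value of $p_0$ being the one produced by the $A_1$ self-improvement $u^{1+\varepsilon}\in A_1$ (valid for $\varepsilon\lesssim([u]_{A_1})^{-1}$) combined with Buckley's sharp bound $\|M\|_{L^{p_0}(W)\to L^{p_0}(W)}\lesssim C_np_0p_0'[W]_{A_{p_0}}^{1/(p_0-1)}$ — whence the exact shape of $K_0$. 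Collecting all the factors — the commutator norm $\prod_{s=1}^l\|b_s\|_{\mathrm{BMO}}$, the power $[W]_{A_\infty}^{pl+\max\{2,p\}}$, the copies of $[v^{1/m}]_{A_t}$ coming from the iteration over the multilinear slots and from the Kolmogorov reduction, and the operator norms appearing along the iteration — everything is absorbed into $K_0^{2l+6m}[v^{1/m}]_{A_t}^{2l+4m}$, the $l$- and $m$-counting in the exponents being dictated respectively by the commutator exponent $pl$ in Theorem~\ref{thm1.2} and by the number of linear slots together with the quasi-norm step.

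Summing over $j$ and $\vec\gamma$ in Hypothesis~\ref{hyp2} and undoing the normalisation then finishes the proof. The genuinely analytic content — the sparse domination (Hypothesis~\ref{hyp2}) and the quantitative Coifman--Fefferman estimate (Theorem~\ref{thm1.2}) — is already in hand, so I expect the main obstacle to be the bookkeeping: one must choose the parameters $q$, $t$, $p_0$, the Rubio de Francia exponents and the self-improvement $\varepsilon$ mutually consistently so that every auxiliary weight lands in the correct $A_p$ class with the claimed quantitative bound, transport the factor $\prod_{s=1}^l\|b_s\|_{\mathrm{BMO}}[W]_{A_\infty}^{pl+\max\{2,p\}}$ through the extrapolation without degrading it, and handle the mixed $L^{1/m,\infty}$ quasi-norm despite the failure of the triangle inequality — it is this last point that accounts for the somewhat generous exponents $2l+6m$ and $2l+4m$ rather than the linear-in-$l$ powers one sees in the genuinely linear case $m=1$.
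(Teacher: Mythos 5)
You have reproduced the paper's own architecture: quantitative endpoint extrapolation in the spirit of \cite{li1} and \cite{cru}, with the Rubio de Francia iteration built from $S_u f=M(fu)/u$, the quantitative Coifman--Fefferman estimate of Theorem \ref{thm1.2} applied at a small exponent, the key elementary input that $uv^{\varepsilon}\in A_p$ for $\varepsilon<2^{-n-2}[u]_{A_1}^{-1}$ (Lemma \ref{lem5}, proved via the reverse H\"older inequality for $u$), the factorization $v^{1/m}=v_1v_2^{1-t}$, Buckley's sharp bound, and interpolation at the threshold $p_0=2^{n+3}(t-1)[u]_{A_1}+1$, which is exactly how the stated $K_0$ arises. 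Two small slips in the setup: $\vec w\in A_{\vec 1}$ does not give each $w_i\in A_1$ (it gives $w_i^{1/m}\in A_1$ and, which is all you need, $u=\nu_{\vec w}\in A_1$ directly), and the paper obtains $t$ simply from $v^{1/m}\in A_\infty$, though your Jensen choice $t=1+(q-1)/m$ works as well.

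The genuine gap is the bridge from Theorem \ref{thm1.2} to the mixed weak-type norm. Your ``normalise, fix $\lambda$, Kolmogorov-type inequality'' reduction is not a working mechanism as stated: Theorem \ref{thm1.2} requires an $A_\infty$ weight, the natural level-set weight $\chi_E\,u\,v^{1/m}$ is not one, and nothing in the sketch explains how the right-hand side re-emerges as the weak norm of $\mathcal{M}_{L(\log L)}(\vec f)/v$ against $uv^{1/m}$ rather than as an integral against the fabricated weight $W$. The paper's mechanism is the power trick plus Lorentz duality: one writes $\|\mathcal{T}_{\vec b}(\vec f)/v\|_{L^{1/m,\infty}(uv^{1/m})}^{1/(mr)}=\bigl\||\mathcal{T}_{\vec b}(\vec f)/v|^{1/(mr)}\bigr\|_{L^{r,\infty}(uv^{1/m})}$, dualizes against $h\in L^{r',1}(uv^{1/m})$ (picking up a factor $r'$), majorizes $h\le\mathcal{R}h$, applies Theorem \ref{thm1.2} with $p=\tfrac{1}{mr}\le 1$ and weight $W=\mathcal{R}h\cdot u\,v^{1/(mr')}$, which lies in $A_t$ with $[W]_{A_\infty}\le 4K_0[v^{1/m}]_{A_t}$ by Lemma \ref{lem5}, and then closes via H\"older in Lorentz spaces together with $\|\mathcal{R}h\|_{L^{r',1}(uv^{1/m})}\le 2\|h\|_{L^{r',1}(uv^{1/m})}$ --- this last point is precisely why $S_u$ must be bounded on $L^{r',1}(uv^{1/m})$ with norm at most $K_0$, forcing $r'\ge 2p_0$, $r\le 2$ and $r'\le 2^{n+4}K_0$. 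The exponents $2l+6m$ and $2l+4m$ come from raising back to the power $mr\le 2m$ and from the factor $(r')^m$, not from a quasi-norm/Kolmogorov step as you suggest. Your outline could be repaired along these lines (a restricted-testing formulation of $L^{r,\infty}$ duality is equivalent), but as written the decisive duality/power-trick step, on which all of the quantitative bookkeeping hangs, is missing.
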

\begin{remark}
	Very recently, in the linear case, Ib\'{a}\~{n}ez-Firnkorn and Rivera-R\'{\i}os \cite{iba1} established mixed-weighted endpoint estimates for the commutators of a class of linear operators and gave some
	 weighted constant estimates. 
We would like to point out that Theorem \ref{thm1.3} improves the results in \cite{cao1,li1} in two aspects.  It yields a more accurate norm constant for the weights $u$ and $v,$ and reduces the condition $v^{\frac{1}{m}} \in A_{\infty}$ to $v \in A_{\infty}.$
\end{remark}

\begin{remark}\label{remark1}
In particular, under the above theorem conditions, if  $\vec{w} \in A_{\vec{1}}$ and $v \in A_{p}(1<p<\infty),$ then we have
$$
\left\|\frac{\mathcal{T}_{\vec{b}}(\vec{f})}{v}\right\|_{L^{\frac{1}{m}, \infty}(u v^{\frac{1}{m}})} \lesssim \tilde{K}_0^{2l+6m}[v^{\frac{1}{m}}]_{A_p}^{2l+4m}\prod_{s =1}^{l} \|b_s\|_{\mathrm{BMO}}\left\|\frac{ \mathcal{M}_{L(\log L)} (\vec{f})}{v}\right\|_{L^{\frac{1}{m}, \infty}(u v^{\frac{1}{m}})},
$$
where $\tilde{K}_0=C_n\tilde{p}_0\tilde{p}_0^{\prime}2^{\tilde{p}_0-1}([v^{\frac{1}{m}}]_{A_p}^2[u]_{A_1}^{\tilde{p}_0})$ with $\tilde{p}_0=2^{n+3}(p-1)[u]_{A_1}+1.$
\end{remark}
 As a corollary of Theorem \ref{thm1.3}, we can easily obtain the following weak type estimates for iterated commutators with the type of Coifman-Fefferman inequalities.
 \begin{corollary}\label{cor1.2}
Let $I=\{i_1,\ldots,i_l\}=\{1,\ldots,l\}\subseteq \{1,\ldots,m\}.$  If $\vec{b} \in \mathrm{BMO}^l$ and $\mathcal{T}_{\vec{b}}$ satisfies the Hypothesis $\ref{hyp2}$. Let $\vec{w}=\left(w_1, \ldots, w_m\right)$ and $u=\prod_{i=1}^m w_i^{1 / m}$. If $\vec{w} \in A_{\vec{1}},$   then
$$
\left\|\mathcal{T}_{\vec{b}}(\vec{f})\right\|_{L^{\frac{1}{m}, \infty}(u )} \lesssim (2[u]_{A_1})^{2^{n+7}m[u]_{A_1}}\prod_{s =1}^{l} \|b_s\|_{\mathrm{BMO}}\left\| \mathcal{M}_{L(\log L)} (\vec{f})\right\|_{L^{\frac{1}{m}, \infty}(u )}.
$$

\end{corollary}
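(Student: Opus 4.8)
\textbf{Proof proposal for Corollary \ref{cor1.2}.}

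The plan is to derive this as a limiting/specialization case of Theorem \ref{thm1.3}. The key observation is that Corollary \ref{cor1.2} is the ``$v \equiv 1$'' incarnation of the mixed estimate: if we take $v$ to be the constant weight $1$, then $uv^{1/m} = u$, the ratio $\mathcal{T}_{\vec b}(\vec f)/v$ becomes $\mathcal{T}_{\vec b}(\vec f)$ itself, and the left side of Theorem \ref{thm1.3} collapses precisely to $\|\mathcal{T}_{\vec b}(\vec f)\|_{L^{1/m,\infty}(u)}$; similarly the right side becomes $\|\mathcal{M}_{L(\log L)}(\vec f)\|_{L^{1/m,\infty}(u)}$ times the constant. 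So the whole task reduces to tracking what happens to the constant $K_0^{2l+6m}[v^{1/m}]_{A_t}^{2l+4m}$ when $v \equiv 1$.

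First I would note that for $v\equiv 1$ one has $v^{1/m}\equiv 1 \in A_t$ for every $t>1$ with $[v^{1/m}]_{A_t}=1$, so the factor $[v^{1/m}]_{A_t}^{2l+4m}$ is simply $1$, and we are free to choose $t$. The constant $K_0$ from Theorem \ref{thm1.3} is
$$
K_0 = 4C_n p_0 p_0' \big([u]_{A_1} + 2^{p_0-1}C_n^{t}[v^{1/m}]_{A_t}^2 [u]_{A_1}^{p_0-1}\big) + 1,
\qquad p_0 = 2^{n+3}(t-1)[u]_{A_1}+1 .
$$
With $[v^{1/m}]_{A_t}=1$ this is $K_0 = 4C_n p_0 p_0'([u]_{A_1}+2^{p_0-1}C_n^t[u]_{A_1}^{p_0-1})+1$. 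Now I would choose $t$ close to $1$, e.g. $t-1 = c/(2^{n+3}[u]_{A_1})$ for a small absolute constant $c$, so that $p_0 = 1 + c$ is bounded by an absolute constant; then $p_0'\lesssim 1/c$, $C_n^t \lesssim C_n$, and $2^{p_0-1}\lesssim 1$, giving $K_0 \lesssim C_n[u]_{A_1}$. Consequently $K_0^{2l+6m} \lesssim (C_n[u]_{A_1})^{2l+6m}$. Since $l\le m$, this is $\lesssim (C_n[u]_{A_1})^{8m}$, and absorbing $C_n$ into the base (using $[u]_{A_1}\ge 1$ and enlarging constants) yields a bound of the shape $(2[u]_{A_1})^{c m}$. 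The exact exponent $2^{n+7}m[u]_{A_1}$ claimed in the statement, however, is \emph{not} merely a constant power of $[u]_{A_1}$ — it is itself proportional to $[u]_{A_1}$ in the exponent — so this shows that one cannot simply plug $v\equiv 1$ into Theorem \ref{thm1.3} with a good choice of $t$; the dependence must come from a different route.

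Therefore the honest plan is the second route: redo the endpoint argument directly at $v\equiv 1$, i.e. prove a weak-type $(1/m,\infty)$ bound for $\mathcal{T}_{\vec b}$ against $u\in A_1$ by feeding the Coifman–Fefferman inequality of Theorem \ref{thm1.2} into an endpoint extrapolation / Calder\'on–Zygmund-type stopping argument (as in \cite{li1}). Concretely, I would run the standard $A_1$ endpoint scheme: bound the distribution function $u(\{|\mathcal{T}_{\vec b}(\vec f)|>\lambda\})$ by splitting at a level governed by $\mathcal{M}_{L(\log L)}(\vec f)$, use that $u\in A_1$ so $Mu\lesssim [u]_{A_1}u$ pointwise, and invoke Theorem \ref{thm1.2} with the strong-type exponent $p=p_0$ chosen as $p_0 = 1 + 1/(2^{n+c}[u]_{A_1})$ — an exponent at distance $\sim 1/[u]_{A_1}$ from $1$, exactly as in the classical linear $A_1$ theory. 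Running Theorem \ref{thm1.2} at this $p_0$ produces the factor $[w]_{A_\infty}^{p_0 l + \max\{2,p_0\}}$ together with the combinatorial losses of the extrapolation, and the $p_0'\sim [u]_{A_1}$ factors get raised to the $m$-th power through the $m$-linear structure; keeping track of these is what produces a constant of the form $(2[u]_{A_1})^{c\,m\,[u]_{A_1}}$, and a careful bookkeeping pins $c = 2^{n+7}$ (or whatever the argument in the body gives).

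The main obstacle I anticipate is precisely this bookkeeping: one must track how the four separate sources of growth — the $[u]_{A_1}$ from $Mu\lesssim[u]_{A_1}u$, the $p_0'\sim 2^{n}[u]_{A_1}$ from the near-$L^1$ extrapolation, the $m$-linear tensorization that raises $p_0'$-type factors to the $m$, and the $[w]_{A_\infty}\lesssim [u]_{A_1}$ with exponent $\sim p_0 l \le m$ from Theorem \ref{thm1.2} — combine, and then optimize the free exponent $p_0$ to balance the polynomially-growing $2^{p_0}$-type terms against the $1/(p_0-1)$-type terms, landing on $p_0-1 \sim 1/(2^{n+?}[u]_{A_1})$. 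The $L(\log L)$ (Orlicz) maximal function on the right requires only that one use the $L\log L$ endpoint of $\mathcal{M}_{L(\log L)}$ in place of the plain multilinear maximal function at the top of the stopping-time argument, which is routine. Everything else — the reduction to sparse forms, the $A_1$–$A_\infty$ interplay — is already packaged in the earlier results, so no genuinely new idea beyond the classical endpoint $A_1$ extrapolation scheme adapted to the $m$-linear commutator setting should be needed.
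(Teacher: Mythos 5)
Your first route is, in fact, the paper's own proof, and you abandoned it for a fallacious reason. Corollary \ref{cor1.2} asserts an \emph{upper bound}, not a matching asymptotic: if specializing the mixed weak type estimate at $v\equiv 1$ (to avoid the ``there exists $t$'' phrasing of Theorem \ref{thm1.3}, quote Remark \ref{remark1}, which is stated for every $p$ with $v\in A_p$, and $1\in A_p$ for all $p>1$ with constant $1$) yields $\left\|\mathcal{T}_{\vec b}(\vec f)\right\|_{L^{1/m,\infty}(u)}\lesssim (C_n[u]_{A_1})^{8m}\prod_{s=1}^l\|b_s\|_{\mathrm{BMO}}\left\|\mathcal{M}_{L(\log L)}(\vec f)\right\|_{L^{1/m,\infty}(u)}$, as your small-$t$ computation indicates, then this is \emph{stronger} than the claimed inequality and implies it, since $(C_n[u]_{A_1})^{8m}\lesssim (2[u]_{A_1})^{2^{n+7}m[u]_{A_1}}$ (recall $[u]_{A_1}\ge 1$ and the implicit constant may depend on $n,m,l$). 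So the fact that the stated constant carries $[u]_{A_1}$ in the exponent does not show that plugging in $v\equiv 1$ ``must fail''; it only shows the stated constant is not optimal. Indeed the paper's proof is exactly this specialization with the lazy choice $p=2$: then $[v^{1/m}]_{A_2}=1$, $\tilde p_0=2^{n+3}[u]_{A_1}+1$, $\tilde p_0'\le 2$, hence $\tilde K_0\le C_n\,2^{n+4}[u]_{A_1}\,2^{2^{n+3}[u]_{A_1}}[u]_{A_1}^{2^{n+3}[u]_{A_1}+1}\lesssim (2[u]_{A_1})^{c\,2^{n+3}[u]_{A_1}}$, and raising to the power $2l+6m\le 8m$ gives precisely a constant of the form $(2[u]_{A_1})^{2^{n+7}m[u]_{A_1}}$. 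No new endpoint argument is needed.

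Because you discarded that route, what you actually submit is the second route, and as written it is not a proof: the Calder\'on--Zygmund/stopping-time scheme and the endpoint extrapolation from \cite{li1} are only named, the interaction of the various sources of growth (the $Mu\lesssim[u]_{A_1}u$ step, the near-$L^1$ exponent $p_0$, the $m$-linear tensorization, the $A_\infty$ constants from Theorem \ref{thm1.2}) is never carried out, and the decisive step is deferred to ``a careful bookkeeping pins $c=2^{n+7}$ (or whatever the argument in the body gives).'' That is the genuine gap: the one complete argument you had was rejected on incorrect logical grounds, and its replacement is an unexecuted plan whose constant you do not actually derive. The fix is short --- keep route one, citing Remark \ref{remark1} (either with $p$ close to $1$, which even improves the corollary, or with $p=2$ to reproduce the stated constant verbatim).
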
\par
 In order to present the Fefferman-Stein inequalities with arbitrary weights, given a weight $w$ and $0<p<\infty,$ we define a class of weighted BMO spaces $\mathrm{BMO}_{p}(w)$  by
$$
\mathrm{BMO}_{p}(w):=\left\{f \in L_{l o c}^1\left(\mathbb{R}^n\right):\|f\|_{\mathrm{BMO}_{p}(w)}<\infty\right\}
$$
where
$$
\|f\|_{\mathrm{BMO}_{p}(w)}:=\sup _Q \left(\frac{1}{w(Q)} \int_Q\left|f(x)-\langle f \rangle_Q\right|^p w(x)dx\right)^{\frac{1}{p}}.
$$
When $p=1,$ we denote
$
\|f\|_{\mathrm{BMO}(w)}:=\sup _Q \frac{1}{w(Q)} \int_Q\left|f(x)-\langle f \rangle_Q\right| w(x)dx.
$
\begin{theorem}\label{thm1.4}
Let $m\geq 2, I=\{i_1,\ldots,i_l\}=\{1,\ldots,l\}\subseteq \{1,\ldots,m\}, \vec{b}=(b_1,\ldots, b_l).$  Let $1<p_1, \ldots, p_m<\infty$ and $\frac{1}{p}=\frac{1}{p_1}+\cdots+\frac{1}{p_m}.$ Assume that for all weights $\vec{w}=\left(w_1, \ldots, w_m\right), \nu_{\vec{w}}=\prod_{s=1}^m w_s^{p / p_s},$ $b_s \in \mathrm{BMO}_{p_s}(w_s)\cap \mathrm{BMO}$ with $1\leq s\leq l,$ and $\mathcal{T}_{\vec{b}}$ satisfies the Hypothesis $\ref{hyp2}$. If $0<p\leq1,$ then

$$
\left\|\mathcal{T}_{\vec{b}}(\vec{f})\right\|_{L^p\left(\nu_{\vec{w}}\right)} \leq C \|\vec{b}\|_{\mathrm{BMO}}^*\prod_{s=1}^m\left\|f_s\right\|_{L^{p_s}(M w_s)},
$$
where $C$ is independent of $\vec{w}$ and $\vec{b},$ and $$\|\vec{b}\|_{\mathrm{BMO}}^*:=\max_{\vec{\gamma}\in \{1,2\}^l}\{\prod_{s:\gamma_s=1}\|b_s\|_{\mathrm{BMO}_{p_s}(w_s)}\prod_{s:\gamma_s=2}\|b_s\|_{\mathrm{BMO}}\}.$$
\end{theorem}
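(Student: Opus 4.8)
The plan is to run a Rubio de Francia style argument on top of the sparse domination supplied by Hypothesis \ref{hyp2}. By that hypothesis it suffices to estimate each sparse form $\mathcal{A}_{\mathcal{S},b}^{\vec\gamma}(\vec f)$ in $L^p(\nu_{\vec w})$, so fix a sparse family $\mathcal{S}$ and a vector $\vec\gamma\in\{1,2\}^l$. Since $0<p\le 1$ the norm $\|\cdot\|_{L^p(\nu_{\vec w})}$ is, up to the usual subadditivity with exponent $p$, controlled by summing over $Q\in\mathcal{S}$ the local contributions; because $\mathcal{S}$ is sparse there are disjoint major subsets $E_Q\subseteq Q$ with $|E_Q|\ge\tfrac12|Q|$, and the standard trick is to replace $\chi_Q$ by $\chi_{E_Q}$ after paying a factor controlled by $\nu_{\vec w}(Q)/\nu_{\vec w}(E_Q)$, which is where an $A_\infty$-type property of $\nu_{\vec w}$ would normally enter — but here $\vec w$ is an \emph{arbitrary} weight, so one cannot do that directly. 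Instead, following the Fefferman–Stein philosophy (as in the linear and multilinear arbitrary-weight results of Pérez and collaborators), I would dualize: test against $g\ge0$ with $\|g\|_{L^{p'}(\nu_{\vec w}^{1-p'})}\le 1$ when $p>1$, or more relevantly, for $0<p\le1$ use the pointwise bound $\mathcal{A}_{\mathcal{S},b}^{\vec\gamma}(\vec f)(x)\lesssim$ a product of iterated maximal operators applied to the $f_s$'s, with the $\mathrm{BMO}$/$\mathrm{BMO}_{p_s}(w_s)$ norms of the $b_s$'s factored out by a John–Nirenberg argument adapted to the weighted $\mathrm{BMO}_{p_s}(w_s)$ spaces.

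Concretely, the key steps, in order, are: (1) reduce via Hypothesis \ref{hyp2} to a single sparse operator $\mathcal{A}_{\mathcal{S},b}^{\vec\gamma}$; (2) handle the two cases $\gamma_s=1$ and $\gamma_s=2$ in the factor $\mathcal{U}(b_{i_s},f_{i_s},Q,\gamma_{i_s})$ — for $\gamma_s=2$ one uses a generalized Hölder inequality in the $L(\log L)$/$\exp L$ duality on the cube $3Q$ together with $\|b_s\|_{\mathrm{BMO}}$ to bound $\langle|(b_s-\langle b_s\rangle_Q)f_s|\rangle_Q\lesssim\|b_s\|_{\mathrm{BMO}}\|f_s\|_{L(\log L),Q}$; for $\gamma_s=1$ one keeps the factor $|b_s(x)-\langle b_s\rangle_Q|$ and must absorb it against the weight $w_s$ using precisely the $\mathrm{BMO}_{p_s}(w_s)$ norm; (3) assemble the resulting bound into an $L^p(\nu_{\vec w})$ estimate for $\mathcal{A}_{\mathcal{S},b}^{\vec\gamma}(\vec f)$ against $\prod_s \mathcal{M}_{L(\log L)}$-type operators of $f_s$, using the sparseness to pass from $\sum_Q(\cdots)\chi_Q$ to an integral bound — here the disjointness of $\{E_Q\}$ and the elementary inequality $\nu_{\vec w}(Q)\le\prod_s\langle w_s\rangle_{3Q}^{p/p_s}|Q|$ combined with Hölder let one dominate $\nu_{\vec w}(Q)\prod_s\langle g_s\rangle_{3Q}$ by $\prod_s\int_{3Q}\mathcal{M}g_s\,w_s$-type quantities; (4) finally invoke the known weak/strong endpoint bound for the multilinear maximal operator with respect to $(Mw_1,\dots,Mw_m)$ — i.e. $\|\mathcal{M}_{L(\log L)}(\vec f)\|_{L^p(\nu_{\vec w})}\lesssim\prod_s\|f_s\|_{L^{p_s}(Mw_s)}$, valid for arbitrary $\vec w$ — which is the multilinear Fefferman–Stein inequality for the maximal operator and is exactly what makes the arbitrary-weight statement possible.

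The dependence $\|\vec b\|_{\mathrm{BMO}}^*=\max_{\vec\gamma}\{\prod_{s:\gamma_s=1}\|b_s\|_{\mathrm{BMO}_{p_s}(w_s)}\prod_{s:\gamma_s=2}\|b_s\|_{\mathrm{BMO}}\}$ comes out naturally: summing over $\vec\gamma\in\{1,2\}^l$ each term produces a product over $s$ of either a $\mathrm{BMO}_{p_s}(w_s)$ factor (from the $\gamma_s=1$ branch, where $|b_s(x)-\langle b_s\rangle_Q|$ must be integrated against $w_s$) or a $\mathrm{BMO}$ factor (from the $\gamma_s=2$ branch, via John–Nirenberg/$L(\log L)$ duality on $3Q$), and taking the max absorbs the finitely many $\vec\gamma$'s into a constant.

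The main obstacle I anticipate is step (2)–(3) in the $\gamma_s=1$ case: the factor $|b_s(x)-\langle b_s\rangle_Q|^{}$ multiplies $\chi_Q(x)$ pointwise and must be controlled in $L^p(\nu_{\vec w})$, but $\nu_{\vec w}$ need not be in any $A_p$ class, so one cannot simply apply a weighted John–Nirenberg inequality to $\nu_{\vec w}$. The fix is to route the argument through the individual weights: write $\nu_{\vec w}=\prod_s w_s^{p/p_s}$, use Hölder with exponents $p_s/p$ to split $\int_Q|b_s(x)-\langle b_s\rangle_Q|^p\,\nu_{\vec w}\,dx$ into a product involving $\int_Q|b_s(x)-\langle b_s\rangle_Q|^{p_s}w_s\,dx\le\|b_s\|_{\mathrm{BMO}_{p_s}(w_s)}^{p_s}w_s(Q)$ (this is where the weighted $\mathrm{BMO}_{p_s}(w_s)$ definition is used, and why it appears rather than plain $\mathrm{BMO}$), then recombine. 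Making this splitting coexist with the sparse summation and the replacement $Q\rightsquigarrow E_Q$ — without ever invoking an $A_\infty$ condition — is the delicate bookkeeping; once that is arranged, everything reduces to the maximal-operator Fefferman–Stein bound cited in step (4).
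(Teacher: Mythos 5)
Your skeleton matches the paper's proof in its first half: reduce via Hypothesis \ref{hyp2} to a single sparse form, use $p$-subadditivity for $0<p\le 1$ to sum over $Q\in\mathcal{S}_j$, treat the $\gamma_s=2$ factors with John--Nirenberg (giving $\|b_s\|_{\mathrm{BMO}}$ times an average of $f_s$), and treat the $\gamma_s=1$ factors by H\"older with exponents $p_s/p$, so that $\int_Q\prod_{s:\gamma_s=1}|b_s-\langle b_s\rangle_Q|^p\,\nu_{\vec w}\lesssim\prod_{s:\gamma_s=1}\|b_s\|_{\mathrm{BMO}_{p_s}(w_s)}^p\prod_{s=1}^m w_s(Q)^{p/p_s}$; your ``main obstacle'' fix is precisely the paper's inequality (\ref{ie23}), and this is indeed where $\mathrm{BMO}_{p_s}(w_s)$ enters and where $\|\vec b\|_{\mathrm{BMO}}^*$ comes from.

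The genuine gap is in your steps (3)--(4), i.e.\ in how the sparse sum is closed. After the splitting, the weight attached to each cube is $\prod_s w_s(Q)^{p/p_s}$, which is \emph{larger} than $\nu_{\vec w}(Q)$, and (as you note) no $A_\infty$ information allows a return to $\nu_{\vec w}(E_Q)$ on disjoint sets; so the quantity to be summed is $\sum_{Q}\prod_s w_s(Q)^{p/p_s}\prod_s(\text{averages of } f_s)^p$, which is not of the form $\int(\mathcal{M}_{L(\log L)}\vec f)^p\nu_{\vec w}$, and the plan ``bound by $\|\mathcal{M}_{L(\log L)}(\vec f)\|_{L^p(\nu_{\vec w})}$, then invoke a Fefferman--Stein inequality'' does not assemble. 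The missing device (the paper's (\ref{ie24}) and the lines after it) is to insert $v_s=Mw_s\ge\langle w_s\rangle_Q$ on $Q$: with $1<r<t<\min_i p_i$ one has $\langle|(b_s-\langle b_s\rangle_Q)f_s|\rangle_Q\lesssim\|b_s\|_{\mathrm{BMO}}\langle|f_s|^t v_s^{t/p_s}\rangle_Q^{1/t}\langle w_s\rangle_Q^{-1/p_s}$, and similarly for the plain averages; since $\sum_s p/p_s=1$, the factors $\langle w_s\rangle_Q^{-p/p_s}$ cancel $w_s(Q)^{p/p_s}$ down to $|Q|$, each cube contributes $\bigl(\prod_s\inf_{x\in Q}M_t(f_s(Mw_s)^{1/p_s})(x)^p\bigr)|Q|$, and only now is sparseness used, in purely Lebesgue form $|Q|\lesssim|E_Q|$ with the $E_Q$ disjoint, after which H\"older and the unweighted $L^{p_s}$-boundedness of $M_t$ (this is why $t<\min_i p_i$) give $\prod_s\|f_s\|^p_{L^{p_s}(Mw_s)}$. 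Your ``$\prod_s\int_{3Q}\mathcal{M}g_s\,w_s$-type'' bound does not interact with the disjointness of the $E_Q$, and you explicitly defer this bookkeeping, which is exactly the crux. Incidentally, the inequality you cite in step (4), $\|\mathcal{M}_{L(\log L)}(\vec f)\|_{L^p(\nu_{\vec w})}\lesssim\prod_s\|f_s\|_{L^{p_s}(Mw_s)}$ for arbitrary weights, is true (it follows from $\mathcal{M}_{L(\log L)}\lesssim_r\mathcal{M}_r$ with $1<r<\min_i p_i$, H\"older, and the linear Fefferman--Stein inequality), but it cannot serve as the endpoint of the reduction once the $\gamma_s=1$ splitting has been performed.
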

\begin{remark}
Since the weights $\vec{w}=\left(w_1, \ldots, w_m\right)$ are arbitrary, the conditions $b_s \in \mathrm{BMO}_{p_s}(w_s)\cap \mathrm{BMO} (1\leq s\leq l)$ are required. However, the following two corollaries show that when the weights $\vec{w}$ satisfies some restrictive conditions ( e.g., $w_s \in A_\infty^{\text{weak}}$ with $1\leq s\leq l$ ), the conditions of the Theorem \ref{thm1.4} can be reduced to $b_s \in \mathrm{BMO} (1\leq s\leq l).$
\end{remark}

\begin{corollary}\label{cor1.3}
Let $m\geq 2, I=\{i_1,\ldots,i_l\}=\{1,\ldots,l\}\subseteq \{1,\ldots,m\}, \vec{b}=(b_1,\ldots, b_l).$  Let $0<p\leq1,$ $1<p_1, \ldots, p_m<\infty$ and $\frac{1}{p}=\frac{1}{p_1}+\cdots+\frac{1}{p_m}.$ Assume that $\vec{w}=\left(w_1, \ldots, w_m\right)$, $\nu_{\vec{w}}=\prod_{s=1}^m w_s^{p / p_s},$ $\vec{b}\in \mathrm{BMO}^l$ and $\mathcal{T}_{\vec{b}}$ satisfies the Hypothesis $\ref{hyp2}$. Then it holds that
\begin{enumerate}\item Suppose that $w_s\in A_\infty$ for any $1\leq s\leq m.$ Then 
$$
\left\|\mathcal{T}_{\vec{b}}(\vec{f})\right\|_{L^p\left(\nu_{\vec{w}}\right)} \leq C \prod_{s=1}^l[w_s]_{A_\infty}\prod_{s=1}^l\|b_s\|_{\mathrm{BMO}}\prod_{s=1}^m\left\|f_s\right\|_{L^{p_s}(M w_s)},
$$

\item  Suppose that $w_s\in A_\infty^{\text{weak}}$ for any $1\leq s\leq m.$ Then
\end{enumerate}
\begin{equation}\label{cor1.4.1}
\left\|\mathcal{T}_{\vec{b}}(\vec{f})\right\|_{L^p\left(\nu_{\vec{w}}\right)} \leq C \sum_{\vec{\gamma}\in \{1,2\}^l}\big(\prod_{s:\gamma_s=1}[w_s]_{A_\infty}^{\text{weak}}\big)\prod_{s=1}^l\|b_s\|_{\mathrm{BMO}}
\prod_{s=1}^m\left\|f_s\right\|_{L^{p_s}(M w_s)},
\end{equation}
where the constant $C$ is independent of $\vec{w}$ and $\vec{b}.$
\end{corollary}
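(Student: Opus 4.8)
The plan is to derive both statements of Corollary~\ref{cor1.3} from Theorem~\ref{thm1.4}, the only extra ingredient being a quantitative comparison between the weighted and unweighted $\mathrm{BMO}$ norms of each symbol. Concretely, I would first establish that for every $w\in A_\infty$, every $0<q<\infty$ and every $b\in\mathrm{BMO}$,
\[
\|b\|_{\mathrm{BMO}_{q}(w)}\;\le\;C_{n,q}\,[w]_{A_\infty}\,\|b\|_{\mathrm{BMO}},
\]
together with its weak counterpart in which $w\in A_\infty^{\mathrm{weak}}$ and $[w]_{A_\infty}$ is replaced by $[w]_{A_\infty^{\mathrm{weak}}}$. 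In particular this shows $\mathrm{BMO}\subset\mathrm{BMO}_{q}(w)$ for such $w$, so that $b_s\in\mathrm{BMO}$ already forces $b_s\in\mathrm{BMO}_{p_s}(w_s)\cap\mathrm{BMO}$ and Theorem~\ref{thm1.4} is applicable.

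To prove the displayed estimate in the $A_\infty$ case I would combine the John--Nirenberg inequality $|\{x\in Q:|b(x)-\langle b\rangle_Q|>\lambda\}|\le e\,e^{-c_n\lambda/\|b\|_{\mathrm{BMO}}}|Q|$ with the sharp reverse H\"{o}lder inequality of Hyt\"{o}nen--P\'{e}rez: every $w\in A_\infty$ lies in $\mathrm{RH}_{r}$ with $r-1\simeq[w]_{A_\infty}^{-1}$, hence $w(E)/w(Q)\lesssim(|E|/|Q|)^{\delta}$ for all measurable $E\subset Q$ with $\delta\simeq[w]_{A_\infty}^{-1}$. Writing $E_\lambda=\{x\in Q:|b(x)-\langle b\rangle_Q|>\lambda\}$ and applying the layer-cake formula,
\[
\frac{1}{w(Q)}\int_Q|b-\langle b\rangle_Q|^{q}w\,dx=q\int_0^\infty\lambda^{q-1}\frac{w(E_\lambda)}{w(Q)}\,d\lambda\;\lesssim\;\int_0^\infty\lambda^{q-1}e^{-c_n\delta\lambda/\|b\|_{\mathrm{BMO}}}\,d\lambda\;\simeq\;\Gamma(q)\Big(\frac{\|b\|_{\mathrm{BMO}}}{c_n\delta}\Big)^{q},
\]
and taking $q$-th roots and using $\delta^{-1}\simeq[w]_{A_\infty}$ gives the claim. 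For the weak version one runs the same computation with the quantitative weak reverse H\"{o}lder inequality for weak $A_\infty$ weights (whose exponent is governed by $[w]_{A_\infty^{\mathrm{weak}}}$) in place of the Hyt\"{o}nen--P\'{e}rez inequality.

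With this comparison in hand, part~(1) follows at once: since $[w_s]_{A_\infty}\ge1$, every factor $\|b_s\|_{\mathrm{BMO}_{p_s}(w_s)}$ occurring in $\|\vec b\|_{\mathrm{BMO}}^*$ is $\lesssim[w_s]_{A_\infty}\|b_s\|_{\mathrm{BMO}}$, so the maximum over $\vec\gamma\in\{1,2\}^l$ is attained when all $\gamma_s=1$ and $\|\vec b\|_{\mathrm{BMO}}^*\lesssim\prod_{s=1}^l[w_s]_{A_\infty}\|b_s\|_{\mathrm{BMO}}$; substituting into Theorem~\ref{thm1.4} gives the stated bound with $C$ depending only on $n,p,p_1,\dots,p_m$. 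For part~(2) I would keep the sum structure: on the coordinates $s$ with $\gamma_s=1$ use the weak estimate $\|b_s\|_{\mathrm{BMO}_{p_s}(w_s)}\lesssim[w_s]_{A_\infty^{\mathrm{weak}}}\|b_s\|_{\mathrm{BMO}}$, on those with $\gamma_s=2$ use only $\|b_s\|_{\mathrm{BMO}}$, and bound the maximum in $\|\vec b\|_{\mathrm{BMO}}^*$ by the full sum over $\vec\gamma$, which produces exactly the factor $\sum_{\vec\gamma}\big(\prod_{s:\gamma_s=1}[w_s]_{A_\infty^{\mathrm{weak}}}\big)\prod_{s=1}^l\|b_s\|_{\mathrm{BMO}}$ appearing in \eqref{cor1.4.1} after insertion into Theorem~\ref{thm1.4}. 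The main obstacle is precisely the weak case of the $\mathrm{BMO}$ comparison: one must check that the weak reverse H\"{o}lder inequality carries the correct quantitative exponent $\simeq[w]_{A_\infty^{\mathrm{weak}}}^{-1}$ and, more delicately, that it can be applied on the cube $Q$ itself — up to fixed dilations that can be reabsorbed — so that no uncontrolled ratio $w_s(\kappa Q)/w_s(Q)$ (which need not be bounded, since weak $A_\infty$ weights need not be doubling) survives in the final, $\vec w$-independent constant.
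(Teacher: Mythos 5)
Your treatment of part (1) is fine and is essentially the paper's route: the paper simply cites \cite[Corollary 2.2]{omb} for the bound $\|b\|_{\mathrm{BMO}_{p_s}(w_s)}\le c_n p_s[w_s]_{A_\infty}\|b\|_{\mathrm{BMO}}$, while you reprove that inequality via John--Nirenberg plus the sharp reverse H\"older inequality; either way, plugging into Theorem \ref{thm1.4} gives (1).

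For part (2), however, there is a genuine gap, and it is exactly the obstacle you flag at the end but do not resolve: the comparison $\|b\|_{\mathrm{BMO}_{q}(w)}\lesssim [w]_{A_\infty}^{\text{weak}}\|b\|_{\mathrm{BMO}}$, with the normalization $\frac{1}{w(Q)}\int_Q|b-\langle b\rangle_Q|^q w$ over the \emph{same} cube $Q$, is not merely hard to verify --- it is false in general. The weak reverse H\"older inequality (Lemma \ref{lem4(1)}) only yields $w(E)\lesssim (|E|/|Q|)^{\delta}w(2Q)$ for $E\subseteq Q$, with the doubled cube on the right, and since weak $A_\infty$ weights need not be doubling this cannot be converted into a bound against $w(Q)$. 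Concretely, take $n=1$, $w(x)=e^x\in A_\infty^{\text{weak}}$, $Q=(a,a+L)$, and $b$ a truncated logarithm $b(x)=\log^{+}\bigl(2L/|x-(a+L)|\bigr)$, so that $\|b\|_{\mathrm{BMO}}\simeq 1$ and $\langle b\rangle_Q=O(1)$; since $w$ concentrates its mass on $Q$ in a unit window at the right endpoint, where $b\simeq\log L$, one gets $\frac{1}{w(Q)}\int_Q|b-\langle b\rangle_Q|\,w\gtrsim\log L\to\infty$. Hence $\mathrm{BMO}\not\subseteq\mathrm{BMO}_{p_s}(w_s)$ with uniform constants for weak $A_\infty$ weights, and Theorem \ref{thm1.4} cannot be invoked as a black box in the weak case; your suggestion that the dilation can be ``reabsorbed'' has no mechanism behind it. The paper instead reopens the proof of Theorem \ref{thm1.4}: for the factors with $\gamma_s=1$ it uses the $w_s(2Q)$-normalized weak John--Nirenberg inequality of \cite[Corollary 2.4]{omb}, which produces $w_s(2Q)^{p/p_s}$ instead of $w_s(Q)^{p/p_s}$, and compensates in the H\"older step for the remaining factors by the pointwise bound $v_s(x)=Mw_s(x)\ge\langle w_s\rangle_{2Q}$ for $x\in Q$, so that the doubled-cube averages cancel against the majorant $Mw_s$ (which also explains why the constant in \eqref{cor1.4.1} comes as a sum over $\vec{\gamma}$ of $\prod_{s:\gamma_s=1}[w_s]_{A_\infty}^{\text{weak}}$ rather than a single product). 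Your argument needs this internal modification; as written, the key lemma it relies on fails.
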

\begin{remark}
According to \cite[Example 3.2]{and}, when $w(x)=e^x$, $[w]_{A_\infty}^{\text{weak}}$ is less than one. Thus $\prod_{s=1}^{k}[w_s]_{A_\infty}^{\text{weak}}$  is not monotonically increasing with $0\leq k\leq l$. This is the reason why the term $\sum_{\vec{\gamma}\in \{1,2\}^l}(\prod_{s:\gamma_s=1}[w_s]_{A_\infty}^{\text{weak}})$ appears in  (\ref{cor1.4.1}).
\end{remark}

Finally, we present the weighted modular inequalities for multilinear iterated commutators $\mathcal{T}_{\vec{b}}$, which are completely new even in the unweighted case.
\begin{theorem}\label{thm1.5}
Let $I=\{i_1,\ldots,i_l\}=\{1,\ldots,l\}\subseteq \{1,\ldots,m\}, \vec{b}=(b_1,\ldots, b_l)\in \mathrm{BMO}^l,$ and $\mathcal{T}_{\vec{b}}$ satisfies the Hypothesis $\ref{hyp2}$.  Let~$\phi$ be an~$N$-function with sub-multiplicative property. Then for any $1<r<\infty,$ we have
\begin{enumerate}[(1)]
	\item if $r<i_{\phi}<\infty,$ then there exists a constant~$\alpha$ such that for
any ~$1<q<\frac{i_{\phi}}{r}$ and ~$w\in A_q ,$
	 \begin{equation*}
\begin{aligned}
		\int_{\mathbb{R}^n} \phi\left(\mathcal{T}_{\vec{b}}(\vec{f})(x)\right) w(x) d x \lesssim & [w]_{A_\infty}^{(l+1)(\alpha C_1+1)}\prod_{s=1}^l\|b_s\|_{\mathrm{BMO}}^{1+\alpha C_1}\\
&\times\left(\prod_{i=1}^m\int_{\mathbb{R}^n} \phi^m\left([w]_{A_q}^{\frac{1}{qr}}\left|f_i(x)\right|\right) w(x) d x\right)^{\frac{1}{m}};
\end{aligned}
	\end{equation*}
	\item if $1<i_{\phi}\leq r,$ then there exists  a constant~$\alpha$ such that for
any~$1<q<i_{\phi}$ and ~$w\in A_q ,$
	 \begin{equation*}
\begin{aligned}
		\int_{\mathbb{R}^n} \phi\left(\mathcal{T}_{\vec{b}}(\vec{f})(x)\right) w(x) d x \lesssim & [w]_{A_\infty}^{(l+1)(\alpha C_1+1)+1+mC_1}\prod_{s=1}^l\|b_s\|_{\mathrm{BMO}}^{1+\alpha C_1}\\
&\times\left(\prod_{i=1}^m\int_{\mathbb{R}^n} \phi^m\left([w]_{A_q}^{\frac{2}{q}}\left|f_i(x)\right|\right) w(x) d x\right)^{\frac{1}{m}},
\end{aligned}
	\end{equation*}
where the definitions of the $N$-function, $i_{\phi}$ and $C_1$ are listed in Section \ref{sub4}.
\end{enumerate}
\end{theorem}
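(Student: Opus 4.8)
The proof is driven entirely by the pointwise sparse domination of Hypothesis \ref{hyp2}. First I would fix $w\in A_q$ and reduce the whole estimate to bounding, for each of the finitely many pairs $(j,\vec{\gamma})$ with $1\le j\le 3^n$ and $\vec{\gamma}\in\{1,2\}^l$, the modular $\int_{\mathbb{R}^n}\phi\big(\mathcal{A}_{\mathcal{S}_j,b}^{\vec{\gamma}}(\vec{f})\big)\,w\,dx$: since $\phi$ is an $N$-function it is convex with $\phi(0)=0$ and sub-multiplicative, so $\phi\big(\sum_{j,\vec{\gamma}}a_{j,\vec{\gamma}}\big)\lesssim\sum_{j,\vec{\gamma}}\phi\big(C\,a_{j,\vec{\gamma}}\big)$ with a constant depending only on $n$ and $l$, and the finitely many pieces may be treated one at a time.

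Fixing one sparse family $\mathcal{S}=\mathcal{S}_j$ and one $\vec{\gamma}$, the next step is to strip the symbols $b_s$ from the sparse form. For the entries with $\gamma_s=2$ I use the generalized Hölder inequality in the Orlicz pair $(\exp L,\,L\log L)$ together with the John--Nirenberg inequality to obtain $\langle|(b_s-\langle b_s\rangle_Q)f_s|\rangle_Q\lesssim\|b_s\|_{\mathrm{BMO}}\,\|f_s\|_{L(\log L),Q}$. For the entries with $\gamma_s=1$ the factor $|b_s(x)-\langle b_s\rangle_Q|$ is not an average over $Q$; following the standard device for commutator sparse forms I estimate it along the chain of cubes of $\mathcal{S}$ containing $x$, which reproduces, at the cost of a further factor $\|b_s\|_{\mathrm{BMO}}$, an extra (still sparse) averaging operator acting on $f_s$. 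After this, and absorbing the constant $\prod_{s=1}^l\|b_s\|_{\mathrm{BMO}}$ through $\phi$ — using that a sub-multiplicative $N$-function grows at most polynomially, $\phi(t)\lesssim t^{C_1}$ for $t\ge1$ with $C_1$ as in Section \ref{sub4} — matters are reduced to a modular estimate for the plain multilinear $L\log L$ sparse operator $F_{\mathcal{S}}(\vec{f}):=\sum_{Q\in\mathcal{S}}\big(\prod_{s\le l}\|f_s\|_{L(\log L),Q}\big)\big(\prod_{s>l}\langle|f_s|\rangle_Q\big)\chi_Q$, the combined cost of the chain argument and the pull-through accounting for the factor $\prod_s\|b_s\|_{\mathrm{BMO}}^{1+\alpha C_1}$.

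The core is the estimate $\int\phi\big(F_{\mathcal{S}}(\vec{f})\big)w\lesssim[w]_{A_\infty}^{(l+1)(\alpha C_1+1)}\big(\prod_{i=1}^m\int\phi^m([w]_{A_q}^{1/(qr)}|f_i|)w\big)^{1/m}$ in case (1). I would run a principal-cube (stopping-time) decomposition of $\mathcal{S}$: inside each $\mathcal{S}$-cube one selects the maximal subcubes on which the multilinear average $\prod_{s\le l}\|f_s\|_{L(\log L),\cdot}\prod_{s>l}\langle|f_s|\rangle_{\cdot}$ roughly doubles, so that on the complement of the descendants this average is comparable to that of the parent principal cube, giving geometric decay along the chains; combined with convexity of $\phi$ this yields $\phi(F_{\mathcal{S}}(\vec{f})(x))\lesssim\sum_{P\ \mathrm{principal}}\phi\big(C\prod_{s\le l}\|f_s\|_{L(\log L),P}\prod_{s>l}\langle|f_s|\rangle_P\big)\chi_P(x)$. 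Integrating against $w$ and passing from $\int_P$ to $w(P)$, I apply the sharp reverse Hölder inequality for $w\in A_\infty$ (exponent $1+(c_n[w]_{A_\infty})^{-1}$, which is where the powers of $[w]_{A_\infty}$ are produced) to absorb the $L\log L$ and $L^1$ averages, then the $A_q$ testing bound $\frac{w(P)}{|P|}\big(\langle\sigma\rangle_P\big)^q\le[w]_{A_q}$ with $\sigma=w^{1-q'}$ to change measure; the exponent $1/(qr)$ on $[w]_{A_q}$ inside $\phi$ is precisely this reverse-Hölder/$A_q$ dilation tuned to the level $r$. Finally sub-multiplicativity $\phi\big(\prod_s t_s\big)\le\prod_s\phi(t_s)$, Jensen's inequality for the convex $\phi$, and Hölder with $m$ equal exponents $m$ convert the average over $P$ of each $\phi(|f_s|)$ into $\big(\int\phi^m([w]_{A_q}^{1/(qr)}|f_s|)w\big)^{1/m}$. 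In case (1) the comparison $\phi(t)\gtrsim t^{r}$, valid because $t^{-r}\phi(t)$ is essentially increasing when $r<i_\phi$, is available directly; in case (2), where $i_\phi\le r$, one first carries out the argument at a level $\rho<i_\phi$ and then pays an extra $L^r$-to-$L^\rho$ interpolation step, which is responsible for the additional summand $1+mC_1$ in the exponent of $[w]_{A_\infty}$ and the coarser $[w]_{A_q}^{2/q}$ inside $\phi$.

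The main obstacle is the final step of the core estimate: organizing the stopping-time decomposition so that the resulting geometric series of $\phi$-values sums with a constant uniform in $\mathcal{S}$, while simultaneously tracking the sharp exponents of $[w]_{A_\infty}$ and $[w]_{A_q}$ through the reverse Hölder inequality and the sub-multiplicativity of $\phi$. Because $\phi$ is only an $N$-function with prescribed index $i_\phi$ rather than a power, one cannot invoke $L^p$-boundedness of sparse operators as a black box; the quantitative bookkeeping of $\phi(\lambda t)$ against $\lambda^{C_1}\phi(t)$ (for $\lambda\ge1$) and against $\lambda^{r}\phi(t)$ or $\lambda^{\rho}\phi(t)$ (for $\lambda\ge1$), encoded in the constants $\alpha$ and $C_1$ from Section \ref{sub4}, is exactly what pins down the exponents $(l+1)(\alpha C_1+1)$, $1+\alpha C_1$, and the extra $1+mC_1$ appearing in the statement.
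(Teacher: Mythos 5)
Your reduction of the $\gamma_s=1$ factors is where the argument breaks. The chain-of-cubes device (Lemma \ref{lem4}) does let you bound $\sum_{Q}|b_s(x)-\langle b_s\rangle_Q|\langle|f_s|\rangle_Q\chi_Q(x)$ pointwise by $\|b_s\|_{\mathrm{BMO}}$ times a \emph{composed} sparse object, but neither that object nor the original form is pointwise dominated by the plain $L\log L$ sparse operator $F_{\mathcal S}(\vec f)$ you reduce to: already for a single cube $Q=[0,1]$, $b(x)=\log|x|$ and $f=\chi_Q$, the left-hand side is $|b(x)-\langle b\rangle_Q|\approx\log(1/|x|)$, unbounded near $0$, while $\|f\|_{L(\log L),Q}\approx 1$; a variant with $f$ supported far from $x$ shows the composed sparse operator also escapes control by $F_{\mathcal S}$. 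So the $x$-dependent factors $|b_s(x)-\langle b_s\rangle_Q|$ cannot be ``stripped'' pointwise at the cost of $\|b_s\|_{\mathrm{BMO}}$, and your absorption step ($\phi(\lambda t)\lesssim\lambda^{C_1}\phi(t)$) only handles genuine constants, not these unbounded functions. This is exactly where the paper's proof does its real work: it sets $h=\phi(\mathcal{A}^{\vec\gamma}_{\mathcal S_j,\vec b}(\vec f))/\mathcal{A}^{\vec\gamma}_{\mathcal S_j,\vec b}(\vec f)$, pairs the $\gamma_s=1$ factors against $h$ through the generalized H\"older inequality in the weighted pair $(\exp L(w),L(\log L)^{l_1}(w))$ and the weighted John--Nirenberg bound (each such factor costing one $[w]_{A_\infty}$), then uses the Carleson embedding, Young's inequality with a carefully chosen small $\varepsilon$, quasi-convexity of $\bar\phi^{\alpha}$, the inequality $\bar\phi(\phi(t)/t)\le\phi(t)$, and the modular inequality for $M_w^{\mathcal D}$ to absorb the resulting $\bar\phi$-term back into the left-hand side; that absorption mechanism is what actually produces the exponents $(l+1)(\alpha C_1+1)$ and $1+\alpha C_1$. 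Your proposal has no substitute for it.

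There are secondary gaps as well. The ``core'' modular estimate you sketch (principal cubes, sharp reverse H\"older, $A_q$ testing) is itself a nontrivial theorem; the paper does not prove it ad hoc but first reduces, via $r'$-H\"older on the $\gamma_s=2$ entries (resp.\ via the oscillation lemma), to $\phi(\mathcal{M}_r\vec f)$ (resp.\ $\phi(\mathcal{M}\vec{f^*})$), and then quotes the modular inequality for $\mathcal M_r$ from \cite[Lemma 5.3]{tan2} together with the maximal/sparse modular inequalities of \cite{and1}; if you want a self-contained stopping-time proof you must actually supply it with the stated dependence on $[w]_{A_\infty}$ and $[w]_{A_q}$. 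Finally, case (2) in the paper is not an interpolation step: one applies the $r=1$ modular bound to $\mathcal{M}(\vec{f^*})$, whose entries contain $\mathcal{A}_{\tilde{\mathcal S}}f_i$, and removes these by Lemma \ref{lem6} applied to $\phi^m$ --- which requires verifying that $\phi^m$ is an $N$-function with $I_{\phi^m}=mI_\phi<\infty$ --- and it is precisely this step that yields the extra factor $[w]_{A_\infty}^{1+mC_1}$ and the constant $[w]_{A_q}^{2/q}$ inside $\phi^m$; a vague ``$L^r$-to-$L^\rho$ interpolation'' would not account for these constants.
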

 \subsection{Historical background}\
\par

{\bf{Local decay estimates.}}
A local decay estimate is an inequality of the form:
\begin{equation}\label{b2}
\varphi(t):=\frac{1}{|Q|}\big|\left\{x \in Q:\left|T_1 f(x)\right|>t\left|T_2 f(x)\right|\right\}\big|\leq c_1 e^{-c_2t}, \quad t>0,
\end{equation}
where $T_1$ is a linear or sublinear operator and $T_2$ is an appropriate maximal function. Inequality (\ref{b2})  reflects accurately the extent that an operator is locally controlled by cerain maximal operator and provides us with enough information to measure the size of $T_1$ and $T_2.$
More precisely, local decay estimation greatly improves the Coifman-Fefferman inequality as follows:
$$
\left\|T_1 f\right\|_{L^p(w)} \leq c\|T_2 f\|_{L^p(w)}
$$
with $0<p<\infty $ and $w\in A_\infty.$
 In \cite{coi}, it is shown that the above inequality holds for maximal singular integral operator $T_1=T^*$ and Hardy-Littlewood maximal operator $T_2=M$. But its proof depends on the good-$\lambda$ technique, so there is no access to the dependence of the constant $c$ on the weight $w$ or $p.$ In 1993, Buckley \cite{buk} obtained an improved good-$\lambda$ inequality with a local exponential decay in $t,$ 
\begin{equation}\label{b1}
\left|\left\{x \in Q: T^* f(x)>2 \lambda, M f(x) \leq t \lambda\right\}\right| \leq c e^{-c / t}|Q|.
\end{equation}
As an application, the sharp constant dependence on weighted norm of $T^*$ was given in \cite{buk} by using inequality (\ref{b1})\par
The estimate of type  (\ref{b2}) is an improved version of inequality (\ref{b1}) due to Karagulyan \cite{kar}. Unfortunately, it is unknown whether Karagulyan's method could be applied to operators besides maximal singular integral operator $T^*.$ In 2013, Ortiz-Caraballo et al. \cite{ort} used a different approach to obtain estimates in the form of (\ref{b2}), and their approach is generalized enough to obtain local decay estimates for operators such as Calder\'{o}n-Zygmund operators, vector-valued extensions of the maximal function or Calder\'{o}n-Zygmund operators, commutators of singular integrals with BMO functions and higher order commutators. For the most recent results on local decay estimations, we refer to \cite{cao,cao1,wen} and the references therein.\par
{\bf{Mixed weak type estimates.}}
Mixed weak type estimates, also known as Sawyer-type inequalities, mean that for the weight functions $u,v$ and the operator $T,$ the following inequality holds:
\begin{equation}\label{m1}
\left\|\frac{T(f v)}{v}\right\|_{L^{1, \infty}\left(\mathbb{R}^n, u v\right)} \lesssim\|f\|_{L^1\left(\mathbb{R}^n, u v\right)}.
\end{equation}
The study of this type of inequalities  has a long history. In 1977, Muckenhoupt and Wheeden \cite{muc} first introduced a new weak type inequality which takes into account the perturbation of the Hardy-Littlewood maximal operator with $A_1$ weights,
$$
|\{x \in \mathbb{R}: w(x) M f(x)>t\}| \leq c_w \frac{1}{t} \int_{\mathbb{R}}|f| w(x) d x.
$$
It is worth mentioning that $w\in A_1$ is no longer a necessary condition to guarantee the validity of weak endpoint results
 \cite[Section 5]{muc}. \par
Later on, in order to give a new proof of Muckenhoupt's theorem, Sawyer \cite{saw} obtained the following result:
\begin{equation}\label{m2}
u v\left(\left\{x \in \mathbb{R}: \frac{M(f v)(x)}{v(x)}>t\right\}\right) \leq c_{u, v} \frac{1}{t} \int_{\mathbb{R}}|f| u(x) v(x) d x,
\end{equation}
where $u, v \in A_1.$
Sawyer \cite{saw} conjectured that (\ref{m2}) should be true for the Hilbert transform. In 2005, Cruz-Uribe et al. \cite{cru} extended (\ref{m2}) to higher dimensions and actually showed that Sawyer's conjecture also  holds for Calder\'{o}n-Zygmund operators. For $u \in A_1$, and $v \in A_1$ or $u v \in A_{\infty},$ it was demonstrated in \cite{cru} that the following estimate is valid for either the Calder\'{o}n-Zygmund operator or the Hardy-Littlewood maximal operator,
\begin{equation}\label{m3}
u v\left(\left\{x \in \mathbb{R}^n: \frac{|T(f v)(x)|}{v(x)}>t\right\}\right) \leq \frac{C}{t} \int_{\mathbb{R}^n}|f(x)| u(x) v(x) d x, \quad t>0.
\end{equation}
The left side of this inequality has no difference from the standard weak-type estimates except a weight function appearing in the level set of the operator $T$.
However, it would be extraordinarily difficult to deal with the left side of (\ref{m3}). There are two main obstacles. The first one is that the product of $uv$ may become more singular. For example, if one chooses $u(x)=v(x)=|x|^{-1 / 2}$ in $\mathbb{R}$, then, $u,v\in A_1$. But the product $u v$ is not locally integrable. The second drawback is that the structure of such sets in the left side of (\ref{m3}) makes it impossible or very difficult to measure them using classical tools such as Vitali covering lemma or interpolation theorem. But once (\ref{m3}) holds for some operator $T$, one may obtain  immediately a weak-type endpoint estimate for it by taking $u \in A_1$ and $v \equiv 1.$ \par
Recently, the study of mixed weak type estimates have attracted much attention. Among them are the works for multilinear Hardy-Littlewood maximal operators in \cite{li1},  the multilinear bounded oscillation operators in \cite{cao1}, Lorentz spaces extensions in \cite{per6}, the multilinear maximal operators and bilinear sparse operators in homogeneous spaces in  \cite{iba1}.\par
{\bf{Fefferman-Stein inequalities.}}
 For an operator $T$ and any weight $w,$ recall that the Fefferman-Stein type inequalities are the estimates of the form:
\begin{equation}\label{f1}
w\left(\left\{x \in \mathbb{R}^n:|T f(x)|>\lambda\right\}\right) \leq \frac{C}{\lambda} \int_{\mathbb{R}^n}|f(x)| M w(x) d x
\end{equation}
and
\begin{equation}\label{f2}
\int_{\mathbb{R}^n}|Tf(x)|^p w(x) d x \leq C\int_{\mathbb{R}^n}|f(x)|^p M w(x) d x, \qquad \hbox{for $1<p<\infty.$}
\end{equation}
In 1971, Fefferman and Stein \cite{fef} showed that (\ref{f1}) and (\ref{f2}) hold for Hardy-Littlewood maximal operator $T=M.$ This type of inequalities was extended to many operators, and (\ref{f1}) was shown to be true for square functions \cite{cha}, but false for fractional integral operators \cite{car}. \par
It was shown by P\'{e}rez \cite{per7} that (\ref{f1}) holds if $M$ is replaced by iterated operator $M^2$ or even by the operator $M_{L(\log L)^{\varepsilon}}$ with $\varepsilon>0,$ and
(\ref{f2}) is true if $M$ is replaced by $M^{\lfloor p\rfloor+1}$ where $\lfloor p\rfloor$ is the integer part of $p$. However, in 2012, Reguera and Thiele \cite{reg} gave an example to show that the estimate of (\ref{f1}) does not hold for the Hilbert transform (this disproves the so-called Muckenhoupt-Wheeden conjecture). For the recent progress of Fefferman-Stein inequalities, see \cite{hoa,rah}.\par
{\bf{Modular inequalities.}}
The modular inequalities concern the following estimates:
\begin{equation}\label{mo1}
\int_{\mathbb{R}^n} \phi(|T_1 f(x)|) w(x) d x \leq C \int_{\mathbb{R}^n} \phi(|T_2 f(x)|) w(x) d x
\end{equation}
and its corresponding weak version,
$$
\sup _{\lambda>0} \phi(\lambda) w\big(\{x \in \mathbb{R}^n:|T_1 f(x)|>\lambda\}\big) \leq C \sup _{\lambda>0} \phi(\lambda) w\big(\{x \in \mathbb{R}^n:|T_2 f(x)|>\lambda\}\big),
$$
where $T_1, T_2$ are linear or sublinear operators, $w\in A_\infty$ and $\phi \geq 0$ is an increasing function satisfying some very mild condition. \par
The modular inequality comes from the study of endpoint estimates for operators. It is well known that the commutators of the Calder\'{o}n-Zygmund operators $T$ with BMO function $b$ have the following Coifman-Fefferman inequality \cite{per8}
$$
\int_{\mathbb{R}^n}|[b, T] f(x)|^p w(x) d x \leq C\|b\|_{\mathrm{BMO}}^p \int_{\mathbb{R}^n} M^2 f(x)^p w(x) d x
$$
with any $0<p<\infty$ and any $w \in A_{\infty}$. Notice that the commutators of the Calder\'{o}n-Zygmund operators are not weak (1,1) type since the maximal operator used to control the commutator in the above estimate is $M^2$, but there is no weak (1,1) property for $M^2$ (see \cite{kok}), which is replaced by the following weak $L\log L$-type estimate:
$$
\left|\left\{x \in \mathbb{R}^n: M^2 f(x)>\lambda\right\}\right| \leq C \int_{\mathbb{R}^n} \phi\left(\frac{|f(x)|}{\lambda}\right) d x,
$$
where $\phi(t)=t\left(1+\log ^{+} t\right)$. This result, however, is not compatible with classical function spaces. Therefore this type of estimates is called modular inequality (see \cite{kok}), and it gives an appropriate endpoint result for $M^2$ and this type of estimates has good interpolation properties.
Based on this, one may wonder whether such estimates are also available for commutators or not. It was P\'{e}rez  \cite{per2} who considered a weak type of Coifman-Fefferman inequality with the form
\begin{equation}\label{mo2}
\sup _{\lambda>0} \varphi(\lambda) w\left(\{x \in \mathbb{R}^n:|[b, T] f(x)|>\lambda\}\right) \leq C \sup _{\lambda>0} \varphi(\lambda) w\left(\{x \in \mathbb{R}^n: M^2 f(x)>\lambda\}\right),
\end{equation}
where $T$ is Calder\'{o}n-Zygmund operator, $\varphi(\lambda)=\frac{\lambda}{1+\log ^{+} \lambda^{-1}}$ and $w \in A_{\infty}.$  Inequality (\ref{mo2}) is very important in illustrating the endpoint estimates of the commutators since as a consequence, it gives that
$$
\left|\left\{x \in \mathbb{R}^n:|[b, T] f(x)|>\lambda\right\}\right| \leq C_{\|b\|_{\mathrm{BMO}}} \int_{\mathbb{R}^n} \phi\left(\frac{|f(x)|}{\lambda}\right) d x.
$$

Notice that there is a function $\varphi$ on both sides of (\ref{mo2}) which is not homogeneous and hence each side of this inequality is not a norm or quasi-norm. But this type of inequality reflects the properties of the operator itself. Since then, many works have enriched the literature in this area. For example, the extrapolation theorem for modular inequality \cite{cur}, the modular inequalities of strong type for
maximal truncated Calder\'{o}n-Zygmund operators \cite{and}, and for variation operators of singular integrals and their
commutators \cite{tan}.

 \subsection{Structure of the paper}\
\par

The organization of the rest of this article is as follows: In Section \ref{Sect 2} we present some preliminaries,
including the properties of Muckenhoupt weights, weak $A_\infty$ weights, Young functions and Orlicz maximal operators.
Section \ref{Sect 3} contains the proof of local exponential decay estimates (\ref{ie7}) of iterated commutators, both in quantitatively weighted and unweighted versions. In Section \ref{Sect 4}, the proof of Coifman-Fefferman inequality (Theorem \ref{thm1.2}) will be given, which will be used later. Section \ref{Sect 5} is devoted to establishing mixed weak type estimates and the purpose of Section \ref{Sect 5} is to prove Theorem \ref{thm1.4}. The modular inequalities are proved in Section \ref{Sect 7} based on the sparse domination. Finally, some applications, including multilinear $\omega$-Calder\'{o}n-Zygmund operators, multilinear Fourier integral operators and Stein's square functions, will be given in Section \ref{Sect 8}.\par
Throughout this paper, we always use $C$ to denote a positive constant, which is independent of the main parameters, but it may change at each occurrence. Let $L_{\text {loc }}^1(\mathbb{R}^n)$ denote the set of all locally integrable functions on $\mathbb{R}^n,$ and $\mathbb{N}=\{0,1,2,\ldots\}.$ For any set $E$, we use $\chi_E$ to denote its characteristic function. Let $\mathscr{S}\left(\mathbb{R}^n\right)$ denote the collection of all Schwartz functions on $\mathbb{R}^n$, equipped with the classical well-known topology determined by a countable family of norms, and $\mathscr{S}^{\prime}\left(\mathbb{R}^n\right)$ its topological dual. If, for any $a, b \in \mathbb{R}, a \leq C b $ ($a \geq C b,$ respectively), we then denote $a \lesssim b $ where $C$ is independent of $a$ and $b$, $(a \gtrsim b,$ respectively). If $a \lesssim b \lesssim a$, we then denote $a \backsimeq b$.

\section{Preliminary}\label{Sect 2}
We begin by presenting some basic facts for sparse family, modular inequalities, Muckenhoupt weights and Orlicz maximal operators.
\subsection{ Sparse family}\label{sub1}
In this subsection, we will introduce a quite useful tool, dyadic calculus, which is taken from \cite{ler2,ler5}.\par
In the following, let $\mathcal{D}\left(Q\right)$ be the set of all dyadic cubes with respect to cube $Q$. These cubes obtained by repeated subdivision $Q$ and its descendants in $2^n$ cubes with the same side length.
\begin{definition}\label{def2.1}
	A collection, $\mathcal{D}$ of cubes is said to be a dyadic lattice if it satisfies the following properties:
	\begin{enumerate}[(i).]
		\item If $Q \in \mathcal{D}$, then each descendant of $Q$ is in $\mathcal{D}$ as well;
		\item For every cubes $Q_1, Q_2 \in \mathcal{D}$, we can find a common ancestor $Q \in \mathcal{D}$ such that $Q_1, Q_2 \in D(Q)$;
		\item For any compact set $K \subset \mathbb{R}^n$, there exists a cube $Q \in \mathcal{D}$ containing $K$.
	\end{enumerate}
\end{definition}\par
In dyadic calculus, the following Three Lattice Theorem (see \cite[Theorem 3.1]{ler2}) plays an important role, which provides a clearly understanding of the structure for dyadic lattics.
\begin{lemma}[\cite{ler2}]
Given a dyadic lattice $\mathcal{D}$, there exist $3^n$ dyadic lattices $\{\mathcal{D}_j\}_{j=1}^{3^n}$  such that
$$
\{3 Q: Q \in \mathcal{D}\}=\bigcup_{j=1}^{3^n} \mathcal{D}_j
$$
and for each cube $Q \in \mathcal{D}$, there is a cube $R_Q \in \mathcal{D}_{j}$ for some $j$ such that $Q \subseteq R_Q$ and $3 l_Q=l_{R_Q}$.
\end{lemma}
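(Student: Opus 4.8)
The plan is to produce the $3^n$ lattices as the colour classes of a suitable $3^n$-colouring of $\mathcal{D}$ itself and then transfer everything to the tripled cubes. First I would record the facts about a dyadic lattice $\mathcal{D}$ (Definition \ref{def2.1}) coming from the dyadic calculus of \cite{ler2,ler5}: for each $k\in\mathbb{Z}$ the family $\mathcal{D}_k=\{Q\in\mathcal{D}:\ell_Q=2^k\}$ partitions $\mathbb{R}^n$; every $Q\in\mathcal{D}_k$ has a unique ancestor in $\mathcal{D}_{k+1}$ and each member of $\mathcal{D}_{k+1}$ has exactly $2^n$ children in $\mathcal{D}_k$; and the ancestor chains are cofinal, so arbitrarily large cubes exist and any bounded set is eventually engulfed. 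Fixing compatible base cubes lets me write $\mathcal{D}_k=\{Q^k_z:z\in\mathbb{Z}^n\}$ so that the ancestor map is $z\mapsto\lfloor z/2\rfloor$ coordinatewise (up to a harmless generation-dependent re-centering) and $3Q^k_z$ is the union of the $3^n$ cubes $Q^k_{z'}$ with $\|z'-z\|_\infty\le 1$.

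The key combinatorial point is that for fixed $k$ the tripled cubes $\{3Q^k_z:z\in\mathbb{Z}^n\}$ cover $\mathbb{R}^n$ with multiplicity $3^n$, while for any residue $r\in(\mathbb{Z}/3\mathbb{Z})^n$ the subfamily $\{3Q^k_z:z\equiv r\ (\mathrm{mod}\ 3)\}$ is an exact tiling of $\mathbb{R}^n$ by cubes of side $3\cdot 2^k$. I would then define $\gamma\colon\mathcal{D}\to(\mathbb{Z}/3\mathbb{Z})^n$ by $\gamma(Q^k_z)=\rho_k(z)$, applied coordinatewise, where $\rho_k(t)=t\bmod 3$ when $k$ is even and $\rho_k(t)=(2-t)\bmod 3$ (the transposition swapping the residues $0$ and $2$ and fixing $1$) when $k$ is odd. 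This parity twist is the crux: the naive choice $\gamma=z\bmod 3$ fails, because $3\ell_Q$ is not a dyadic multiple of the grid, so $3Q^k_z$ is contained in $2^n$ distinct tripled cubes of $\mathcal{D}_{k+1}$ and the colouring must be arranged so that exactly one of these $2^n$ ``potential parents'' shares the colour of $3Q^k_z$; a one-line congruence check shows this forces $\rho_k=\rho_{k+1}\circ\sigma$ with $\sigma$ the above transposition, which the alternating choice meets.

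With $\gamma$ fixed I would verify two local compatibility facts, each by a short congruence computation in one coordinate followed by taking products: (i) among the $2^n$ tripled cubes $3Q^{k+1}_{z'}\supseteq 3Q^k_z$ exactly one has colour $\gamma(Q^k_z)$; equivalently, (i') every colour-$c$ tripled cube of generation $k+1$ has all $2^n$ of its dyadic children again of colour $c$. Setting $\mathcal{D}^{(c)}:=\{3Q:Q\in\mathcal{D},\ \gamma(Q)=c\}$ for $c\in(\mathbb{Z}/3\mathbb{Z})^n$, closure under descendants is immediate from (i') by iteration, while the existence of common ancestors and the engulfing of compact sets follow from the per-generation tiling together with the cofinality of ancestor chains (for large $k$ the colour-$c$ cubes of generation $k$ meeting a given bounded set coincide and swell to fill $\mathbb{R}^n$). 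Hence each $\mathcal{D}^{(c)}$ is a dyadic lattice; since $\bigsqcup_c\mathcal{D}^{(c)}=\{3Q:Q\in\mathcal{D}\}$ by construction and there are $3^n$ colours, this is the asserted decomposition, and for any $Q\in\mathcal{D}$ one may take $R_Q=3Q\in\mathcal{D}^{(\gamma(Q))}$, which contains $Q$ and has $\ell_{R_Q}=3\ell_Q$.

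The main obstacle is discovering the correct colouring: the parity-dependent transposition is easy to overlook and the obvious ``mod $3$'' colouring is genuinely wrong, so one has to find exactly which twist makes the potential-parent count work out to a single same-colour cube. Once that is in place the remaining work is only the elementary but mildly fussy congruence bookkeeping behind (i)/(i') and the routine verification of the three dyadic-lattice axioms for each colour class.
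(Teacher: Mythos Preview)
The paper does not supply its own proof of this lemma: it is quoted verbatim from Lerner--Nazarov \cite{ler2} (the ``Three Lattice Theorem'', Theorem~3.1 there) and used as a black box. So there is no in-paper argument to compare against.

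Your proposal is a correct outline, and it is in fact the same strategy as the original proof in \cite{ler2}: one $3^n$-colours the cubes of $\mathcal{D}$ by residues in $(\mathbb{Z}/3\mathbb{Z})^n$, with a parity-dependent twist between consecutive generations, so that each colour class of tripled cubes satisfies the dyadic-lattice axioms. Your specific choice $\rho_k(t)=t\bmod 3$ for $k$ even and $\rho_k(t)=(2-t)\bmod 3$ for $k$ odd passes the one-dimensional check (each tripled cube has exactly one same-colour parent among its two potential parents, and conversely each tripled parent has both dyadic children of its own colour), and the $n$-dimensional statement follows coordinatewise. The only point that deserves a little more care than you indicate is the verification of axioms (ii) and (iii) of Definition~\ref{def2.1} for each $\mathcal{D}^{(c)}$ in the generality of an abstract dyadic lattice: the enumeration $Q^k_z$ with ancestor map $z\mapsto\lfloor z/2\rfloor$ is only available locally (inside a fixed large ancestor), so one should argue via the cofinality property rather than a global coordinate system. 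This is exactly how \cite{ler2} handles it, and your sketch already gestures at this, so the gap is cosmetic rather than substantive.
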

We need to introduce one more  definition.
\begin{definition}
	Given a dyadic lattice $\mathcal{D},$ a subset $\mathcal{S}$ of $ \mathcal{D}$ is said to be a $\eta$-sparse family with $\eta \in(0,1)$ if for every cube $Q \in \mathcal{S},$
	$$|\bigcup_{P \in \mathcal{S}, P \subsetneq Q} P|\leq (1-\eta) \left| Q \right|.$$
\end{definition}
Equivalently, if we define
$E(Q)=Q \backslash \bigcup_{P \in \mathcal{S}, P \subsetneq Q} P,$
then a simple calculation shows that the sets $E(Q)$ are pairwisely disjoint and $|E(Q)| \geq \eta |Q|$.\par
Let $\mathcal{D}$ be a dyadic lattice and $\mathcal{S}\subseteq \mathcal{D}$ be a $\eta$-sparse family, we define the sparse operator $\mathcal{A}_{r, \mathcal{S}}$ with $r>0$ as
$$\mathcal{A}_{r, \mathcal{S}} f(x)= \sum_{Q \in \mathcal{S}}\langle|f|^{r}\rangle_{Q}^{1 / r}\chi_{Q}(x)=\sum_{Q \in \mathcal{S}}\left(\frac{1}{|Q|} \int_{Q}|f(y)|^{r} d y\right)^{\frac{1}{r}} \chi_{Q}(x).$$\par
The following lemma in \cite[Lemma 5.1]{ler1} for the oscillation estimation of sparse families will play a crucial role in proving the local estimation of the commutators.
\begin{lemma}[\cite{ler1}]\label{lem4}
Let $\mathscr{D}$ be a dyadic lattice and let $\mathcal{S} \subset \mathscr{D}$ be a $\gamma$-sparse family. Assume that $b \in L_{\text {loc }}^1(\mathbb{R}^n)$. Then there exists a $\frac{\gamma}{2(1+\gamma)}$-sparse family $\widetilde{\mathcal{S}} \subset \mathscr{D}$ such that $\mathcal{S} \subset \widetilde{\mathcal{S}}$ and for every cube $Q \in \widetilde{\mathcal{S}}$,
$$
\left|b(x)-{\langle b \rangle}_{Q}\right| \leq 2^{n+2} \sum_{R \in \widetilde{\mathcal{S}}, R \subseteq Q} \langle\left|b-{\langle b \rangle}_{R}\right|\rangle_R \chi_R(x)
$$
for a.e. $x \in Q$.
\end{lemma}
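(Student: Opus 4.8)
The plan is to run a Calder\'on--Zygmund stopping-time construction for $b$ inside each cube of $\mathcal S$ and then take the union of the resulting families. Fix $Q_0\in\mathcal S$; since $Q_0\in\mathscr D$, all its dyadic descendants lie in $\mathscr D$, so any subfamily of $\mathcal D(Q_0)$ we build is admissible. I would define $\mathcal G_{Q_0}\subset\mathcal D(Q_0)$ recursively: declare $Q_0\in\mathcal G_{Q_0}$, and to a cube $P\in\mathcal G_{Q_0}$ attach as children the maximal dyadic cubes $P'\subsetneq P$ such that either $P'$ is contained in a maximal proper subcube of $Q_0$ belonging to $\mathcal S$, or
\[
\langle|b-\langle b\rangle_P|\rangle_{P'}>\lambda\,\langle|b-\langle b\rangle_P|\rangle_P ,
\]
for a fixed $\lambda$ (one may take $\lambda=4$). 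Children of the first type are kept as leaves — they are caught by the construction rooted at the $\mathcal S$-cube in which they sit — and one iterates only inside children of the second (oscillation) type. Two routine facts drive the estimate: a Chebyshev bound against $\int_P|b-\langle b\rangle_P|$ gives that the oscillation-children of any node $P$ have total measure at most $\lambda^{-1}|P|$, so these cubes form a $\tfrac12$-sparse structure and the set of points lying in infinitely many of them is null; and, by Lebesgue differentiation, for a.e.\ $x\in P$ not lying in any child of $P$ one has $|b(x)-\langle b\rangle_P|\le\lambda\,\langle|b-\langle b\rangle_P|\rangle_P$. Set $\widetilde{\mathcal S}:=\bigcup_{Q_0\in\mathcal S}\mathcal G_{Q_0}$; since $Q_0\in\mathcal G_{Q_0}$ we have $\mathcal S\subset\widetilde{\mathcal S}$.

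The pointwise inequality then follows by telescoping the averages along the chain of cubes of $\widetilde{\mathcal S}$ through a fixed (generic) $x$. The construction is self-similar, so it suffices to treat $Q=Q_0\in\mathcal S$; for a.e.\ $x\in Q_0$ let $Q_0=P_0\supsetneq P_1\supsetneq\cdots\supsetneq P_N$ be the cubes of $\widetilde{\mathcal S}$ containing $x$. Between two consecutive ones there sits at most one ``skipped'' cube, namely a maximal oscillation-cube that happens to be essentially covered by $\mathcal S$-subcubes, and for this reason for a.e.\ $x$ one never has two such in a row. By the selection rule the dyadic parent $\widehat{P_{j+1}}$ fails the oscillation test, so
\[
|\langle b\rangle_{P_{j+1}}-\langle b\rangle_{P_j}|\le\langle|b-\langle b\rangle_{P_j}|\rangle_{P_{j+1}}\le 2^n\langle|b-\langle b\rangle_{P_j}|\rangle_{\widehat{P_{j+1}}}\le 2^n\lambda\,\langle|b-\langle b\rangle_{P_j}|\rangle_{P_j},
\]
and a cube skipped between $P_j$ and $P_{j+1}$ contributes, after the triangle inequality, another multiple of $\langle|b-\langle b\rangle_{P_j}|\rangle_{P_j}$. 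Summing these $N$ jumps, adding the Lebesgue-differentiation bound $|b(x)-\langle b\rangle_{P_N}|\le\lambda\langle|b-\langle b\rangle_{P_N}|\rangle_{P_N}$, and using the standard local mean oscillation bookkeeping to fix the numerology, one arrives at $|b(x)-\langle b\rangle_{Q}|\le 2^{n+2}\sum_{R\in\widetilde{\mathcal S},\,R\subseteq Q}\langle|b-\langle b\rangle_R|\rangle_R\,\chi_R(x)$ for every $Q\in\widetilde{\mathcal S}$.

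The remaining — and most delicate — point is that $\widetilde{\mathcal S}$ is $\tfrac{\gamma}{2(1+\gamma)}$-sparse. Fix $R\in\widetilde{\mathcal S}$ and estimate $|E_{\widetilde{\mathcal S}}(R)|=|R\setminus\bigcup\{R'\in\widetilde{\mathcal S}:R'\subsetneq R\}|$. The structural observation is that, because the tree rooted at a larger cube of $\mathcal S$ is stopped at its maximal proper $\mathcal S$-subcubes, every cube of $\widetilde{\mathcal S}$ strictly inside $R$ is either contained in one of the maximal proper $\mathcal S$-subcubes of $R$ (whose union has measure $\le(1-\gamma)|R|$ when $R\in\mathcal S$, and which is handled recursively when $R\notin\mathcal S$, using the $\gamma$-sparseness of $\mathcal S$ along the chain of $\mathcal S$-cubes containing $R$), or else is an oscillation-descendant of $R$ inside the single tree rooted at $R$, whose cubes have total measure at most $(1+\lambda^{-1}+\lambda^{-2}+\cdots)\lambda^{-1}|R|$; in particular the oscillation-cubes attached to distinct nested cubes of $\mathcal S$ live in essentially disjoint regions and cannot over-accumulate inside $R$. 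Balancing these two contributions against the sparseness of $\mathcal S$ yields $|E_{\widetilde{\mathcal S}}(R)|\ge\tfrac{\gamma}{2(1+\gamma)}|R|$. The hard part is exactly this last step: controlling the interaction between the oscillation stopping time and the ambient sparse family — i.e.\ making sure that enlarging $\mathcal S$ so as to force the oscillation bound does not destroy sparseness — and extracting the precise constant requires the careful accounting just sketched rather than a crude geometric estimate.
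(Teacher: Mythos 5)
You should first note that the paper does not prove this lemma at all: it is quoted verbatim from \cite{ler1} (Lemma 5.1 there), so the benchmark is the Lerner--Ombrosi--Rivera-R\'{\i}os argument. Your first two paragraphs are essentially that argument: a local Calder\'on--Zygmund stopping time for $b$ inside each cube of $\mathcal S$, jump estimates coming from the fact that the dyadic parent of a selected cube fails the stopping test, a telescoping along the chain of selected cubes containing $x$, and the union over $Q\in\mathcal S$; your extra pruning at the maximal proper $\mathcal S$-subcubes is a legitimate (indeed convenient) variant. Two points there are glossed: consecutive cubes of $\widetilde{\mathcal S}$ containing $x$ need not be parent--child in a single stopping tree once the trees rooted at different $\mathcal S$-cubes interleave, and your claim that ``skipped'' cubes never occur twice in a row is not justified (it can be repaired by charging each jump to the tree-parent, which also lies in the chain, at the cost of a bounded multiplicity); relatedly, with $\lambda=4$ and these extra charges the constant $2^{n+2}$ does not actually come out, whereas the threshold $2\langle|b-\langle b\rangle_P|\rangle_P$ used in \cite{ler1} gives it cleanly.

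The genuine gap is exactly in the step you single out as the hard part. You try to bound $|E(R)|=|R\setminus\bigcup\{R'\in\widetilde{\mathcal S}:R'\subsetneq R\}|$ from below by subtracting measures: the $\mathcal S$-part can cover up to $(1-\gamma)|R|$ and your oscillation cubes up to $\frac{1}{\lambda-1}|R|=\frac13|R|$, so this accounting yields a nonnegative bound only when $\gamma>\frac13$ and can never produce $\frac{\gamma}{2(1+\gamma)}$ for small $\gamma$; worse, the set $R\setminus\bigcup_{R'\subsetneq R}R'$ can genuinely be empty (the oscillation cubes may cover precisely the portion of $R$ left uncovered by the maximal proper $\mathcal S$-subcubes), so sparseness in the literal ``complement of the union of proper subcubes'' sense cannot be salvaged by any balancing of these two contributions. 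What the lemma asserts, and what \cite{ler1} proves, is sparseness in the equivalent sense of pairwise disjoint major subsets, verified through the Carleson packing condition: with your pruning, for $R\in\widetilde{\mathcal S}$ the trees rooted at $\mathcal S$-cubes strictly containing $R$ contribute inside $R$ essentially only the cube $R$ itself, so $\sum_{P\in\widetilde{\mathcal S},\,P\subseteq R}|P|\le 2|R|+\frac{2}{\gamma}|R|$ (the subtree through $R$ plus, via the $\frac1\gamma$-Carleson property of $\mathcal S$, the mass $\le 2|Q|$ of each tree rooted at $Q\in\mathcal S$ with $Q\subseteq R$), and then the Lerner--Nazarov equivalence between the $\Lambda$-Carleson condition and $\frac1\Lambda$-sparseness gives exactly $\frac{\gamma}{2(1+\gamma)}$. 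Without this detour through the packing condition your final paragraph is not a proof of the sparseness claim.
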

\subsection{ The classical weights }\label{sub2}
In this subsection, we will present the relevant definitions of classical Muckenhoupt weights, multiple weights, and the weak $A_\infty$ class, as well as their main properties. We will start with the multilinear maximal function $\mathcal{M}$ defined by
$$
\mathcal{M}(\vec{f})(x):=\sup _{Q \ni x} \prod_{i=1}^m \frac{1}{|Q|} \int_Q\left|f_i(y)\right| d y,
$$
where the supremum is taken over all cubes $Q$ containing $x$.\\
The following multiple weights associated with $\mathcal{M}$ were introduced in \cite{ler4}.
\begin{definition}[\cite{ler4}]\label{def2.2}
Let $\frac{1}{p}=\frac{1}{p_1}+\cdots+\frac{1}{p_m}$ with $1\leq p_1, \ldots, p_m<\infty$, $\vec{w}=\left(w_1, \ldots,w_m\right)$, where each $w_i$ is a nonnegative and locally integrable function on $\mathbb{R}^n$, we say that $\vec{w}$ belongs to multiple weights $A_{\vec{p}}$  if
$$
[\vec{w}]_{A_{\vec{p}}}:=\sup _Q\left(\frac{1}{|Q|} \int_Q \nu_{\vec{w}}(x) d x\right) \prod_{j=1}^m\left(\frac{1}{|Q|} \int_Q w_i^{1-p_i^{\prime}}(x) d x\right)^{p / p_i^{\prime}}<\infty,
$$
where the supremum is taken over all cubes $Q \subset \mathbb{R}^n$ and $\nu_{\vec{w}}=\prod_{j=1}^m w_j^{p / p_j}$.
When $p_i=1$ for some $i,$ $\left(\frac{1}{|Q|} \int_Q w_i{ }^{1-p_i^{\prime}}\right)^{1 / p_i^{\prime}}$ is understood as $\left(\inf _Q w_i\right)^{-1}$.
\end{definition}

If $m=1$, the multiple $A_{\vec{p}}$ weights coincide with the classical Muckenhoupt $A_p$ weights. In the linear case, we say a weight $w$ belongs to the class $A_1$ if there is a constant $C$ such that
$$
\frac{1}{|Q|} \int_Q w(y) d y \leq C \inf _Q w,
$$
and the infimum of these constants $C$ is called the $A_1$ constant of $w.$ \par
The following characterization of multiple weights $A_{\vec{p}}$ is given in \cite[Theorem 3.6]{ler4}.
\begin{lemma}[\cite{ler4}] Let $\vec{w}=\left(w_1, \ldots, w_m\right)$ and $1 \leq p_1, \ldots, p_m<\infty$.
Then $\vec{w} \in A_{\vec{p}}$ if and only if
$$
\left\{\begin{array}{l}
w_j^{1-p_j^{\prime}} \in A_{m p_j^{\prime}}, \quad j=1, \ldots, m, \\
\nu_{\vec{w}} \in A_{m p},
\end{array}\right.
$$
where the condition $w_j^{1-p_j^{\prime}} \in A_{m p_j^{\prime}}$ in the case $p_j=1$ is understood as $w_j^{1 / m} \in A_1$.
\end{lemma}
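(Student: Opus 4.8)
The plan is to prove both implications cube by cube with H\"older's inequality, working throughout with the ``dual'' weights
$$
\sigma_j:=w_j^{1-p_j'}\ \ (1\le j\le m),\qquad \nu:=\nu_{\vec w}=\prod_{j=1}^m w_j^{p/p_j}.
$$
First I would record the two elementary identities on which everything hinges: $\nu^{1-(mp)'}=\prod_{j=1}^m w_j^{-p/(p_j(mp-1))}$ and $w_j^{p/p_j}=\nu\prod_{k\neq j}\sigma_k^{p/p_k'}$, the latter following from the definition of $\nu$ together with the relation $w_k^{-p/p_k}=\sigma_k^{p/p_k'}$, valid because $p_k(p_k'-1)=p_k'$. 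I would also note the normalisations $\sum_{j}p/p_j'=mp-1$ and $\sum_j 1/p_j'=m-1/p$, which are exactly what one uses to check that all the H\"older exponents appearing below have reciprocals summing to $1$.

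For the necessity, assume $\vec w\in A_{\vec p}$ and fix a cube $Q$. To obtain $\nu\in A_{mp}$, I would write $\nu^{1-(mp)'}=\prod_j\sigma_j^{1/a_j}$ with $a_j=p_j'(mp-1)/p$ (so $\sum 1/a_j=1$), apply H\"older to get $\langle\nu^{1-(mp)'}\rangle_Q\le\prod_j\langle\sigma_j\rangle_Q^{1/a_j}$, and hence $\langle\nu\rangle_Q\langle\nu^{1-(mp)'}\rangle_Q^{mp-1}\le\langle\nu\rangle_Q\prod_j\langle\sigma_j\rangle_Q^{p/p_j'}\le[\vec w]_{A_{\vec p}}$. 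To obtain $\sigma_j\in A_{mp_j'}$, I would compute $\sigma_j^{1-(mp_j')'}=w_j^{\delta_j}$ with $\delta_j=\tfrac{1}{(m-1)p_j+1}$, substitute $w_j^{\delta_j}=\nu^{\beta_0}\prod_{k\neq j}\sigma_k^{\beta_k}$ via the second identity, check $\beta_0+\sum_{k\neq j}\beta_k=1$, apply H\"older, raise to the power $mp_j'-1$, and multiply by $\langle\sigma_j\rangle_Q$; a short bookkeeping shows the outcome is exactly $\big(\langle\nu\rangle_Q^{1/p}\prod_k\langle\sigma_k\rangle_Q^{1/p_k'}\big)^{p_j'}\le[\vec w]_{A_{\vec p}}^{p_j'/p}$.

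For the sufficiency, assume $\nu\in A_{mp}$ and $\sigma_j\in A_{mp_j'}$ for all $j$, fix $Q$, and set $A:=\langle\nu\rangle_Q\prod_j\langle\sigma_j\rangle_Q^{p/p_j'}$. Using $\langle\nu\rangle_Q\le[\nu]_{A_{mp}}\langle\nu^{1-(mp)'}\rangle_Q^{1-mp}$ and $\langle\sigma_j\rangle_Q\le[\sigma_j]_{A_{mp_j'}}\langle w_j^{\delta_j}\rangle_Q^{1-mp_j'}$, one gets
$$
A^{1/p}\le[\nu]_{A_{mp}}^{1/p}\prod_j[\sigma_j]_{A_{mp_j'}}^{1/p_j'}\Big(\langle\nu^{1-(mp)'}\rangle_Q^{(mp-1)/p}\prod_j\langle w_j^{\delta_j}\rangle_Q^{(mp_j'-1)/p_j'}\Big)^{-1},
$$
so it suffices to prove the \emph{lower} bound $\langle\nu^{1-(mp)'}\rangle_Q^{(mp-1)/p}\prod_j\langle w_j^{\delta_j}\rangle_Q^{(mp_j'-1)/p_j'}\ge1$. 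The point is that with $s_0=\tfrac{mp-1}{pm^2}$ and $s_j=\tfrac{mp_j'-1}{p_j'm^2}$ one has $s_0+\sum_j s_j=1$ and, crucially, $(\nu^{1-(mp)'})^{s_0}\prod_j w_j^{\delta_j s_j}\equiv1$, because the exponent of each $w_k$ equals $-\tfrac{1}{p_k m^2}+\delta_k s_k=0$ (here $\delta_k s_k=\tfrac{1}{p_k m^2}$). Applying H\"older to the constant function $1$ written this way yields $1\le\langle\nu^{1-(mp)'}\rangle_Q^{s_0}\prod_j\langle w_j^{\delta_j}\rangle_Q^{s_j}$, and raising to the power $m^2$ is precisely the required bound. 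This gives $[\vec w]_{A_{\vec p}}\le[\nu_{\vec w}]_{A_{mp}}\prod_j[\sigma_j]_{A_{mp_j'}}^{p/p_j'}$.

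Finally, when $p_j=1$ for some index $j$ the factor $\langle\sigma_j\rangle_Q^{p/p_j'}$ is interpreted as $(\inf_Q w_j)^{-p}$, $\delta_j$ becomes $1/m$, and the condition $\sigma_j\in A_{mp_j'}$ is read as $w_j^{1/m}\in A_1$; the H\"older computations above go through verbatim once the relevant averages are replaced by the corresponding infima. I expect the only genuinely non-routine step to be the sufficiency direction: a priori the hypotheses constrain $\nu_{\vec w}$ and the individual $\sigma_j$ separately, and it is not transparent that they recombine into the single product condition defining $A_{\vec p}$; what forces this is exactly the multiplicative identity $(\nu^{1-(mp)'})^{s_0}\prod_j w_j^{\delta_j s_j}\equiv1$, which makes H\"older return a lower bound of exactly $1$ with no loss in the constants.
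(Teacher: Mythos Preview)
Your proof is correct. The identities and exponent bookkeeping all check out: in particular, $\sum_j 1/a_j=1$, $\beta_0+\sum_{k\ne j}\beta_k=1$, $s_0+\sum_j s_j=1$, and the key pointwise identity $(\nu^{1-(mp)'})^{s_0}\prod_j w_j^{\delta_j s_j}\equiv 1$ are all verified by straightforward computation, so the H\"older applications in both directions are legitimate and yield the quantitative bounds $[\nu_{\vec w}]_{A_{mp}}\le[\vec w]_{A_{\vec p}}$, $[\sigma_j]_{A_{mp_j'}}\le[\vec w]_{A_{\vec p}}^{p_j'/p}$, and $[\vec w]_{A_{\vec p}}\le[\nu_{\vec w}]_{A_{mp}}\prod_j[\sigma_j]_{A_{mp_j'}}^{p/p_j'}$.

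Note, however, that the paper under review does \emph{not} supply its own proof of this lemma: it is quoted verbatim as \cite[Theorem~3.6]{ler4} and used as a black box. The argument you give is essentially the one in that original reference, which also proceeds by splitting the product weight via H\"older's inequality in each direction; your sufficiency step (writing $1$ as a product and applying H\"older to obtain a lower bound) is a clean reformulation of the same mechanism. So there is nothing to compare against in the present paper, and relative to the source \cite{ler4} your approach is the standard one.
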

We now introduce definitions of several other classes of weights which will be used later. For more information about them, see \cite{and,omb,per3}.  Since the $A_p$ classes are increasing with respect to $p$, the $A_{\infty}$ class of weights is defined in a natural way by $$A_{\infty}=\bigcup_{p>1} A_p.$$
A weight $w \in A_{\infty}$ if and only if
$$
[w]_{A_{\infty}}:=\sup _Q \frac{1}{w(Q)} \int_Q M\left(w \chi_Q\right)(x)dx<\infty .
$$
This form of $[w]_{A_{\infty}}$ is known as the Fujii-Wilson $A_{\infty}$ constant and was first introduced in \cite{fuj}.
Next we give a  class of weights that is more general than $A_{\infty}$. A weight $w$ belonging to weak $A_{\infty}$ class means that there exist $0<c,\delta<\infty$ such that for all cubes $Q$ and all measurable subsets $E$ of $Q$:
$$
w(E) \leq c\left(\frac{|E|}{|Q|}\right)^\delta w(2 Q).
$$
This class of weights is derived from \cite{saw1}, but is very interesting in its own way as it appears in many contexts, such as quasiregular mappings theory or the regularity for solutions of elliptic PDE's (see \cite{boj}).\par
Similar to the Fujii-Wilson $A_{\infty}$ constants, the weak $A_{\infty}$ constants can be expressed as follows
$$
[w]_{A_{\infty}}^{\text {weak }}:=\sup _Q \frac{1}{w(2 Q)} \int_Q M\left(w \chi_Q\right)(x)dx.
$$
It is shown in \cite{and} that the constant $2$ in the average could be replaced by any parameter $\kappa>1,$ and the following lemma holds.
\begin{lemma}[\cite{and}]\label{lem4(1)}
Let $w\in A_{\infty}^{\text{weak}},$ then for all $Q$ cubes in $\mathbb{R}^n$ with sides parallel to the axes,
$$\left(\frac{1}{|Q|}\int_Q w^r(x)dx\right)^{\frac{1}{r}}\leq \frac{2}{|2Q|}\int_{2Q}w(x)dx,$$
with $$1 < r \leq 1+\frac{1}{\tau_n[w]_{A_\infty}^{weak}},$$
where $\tau_n$ is a dimensional constant with the property $\tau_n \simeq 2^n.$
\end{lemma}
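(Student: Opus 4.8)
The plan is to run the proof of the sharp reverse Hölder inequality for Fujii--Wilson $A_\infty$ weights (Hytönen--Pérez) locally, with every normalization against $w(R)$ replaced by one against $w(2R)$. Fix a cube $Q$, let $\mathcal D(Q)$ be the dyadic lattice it generates and $M^d_Q$ the associated local dyadic maximal operator, so that $w\le M^d_Q w$ a.e.\ on $Q$ and, for every height $\lambda\ge\langle w\rangle_Q$, the level set $\Omega_\lambda:=\{x\in Q: M^d_Q w(x)>\lambda\}$ is the disjoint union of the maximal (Calderón--Zygmund) cubes $Q^\lambda_j\in\mathcal D(Q)$ with $\lambda<\langle w\rangle_{Q^\lambda_j}\le 2^n\lambda$, and on each such cube $R=Q^\lambda_j$ one has $M^d_Q w=M^d_R(w\chi_R)$. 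Writing $r=1+\varepsilon$, I would begin from the layer-cake identity for the measure $w\,dx$:
\begin{equation*}
\int_Q w^{1+\varepsilon}\,dx\ \le\ \int_Q w\,(M^d_Q w)^{\varepsilon}\,dx\ =\ \langle w\rangle_Q^{\varepsilon}\,w(Q)\ +\ \varepsilon\int_{\langle w\rangle_Q}^{\infty}\lambda^{\varepsilon-1}\,w(\Omega_\lambda)\,d\lambda,
\end{equation*}
and use $w(\Omega_\lambda)=\sum_j\langle w\rangle_{Q^\lambda_j}|Q^\lambda_j|\le 2^n\lambda\,|\Omega_\lambda|$, which reduces the whole estimate to a sufficiently fast decay of $\lambda\mapsto|\Omega_\lambda|$.

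The decisive step is to establish a geometric decay of the level sets, $|\Omega_{2^{n+1}\lambda}|\le\theta\,|\Omega_\lambda|$ (or its summed variant) with $1-\theta\simeq 1/(\tau_n[w]_{A_\infty}^{\mathrm{weak}})$, $\tau_n\simeq 2^n$, so that $|\Omega_{(2^{n+1})^k\langle w\rangle_Q}|\lesssim\theta^k|Q|$. Here the hypothesis enters: on each Calderón--Zygmund cube $R=Q^\lambda_j$, using $M^d_Q w=M^d_R(w\chi_R)$ on $R$ and the definition of $[w]_{A_\infty}^{\mathrm{weak}}$,
\begin{equation*}
\int_{\langle w\rangle_R}^{\infty}\bigl|\{x\in R: M^d_R(w\chi_R)(x)>s\}\bigr|\,ds\ =\ \int_R M^d_R(w\chi_R)\,dx-w(R)\ \le\ \int_R M(w\chi_R)\,dx\ \le\ [w]_{A_\infty}^{\mathrm{weak}}\,w(2R);
\end{equation*}
summing this over the pairwise disjoint cubes $R$ of one generation (with $2R\subseteq 2Q$) controls the next level set. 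Plugging $|\Omega_\lambda|\lesssim\theta^k|Q|$ back into the first display turns $\frac1{|Q|}\int_Q w^{1+\varepsilon}$ into a constant times $\langle w\rangle_{2Q}^{1+\varepsilon}$ times a geometric series $\sum_k\bigl((2^{n+1})^{\varepsilon}\theta\bigr)^k$, which converges exactly when $\varepsilon<\log(1/\theta)/((n+1)\log 2)$, i.e.\ when $1<r\le 1+\frac1{\tau_n[w]_{A_\infty}^{\mathrm{weak}}}$ for a suitable dimensional $\tau_n\simeq 2^n$; after shrinking $\varepsilon$ by a further bounded factor the series becomes small enough to yield the right-hand side of the claim raised to the $r$-th power.

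The one genuinely new point — and the main obstacle — is the last summation. Because $[w]_{A_\infty}^{\mathrm{weak}}$ is defined through a dilation, the error terms produced at each generation of Calderón--Zygmund cubes appear as $w(2R)$ rather than $w(R)$, and these do not fit inside a single copy of $w(Q)$; one must therefore keep the bookkeeping against $w(2Q)$ from the outset, which is precisely what manufactures the dilated cube $2Q$ (and the fixed numerical constant) on the right-hand side of the statement, in place of the $w(Q)$ that would appear for a genuine $A_\infty$ weight. Once this is arranged, the optimization of the geometric series producing the threshold $r\le 1+1/(\tau_n[w]_{A_\infty}^{\mathrm{weak}})$ with $\tau_n\simeq 2^n$ is routine and parallels the Fujii--Wilson case.
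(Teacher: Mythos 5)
A preliminary remark: the paper offers no proof of this lemma at all — it is quoted verbatim from \cite{and} — so your sketch has to be measured against the argument of Anderson--Hyt\"onen--Tapiola (the weak-$A_\infty$ analogue of the Hyt\"onen--P\'erez sharp reverse H\"older proof), which is indeed the strategy you are trying to reproduce. Your preparatory steps are all correct: $w\le M^d_Qw$ a.e.\ on $Q$, the layer-cake split at $\langle w\rangle_Q$, the Calder\'on--Zygmund cubes with $\lambda<\langle w\rangle_{Q_j^\lambda}\le 2^n\lambda$, the localization $M^d_Qw=M^d_R(w\chi_R)$ on $R=Q_j^\lambda$, and the identity $\int_{\langle w\rangle_R}^\infty|\{x\in R:M^d_R(w\chi_R)(x)>s\}|\,ds=\int_RM^d_R(w\chi_R)\,dx-w(R)\le[w]_{A_\infty}^{\mathrm{weak}}\,w(2R)$.

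The gap is in the decisive step, and it is twofold. (i) The decay you postulate is for Lebesgue measure, $|\Omega_{2^{n+1}\lambda}|\le\theta|\Omega_\lambda|$ with $1-\theta\simeq 1/(\tau_n[w]_{A_\infty}^{\mathrm{weak}})$, but after your reduction $w(\Omega_\lambda)\le 2^n\lambda|\Omega_\lambda|$ the tail is $\varepsilon 2^n\int_{\langle w\rangle_Q}^\infty\lambda^{\varepsilon}|\Omega_\lambda|\,d\lambda$, and its $k$-th dyadic block (heights between $(2^{n+1})^k\langle w\rangle_Q$ and $(2^{n+1})^{k+1}\langle w\rangle_Q$) carries the factor $(2^{n+1})^{k(1+\varepsilon)}$, not $(2^{n+1})^{k\varepsilon}$; the series that actually arises is $\sum_k\bigl((2^{n+1})^{1+\varepsilon}\theta\bigr)^k$, which diverges for any $\theta$ close to $1$ (note also that a ratio $\theta=1-1/(\tau_n[w]_{A_\infty}^{\mathrm{weak}})\ge 1/2$ is weaker than the trivial ratio $1/2$ valid for every weight, another sign the decay cannot be formulated this way). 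The series you wrote, $\sum_k\bigl((2^{n+1})^{\varepsilon}\theta\bigr)^k$, is the one produced by geometric decay of the \emph{weighted} quantities $w(\Omega_\lambda)$, i.e.\ of the generation sums $\sum_jw(Q_j^{\lambda_k})$; in the genuine $A_\infty$ case this is exactly what the Fujii--Wilson condition yields through the Carleson-type bound $\sum_{S'\subseteq S}w(S')\le 2[w]_{A_\infty}w(S)$ for the stopping family, giving tails that decay like $(1-c/[w]_{A_\infty})^k$. (ii) In the weak setting this weighted decay is precisely the nontrivial point, and your sketch does not establish it: Chebyshev applied to your display only gives $|\Omega_{2^{n+1}\lambda}\cap R|\le[w]_{A_\infty}^{\mathrm{weak}}\,w(2R)/(2^n\lambda)$, and since a weak-$A_\infty$ weight need not be doubling, $w(2R)$ is in no way comparable to $w(R)\le 2^n\lambda|R|$; summing over the disjoint cubes $R$ produces $\sum_jw(2Q_j^\lambda)$, which even granting bounded overlap of the dilates is only $\lesssim_n w(2Q)$, i.e.\ a weak-$(1,1)$-type bound whose decay exponent $1$ makes the tail integral diverge for every $\varepsilon>0$, and which has no self-improving structure to iterate because $w(2Q_j^\lambda)$ does not return to a quantity of the previous generation. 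This failure of the iteration to close, caused by the dilated cubes, is exactly what \cite{and} has to overcome (keeping the weighted bookkeeping and exploiting the freedom in the dilation parameter of $[w]_{A_\infty}^{\mathrm{weak}}$); your proposal correctly names it as the main obstacle but then declares it routine, so the heart of the proof is missing.
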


\subsection{  Young function and Orlicz maximal operators}\label{sub3}
 We need to recall some fundamental facts about Young functions and Orlicz spaces. For more information and a lively exposition about these spaces, we refer the readers to \cite{rao}.\par
Let $\Phi$ be the set of functions $\phi:[0, \infty) \longrightarrow[0, \infty)$ which are non-negative, increasing, $\lim _{t \rightarrow \infty} \phi(t)=\infty $  and $\lim _{t \rightarrow 0} \phi(t)=0 .$ $\phi$ is said to be a Young function If $\phi \in \Phi$ is convex.
Given a Young function $\phi,$ the Orlicz space $ L_{\phi}(\mu)$ with respect to the measure $\mu$ is defined to be the set of measurable functions $f$, such that for some $\lambda>0$,
$$
\int_{\mathbb{R}^n} \phi\left(\frac{|f(x)|}{\lambda}\right) d \mu<\infty.
$$
The Luxemburg norm of $f$ over a cube $Q$ is defined by
$$
\|f\|_{\phi(\mu), Q}:=\inf \left\{\lambda>0: \frac{1}{\mu(Q)} \int_{Q} \phi\left(\frac{|f(x)|}{\lambda}\right) d \mu \leq 1\right\}.
$$
For the sake of convenience, we denote $\|f\|_{\phi(\mu), Q}=\|f\|_{\phi, Q}$ if $\mu$ is the Lebesgue measure and  $\|f\|_{\phi(\mu), Q}=\|f\|_{\phi(w), Q}$ if $\mu=w d x$ is an absolutely continuous measure with respect to the Lebesgue measure.\par
A simple yet important observation in Orlicz space $ L_{\phi}(\mu)$ is that each Young function $\phi$ satisfies the generalized H\"{o}lder inequality:
$$\frac{1}{\mu(Q)} \int_{Q}|f g| d \mu \leq 2\|f\|_{\phi(\mu), Q}\|g\|_{\bar{\phi}(\mu), Q},$$
where $\bar{\phi}(t)=\sup _{s>0}\{s t-\phi(s)\}$ is the complementary function of $\phi$.

Let $\mathcal{D}$ be a dyadic grid and $M_{\phi}^{\mathcal{D}}$ be the dyadic Orlicz maximal operator defined by
$$
M_{\phi}^{\mathcal{D}} f(x):=\sup _{ Q\ni x, Q \in \mathcal{D}}\|f\|_{\phi, Q},
$$	where the supremum is taken over all the dyadic cubes containing $x$. Similarly, we denote the classical  Orlicz maximal operator  by $M_{\phi}$.\par
We will employ the following particular examples of maximal operators several times.

\begin{itemize}
	\item If $\phi(t)=t^{r}$ with $r>1,$ then $M_{\phi}=M_{r}$.
	\item If $\phi(t)=t \log (e+t)^{\alpha}$ with $\alpha>0 ,$ then $\bar{\phi}(t) \simeq e^{t^{1 / \alpha}}-1$ and we denote
	$M_{\phi}=M_{L \log L^{\alpha}}$. Then $M \leq M_{\phi} \lesssim M_{r}$ for all $1<r<\infty.$ Moreover, $M_{\phi} \backsimeq M^{l+1},$ where $\alpha=l \in \mathbb{N}$ and $M^{l+1}$ is $M$ iterated $l+1$ times.
\end{itemize}
We end this subsection by the definition of the multilinear $L(\log L)$-maximal operators
$$
\mathcal{M}_{L(\log L)}(\vec{f})(x):=\sup _{Q \ni x} \prod_{i=1}^m\left\|f_i\right\|_{L(\log L), Q}.
$$
\subsection{  Modular inequality}\label{sub4}
In this subsection, we will collect some  concepts related to Young functions and modular inequalities from \cite{cur}.\par
A function $\phi \in \Phi$ is said to be quasi-convex if there exist a convex function $\widetilde{\phi}$ and $a_{1} \geq 1$ such that
$$\widetilde{\phi}(t) \leq \phi(t) \leq a_{1} \widetilde{\phi}\left(a_{1} t\right), \quad t \geq 0.$$
Given a positive increasing function $\phi,$  we define the lower and upper dilation indices of $\phi,$ respectively, as follows:
$$i_{\phi}=\lim _{t \rightarrow 0^{+}} \frac{\log h_{\phi}(t)}{\log t}=\sup _{0<t<1} \frac{\log h_{\phi}(t)}{\log t}, \quad I_{\phi}=\lim _{t \rightarrow \infty} \frac{\log h_{\phi}(t)}{\log t}=\inf _{1<t<\infty} \frac{\log h_{\phi}(t)}{\log t},$$
where $h_{\phi}(t)=\sup _{s>0} \frac{\phi(s t)}{\phi(s)}, t>0.$\par
Now we turn to the $\Delta_{2}$ condition. A function $\phi \in \Phi$ satisfies the $\Delta_{2}$ condition (we denote $\phi \in \Delta_{2}$) if $\phi$ is doubling, that is, $\phi(2t)\leq C\phi(t)$.
A key fact is that if $\phi$ is quasi-convex, then $i_{\phi} \geq 1$ and that $\phi \in \Delta_{2}$ if and only if $I_{\phi}<\infty.$ Moreover, $\bar{\phi} \in \Delta_{2}$ if and only if $i_{\phi}>1,$ where $\bar{\phi}$ is the complementary function of $\phi$ defined in Section \ref{sub3}.\par
\vspace{0.1cm}
Given a weight $w \in A_{\infty}$ and $\phi \in \Phi$, the modular of $f$ is defined by
$$\rho_{w}^{\phi}(f)=\int_{\mathbb{R}^{n}} \phi(|f(x)|) w(x) d x.$$
The collection
$$
Z_{w}^{\phi}=\left\{f: \rho_{w}^{\phi}(f)<\infty\right\}
$$
is called as a modular space.
A multi(sub)linear operator $T$ is said to satisfy a modular inequality on $Z_{w}^{\phi}$ if there exist constants $c_{i}^{(1)}, c_{i}^{(2)}>0$ with $i=1,\ldots, m,$ such that
$$\rho_{w}^{\phi}(T \vec{f}) \leq \prod _{i=1}^m c_{i}^{(1)} \rho_{w}^{\phi}\left(c_{i}^{(2)} f_i\right).$$\par

\section{ Proofs of Theorems \ref{thm1.1} and Corollary \ref{cor1.1}}\label{Sect 3}
In this section, we will prove Theorem \ref{thm1.1} in the unweighted setting and Corollary \ref{cor1.1} in the weighted case. We begin with the proof of Theorem \ref{thm1.1}.
\begin{proof}[Proof of Theorem $\ref{thm1.1}$]
Let $I=\{1,\ldots,l\}\subseteq \{1,\ldots,m\}$ and fix a cube $Q_0$ such that $\operatorname{supp}\left(f_s\right) \subset Q_0$ for $1 \leq s\leq m.$ By the Hypothesis $\ref{hyp1}$, we can see that there exists a $\eta$-sparse family $\mathcal{F} \subset \mathcal{D}\left(Q_0\right)$ such that for a.e. $x \in Q_0$,

\begin{equation*}
\begin{aligned}
{\left|\mathcal{T}_{\vec{b}} \vec{f}(x)\right|}&= \left|\mathcal{T}_{\vec{b}} (\vec{f}\chi_{3Q_{0}})(x)\right|\\
&\leq C \sum_{Q \in {\mathcal{F}}}\sum_{\vec{\gamma} \in {\{1,2\}^{l}}} \left( \prod_{s=1}^l \mathcal{R}(b_s,f_s,Q,\gamma_s) \right)\prod_{s=l+1}^m {\langle |f_s |\rangle}_{3Q}\chi_{Q}(x),\\
\end{aligned}
\end{equation*}
where
\begin{equation*}
\mathcal{R}(b,f,Q,\gamma)= \begin{cases}|b-{\langle b\rangle}_{3Q}|{\langle |f |\rangle}_{3Q}, &  \text{if}\quad  \gamma=1, \\ {\langle |(b-{\langle b \rangle}_{3Q} )f  |\rangle}_{3Q}, & \text{if}\quad  \gamma=2.\end{cases}
\end{equation*}
Consider the pointwisely estimate of
\begin{equation*}
\begin{aligned}
\mathcal{T}_{\mathcal{F}}^{\vec{\gamma}}(\vec{f},\vec{b})(x):=\sum_{Q \in {\mathcal{F}}}\prod_{s =1}^{l}\mathcal{R}(b_s,f_s,Q,\gamma_s)\prod_{s=l+1}^m {\langle |f_s |\rangle}_{3Q}\chi_{Q}(x).
\end{aligned}
\end{equation*}
Without loss of generality, we may assume that $\vec{\gamma}=(\overbrace{1, \ldots ,1}^{l_1}, \overbrace{2, \ldots ,2}^{l-l_1})$  and write
\begin{equation*}
\begin{aligned}
\mathcal{T}_{\mathcal{F}}^{\vec{\gamma}}(\vec{f},\vec{b})(x)=&\sum_{Q \in {\mathcal{F}}}\prod_{s =1}^{l_1}\left|b_s(x)-{\langle b_s \rangle}_{3Q} \right|  {\langle |f_s|  \rangle}_{3Q}   \prod_{s =l_1+1}^{l}      {\langle |(b_s-{\langle b_s \rangle}_{3Q} )f_s  |\rangle}_{3Q}\\
&\times \prod_{s =l+1}^{m} {\langle |f_s|\rangle}_{3Q} \chi_{Q}(x).
\end{aligned}
\end{equation*}\par
First, we observe that
\begin{equation}\label{ie2}
|b_s-{\langle b_s \rangle}_{3Q} | \leq |b_s-{\langle b_s \rangle}_{Q}|+C_n \|b_s\|_{\mathrm{BMO}},  \quad 1\leq s \leq l_1.   \\
\end{equation}
By applying the Lemma \ref{lem4} to $b_1,$ we know that there exists a $\frac{\gamma}{2(1+\gamma)}$-sparse family $\tilde{{{\mathcal{S}}_1}} \subset \mathscr{D}(Q_0)$ such that $\mathcal{F} \subset \tilde{{{\mathcal{S}}_1}}$ and
$$
\left|b_1(x)-{\langle b_1 \rangle}_{Q} \right| \leq 2^{n+2} \sum_{R \in\tilde{{{\mathcal{S}}_1}} , R\subseteq Q} {\langle \left|b_1-{\langle b_1\rangle}_R\right|\rangle}_R \chi_R (x)
$$
for a.e. $x \in Q$.\par
Note that, if ${\mathcal{S}}_1$ is a $\eta_1$-sparse family and ${\mathcal{S}}_2$ is a $\eta_2$-sparse family, then
${\mathcal{S}}_1\bigcup {\mathcal{S}}_2$ is a $\frac{\eta_1 \eta_2}{\eta_1+\eta_2}$-sparse family. In fact, by the fact that ${\mathcal{S}}_i$ is $\frac{1}{\eta_i}$-Carleson ($i=1,2$), (see \cite[p. 22]{ler2}), for any $Q\in \mathscr{D}(Q_0),$ it holds that
\begin{equation}\label{ie34}
\sum_{P\in \mathcal{S}_1 \bigcup\mathcal{S}_2,P\subseteq Q} |P| \leq \sum_{P\in \mathcal{S}_1,P\subseteq Q}|P| +\sum_{P\in \mathcal{S}_2,P\subseteq Q}|P|\leq \frac{\eta_1+\eta_2}{\eta_1 \eta_2}|Q|,
\end{equation}
which implies that ${\mathcal{S}}_1\bigcup {\mathcal{S}}_2$ is a $\frac{\eta_1 \eta_2}{\eta_1+\eta_2}$-sparse family.\par
For $b_2$, there exists a $\frac{\gamma}{2(1+\gamma)}$-sparse family $\tilde{{{\mathcal{S}}_2}} \subset \mathscr{D}(Q_0)$ such that
\begin{equation*}
\left|b_2(x)-{\langle b_2 \rangle}_{Q} \right| \leq 2^{n+2} \sum_{R \in\tilde{{\mathcal{S}_1}}\bigcup \tilde{\mathcal{S}_2} , R\subseteq Q} {\langle \left|b_2-{\langle b_2\rangle}_R\right|\rangle}_R \chi_R (x).
\end{equation*}
This estimate is also valid for $b_1$ and $\tilde{{\mathcal{S}_1}}\bigcup \tilde{\mathcal{S}_2}$ is a $\frac{\eta }{4(1+\eta)}$-sparse family. It is also convenient to denote $\tilde{{\mathcal{S}_1}}\bigcup \tilde{\mathcal{S}_2}$ by $\tilde{\mathcal{S}_2}.$ We iterate this procedure $l$ times to obtain a $\frac{\eta }{2^l(1+\eta)}$-sparse family $\tilde{{\mathcal{S}_l}}=:\tilde{{\mathcal{S}}}$ satisfying the following property
\begin{equation}\label{ie1}
\left|b_t(x)-{\langle b_t \rangle}_{Q} \right| \leq 2^{n+2} \sum_{R \in\tilde{{\mathcal{S}}} , R\subseteq Q} {\langle \left|b_t-{\langle b_t\rangle}_R\right|\rangle}_R \chi_R (x),\quad  1\leq t \leq l.
\end{equation}
This indicates that there exists a sparse family $\tilde{{{\mathcal{S}}}} \subset \mathscr{D}(Q_0)$ such that for any $1\leq s \leq l_1$ and $Q\in \mathcal{F}\subseteq\tilde{{{\mathcal{S}}}},$
\begin{equation*}
|b_s(x)-{\langle b_s \rangle}_{Q} | \leq 2^{n+2}\|b_s\|_{\mathrm{BMO}}\sum_{R \in\tilde{{{\mathcal{S}}}} , R\subseteq Q} \chi_{R}(x),
\end{equation*}
which, together with (\ref{ie2}), gives that
\begin{equation}\label{ie3}
|b_s-{\langle b_s \rangle}_{3Q} | \leq C_n\|b_s\|_{\mathrm{BMO}}( 1+\sum_{R \in\tilde{{{\mathcal{S}}}} , R\subseteq Q} \chi_{R}(x)), \quad 1\leq s\leq l_1.
\end{equation}\par
We now turn to ${\langle|(b_s-{\langle b_s \rangle}_{3 Q}) f_s |  \rangle}_{3Q}$ with $l_1+1\leq s\leq l$, there are two different ways to deal with it. \\
Method one: By the generalized H\"{o}lder's inequality, we have
\begin{equation*}
\begin{aligned}
{\langle|(b_s-{\langle b_s \rangle}_{3 Q}) f_s |  \rangle}_{3Q} &\leq 2\left\|b_s-{\langle b_s \rangle}_{3Q}\right\|_{\exp L,3Q} \|f_s\|_{L(\log L),3Q}\\
&\lesssim \|b_s\|_{\mathrm{BMO}} \|f_s\|_{L(\log L),3Q}.
\end{aligned}
\end{equation*}
This fact together with (\ref{ie3}) easily yields
\begin{equation*}
\begin{aligned}
\mathcal{T}_{\mathcal{F}}^{\vec{\gamma}}(\vec{b},\vec{f})(x) &\lesssim \sum_{Q \in {\mathcal{F}}}\prod_{s =1}^{l_1} \left( \|b_s\|_{\mathrm{BMO}}\left( 1+\sum_{R \in\tilde{{{\mathcal{S}}}} , R\subseteq Q} \chi_{R}(x)\right){\langle|f_s|\rangle}_{3Q} \right) \\
&\quad \quad\times\prod_{s =l_1+1}^{l}\|b_s\|_{\mathrm{BMO}}\|f_s\|_{L(\log L),3Q}\prod_{s =l+1}^{m} {\langle |f_s|\rangle}_{3Q} \chi_{Q}(x)  \\
&\leq \left( 1+\sum_{R \in\tilde{{{\mathcal{S}}}} , R\subseteq Q_0} \chi_{R}(x)\right)^{l_1}\prod_{s =1}^{l} \|b_s\|_{\mathrm{BMO}}\sum_{Q \in {\mathcal{F}}} \prod_{s =1}^{l} \|f_s\|_{L(\log L),3Q} \\
&\quad \times \prod_{s =l+1}^{m}{\langle |f_s|\rangle}_{3Q} \chi_{Q}(x) \\
&\lesssim  \prod_{s =1}^{l} \|b_s\|_{\mathrm{BMO}}\sum_{Q \in {\mathcal{F}},Q\subseteq Q_0}\prod_{s =1}^{l} \|f_s\|_{L(\log L),3Q}\prod_{s =l+1}^{m}{\langle |f_s|\rangle}_{3Q} \chi_{Q}(x)\\
&\quad+ \prod_{s =1}^{l} \|b_s\|_{\mathrm{BMO}} \left( \sum_{Q \in {\mathcal{F}},Q\subseteq Q_0}\prod_{s =1}^{l} \|f_s\|_{L(\log L),3Q}\prod_{s =l+1}^{m}{\langle |f_s|\rangle}_{3Q}\chi_{Q}(x)\right) \\
&\quad  \times \left(\sum_{R \in \tilde{{\mathcal{S}}},R\subseteq Q_0} \chi_R(x)\right)^{l_1}.\\
\end{aligned}
\end{equation*}
By the definition of $M_{L(\log L)}^{(1,l)},$ we have
\begin{equation}\label{ie4}
\begin{aligned}
\mathcal{T}_{\mathcal{F}}^{\vec{\gamma}}(\vec{b},\vec{f})(x) &\lesssim \prod_{s =1}^{l} \|b_s\|_{\mathrm{BMO}}\mathcal{M}_{L(\log L)}^{(1,l)}(\vec{f})(x)\sum_{Q \in {\mathcal{F}},Q\subseteq Q_0}\chi_Q(x)\\
&\quad +\prod_{s =1}^{l} \|b_s\|_{\mathrm{BMO}}\mathcal{M}_{L(\log L)}^{(1,l)}(\vec{f})(x)\left(\sum_{Q \in {\tilde{{\mathcal{S}}}},Q\subseteq Q_0}\chi_Q(x)\right)^{l_1+1} \\
&\lesssim \prod_{s =1}^{l} \|b_s\|_{\mathrm{BMO}}\mathcal{M}_{L(\log L)}^{(1,l)}(\vec{f})(x)\left(\sum_{Q \in {\tilde{{\mathcal{S}}}},Q\subseteq Q_0}\chi_Q(x)\right)^{l_1+1}.
\end{aligned}
\end{equation}

Method two: For each $l_1+1\leq s\leq l$, using (\ref{ie1}) again, we obtain
\begin{equation*}
\begin{aligned}
{\langle|(b_s-{\langle b_s \rangle}_{3Q}) f_s |  \rangle}_{3Q} &\leq \frac{1}{|3Q|}\int_{3Q}\left|b_s(x)-{\langle b_s \rangle}_{Q}\right| |f_s(x) |  dx\\
& \quad +|{\langle b_s \rangle}_{Q}-{\langle b_s \rangle}_{3Q}| \frac{1}{|3Q|}\int_{3Q}|f_s(x)|dx\\
&\lesssim \|b_s\|_{\mathrm{BMO}}\frac{1}{|3Q|}\int_{3Q}\sum_{R \in {\mathcal{\tilde{S}}},R\subseteq Q}\chi_R(x)|f_s(x)|dx+\|b_s\|_{\mathrm{BMO}}{\langle|f_s | \rangle}_{3Q}  \\
&=\|b_s\|_{\mathrm{BMO}}\frac{1}{|3Q|}\Big( \sum_{R \in {\mathcal{\tilde{S}}},R\subseteq Q}\int_{R}|f_s(x) |dx+\int_{3Q}|f_s(x)|dx\Big).  \\
\end{aligned}
\end{equation*}

This estimate yields that
\begin{equation*}
\begin{aligned}
{\langle|(b_s-{\langle b_s \rangle}_{3Q}) f_s |  \rangle}_{3Q} &\lesssim \frac{2\|b_s\|_{\mathrm{BMO}}}{|3Q|} \sum_{R \in {\mathcal{\tilde{S}}},R\in Q}\int_{3R}|f_s(x) |dx \\
&=\frac{2\|b_s\|_{\mathrm{BMO}}}{|3Q|} \sum_{R \in {\mathcal{\tilde{S}}},R\subseteq Q}\frac{3^n}{|3R|}\int_{3R}|f_s(x) |dx \int_{Q}\chi_R(x) dx  \\
&=\frac{2\|b_s\|_{\mathrm{BMO}}}{|Q|}\int_{Q}\sum_{R \in {\mathcal{\tilde{S}}},R\subseteq Q}{\langle |f_s| \rangle}_{3R}\chi_{R}(x)dx.
\end{aligned}
\end{equation*}
Let $\mathcal{S}^*=\{3R:R\in \mathcal{\tilde{S}}\},$ then $\mathcal{S}^*$ is also a sparse family and
\begin{equation*}
\sum_{R \in {\mathcal{\tilde{S}}},R\subseteq Q}{\langle |f_s|  \rangle}_{3R}\chi_R(x)\leq \sum_{Q \in {\mathcal{S}^*}}{\langle |f_s|  \rangle}_{Q}\chi_Q(x)=\mathcal{A}_{{S}^*}(f_s)(x).
\end{equation*}Therefore
\begin{equation*}
{\langle|(b_s-{\langle b_s \rangle}_{3Q}) f_s |  \rangle}_{3Q}\lesssim \|b_s\|_{\mathrm{BMO}}\frac{1}{|Q|}\int_{Q}\mathcal{A}_{{\mathcal{S}}^*}(f_s)(x)dx, \\
\end{equation*}
which together with (\ref{ie3}) implies that
\begin{equation}\label{ie5}
\begin{aligned}
\mathcal{T}_{\mathcal{F}}^{\vec{\gamma}}(\vec{f},\vec{b})(x) &\lesssim  \left( 1+\sum_{R \in\tilde{{{\mathcal{S}}}} , R\subseteq Q_0} \chi_{R}(x)\right)^{l_1}\prod_{s =1}^{l} \|b_s\|_{\mathrm{BMO}}\\
&\quad \times \sum_{Q \in {\mathcal{F}}} \prod_{s =1}^{l_1} {\langle |f_s|\rangle}_{3Q} \prod_{s =l+1}^{m}{\langle |f_s|\rangle}_{3Q} \prod_{s =l_1+1}^{m}{\langle \mathcal{A}_{{{\mathcal{S}}}^*}f_s\rangle}_{3Q}\chi_Q(x)\\
& \leq \prod_{s =1}^{l} \|b_s\|_{\mathrm{BMO}} \left( 1+\sum_{R \in\tilde{{{\mathcal{S}}}} , R\subseteq Q_0} \chi_{R}(x)\right)^{l_1}\mathcal{M}(\vec{f_0})(x)\sum_{Q \in {\mathcal{F}}} \chi_Q(x),\\
\end{aligned}
\end{equation}
where $\vec{f_0}:=(\mathcal{A}_{{\mathcal{S}}^*}f_{1}, \cdots,\mathcal{A}_{{{\mathcal{S}}}^*}(f_{l}),f_{l+1},\cdots,f_m)$\par
Combining all the estimates obtained in (\ref{ie4}) and (\ref{ie5}), it yields that

\begin{equation}
\begin{aligned}
\mathcal{T}_{\mathcal{F}}^{\vec{\gamma}}(\vec{f},\vec{b})(x) &\leq C \prod_{s =1}^{l} \|b_s\|_{\mathrm{BMO}}   \min\left\{\mathcal{M}_{L(\log L)}^{(1,l)}(\vec{f}),\mathcal{M}(\vec{f_0})\right\}\left(\sum_{Q \in\tilde{{{\mathcal{S}}}} , Q\subseteq Q_0}\chi_Q(x)\right)^{l_1+1}. \\
\end{aligned}
\end{equation}
Recall that in \cite[Theorem 2]{ort}, it was shown that
\begin{equation}\label{ie35}
\left|\left\{x \in Q: \sum_{Q^{\prime} \in \mathcal{S}, Q^{\prime} \subseteq Q} \chi_{Q^{\prime}}(x)>t\right\}\right| \leq c e^{-\alpha t}|Q|, \quad \forall Q \ \text{and} \ t>0.
\end{equation}
Keeping this significant observation in mind, then we have
\begin{equation*}
\begin{aligned}
& \left| \left\{x \in Q_0:\mathcal{T}_{\mathcal{F}}^{\vec{\gamma}}(\vec{f},\vec{b})(x)>t \min\left\{\mathcal{M}_{L(\log L)}^{(1,l)}(\vec{f})(x),\mathcal{M}(\vec{f_0})(x)\right\}\right\} \right|   \\
&\hspace{4cm} \leq \left| \left\{x \in Q_0:\sum_{Q \in{\tilde{{\mathcal{S}}}} , Q\subseteq Q_0}\chi_Q(x)>(\frac{t}{C \prod_{s =1}^{l} \|b_s\|_{\mathrm{BMO}}})^{\frac{1}{l+1}}\right\} \right|   \\
&\hspace{4cm}\leq C e^{-\alpha\left(\frac{t}{ \prod_{s =1}^{l} \|b_s\|_{\mathrm{BMO}}}\right)^{\frac{1}{l+1}}}|Q_0|.
\end{aligned}
\end{equation*}
It then follows that
\begin{equation*}
\begin{aligned}
&\left| \left\{x \in Q_0:\left|\mathcal{T}_{\vec{b}}(\vec{f})(x)\right|>t \min\left\{\mathcal{M}_{L(\log L)}^{(1,l)}(\vec{f})(x),\mathcal{M}(\vec{f_0})(x)\right\}\right\} \right|  \\
 &\quad\leq \sum_{\vec{\gamma}\in (1,2)^l}\left| \left\{x \in Q_0:\mathcal{T}_{\mathcal{F}}^{\vec{\gamma}}(\vec{f},\vec{b})(x)> \frac{t}{C2^l}\min\left\{\mathcal{M}_{L(\log L)}^{(1,l)}(\vec{f})(x),\mathcal{M}(\vec{f_0})(x)\right\}\right\} \right|   \\
 &\quad\leq C_1   e^{\frac{-\alpha_1 t^{1/(l+1)}}{ \left(\prod_{s =1}^{l} \|b_s\|_{\mathrm{BMO}}\right)^{1/(l+1)}}}|Q_0|.
\end{aligned}
\end{equation*}
This finishes the proof of (\ref{ie7}).\par
Finally, we need to prove that the exponent of the result in Theorem \ref{thm1.1} is sharp. To see this, let $m=1, I=\{1\}$ and $T$ be an $\omega$-Calder\'{o}n-Zygmund operator with $\omega$ satisfying the Dini condition $[\omega]_{\text {Dini }}=\int_0^1 \omega(t) \frac{d t}{t}<\infty$. Note that for any $b\in \mathrm{BMO},$ $T_b$ satisfies Hypothesis $\ref{hyp1}$ (see \cite[Theorem 1.1]{ler1}) and $M_{L\log L}f\simeq M^2f$ where $M^2$ denotes the composition of Hardy-Littlewood maximal operator $M.$ \\
Applying Theorem \ref{thm1.1}  to $T_b,$  we have

\begin{equation}\label{ie8}
\begin{aligned}
&\left| \left\{x \in Q_0:\left|T_{b}(f)(x)\right|>t M^2f(x)\right\} \right| \leq C_1  e^{-\sqrt{\frac{\alpha_1 t}{\|b\|_{\mathrm{BMO}}}}}|Q_0|.
\end{aligned}
\end{equation}
In particular, if $n=1, Q_0=(0,1)$, and $f(x)=\chi_{(0,1)}(x),$ then for any $x\in Q_0,$ $Mf(x)=1$ which implies $M^2f(x)=1.$ Let $b=\log |x|\in \mathrm{BMO}$ and $T=H$ (the Hilbert transform). Then the following estimate was proved in
\cite[p. 6]{per1}:
$$\left|\left\{x \in Q_0:\left|T_b\left(\chi_{Q_0}\right)(x)\right|>t\right\}\right| \geq e^{-\sqrt{c_0 t}},$$
which holds for some absolute constant $c_0.$ \par
Comparing this result with (\ref{ie8}), we know that the exponent in local decay estimate is sharp and this completes the proof of Theorem \ref{thm1.1}.
\end{proof}\par
We now turn our attention to the proof of Corollary $\ref{cor1.1}$.
\begin{proof}[Proof of Corollary $\ref{cor1.1}$]
Set
$$E=\left\{x\in Q_0: |\mathcal{T}_{\vec{b}}(\vec{f})(x)|>t\mathcal{M}_{L(\log L)}(\vec{f})(x)\right\}\subseteq Q_0.$$
Then Theorem \ref{thm1.1} gives that
$$|E|\leq C_1  e^{-\alpha_1\left(\frac{t}{ \prod_{s =1}^{l} \|b_s\|_{\mathrm{BMO}}}\right)^{\frac{1}{l+1}}}|Q_0|.$$ \\
It suffices to show that for each $w\in A_{\infty}^{\text{weak}},$ there exist constants $C_0, \varepsilon_0$ which depend on
$w$ such that for every $A\subseteq Q,$
\begin{equation}\label{ie08}
\frac{w(A)}{w(2Q)}\leq C_0 \left(\frac{|A|}{|Q|} \right)^{\varepsilon_0}.
\end{equation}
In view of this, $w(E)$ automatically satisfies
$$w(E) \leq C_1^{\varepsilon_0} C_0  e^{-\alpha_1\varepsilon_0\left(\frac{t}{ \prod_{s =1}^{l} \|b_s\|_{\mathrm{BMO}}}\right)^{\frac{1}{l+1}}}w(2Q_0).$$\par
In order to obtain the quantitative weighted estimate (\ref{ie6}), it remains to prove (\ref{ie08}) and determine the dependence of $C_0, \varepsilon_0$ and $w.$
Let $r_w=1+\frac{1}{\tau_n[w]_{A_\infty}^{weak}},$ then $ r_w^{\prime}=\frac{r_w}{r_w-1}=\tau_n[w]_{A_\infty}^{weak}+1$ and Lemma \ref{lem4(1)} imply that
$$\left(\frac{1}{|Q|}\int_Q w^{r_w}(x)dx\right)^{\frac{1}{r_w}}\leq \frac{2}{|2Q|}\int_{2Q}w(x)dx.$$
 H\"{o}lder's inequality with exponents $r_w$ further gives
\begin{equation*}
\begin{aligned}
w(E) &\leq \left( \int_E w^{r_w}(x)dx\right)^{\frac{1}{r_w}}|E|^{\frac{1}{r_w^{\prime}}} \leq \frac{2}{|2Q_0|}\int_{2Q_0}w(x)dx |Q_0|^{\frac{1}{r_w}}|E|^{\frac{1}{r_w^{\prime}}}, \\
\end{aligned}
\end{equation*}
where we have used the fact that $E\subseteq Q_0.$\\
Therefore,
\begin{equation*}
\begin{aligned}
\frac{w(E)}{w(2Q_0)}\leq 2^{1-n} \left(\frac{|E|}{|Q_0|} \right)^{\frac{1}{r_w^{\prime}}}
\leq 2^{1-n} \left(\frac{|E|}{|Q_0|} \right)^{\frac{1}{\tau_n[w]_{A_\infty}^{weak}+1}}.
\end{aligned}
\end{equation*}
 Picking $C_0=2^{1-n}$ and $\varepsilon_0=\frac{1}{\tau_n[w]_{A_\infty}^{weak}+1},$ we have now proved that
 \begin{equation*}
\begin{aligned}
w\left( \left\{x \in Q_0:|\mathcal{T}_{\vec{b}}(\vec{f})(x)|>t\mathcal{M}_{L(\log L)}\vec{f}(x)\right\}\right) \leq C_2 e^{-{\frac{\alpha_2c_{*}}{[w]_{A_\infty}^{weak}+1}}t^{\frac{1}{l+1}}} w(2Q_0),
\end{aligned}
\end{equation*}
where $c_{*}^{-l-1}=\prod_{s =1}^{l} \|b_s\|_{\mathrm{BMO}}.$
This finishes the proof of the Corollary $\ref{cor1.1}.$
\end{proof}
\par
Using Theorem \ref{thm1.1} and Corollary \ref{cor1.1}, we can directly obtain the following result.

\begin{corollary}\label{cor1.1(1)}
Let $\mathcal{T}_{\vec{b}}$ be defined as in Theorem \ref{thm1.1}. If $w\in A_{\infty}$, then
\begin{equation*}
\begin{aligned}
&w\left(\left\{x \in Q_0:\left|\mathcal{T}_{\vec{b}}(\vec{f})(x)\right|>t \mathcal{M}_{L(\log L)}(\vec{f})(x)\right\}\right) \\
&\hspace{5cm} \qquad\leq c_2 e^{-{\frac{\alpha_2}{[w]_{A_\infty}}}\left({\frac{t}{\prod_{s =1}^{l} \|b_s\|_{\mathrm{BMO}}}}\right)^{\frac{1}{l+1}}} w(Q_0), \quad t>0.
\end{aligned}
\end{equation*}

\end{corollary}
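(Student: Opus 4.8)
\textbf{Proof proposal for Corollary \ref{cor1.1(1)}.}
The plan is to run exactly the argument already used for Corollary \ref{cor1.1}, with one single modification: replace the weak reverse H\"older inequality of Lemma \ref{lem4(1)} (which lives on $2Q$) by the sharp \emph{local} reverse H\"older inequality available for genuine $A_\infty$ weights. This is precisely what allows us to upgrade $w(2Q_0)$ to $w(Q_0)$ and to replace $[w]_{A_\infty}^{\mathrm{weak}}$ by $[w]_{A_\infty}$. So first I would fix a cube $Q_0$ containing $\operatorname{supp}(f_s)$ for $1\le s\le m$, set $E=\{x\in Q_0:|\mathcal{T}_{\vec b}(\vec f)(x)|>t\,\mathcal{M}_{L(\log L)}(\vec f)(x)\}$, and observe that since $\min\{\mathcal{M}_{L(\log L)}^{(1,l)}(\vec f),\mathcal{M}(\vec f_0)\}\le \mathcal{M}_{L(\log L)}(\vec f)$, Theorem \ref{thm1.1} gives the Lebesgue-measure bound $|E|\le C_1\, e^{-\alpha_1(t/\prod_{s=1}^l\|b_s\|_{\mathrm{BMO}})^{1/(l+1)}}|Q_0|$.

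Next, for $w\in A_\infty$ I would invoke the sharp reverse H\"older inequality for $A_\infty$ weights: there is a dimensional constant $\tau_n$ such that, with $r_w:=1+\frac{1}{\tau_n[w]_{A_\infty}}$ (hence $r_w'=\tau_n[w]_{A_\infty}+1$), one has $\big(\frac{1}{|Q_0|}\int_{Q_0}w^{r_w}\big)^{1/r_w}\le \frac{2}{|Q_0|}\int_{Q_0}w$ --- now on $Q_0$ itself, not on $2Q_0$. H\"older's inequality with exponents $r_w,r_w'$ together with $E\subseteq Q_0$ then yields $w(E)\le 2\,w(Q_0)\,(|E|/|Q_0|)^{1/r_w'}$. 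Plugging in the measure bound from the previous step and writing $\varepsilon_0=1/(\tau_n[w]_{A_\infty}+1)$, which satisfies $\varepsilon_0\le 1$ (so $C_1^{\varepsilon_0}\le\max\{C_1,1\}$) and $\varepsilon_0\gtrsim 1/[w]_{A_\infty}$ since $[w]_{A_\infty}\ge 1$, one arrives at $w(E)\le c_2\, e^{-(\alpha_2/[w]_{A_\infty})(t/\prod_{s=1}^l\|b_s\|_{\mathrm{BMO}})^{1/(l+1)}}w(Q_0)$ after relabelling constants, which is the assertion.

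I do not expect any genuine obstacle: the statement is a direct consequence of Theorem \ref{thm1.1} and the standard sharp reverse H\"older inequality for $A_\infty$ weights, and the proof is essentially a transcription of that of Corollary \ref{cor1.1}. The only point requiring a line of care is that $A_\infty$ weights (unlike weak $A_\infty$ weights) admit a reverse H\"older estimate on the cube itself rather than on a dilate of it, and it is exactly this feature that removes the enlargement $2Q_0$ and produces the cleaner constant $[w]_{A_\infty}$; one could alternatively argue from $[w]_{A_\infty}^{\mathrm{weak}}\le[w]_{A_\infty}$ combined with a quantitative doubling bound, but the local reverse H\"older route is the shortest.
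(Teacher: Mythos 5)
Your proposal is correct and matches the paper's argument in substance: the paper also reduces to the Lebesgue-measure decay of Theorem \ref{thm1.1} and then applies the comparison $w(E)/w(Q_0)\le 2\left(|E|/|Q_0|\right)^{1/(c_n[w]_{A_\infty})}$, which it simply cites (Lemma 4.6 of \cite{iba}) rather than rederiving from the sharp reverse H\"older inequality as you do. Since that cited lemma is itself the local reverse H\"older estimate in disguise, your proof is essentially a transcription of the paper's, with the key comparison proved inline.
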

\begin{proof}[Proof of Corollary $\ref{cor1.1(1)}$]
Following the definitions in the proof of Corollary \ref{cor1.1}, we have
\begin{equation}\label{iecor1.1}
|E|\leq C_1  e^{-\alpha_1c_{*}t^{\frac{1}{l+1}}}|Q_0|
\end{equation}
with $c_{*}^{-l-1}=\prod_{s =1}^{l} \|b_s\|_{\mathrm{BMO}}.$
Using the estimate
\begin{equation*}
\frac{w(E)}{w(Q_0)}\leq 2\left(\frac{|E|}{|Q_0|} \right)^{\frac{1}{c_n[w]_{A_\infty}}}
\end{equation*}
proved in Lemma 4.6 in \cite{iba} for every $w\in A_{\infty}$ and (\ref{iecor1.1}), it is easy to verfy that Corollary $\ref{cor1.1(1)}$ is valid.
\end{proof}
\begin{remark}
Without using Lemma 4.6 in \cite{iba}, Corollary $\ref{cor1.1(1)}$ may also be proved via the doubling property of $w\in A_\infty$. But the constant $c_2$ would depend on the doubling constant of the measure $wdx$ and thus on $[w]_{A_\infty}$, which yields the fact that $c_2$ has a worse dependence on $[w]_{A_\infty}$.
\end{remark}

\section{ Proofs of Theorems \ref{thm1.2}}\label{Sect 4}

In this section, we aim to establish the Coifman-Fefferman inequality for $\mathcal{T}_{\vec{b}}.$ Before doing it, we present a generalized H\"{o}lder's inequality of multilinear version, which is a generalization of \cite[Lemma 1]{per9} under the general measure.

\begin{lemma}\label{lem2}
Let $\Phi_0, \Phi_1, \Phi_2, \ldots, \Phi_m$ be Young functions. If

$$
\Phi_1^{-1}(t) \Phi_2^{-1}(t) \cdots \Phi_m^{-1}(t) \leq D \Phi_0^{-1}(t),
$$
then for all functions $f_1, \ldots, f_m$ and all cubes $Q$ we have that
\begin{equation}\label{1e22}
\left\|f_1 f_2 \cdots f_m\right\|_{\Phi_0(\mu), Q} \leq m D\left\|f_1\right\|_{\Phi_1(\mu), Q}\left\|f_2\right\|_{\Phi_2(\mu), Q} \cdots\left\|f_m\right\|_{\Phi_m(\mu), Q} .
\end{equation}
In particular, for any weight $w$  and $s_1, \ldots, s_m \geq 1.$ Let $\frac{1}{s}=\sum_{i=1}^m \frac{1}{s_i}$. Then we have
\begin{equation}\label{1e23}
\frac{1}{w(Q)} \int_Q\left|f_1(x) \cdots f_m(x) g(x)\right|w(x)dx \leq 2^{\frac{1}{s}}(1+\frac{1}{s})^{\frac{1}{s}}\prod_{i=1}^m\left\|f_i\right\|_{\exp L^{s_i}(w),Q} \|g\|_{L(\log L)^{\frac{1}{s}}(w),Q}.
\end{equation}
\end{lemma}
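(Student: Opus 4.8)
The plan is to deduce \eqref{1e22} from a single pointwise Young-type inequality, and then to obtain \eqref{1e23} by specializing and invoking the generalized Orlicz--H\"older inequality recalled in Section~\ref{sub3}. First I would isolate the pointwise estimate
\begin{equation}\label{auxyoung}
\Phi_0\!\left(\frac{x_1x_2\cdots x_m}{D}\right)\le\sum_{i=1}^m\Phi_i(x_i),\qquad x_1,\dots,x_m\ge 0,
\end{equation}
which is where the hypothesis $\Phi_1^{-1}(t)\cdots\Phi_m^{-1}(t)\le D\,\Phi_0^{-1}(t)$ enters. To prove \eqref{auxyoung}, set $t:=\max_{1\le i\le m}\Phi_i(x_i)$; since each $\Phi_i^{-1}$ is nondecreasing and $\Phi_i(x_i)\le t$, the generalized-inverse relation $x_i\le\Phi_i^{-1}(\Phi_i(x_i))$ gives $x_i\le\Phi_i^{-1}(t)$, so $x_1\cdots x_m\le\prod_{i=1}^m\Phi_i^{-1}(t)\le D\,\Phi_0^{-1}(t)$; applying the nondecreasing $\Phi_0$ together with $\Phi_0(\Phi_0^{-1}(t))\le t$ yields \eqref{auxyoung}. (If the $\Phi_i$ are not strictly increasing or continuous one uses the usual left-continuous generalized inverses, for which both of these inverse inequalities persist.)

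For \eqref{1e22}, by homogeneity of the Luxemburg norm I may assume $\|f_i\|_{\Phi_i(\mu),Q}=1$ for every $i$ (the cases where some factor vanishes or is infinite being trivial), so that $\frac{1}{\mu(Q)}\int_Q\Phi_i(|f_i|)\,d\mu\le 1$. Inserting $x_i=|f_i(y)|$ into \eqref{auxyoung}, integrating over $Q$ against $\mu$, and dividing by $\mu(Q)$, I obtain
\begin{equation*}
\frac{1}{\mu(Q)}\int_Q\Phi_0\!\left(\frac{|f_1\cdots f_m|}{D}\right)d\mu\le\sum_{i=1}^m\frac{1}{\mu(Q)}\int_Q\Phi_i(|f_i|)\,d\mu\le m.
\end{equation*}
Since $\Phi_0$ is convex with $\Phi_0(0)=0$, one has $\Phi_0(t/m)\le m^{-1}\Phi_0(t)$, hence $\frac{1}{\mu(Q)}\int_Q\Phi_0\!\big(|f_1\cdots f_m|/(mD)\big)d\mu\le 1$, i.e.\ $\|f_1\cdots f_m\|_{\Phi_0(\mu),Q}\le mD$; undoing the normalization gives \eqref{1e22}.

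For \eqref{1e23} I would apply the generalized Orlicz--H\"older inequality (Section~\ref{sub3}) with $d\mu=w\,dx$, $F=f_1\cdots f_m$, $G=g$, and the complementary pair $(\exp L^{s},\bar\phi)$ with $\bar\phi\simeq L(\log L)^{1/s}$, thereby reducing the claim to a bound for $\|f_1\cdots f_m\|_{\exp L^{s}(w),Q}$. Rather than quoting \eqref{1e22} here (which would introduce an unwanted factor $m$), I would use the weighted AM--GM inequality $|f_1\cdots f_m|^{s}\le\sum_{i=1}^m\frac{s}{s_i}|f_i|^{s_i}$, valid since $\sum_i s/s_i=1$, so that $e^{|f_1\cdots f_m|^{s}}\le\prod_{i=1}^m\big(e^{|f_i|^{s_i}}\big)^{s/s_i}$, and then the classical H\"older inequality with exponents $s_i/s$; this gives $\|f_1\cdots f_m\|_{\exp L^{s}(w),Q}\lesssim\prod_{i=1}^m\|f_i\|_{\exp L^{s_i}(w),Q}$ with a constant independent of $m$. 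The steps above are routine apart from two points I expect to be the main work: the generalized-inverse bookkeeping in \eqref{auxyoung} for non-smooth Young functions, and---for \eqref{1e23}---matching precisely the normalizations of the $\exp L^{s_i}$ scale (the ``$-1$'' in $e^{t}-1$), of $L(\log L)^{1/s}$, and of the constant $2$ in the Orlicz--H\"older inequality, so as to land on the exact constant $2^{1/s}(1+1/s)^{1/s}$ rather than a cruder bound.
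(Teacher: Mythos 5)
Your proof of \eqref{1e22} is essentially the paper's own argument: the same pointwise Young-type inequality $\Phi_0(x_1\cdots x_m/D)\le\sum_i\Phi_i(x_i)$ derived from the hypothesis on the inverses (your choice $t=\max_i\Phi_i(x_i)$ versus the paper's $t_0=\sum_i\Phi_i(x_i)$ is immaterial), followed by integration, the bound $\Phi_0(t/m)\le\Phi_0(t)/m$, and homogeneity of the Luxemburg norm. That part is correct.

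For \eqref{1e23} you depart from the paper, and here there is a concrete gap. The paper proves \eqref{1e23} by applying \eqref{1e22} with $\Phi_0(t)=t$, $\Phi_i=\varphi_{s_i}(t)=e^{t^{s_i}}-1$ ($s_i\ge 1$, hence genuinely convex) and $\Phi_{m+1}(t)=t(\log(e+t))^{1/s}$, reducing everything to the explicit inverse-function inequality $\varphi_{s_1}^{-1}(x)\cdots\varphi_{s_m}^{-1}(x)\Phi_{1/s}^{-1}(x)\le 2^{1/s}(1+1/s)^{1/s}x$, which is verified by the elementary estimate $\Phi_{t}^{-1}(x)\simeq x/(\log(e+x))^{t}$ with tracked constants; this is exactly where the stated constant comes from. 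Your route instead invokes the two-function Orlicz--H\"older inequality for the ``complementary pair'' $(\exp L^{s},L(\log L)^{1/s})$ and then bounds $\|f_1\cdots f_m\|_{\exp L^{s}(w),Q}\le\prod_i\|f_i\|_{\exp L^{s_i}(w),Q}$ by weighted AM--GM and H\"older (that product bound is clean, with constant $1$). But in the relevant range one has $1/s=\sum_i 1/s_i\ge m$, so $s\le 1$ and $t\mapsto e^{t^{s}}-1$ is not convex near $0$: it is not a Young function, $L(\log L)^{1/s}$ is not its exact complementary function, and the generalized H\"older inequality recalled in Section \ref{sub3} does not apply verbatim to this pair. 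One must either pass to an equivalent convex function or prove the relevant Young-type pointwise inequality directly, and in either case the equivalence constants (depending on $s$) must be tracked -- which is precisely the computation you defer as ``the main work.'' Since the explicit constant $2^{1/s}(1+1/s)^{1/s}$ (whose dependence on $s$, i.e.\ on $l_1$ and $p$, is what gets used later in the proofs of Theorems \ref{thm1.2} and \ref{thm1.3}) is part of the statement, your argument as written does not yet establish it. Your parenthetical worry that quoting \eqref{1e22} would cost a factor of the number of functions is fair -- the paper's own application does incur such a factor, visible as $(l_1+1)$ in the proof of Theorem \ref{thm1.2} -- but replacing that route by the duality argument only trades this for the untracked equivalence constants, so the gap remains until you carry out the deferred estimate.
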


\begin{proof}
We first prove that if $\Phi_0, \ldots, \Phi_k$ are continuous, nonnegative, strictly increasing functions on $[0, \infty)$ with $\Phi_i(0)=0$ and $\lim _{t \rightarrow \infty} \Phi_i(t)=\infty$ $(0\leq i\leq m)$ such that
$$
\Phi_1^{-1}(t) \Phi_2^{-1}(t) \cdots \Phi_k^{-1}(t) \leq \Phi_0^{-1}(t), \quad t \geq 0,
$$
then for all $0 \leq x_1, x_2, \ldots, x_k<\infty$
$$
\Phi_0\left(x_1 x_2 \cdots x_k\right) \leq \Phi_1\left(x_1\right)+\Phi_2\left(x_2\right)+\cdots+\Phi_k\left(x_k\right) .
$$
To see this, fix any $\vec{x}=\left(x_1, \ldots, x_m\right)\in (\mathbb{R}^+)^m$  and let $t_0=\Phi_1\left(x_1\right)+\Phi_2\left(x_2\right)+\cdots+\Phi_m\left(x_m\right)$. Then the condition in Lemma \ref{lem2} gives
$$
\Phi_0\left(\frac{\Phi_1^{-1}\left(t_0\right) \Phi_2^{-1}\left(t_0\right) \cdots \Phi_m^{-1}\left(t_0\right)}{D}\right) \leq \Phi_0\left(\frac{D\Phi_0^{-1}(t_0)}{D} \right)=t_0.
$$
By the nonnegativity of $\Phi_i$, for any $i\in \{1,\ldots,m\},$ we have $t_0\geq \Phi_i(x_i).$ Then

$$
\Phi_i^{-1}\left(t_0\right) \geq \Phi_i^{-1}\left(\Phi_i\left(x_i\right)\right)=x_i,
$$
which indicates that
\begin{equation}\label{1e023}
\Phi_0\left(\frac{x_1 x_2 \cdots x_m}{D}\right) \leq t_0=\Phi_1\left(x_1\right)+\Phi_2\left(x_2\right)+\cdots+\Phi_m\left(x_m\right).
\end{equation}\par
 Now consider the proof of (\ref{1e22}). By using the convexity of $\Phi_0$, for any $\lambda \in (0,1),$ it holds that
$$\Phi_0(\lambda t_1+(1-\lambda)t_2)\leq \lambda \Phi_0(t_1)+(1-\lambda)\Phi_0(t_2).$$
Let $t_2 = 0$, then $\Phi_0(\lambda t)\leq \lambda \Phi_0(t) (t>0)$ since $\Phi_0(0)=0$.
Recalling the definition of $\|f\|_{A(\mu),Q}$, we get
$$
\|f\|_{A(\mu), Q} \leq 1 \Leftrightarrow \frac{1}{\mu(Q)} \int_Q A(|f(x)|) d \mu(x) \leq 1 .
$$
Using this fact and by (\ref{1e023}), for any $t_i>\left\|f_i\right\|_{\Phi_i, Q}$ with $1\leq i\leq m,$ we obtain
$$
\begin{aligned}
\frac{1}{\mu(Q)} \int_Q \Phi_0\left(\frac{\left|f_1 \cdots f_m\right|}{m D t_1 \cdots t_m}\right)d\mu & \leq \frac{1}{m\mu(Q)} \int_Q \Phi_0\left(\frac{\left|f_1 \cdots f_m\right|}{D t_1 \cdots t_m}\right)d\mu \\
& \leq \frac{1}{m}\sum_{i=1}^m\frac{1}{\mu(Q)} \int_Q \Phi_i\left(\frac{\left|f_i(x)\right|}{t_i}\right)d\mu(x)\\
& \leq1 .
\end{aligned}
$$
This inequality implies
$$
\left\|f_1 \cdots f_m\right\|_{\Phi_0(\mu), Q} \leq m D t_1 \cdots t_m.
$$
and it is enough to take the infimum on each $t_i$ to finish the proof of the (\ref{1e22}).\par
Finally, we give the proof of (\ref{1e23}). For $x\geq 0, t>0,$ we denote $\varphi_t(x)=e^{x^t}-1,$ and $\Phi_t(x)=x(\log (e+x))^t.$ It is easy to see that $\varphi_t^{-1}(x)=(\log (x+1))^{\frac{1}{t}}.$
In order to prove (\ref{1e23}), by (\ref{1e22}) with $d\mu =wdx$, it suffices to show that
\begin{equation}\label{ie024}
\varphi_{s_1}^{-1}(x) \varphi_{s_2}^{-1}(x)\cdots \varphi_{s_m}^{-1}(x) \Phi_{1/s}^{-1}(x) \leq Dx.
\end{equation}
First, we claim that
$$
 \Phi_{t}^{-1}(x)\simeq  \frac{x}{(\log (e+x))^{t}}.
$$
In fact, we only need to show $ x\simeq  \frac{\Phi_{t}(x)}{(\log (e+\Phi_{t}(x)))^{t}}.$
Note that $\Phi_{t}(x)\geq x,$ then
$$
\frac{\Phi_{t}(x)}{(\log (e+\Phi_{t}(x)))^{t}}\leq \frac{\Phi_{t}(x)}{(\log (e+x))^{t}}=x.
$$
On the other hand, since $\Phi_{t}(x)\leq (x+1)^{t+1}$, we have
$$
\begin{aligned}
x(\log (e+\Phi_{t}(x)))^{t}&\leq  x(\log (e+(x+1)^{t+1}))^{t}\leq x(\log (e+x+1)^{t+1})^{t}\\
& \leq 2^t(t+1)^t x(\log (e+x))^{t}=2^t(t+1)^t \Phi_{t}(x).
\end{aligned}
$$
We now continue with the proof of (\ref{ie024}). By the fact that $\varphi_{t}^{-1}(x)=(\log (x+1))^{\frac{1}{t}}\leq(\log (e+x))^{\frac{1}{t}},$ it may lead to
$$
\begin{aligned}
\varphi_{s_1}^{-1}(x) \varphi_{s_2}^{-1}(x)\cdots \varphi_{s_m}^{-1}(x) \Phi_{1/s}^{-1}(x) &\leq
2^{\frac{1}{s}}(1+\frac{1}{s})^{\frac{1}{s}}(\log (e+x))^{\frac{1}{s_1}+\cdots+\frac{1}{s_m}}\frac{x}{(\log (e+x))^{\frac{1}{s}}}\\
&=2^{\frac{1}{s}}(1+\frac{1}{s})^{\frac{1}{s}}x,
\end{aligned}
$$
which together with (\ref{1e22}) completes the proof of Lemma \ref{lem2}.
\end{proof}
The following weighted John-Nirenberg inequality for BMO functions provides a foundation for our analysis.
\begin{lemma}[\cite{iba}]\label{lem3}
Let $b \in \mathrm{B M O}$ and $w \in A_{\infty}$. Then we have
$$
\left\|b-b_Q\right\|_{\exp L(w), Q} \leq c_n[w]_{A_{\infty}}\|b\|_{\mathrm{B M O}} .
$$
Furthermore, if $j>0$ then
$$
\left\|\left|b-b_Q\right|^j\right\|_{\exp L^{\frac{1}{j}}(w), Q} \leq c_{n, j}[w]_{A_{\infty}}^j\|b\|_{\mathrm{B M O}}^j .
$$
\end{lemma}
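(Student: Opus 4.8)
The plan is to deduce both Orlicz estimates from a single quantitative exponential integrability bound, which I would obtain by combining the sparse domination of the BMO oscillation (Lemma \ref{lem4}) with the Fujii--Wilson form of $[w]_{A_\infty}$ recorded in Section \ref{sub2}. Fix a cube $Q$ and write $b_Q=\langle b\rangle_Q$. Applying Lemma \ref{lem4} to the ($1$-sparse) family $\{Q\}$ and to $b$ produces a $\tfrac{1}{4}$-sparse family $\widetilde{\mathcal S}\subseteq\mathcal D(Q)$ with $|b(x)-b_Q|\le 2^{n+2}\|b\|_{\mathrm{BMO}}\,N_Q(x)$ for a.e.\ $x\in Q$, where $N_Q:=\sum_{R\in\widetilde{\mathcal S},\,R\subseteq Q}\chi_R$ (here one bounds $\langle|b-\langle b\rangle_R|\rangle_R\le\|b\|_{\mathrm{BMO}}$). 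Since $\|f\|_{\exp L(w),Q}=\inf\{\lambda>0:\,\tfrac{1}{w(Q)}\int_Q(e^{|f|/\lambda}-1)w\le1\}$ (up to comparability of Young functions), it suffices to prove
\[
\frac{1}{w(Q)}\int_Q\exp\!\Big(\frac{N_Q(x)}{c_n[w]_{A_\infty}}\Big)\,w(x)\,dx\le 2
\]
for a dimensional constant $c_n$: feeding this into the definition of $\|\cdot\|_{\exp L(w),Q}$ with $\lambda=c_n2^{n+2}[w]_{A_\infty}\|b\|_{\mathrm{BMO}}$ yields the first inequality, and the second then follows from the elementary Orlicz identity $\big\||g|^j\big\|_{\exp L^{1/j}(w),Q}=\|g\|_{\exp L(w),Q}^{\,j}$ (up to a constant depending on $j$) applied to $g=b-b_Q$.

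The core of the argument is the moment estimate $\tfrac{1}{w(Q)}\int_Q N_Q^{\,k}\,w\le k!\,(C_n[w]_{A_\infty})^k$. I would first establish a weighted Carleson packing inequality: for every cube $P\subseteq Q$,
\[
\sum_{R\in\widetilde{\mathcal S},\,R\subseteq P}w(R)\le C_n[w]_{A_\infty}\,w(P).
\]
This uses only $\tfrac{1}{4}$-sparseness and the Fujii--Wilson definition: writing $w(R)=\langle w\rangle_R|R|\le 4\langle w\rangle_R|E(R)|$ and noting $\langle w\rangle_R\le M(w\chi_P)(x)$ for every $x\in R\subseteq P$, one sums over the pairwise disjoint sets $E(R)$ to get $\sum_{R\subseteq P}w(R)\le 4\int_P M(w\chi_P)\le 4[w]_{A_\infty}w(P)$. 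Next, because dyadic cubes through a common point are linearly ordered by inclusion, $\int_Q N_Q^{\,k}\,w\le k!\sum_{R_k\subseteq\cdots\subseteq R_1}w(R_k)$, and iterating the packing inequality $k$ times — summing first over $R_k$, then over $R_{k-1}$, and so on — produces exactly the claimed factorial bound.

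Inserting the moment bound into the Taylor expansion of the exponential gives, for any $A>C_n[w]_{A_\infty}$,
\[
\frac{1}{w(Q)}\int_Q\exp\!\Big(\frac{N_Q}{A}\Big)w=\sum_{k\ge0}\frac{1}{k!\,A^k}\cdot\frac{1}{w(Q)}\int_Q N_Q^{\,k}\,w\le\sum_{k\ge0}\Big(\frac{C_n[w]_{A_\infty}}{A}\Big)^k=\frac{1}{1-C_n[w]_{A_\infty}/A},
\]
and the choice $A=2C_n[w]_{A_\infty}$ gives the displayed estimate with $c_n=2C_n$. Passing back through the first paragraph completes the proof of both assertions of Lemma \ref{lem3}.

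\textbf{Main obstacle.} The only delicate point is arranging the iteration of the packing inequality so that it produces the precise factorial growth $k!\,(C_n[w]_{A_\infty})^k$ in the moments — i.e.\ so that the exponential series has radius of convergence proportional to $[w]_{A_\infty}^{-1}$ rather than to $[w]_{A_\infty}^{-k}$. This is exactly what the linear ordering of dyadic cubes sharing a common point buys: it converts the $k$-th power $N_Q^{\,k}$ into a sum over decreasing chains, on which the $A_\infty$ packing estimate can be applied one link at a time. Everything else (the sparse domination of $b-b_Q$, the comparison $\|\cdot\|_{\exp L(w),Q}$ with the exponential modular, and the power transform for $|b-b_Q|^j$) is routine once Lemma \ref{lem4} and the Fujii--Wilson characterization of $[w]_{A_\infty}$ are invoked.
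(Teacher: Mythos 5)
Your proposal is correct, but note that there is nothing in the paper to compare it with: Lemma \ref{lem3} is quoted from \cite{iba} and no proof is given here. In the cited source (and in most of the literature) the inequality is obtained by a different mechanism, namely the sharp reverse H\"older inequality for $A_\infty$ weights with exponent $r=1+\frac{1}{c_n[w]_{A_\infty}}$ combined, via H\"older's inequality, with the classical unweighted John--Nirenberg inequality, the factor $[w]_{A_\infty}$ entering through $r'\simeq[w]_{A_\infty}$. Your route stays entirely inside the toolkit already present in this paper: Lemma \ref{lem4} dominates $|b-b_Q|$ by $2^{n+2}\|b\|_{\mathrm{BMO}}N_Q$ with $N_Q=\sum_{R\in\widetilde{\mathcal S},\,R\subseteq Q}\chi_R$, the weighted Carleson packing bound $\sum_{R\in\widetilde{\mathcal S},\,R\subseteq P}w(R)\le 4[w]_{A_\infty}w(P)$ follows from sparseness plus the Fujii--Wilson definition exactly as you write, and the nestedness of dyadic cubes through a common point lets you convert $N_Q^k$ into a sum over weakly decreasing chains, so that iterating the packing bound $k$ times gives $\frac{1}{w(Q)}\int_Q N_Q^k\,w\le k!\,(C_n[w]_{A_\infty})^k$ and hence the exponential modular bound with the correct linear dependence on $[w]_{A_\infty}$; the case $j>0$ then indeed reduces to the first case by the power transform of Luxemburg norms, with constants depending only on $j$ from replacing $e^{t^{1/j}}-1$ by a comparable Young function. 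I checked the delicate points (the $k!$ chain count including repeated cubes, disjointness of the sets $E(R)$, the monotone-convergence interchange, and the normalization $\phi(t)=e^t-1$) and they are fine. Two cosmetic remarks: $\{Q\}$ should be called $\eta$-sparse for every $\eta<1$ rather than $1$-sparse (this only shifts the sparseness constant of $\widetilde{\mathcal S}$), and Lemma \ref{lem4} produces $\widetilde{\mathcal S}\subset\mathscr{D}$ rather than $\subseteq\mathcal D(Q)$, though you only ever use the cubes contained in $Q$, for which sparseness is inherited. Compared with the reverse-H\"older argument, your proof is a bit longer but self-contained, quantitative in exactly the constants this paper tracks, and runs on the same Carleson-packing mechanism as the decay estimate (\ref{ie35}) exploited in Section \ref{Sect 3}.
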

We are now ready to prove Theorem $\ref{thm1.2}$.
\begin{proof}[Proof of Theorem $\ref{thm1.2}$]
According to the Hypothesis \ref{hyp2}, we have
\begin{equation*}
\begin{aligned}
\big|\mathcal{T}_{\vec{b}}(\vec{f})(x)\big| \leq C \cdot \sum_{j=1}^{3^n}\sum_{\vec{\gamma}\in\{1,2\}^l}\mathcal{A}_{\mathcal{S}_{j},\vec{b}}^{\vec{\gamma}}(\vec{f})(x) \quad a.e. \ x\in \mathbb{R}^n.\\
\end{aligned}
\end{equation*}
By symmetry, we may assume that $\vec{\gamma}=(\overbrace{1, \ldots ,1}^{l_1}, \overbrace{2, \ldots ,2}^{l-l_1}).$  In order to show inequality (\ref{iethm1.2}), by using the triangle inequality, it suffices to show that
\begin{equation}\label{iethm1.2(1)}
\|\mathcal{A}_{\mathcal{S}_{j},\vec{b}}^{\vec{\gamma}}(\vec{f})\|_{L^p(w)} \lesssim \prod_{s =1}^{l} \|b_s\|_{\mathrm{BMO}}^{1/p}[w]_{A_\infty}^{l}
[w]_{A_\infty}^{\max\{2/p,1\}}\|\mathcal{M}_{L(\log L)}(\vec{f})\|_{L^p(w)},
\end{equation}
for every $1\leq j \leq 3^n.$\par
To prove (\ref{iethm1.2(1)}), we first consider the case $p>1$. For any $w\in A_\infty,$ by duality, we obtain
\begin{equation}
\begin{aligned}
\left\|\mathcal{A}_{\mathcal{S}_{j},\vec{b}}^{\vec{\gamma}}(\vec{f})\right\|_{L^p(w)}=\sup_{\|g\|_{L^{p^\prime}(w)}\leq 1}\left|\int_{\mathbb{R}^{n}}\mathcal{A}_{\mathcal{S}_{j},\vec{b}}^{\vec{\gamma}}(\vec{f})(x)g(x)w(x)dx \right|.
\end{aligned}
\end{equation}
For any fixed nonnegative function $g\in L^{p^\prime}(w)$ with $\|g\|_{L^{p^\prime}(w)}\leq 1,$ Lemmas \ref{lem2} and \ref{lem3} give that
\begin{equation*}
\begin{aligned}
\left|\int_{\mathbb{R}^{n}}\mathcal{A}_{\mathcal{S}_{j},\vec{b}}^{\vec{\gamma}}(\vec{f})(x)g(x)w(x)dx \right| &\leq  \sum_{Q \in \mathcal{S}_{j}}\prod_{s =1}^{l_1} {\langle |f_s|\rangle}_{Q} \int_Q \prod_{s =1}^{l_1} \left|b_s(x)-{\langle b_s \rangle}_{Q} \right| g(x)w(x)dx  \\
& \quad \quad \times \prod_{s =l_1+1}^{m}{\langle|(b_s-{\langle b_s \rangle}_{Q} )f_s|\rangle}_Q\prod_{s =l+1}^{m}{\langle|f_s|\rangle}_Q \\
&\lesssim \sum_{Q \in \mathcal{S}_{j}} w(Q)\prod_{s =1}^{l_1} {\langle |f_s|\rangle}_{Q}\prod_{s =1}^{l_1}\left\|b_s-{\langle b_s \rangle}_{Q} \right\|_{\exp L(w),Q}  \\
& \quad \quad \times \|g\|_{L(\log L)^{l_1}(w),Q}\prod_{s =l_1+1}^{l}\left\|b_s-{\langle b_s \rangle}_{Q} \right\|_{\exp L,Q}\|f_s\|_{L(\log L),Q}\\
&\quad \quad \times  \prod_{s =l+1}^{m}{\langle|f_s|\rangle}_Q \\
&\lesssim \sum_{Q \in \mathcal{S}_{j}}\|g\|_{L(\log L)^{l_1}(w),Q} \left(\prod_{s =1}^{m}\|f_s\|_{L(\log L),Q} \right)w(Q)\\
& \quad \times [w]_{A_\infty}^{l_1}\prod_{s =1}^{l} \|b_s\|_{\mathrm{BMO}}.
\end{aligned}
\end{equation*}
Then the Carleson embedding theorem combining with H\"{o}lder's inequality yields that

\begin{equation*}
\begin{aligned}
\left|\int_{\mathbb{R}^{n}}\mathcal{A}_{\mathcal{S}_{j},\vec{b}}^{\vec{\gamma}}f(x)g(x)w(x)dx \right| &\lesssim [w]_{A_\infty}^{l_1} \prod_{s =1}^{l} \|b_s\|_{\mathrm{BMO}}\sum_{Q \in \mathcal{S}_{j}}w(Q) \\
&\quad \times \left(\frac{1 }{w(Q)}\int_Q\left(\mathcal{M}_{L(\log L)}(\vec{f})(x) M_{L(\log L)^{l_1}(w)}^{\mathcal{D}_j}g(x)\right)^{\frac{1}{2}}w(x)dx\right)^2 \\
&\lesssim  [w]_{A_\infty}^{l_1+1}\prod_{s =1}^{l} \|b_s\|_{\mathrm{BMO}}\\
&\quad \times \int_{\mathbb{R}^{n}}\mathcal{M}_{L(\log L)}(\vec{f})(x) M_{L(\log L)^{l_1}(w)}^{\mathcal{D}_j}g(x)w(x)dx \\
&\lesssim [w]_{A_\infty}^{l_1+1}\prod_{s =1}^{l} \|b_s\|_{\mathrm{BMO}} \left\|\mathcal{M}_{L(\log L)}(\vec{f}) \right\|_{L^p(w)}\left\| (M_{w}^{\mathcal{D}_j})^{l_1+1}g\right\|_{L^{p'}(w)},
\end{aligned}
\end{equation*}
here we used that $M_{L(\log L)^{k}(w)}^{\mathcal{D}}f \simeq (M_{w}^{\mathcal{D}})^{k+1}f$ $(k\in \mathbb{N}^*)$ (\cite[p. 179]{per2}) in the last step.\\
For any weight $w$, since $\left\|M_w^{\mathcal{D}}f \right\|_{L^p(w)}\leq C \left\|f\right\|_{L^p(w)}$ (\cite[Theorem 15.1]{ler2}), then 
\begin{equation}\label{ie9}
\begin{aligned}
\left\|\mathcal{A}_{\mathcal{S}_{j},\vec{b}}^{\vec{\gamma}}(\vec{f})\right\|_{L^p(w)}\lesssim [w]_{A_\infty}^{l_1+1}\prod_{s =1}^{l} \|b_s\|_{\mathrm{BMO}} \left\|\mathcal{M}_{L(\log L)}(\vec{f}) \right\|_{L^p(w)}.
\end{aligned}
\end{equation}\par
We now turn our attention to the case $0<p\leq 1.$ By duality, it follows that
\begin{equation}\label{ie10}
\begin{aligned}
\|\mathcal{A}_{\mathcal{S}_{j},\vec{b}}^{\vec{\gamma}}(\vec{f})\|_{L^p(w)}=\sup_{\|g\|_{L^{2}(w)}\leq 1}\Big|\int_{\mathbb{R}^{n}}\left(\mathcal{A}_{\mathcal{S}_{j},\vec{b}}^{\vec{\gamma}}(\vec{f})(x)\right)^{\frac{p}{2}}
g(x)w(x)dx \Big|^{\frac{2}{p}}.
\end{aligned}
\end{equation}
Therefore,
$$
\begin{aligned}
\Big|\int_{\mathbb{R}^{n}}\left(\mathcal{A}_{\mathcal{S}_{j},\vec{b}}^{\vec{\gamma}}(\vec{f})(x)\right)^{\frac{p}{2}}
g(x)w(x)dx \Big| \leq &\sum_{Q \in \mathcal{S}_{j}}\prod_{s =1}^{l_1} {\langle |f_s|\rangle}_{Q}^{\frac{p}{2}} \int_Q \prod_{s =1}^{l_1} \left|b_s(x)-{\langle b_s \rangle}_{Q} \right|^{\frac{p}{2}} |g(x)|w(x)dx  \\
& \quad \quad \times \prod_{s =l_1+1}^{m}{\langle|(b_s-{\langle b_s \rangle}_{Q} )f_s|\rangle}_Q^{\frac{p}{2}}\prod_{s =l+1}^{m}{\langle|f_s|\rangle}_Q^{\frac{p}{2}}. \\
\end{aligned}
$$
Using Lemma \ref{lem2} and the same argument as in the case of $p>1$, we deduce that
\begin{equation*}
\begin{aligned}
\int_{\mathbb{R}^{n}}\left(\mathcal{A}_{\mathcal{S}_{j},\vec{b}}^{\vec{\gamma}}(\vec{f})(x)\right)^{\frac{p}{2}}
|g(x)|w(x)dx &\leq \sum_{Q \in \mathcal{S}_{j}} (l_1+1)(1+\frac{pl_1}{2})^{\frac{pl_1}{2}}2^{\frac{pl_1}{2}}w(Q)\left(\prod_{s =1}^{l_1} {\langle |f_s|\rangle}_{Q}^{\frac{p}{2}}\right)\\
&\quad \times \prod_{s =1}^{l_1}\left\| |b_s-{\langle b_s \rangle}_{Q} |^{\frac{p}{2}} \right\|_{\exp L^{\frac{p}{2}}(w),Q}  \|g\|_{L(\log L)^{^{\frac{pl_1}{2}}}(w),Q}\\
& \quad \quad \times C_n^{\frac{pl}{2}} \prod_{s =l_1+1}^{l}\|b_s\|_{\mathrm{BMO}}^{\frac{p}{2}}\|f_s\|_{L(\log L),Q}^{\frac{p}{2}}\prod_{s =l+1}^{m}{\langle|f_s|\rangle}_Q \\
&\leq (l_1+1)(1+\frac{pl_1}{2})^{\frac{pl_1}{2}}2^{\frac{pl_1}{2}}C_n^{\frac{pl}{2}} [w]_{A_\infty}^{\frac{pl_1}{2}}\prod_{s =1}^{l} \|b_s\|_{\mathrm{BMO}}^{\frac{p}{2}}\\
& \quad \times \sum_{Q \in \mathcal{S}_{j}}\|g\|_{L(\log L)^{\frac{pl_1}{2}}(w),Q} \left(\prod_{s =1}^{m}\|f_s\|_{L(\log L),Q}^{\frac{p}{2}} \right)w(Q).\\
\end{aligned}
\end{equation*}
 Then the Carleson embedding theorem yields the following inequality
\begin{equation}\label{ie11}
\begin{aligned}
\int_{\mathbb{R}^{n}}\left(\mathcal{A}_{\mathcal{S}_{j},\vec{b}}^{\vec{\gamma}}(\vec{f})(x)\right)^{\frac{p}{2}}
|g(x)|w(x)dx
&\leq (l_1+1)(1+\frac{pl_1}{2})^{\frac{pl_1}{2}}2^{\frac{pl_1}{2}}C_n^{\frac{pl}{2}} [w]_{A_\infty}^{1+\frac{pl_1}{2}}\prod_{s =1}^{l} \|b_s\|_{\mathrm{BMO}}^{\frac{p}{2}}\\
&\quad \times \int_{\mathbb{R}^{n}}(\mathcal{M}_{L(\log L)}(\vec{f})(x))^{\frac{p}{2}} M_{L(\log L)^{(pl_1)/2}(w)}^{\mathcal{D}_j}g(x)w(x)dx \\
&\leq (l_1+1)(1+\frac{pl_1}{2})^{\frac{pl_1}{2}}2^{\frac{pl_1}{2}}C_n^{\frac{pl}{2}} [w]_{A_\infty}^{1+\frac{pl_1}{2}}\prod_{s =1}^{l} \|b_s\|_{\mathrm{BMO}}^{\frac{p}{2}}\\
&\quad \times \left\|(\mathcal{M}_{L(\log L)}(\vec{f}))^{\frac{p}{2}} \right\|_{L^2(w)}\left\| M_{L(\log L)^{(pl_1)/2}(w)}^{\mathcal{D}_j}g\right\|_{L^{2}(w)}.
\end{aligned}
\end{equation}
Note that $\| M_{L(\log L)^{(pl_1)/2}(w)}^{\mathcal{D}_j}g\|_{L^{2}(w)}\leq \| M_{L(\log L)^{l_1}(w)}^{\mathcal{D}_j}g\|_{L^{2}(w)}\leq C_{n,l}\|g\|_{L^{2}(w)},$  which together with (\ref{ie10}) and (\ref{ie11}) gives
\begin{equation}\label{ie12}
\begin{aligned}
\|\mathcal{A}_{\mathcal{S}_{j},\vec{b}}^{\vec{\gamma}}(\vec{f})\|_{L^p(w)}&\leq C_{n,l} C_n^{\frac{4}{p}} (l+1)^{\frac{2}{p}}(1+\frac{pl}{2})^{l}[w]_{A_\infty}^{l+\frac{2}{p}}\prod_{s =1}^{l} \|b_s\|_{\mathrm{BMO}} \left\|\mathcal{M}_{L(\log L)}(\vec{f}) \right\|_{L^p(w)}.
\end{aligned}
\end{equation}
Combining (\ref{ie9}) with (\ref{ie12}), we conclude that for any $0<p<\infty$ and $ w\in A_\infty,$

\begin{equation*}
\int_{\mathbb{R}^n}\left|\mathcal{T}_{\vec{b}}(\vec{f})(x)\right|^pw(x)dx \lesssim \prod_{s =1}^{l} \|b_s\|_{\mathrm{BMO}}[w]_{A_\infty}^{pl}
[w]_{A_\infty}^{\max\{2,p\}}\int_{\mathbb{R}^n}\left(\mathcal{M}_{L(\log L)}(\vec{f})(x)\right)^pw(x)dx,
\end{equation*}
which yields the required estimate (\ref{iethm1.2}).
\end{proof}
\begin{remark}
For the Coifman-Fefferman inequality, we focus only on the precise $A_\infty$ weight constant. But in proving the mixed weighted weak type estimates, such as Theorem \ref{thm1.3}, we need the dependence of the constants in (\ref{ie12}) with respect to $p$ .
\end{remark}

\section{ Proofs of Theorems \ref{thm1.3} and Corollary \ref{cor1.2}}\label{Sect 5}
To prove Theorem \ref{thm1.3}, we need the following lemma with a more precise constant estimate, a previous version of which can be found in \cite[Lemma 2.3]{cru}.
\begin{lemma}\label{lem5}
If $u\in A_1, v\in A_p, 1\leq p<\infty,$ then $uv^\varepsilon \in A_p$ for all $0<\varepsilon <\frac{1}{2^{n+2}[u]_{A_1}}.$
\end{lemma}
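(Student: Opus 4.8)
The plan is to estimate the $A_p$ characteristic of $uv^{\varepsilon}$ directly, splitting its two defining averages and using two consequences of $u\in A_1$: the pointwise lower bound $u(x)\ge\langle u\rangle_Q/[u]_{A_1}$ for a.e.\ $x\in Q$, and the sharp reverse H\"older inequality. Since $A_1\subset A_\infty$ with $[u]_{A_\infty}\le[u]_{A_1}$, there is an exponent $r>1$ with $r'\le 2^{n+2}[u]_{A_1}$ such that $\langle u^{r}\rangle_Q^{1/r}\le 2\langle u\rangle_Q$ for every cube $Q$; the hypothesis $0<\varepsilon<\big(2^{n+2}[u]_{A_1}\big)^{-1}$ is exactly what makes $\varepsilon r'\le 1$, and it also forces $0<\varepsilon<1$ since $[u]_{A_1}\ge 1$. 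These two inequalities on $\varepsilon$ are all the argument needs.

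For $p>1$ I would estimate the two averages in $[uv^{\varepsilon}]_{A_p}=\sup_Q\langle uv^{\varepsilon}\rangle_Q\,\langle(uv^{\varepsilon})^{1-p'}\rangle_Q^{p-1}$ separately. For the first, H\"older's inequality with exponents $r,r'$, then the reverse H\"older inequality, then Jensen's inequality (valid because $\varepsilon r'\le1$), give $\langle uv^{\varepsilon}\rangle_Q\le\langle u^{r}\rangle_Q^{1/r}\langle v^{\varepsilon r'}\rangle_Q^{1/r'}\le 2\langle u\rangle_Q\langle v\rangle_Q^{\varepsilon}$. For the second, the pointwise lower bound on $u$ raised to the negative power $1-p'$ gives $u(x)^{1-p'}\le[u]_{A_1}^{p'-1}\langle u\rangle_Q^{1-p'}$ a.e.\ on $Q$, so, writing $v^{\varepsilon(1-p')}=(v^{1-p'})^{\varepsilon}$ and applying Jensen with exponent $\varepsilon\in(0,1)$, $\langle(uv^{\varepsilon})^{1-p'}\rangle_Q\le[u]_{A_1}^{p'-1}\langle u\rangle_Q^{1-p'}\langle v^{1-p'}\rangle_Q^{\varepsilon}$; raising to the power $p-1$ and using $(p'-1)(p-1)=1$ this becomes $[u]_{A_1}\langle u\rangle_Q^{-1}\langle v^{1-p'}\rangle_Q^{\varepsilon(p-1)}$.

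Multiplying the two bounds, the powers of $\langle u\rangle_Q$ cancel and one is left with $\langle uv^{\varepsilon}\rangle_Q\,\langle(uv^{\varepsilon})^{1-p'}\rangle_Q^{p-1}\le 2[u]_{A_1}\big(\langle v\rangle_Q\langle v^{1-p'}\rangle_Q^{p-1}\big)^{\varepsilon}\le 2[u]_{A_1}[v]_{A_p}^{\varepsilon}$, using $v\in A_p$; taking the supremum over $Q$ gives $uv^{\varepsilon}\in A_p$ with $[uv^{\varepsilon}]_{A_p}\le 2[u]_{A_1}[v]_{A_p}^{\varepsilon}$. The case $p=1$ is the same: the first estimate is unchanged, while $\inf_Q(uv^{\varepsilon})\ge(\inf_Q u)(\inf_Q v)^{\varepsilon}\ge[u]_{A_1}^{-1}[v]_{A_1}^{-\varepsilon}\langle u\rangle_Q\langle v\rangle_Q^{\varepsilon}$, so $[uv^{\varepsilon}]_{A_1}\le 2[u]_{A_1}[v]_{A_1}^{\varepsilon}$.

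The one delicate point is the range of $\varepsilon$: the conclusion itself is qualitative, but the stated threshold $(2^{n+2}[u]_{A_1})^{-1}$ is dictated precisely by the requirement $\varepsilon r'\le1$, where $r$ is the reverse H\"older exponent of $u$. This is why one must use the \emph{sharp} reverse H\"older inequality for $A_1$ weights (with $r'\simeq 2^{n+2}[u]_{A_1}$) instead of its classical qualitative form, and why each application of Jensen's inequality above must be checked to involve only exponents in $(0,1]$; everything else is routine manipulation of $A_p$ averages together with the identity $p'-1=1/(p-1)$.
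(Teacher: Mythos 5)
Your proof is correct and follows essentially the same route as the paper: the sharp reverse H\"older inequality for $u\in A_1$ combined with H\"older's inequality handles the average of $uv^{\varepsilon}$, the pointwise $A_1$ bound $u\gtrsim \langle u\rangle_Q/[u]_{A_1}$ handles the dual average, and both arguments land on the same quantitative bound $[uv^{\varepsilon}]_{A_p}\le 2[u]_{A_1}[v]_{A_p}^{\varepsilon}$. The only cosmetic difference is that the paper chooses the H\"older exponent $t=(1/\varepsilon)'$ so that $\varepsilon t'=1$ exactly (applying H\"older to both averages), whereas you work with the reverse H\"older exponent itself and insert Jensen's inequality where $\varepsilon r'\le 1$; this changes nothing of substance.
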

\begin{proof}
Since $u\in A_1,$ by \cite[Lemma 3.26]{per3}, for each cube $Q$ it follows that
$$
\left(\frac{1}{|Q|} \int_Q u^{r_0}(x)dx\right)^{1 / r_0} \leq \frac{2}{|Q|} \int_Q u(x)dx,
$$
where $r_0=1+\frac{1}{2^{n+1}[u]_{A_1}}.$
For any $0<\varepsilon <\frac{1}{2^{n+2}[w]_{A_1}},$ let $t=(\frac{1}{\varepsilon})^{\prime}.$ Then the H\"{o}lder's inequality yields that
$$
\left(\frac{1}{|Q|} \int_Q u^{t}(x)dx\right)^{1 / t} \leq \frac{2}{|Q|} \int_Q u(x)dx.
$$\par
Consider first the case $p=1$. Since $u, v \in A_1$, for any cube $Q$ and almost every $x \in Q$,
$$
\begin{aligned}
 \frac{1}{|Q|} \int_Q u(x) v^\varepsilon(x) dx
 &\leq\left(\frac{1}{|Q|} \int_{Q} u^t(x)dx\right)^{1 / t}\left(\frac{1}{|Q|} \int_{Q} v^{\varepsilon t^\prime}(x) dx\right)^{1 / t^{\prime}} \leq 2[u]_{A_1}[v]_{A_1}^\varepsilon u(x) v^\varepsilon(x), \\
&
\end{aligned}
$$
which implies that $u v^\varepsilon \in A_1$ and $\left[u v^\varepsilon\right]_{A_1} \leq2[u]_{A_1}[v]_{A_1}^\varepsilon$.\par
If $1<p<\infty$ and $v \in A_p.$ Then for any cube $Q$, the H\"{o}lder's inequality implies that
$$
\begin{aligned}
& \left(\frac{1}{|Q|} \int_{Q} u(x) v^\varepsilon(x) d x\right)\left(\frac{1}{|Q|} \int_{Q}\left(u(x) v^\varepsilon(x)\right)^{1-p^{\prime}} d x\right)^{p-1} \\
 & \hspace{3cm} \leq\left(\frac{1}{|Q|} \int_{Q} u^t(x) d x\right)^{1 / t}\left(\frac{1}{|Q|} \int_{\mathrm{Q}} v^{\varepsilon t^\prime}(x)d x\right)^{1 / t^{\prime}} \\
& \hspace{4cm} \times\left(\frac{1}{|Q|} \int_{Q} u^{t(1-p^{\prime})}(x)d x\right)^{\frac{p-1}{t}}\left(\frac{1}{|Q|} \int_{Q}v^{\varepsilon t^\prime (1-p^{\prime})}(x) d x\right)^{\frac{p-1}{t^\prime}} \\
& \hspace{3cm} \leq \frac{2}{|Q|} \int_Q u(x) d x\|u^{-1}\|_{L^\infty(Q)} \left[\frac{1}{|Q|} \int_{Q} v(x) d x\left(\frac{1}{|Q|} \int_{Q}v^{1-p^{\prime}}(x) d x\right)^{p-1}\right]^\varepsilon\\
&\hspace{3cm}\leq2[u]_{A_1}[v]_{A_{p}}^\varepsilon .
\end{aligned}
$$
Therefore, $u v^\varepsilon \in A_p$ with $\left[u v^\varepsilon\right]_{A_p} \leq2[u]_{A_1}[v]_{A_p}^\varepsilon$.
This finishes the proof of Lemma \ref{lem5}.
\end{proof}
Now we need to show that Lemma \ref{lem5} implies Theorem $\ref{thm1.3}$.
\begin{proof}[Proof of Theorem $\ref{thm1.3}$]
Some basic ideas will be taken from \cite[Theorem 1.9]{li1}, and these ideas have also been used in \cite[Theorem 1.7]{cru}. Note that $u=w_1^{\frac{1}{m}} \cdots w_m^{\frac{1}{m}} \in A_1$ and $v\in A_\infty.$ Let $u \in A_1,$ $S_u$ be the operator defined by
$$
S_u f(x)=\frac{M(f u)(x)}{u(x)}
$$
if $u(x) \neq 0, S_u f(x)=0$ otherwise.\par
For any $h \in L^{r^{\prime}, 1}(u v^{\frac{1}{m}})$ with $h \geq 0,$ applying the Rubio de Francia algorithm with
$$
\mathcal{R} h(x)=\sum_{j=0}^{\infty} \frac{S_u^j h(x)}{(2 K_0)^j},
$$
where $K_0>0$ is an absolute constant which will be chosen later. A simple calculation shows that

$$
\begin{aligned}
& 0\leq h(x) \leq \mathcal{R} h(x) ; \quad
 S_u(\mathcal{R} h)(x) \leq 2 K_0 \mathcal{R} h(x) .
\end{aligned}
$$
Then, it follows from the second estimate that
$\mathcal{R} h\cdot u\in A_1$ and $[\mathcal{R} h\cdot u]_{A_1}\leq 2 K_0.$
Furthermore, we claim that there exists some $r>1$ such that $\mathcal{R} h \cdot u v^{\frac{1}{m r^{\prime}}} \in A_{\infty}$ and
$$
\|\mathcal{R} h\|_{L^{r^{\prime}, 1}(u v^{\frac{1}{m}})} \leq 2\|h\|_{L^{r^{\prime}, 1}(u v ^\frac{1}{m})} .
$$
We postpone the proof of this claim to the end of this section.\par
Observe that
\begin{equation}\label{ie17}
\begin{aligned}
\bigg\|\frac{\mathcal{T}_{\vec{b}}(\vec{f})}{v}\bigg\|_{L^{\frac{1}{m}, \infty}(u v^{\frac{1}{m}})}^{\frac{1}{m r}} 
& =\bigg\| \Big|\frac{\mathcal{T}_{\vec{b}}(\vec{f})}{v}\Big|^{\frac{1}{m r}}\bigg\|_{L^{r, \infty}\left(u v^{\frac{1}{m}}\right)},
\end{aligned}
\end{equation}
then using the duality property of Lorentz spaces for $1<p<\infty$ (Exercise 1.4.12 in \cite{gra}), it follows that
$$
\|f\|_{L^{p, \infty}(\mu)} \backsimeq \sup _{\|g\|_{L^{p^{\prime}, 1}(\mu)} \leq 1}\left|\int_{\mathbb{R}^n} f(x) g(x) d \mu(x)\right|.
$$
This means that there exist $c_1,c_2 >0,$
\begin{equation}\label{ie16}
\begin{aligned}
c_1 \sup _{\|g\|_{L^{p^{\prime}, 1}(\mu)} \leq 1}\Big |\int_{\mathbb{R}^n} f(x) g(x) d \mu(x)\Big| \leq\|f\|_{L^{p, \infty}(\mu)}\leq c_2 \sup _{\|g\|_{L^{p^{\prime}, 1}(\mu)} \leq 1}\Big|\int_{\mathbb{R}^n} f(x) g(x) d \mu(x)\Big|.
\end{aligned}
\end{equation}
In the following, we will calculate the exact value of $c_1$ and $c_2$. \par
Suppose that $\mathcal{X}$ is a quasi-Banach space and let $\mathcal{X}^*$  be its dual space. Then for all $T\in \mathcal{X}^*$, we have
$$
\|T\|_{\mathcal{X}^*}=\sup _{\substack{x \in \mathcal{X} \\\|x\|_\mathcal{X} \leq 1}}|T(x)| .
$$
Let $\mathcal{X}=L^{p^{\prime},1}(\mu)$ and $\mathcal{X}^*=L^{p,\infty}(\mu).$  For a fixed $f\in L^{p,\infty}(\mu),$ we define $T_f(g)$ by
$$T_f(g):=\int_{\mathbb{R}^n}f(x)g(x)d\mu(x), \quad g\in L^{p^{\prime},1}(\mu).$$
Therefore
\begin{equation}\label{ie14}
\|T_f\|_{\mathcal{X}^*}=\sup _{\substack{x \in \mathcal{X} \\\|x\|_\mathcal{X} \leq 1}}|T_f(x)| =\sup _{\|g\|_{L^{p^{\prime}, 1}(\mu)} \leq 1}\Big|\int_{\mathbb{R}^n}f(x)g(x)d\mu(x)\Big|.
\end{equation}
In addition, the discussion in \cite[p. 59]{gra} gives
\begin{equation}\label{ie15}
\|T_f\|_{\mathcal{X}^*} \leq \|f\|_{L^{p, \infty}(\mu)} \leq p^{\prime}\|T_f\|_{\mathcal{X}^*}.
\end{equation}
 Inserting (\ref{ie14}) in (\ref{ie15}) and picking $c_1=1,c_2=p^{\prime}$ yields what we want to prove. \par
Let us continue to estimate (\ref{ie17}).
Applying (\ref{ie16}) with $c_2=r^{\prime},$ we obtain
\begin{equation}\label{ie13}
\begin{aligned}
\bigg\|\frac{\mathcal{T}_{\vec{b}}(\vec{f})}{v}\bigg\|_{L^{\frac{1}{m}, \infty}(u v^{\frac{1}{m}})}^{\frac{1}{m r}}
& \leq r^{\prime} \sup _{\substack{h \in L^{r^{\prime}, 1}(u v^{\frac{1}{m}})\\ \|h\|_{L^{r^{\prime}, 1}(u v^{\frac{1}{m}})}\leq1}}\int_{\mathbb{R}^n}\Big| \mathcal{T}_{\vec{b}}(\vec{f})(x)\Big|^{\frac{1}{m r}} \mathcal{R}h(x) u(x) v^{\frac{1}{mr^\prime}}(x) d x .
\end{aligned}
\end{equation}
Note that ${mr}>1$, where $r$ depends on the weights $u$ and $v.$ Then, using the weighted constant estimates in Theorem \ref{thm1.2} with  $0<p\leq 1$ gives
\begin{equation*}
\begin{aligned}
{}&\int_{\mathbb{R}^n}\Big| \mathcal{T}_{\vec{b}}(\vec{f})(x)\Big|^{\frac{1}{m r}} \mathcal{R}h(x) u(x) v^{\frac{1}{mr^\prime}}(x) d x\\ &\leq C_{n,l}^{\frac{l}{mr}} C_n (l+1)^2(1+\frac{1}{2mr})^{\frac{l}{mr}}[\mathcal{R}h\cdot uv^{\frac{1}{mr^\prime}}]_{A_\infty}^{2+\frac{l}{mr}}\prod_{s =1}^{l} \|b_s\|_{\mathrm{BMO}}^{\frac{1}{mr}}\\ &
\quad \times\int_{\mathbb{R}^n}\Big| \mathcal{M}_{L(\log L)}(\vec{f})(x)\Big|^{\frac{1}{m r}} \mathcal{R}h(x) u(x) v^{\frac{1}{mr^\prime}}(x) d x\\
& \leq C_{n,m,l}[\mathcal{R}h\cdot uv^{\frac{1}{mr^\prime}}]_{A_\infty}^{2+\frac{l}{m}}\prod_{s =1}^{l} \|b_s\|_{\mathrm{BMO}}^{\frac{1}{mr}}\\
&\quad \times \int_{\mathbb{R}^n}\Big| \frac{\mathcal{M}_{L(\log L)}(\vec{f})(x)}{v(x)}\Big|^{\frac{1}{m r}} \mathcal{R}h(x) u(x) v^{\frac{1}{m}}(x) d x.\\
\end{aligned}
\end{equation*}
This inequality, together with the H\"{o}lder's inequality (\ref{ie16}) in Lorentz spaces, implies that
$$
\begin{aligned}
\int_{\mathbb{R}^n}\Big| \mathcal{T}_{\vec{b}}(\vec{f})(x)\Big|^{\frac{1}{m r}} \mathcal{R}h(x) u(x) v^{\frac{1}{mr^\prime}}(x) d x&
\leq C_{n,m,l}[\mathcal{R}h\cdot uv^{\frac{1}{mr^\prime}}]_{A_\infty}^{2+\frac{l}{m}}\prod_{s =1}^{l} \|b_s\|_{\mathrm{BMO}}^{\frac{1}{mr}}\\
& \quad \times \bigg\|\frac{\mathcal{M}_{L(\log L)}(\vec{f})}{v}\bigg\|_{L^{\frac{1}{m}, \infty}(u v^{\frac{1}{m}})}^{\frac{1}{m r}}\|\mathcal{R}h\|_{L^{r^\prime, 1}(u v^{\frac{1}{m}})}^{\frac{1}{m r}}
\end{aligned}
$$
Recall that
$
\|\mathcal{R} h\|_{L^{r^{\prime}, 1}(u v^{\frac{1}{m}})} \leq 2\|h\|_{L^{r^{\prime}, 1}(u v ^\frac{1}{m})}.
$Then for $h\in {L^{r^{\prime}, 1}(u v ^\frac{1}{m})}$ in (\ref{ie13}), 
we have
\begin{equation}\label{ie18}
\begin{aligned}
\bigg\|\frac{\mathcal{T}_{\vec{b}}(\vec{f})}{v}\bigg\|_{L^{\frac{1}{m}, \infty}(u v^{\frac{1}{m}})}^{\frac{1}{ r}}
&\leq C_{n,m,l} (r^{\prime})^m [\mathcal{R}h\cdot uv^{\frac{1}{mr^\prime}}]_{A_\infty}^{2m+l}\prod_{s =1}^{l} \|b_s\|_{\mathrm{BMO}}^{\frac{1}{r}}\bigg\|\frac{\mathcal{M}_{L(\log L)}(\vec{f})}{v}\bigg\|_{L^{\frac{1}{m}, \infty}(u v^{\frac{1}{m}})}^{\frac{1}{ r}}.
\end{aligned}
\end{equation}\par
To finish the proof it remains to show our foregoing claim and get the value of $K_0$. The proof follows the same scheme of that in \cite{cru}, but we have a more precise estimate of the constants.
Since $\vec{w}=\left(w_1, \ldots, w_m\right) \in A_{\vec{1}}$ and $v\in A_\infty,$ it follows that $u=w_1^{\frac{1}{m}} \ldots w_m^{\frac{1}{m}} \in A_1$ and $v^{\frac{1}{m}}\in A_\infty.$ The former indicates that $S_u$ is bounded on $L^{\infty}(u v^{\frac{1}{m}})$ with constant $C_0=[u]_{A_1},$ that is,
\begin{equation}\label{ie19}
\|S_u f\|_{L^{\infty}(u v^{\frac{1}{m}})} \leq[u]_{A_1}\|f\|_{L^{\infty}(u v^{\frac{1}{m}})}.
\end{equation}
In order to apply the Marcinkiewicz interpolation theorem, we also need to show that
$S_u$ is bounded on $L^{p_0}(u v^{\frac{1}{m}})$ for some $1<p_0<\infty$.
Note that for $v^{1 / m} \in A_{\infty},$ there exists $t>1$ such that $v^{\frac{1}{m}} \in A_t.$ Then $A_p$ factorization theorem (\cite[Theorem 7.5.1]{gra}) tells us that there exist $v_1, v_2 \in A_1$ such that $v^{\frac{1}{m}}=v_1 v_2^{1-t}.$ Using these facts, we have
$$
u^{1-p_0} v^{\frac{1}{m}}=v_1\left(u v_2^{\frac{t-1}{p_0-1}}\right)^{1-p_0} .
$$
 Applying Lemma \ref{lem5}, for any $0<\varepsilon <\frac{1}{2^{n+2}[u]_{A_1}}$ and $v_2\in A_1,$ yields that $uv_2^\varepsilon \in A_1.$ Then we pick
$p_0=2^{n+3}(t-1)[u]_{A_1}+1$ such that $u v_2^{\frac{t-1}{p_0-1}}\in A_1,$ which further implies $$u^{1-p_0} v^{\frac{1}{m}}\in A_{p_0}.$$
Observe that
$$
\int_{\mathbb{R}^n} |S_uf(x)|^{p_0} u(x) v^{\frac{1}{m}}(x) d x=\int_{\mathbb{R}^n} (M(f u)(x))^{p_0} u^{1-p_0}(x) v^{\frac{1}{m}}(x) d x.
$$
By this observation and Buckley's theorem \cite[Theorem 3.11]{per3}, it follows that
\begin{equation}\label{ie20}
\|S_u f\|_{L^{p_0}(u v^{\frac{1}{m}})}=\|M(f u)\|_{L^{p_0}(u^{1-p_0} v^{\frac{1}{m}})} \leq c_np_0^{\prime}[u^{1-p_0} v^{\frac{1}{m}}]_{A_{p_0}}^{\frac{1}{p_0-1}}\|f\|_{L^{p_0}(uv^\frac{1}{m})},
\end{equation}
and thus $S_u: L^{p_0}(u v^{\frac{1}{m}})\rightarrow L^{p_0}(u v^{\frac{1}{m}})$ holds with
constant $C_1=c_np_0^{\prime}[u^{1-p_0} v^{\frac{1}{m}}]_{A_{p_0}}^{\frac{1}{p_0-1}}.$\par
The Marcinkiewicz interpolation theorem in \cite[Proposition A.1]{cru}, together with (\ref{ie19}) and (\ref{ie20}), yields that $S_u$ is bounded on $L^{q, 1}(uv^{\frac{1}{m}})$ for all $p_0<q<\infty$ and enjoys the property 
$$
\|S_u f\|_{L^{q, 1}(u v^{\frac{1}{m}})} \leq 2^{\frac{1}{q}}\Big(C_1(\frac{1}{p_0}-\frac{1}{q})^{-1}+C_0\Big)\|f\|_{L^{q, 1}(u v^{\frac{1}{m}})} .
$$
Notice that, if $q\geq 2p_0,$ then
$$
2^{\frac{1}{q}}\Big(C_1(\frac{1}{p_0}-\frac{1}{q})^{-1}+C_0\Big)\leq 4p_0(C_0+C_1)\leq c_np_0p_0^{\prime}([u^{1-p_0} v^{\frac{1}{m}}]_{A_{p_0}}^{\frac{1}{p_0-1}}+[u]_{A_1}).
$$
Using Lemma \ref{lem5} and $A_p$ factorization theorem again, we obtain
\begin{equation*}
\begin{aligned}
 \ [u^{1-p_0} v^{\frac{1}{m}}]_{A_{p_0}}^{\frac{1}{p_0-1}}&\leq [v_1]_{A_1}\left[u v_2^{\frac{t-1}{p_0-1}}\right]_{A_1}^{p_0-1} \leq2^{p_0-1}c_n^t[v^{\frac{1}{m}}]_{A_{t}}^2[u]_{A_1}^{p_0-1}.
\end{aligned}
\end{equation*}
Let $K_0= 4c_np_0p_0^{\prime}([u]_{A_1}+2^{p_0-1}c_n^t[v^{\frac{1}{m}}]_{A_{t}}^2[u]_{A_1}^{p_0-1})+1,$ then
$$
\|S_u f\|_{L^{q, 1}(u v^{\frac{1}{m}})} \leq K_0\|f\|_{L^{q, 1}(u v^{\frac{1}{m}})},\quad \hbox{\ for all } q\geq 2p_0 .
$$
Take $\varepsilon=\frac12{\min\{\frac{1}{2^{n+3}K_0},\frac{1}{2p_0} \}}$
 and $r=(\frac{1}{\varepsilon})^{\prime},$ which implies that $r^{\prime}>2p_0.$ This ensures that $S_u$ is bounded on $L^{r^{\prime}, 1}(u v^{\frac{1}{m}})$ with constant bounded by $K_0$.\par
 For any $0<\varepsilon<\frac{1}{2^{n+3}K_0}$ and $W_1\in A_1$ with $[W_1]_{A_1}\leq 2K_0.$ Lemma \ref{lem5} and a simple calculation yield that
$W_1W_2^{\varepsilon}\in A_t, \hbox{for all } W_2\in A_t.$
Based on the facts that $\mathcal{R} h\cdot u\in A_1$ with $[\mathcal{R} h\cdot u]_{A_1}\leq 2 K_0$
 and $v^{\frac{1}{m}}\in A_t$, we have
$$\mathcal{R} h \cdot u v^{\frac{1}{m r^{\prime}}}=\mathcal{R} h \cdot u v^{\frac{\varepsilon}{m }} \in A_{t}\subseteq A_\infty,$$
 where $\frac{1}{r^{\prime}}=\varepsilon<\frac{1}{2^{n+3}K_0}.$ In addition, it is easy to see that
\begin{equation}\label{ie21}
\begin{aligned}
\ [\mathcal{R} h \cdot u v^{\frac{1}{m r^{\prime}}}]_{A_\infty}&\leq [\mathcal{R} h \cdot u v^{\frac{1}{m r^{\prime}}}]_{A_t}
\leq2[\mathcal{R} h \cdot u]_{A_1}[v^{\frac{1}{m}}]_{A_t}^{\frac{1}{r^\prime}}\leq 4K_0[v^{\frac{1}{m}}]_{A_t}.
\end{aligned}
\end{equation}
This completes the proof of the claim. Note that the following two facts hold:
$r\leq2 $, which follows from $K_0\geq 1$ and $p_0>1.$  $r^\prime\leq2^{n+4}K_0$ since $K_0\geq p_0.$ \par
Combining these two facts and using (\ref{ie18}), (\ref{ie21}), we deduce
$$
\left\|\frac{\mathcal{T}_{\vec{b}}(\vec{f})}{v}\right\|_{L^{\frac{1}{m}, \infty}(u v^{\frac{1}{m}})} \leq C_{n,m,l} K_0^{2l+6m}[v^{\frac{1}{m}}]_{A_t}^{2l+4m}\prod_{s =1}^{l} \|b_s\|_{\mathrm{BMO}}\left\|\frac{ \mathcal{M}_{L(\log L)} (\vec{f})}{v}\right\|_{L^{\frac{1}{m}, \infty}(u v^{\frac{1}{m}})},
$$
which completes the proof of Theorem \ref{thm1.3}.
\end{proof}

\begin{proof}[Proof of Corollary $\ref{cor1.2}$]
Let $v\equiv 1,$ applying Theorem \ref{thm1.4} and Remark $\ref{remark1}$ with $v\in A_2$, Corollary  $\ref{cor1.2}$ follows easily by a simple calculation.
\end{proof}

\section{ Proofs of Theorems \ref{thm1.4} and Corollary \ref{cor1.3}}\label{Sect 6}
\begin{proof}[Proof of Theorem $\ref{thm1.4}$]
Let $\vec{\gamma}=(\overbrace{1, \ldots ,1}^{l_1}, \overbrace{2, \ldots ,2}^{l-l_1}),$ under the Hypothesis $\ref{hyp2}$ of $\mathcal{T}_{\vec{b}},$ we consider only the contribution of $\mathcal{A}_{\mathcal{S}_{j},\vec{b}}^{\vec{\gamma}}(\vec{f})$ for $j=1,\ldots,3^n.$ \par
Fix exponents $\frac{1}{p}=\frac{1}{p_1}+\cdots+\frac{1}{p_m}$ with $1<p_1, \ldots, p_m<\infty$ and weights $\vec{w}=\left(w_1, \ldots, w_m\right)$. It follows from the definition of $\mathcal{A}_{\mathcal{S}_{j},\vec{b}}^{\vec{\gamma}}(\vec{f})$ and $0<p\leq 1$ that
\begin{equation}\label{ie22}
\begin{aligned}
\| \mathcal{A}_{\mathcal{S}_{j},\vec{b}}^{\vec{\gamma}}(\vec{f}) \|_{L^p(\nu_{\vec{w}})}^p \leq
 \sum_{Q \in \mathcal{S}_{j}}& \prod_{s =1}^{l_1}{\langle|f_s|\rangle}_Q^p \int_Q \prod_{s =1}^{l_1}\left|b_s(x)-{\langle b_s \rangle}_{Q} \right|^p \nu_{\vec{w}}(x)dx \\
 &\times  \prod_{s =l_1+1}^l {\langle|(b_s-{\langle b_s \rangle}_{Q} )f_s|\rangle}_Q^p\prod_{s =l+1}^{m}{\langle|f_s|\rangle}_Q^p.
\end{aligned}
\end{equation}
Let $v_i(x)=M w_i(x)$ with $i=1,\ldots, m,$ then $ v_i(x)\geq\langle w_i\rangle_Q$ for a.e. $x\in Q$ where $Q$ is any dyadic cube contained in $\mathcal{S}_j.$
Using the H\"{o}lder's inequality, we obtain
\begin{equation}\label{ie23}
\begin{aligned}
\int_Q \prod_{s =1}^{l_1}\left|b_s(x)-{\langle b_s \rangle}_{Q} \right|^p \nu_{\vec{w}}(x)dx  \lesssim \prod_{s =1}^{l_1}\|b_s\|_{\mathrm{BMO}_{p_s}(w_s)}^p \prod_{s =1}^{m} \left(  w_s(Q) \right)^{\frac{p}{p_s}}.
\end{aligned}
\end{equation}
Pick $r,t\in \mathbb{R}$ such that $1<r<t<\min _i\{p_i\}.$ Applying the H\"{o}lder's inequality twice gives
\begin{equation}\label{ie24}
\begin{aligned}
 {\langle|(b_s-{\langle b_s \rangle}_{Q} )f_s|\rangle}_Q & \lesssim \|b_s\|_{\mathrm{BMO}}{\langle|f_s|^r\rangle}_Q^{\frac{1}{r}}  \leq \left\|b_s\right\|_{\mathrm{BMO}} {\langle|f_s|^t v_s^{\frac{t}{p_s}}\rangle}_Q^{\frac{1}{t}} {\langle w_s \rangle}_Q^{-{\frac{1}{p_s}}}.
 \end{aligned}
\end{equation}
Putting the estimates (\ref{ie22}-(\ref{ie24}) together we obtain
\begin{equation*}
\begin{aligned}
\left\| \mathcal{A}_{\mathcal{S}_{j},\vec{b}}^{\vec{\gamma}}(\vec{f}) \right\|_{L^p(\nu_{\vec{w}})}^p &\leq
 \sum_{Q \in \mathcal{S}_{j}} \prod_{s =1}^{l_1} \|b_s\|_{\mathrm{BMO}_{p_s}(w_s)}^p  \prod_{s =l_1+1}^{l} \|b_s\|_{\mathrm{BMO}}^p \prod_{s =1}^{m}  {\langle|f_s|^t v_s^{\frac{t}{p_s}}\rangle}_Q^{\frac{p}{t}} \\
 & \quad \quad\times\prod_{s =1}^{m} {\langle w_s \rangle}_Q^{-{\frac{p}{p_s}}}\prod_{s =1}^{m} w_s(Q)^{\frac{p}{p_s}}  \\
 & \lesssim   \sum_{Q \in \mathcal{S}_{j}} \left(\prod_{s =1}^{m} \inf_{x \in Q}\left(M_t(f_s v_s^{\frac{1}{p_s}}) \right)^p(x)\right)|Q| \\
 & \quad \times\prod_{s =1}^{l_1}  \|b_s\|_{\mathrm{BMO}_{p_s}(w_s)}^p \prod_{s =l_1+1}^{l} \|b_s\|_{\mathrm{BMO}}^p\\
 &\lesssim  \sum_{Q \in \mathcal{S}_{j}} \left(\prod_{s =1}^{m} \inf_{x \in E_ {Q}}\left(M_t(f_s v_s^{\frac{1}{p_s}})\right)^p(x)\right)|E_Q|\\
 &\quad \times\prod_{s =1}^{l_1}  \|b_s\|_{\mathrm{BMO}_{p_s}(w_s)}^p \prod_{s =l_1+1}^{l} \|b_s\|_{\mathrm{BMO}}^p,
\end{aligned}
\end{equation*}
where in the last inequality, we have used the sparseness property of the collection $\mathcal{S}_{j}$.\\
Note that  $1<t<\min _i\{p_i\}$ and $\{E_Q\}$ are pairwise disjoint, it follows that
\begin{equation*}
\begin{aligned}
\left\| \mathcal{A}_{\mathcal{S}_{j},\vec{b}}^{\vec{\gamma}}(\vec{f}) \right\|_{L^p(\nu_{\vec{w}})}^p &\lesssim \prod_{s =1}^{l_1} \|b_s\|_{\mathrm{BMO}_{p_s}(w_s)}^p \prod_{s =l_1+1}^{l} \|b_s\|_{\mathrm{BMO}}^p \int_{\mathbb{R}^{n}} \prod_{s =1}^{m} \left(M_t(f_s v_s^{\frac{1}{p_s}}) \right)^p(x)dx \\
& \leq \left( \|\vec{b}\|_{\mathrm{BMO}}^*\right)^p \prod_{s =1}^{m} \|M_t(f_s v_s^{\frac{1}{p_s}}) \|_{L^{p_s}(\mathbb{R}^{n})}^p  \\
& \lesssim \left( \|\vec{b}\|_{\mathrm{BMO}}^*\right)^p \prod_{s =1}^{m} \|f_s  \|_{L^{p_s}(Mw_s)}^p.
\end{aligned}
\end{equation*}
The proof of Theorem \ref{thm1.4} is finished.
\end{proof}

\begin{proof}[Proof of Corollary $\ref{cor1.3}$]
It was shown in \cite[Corollary 2.2]{omb} that, for any $ w \in A_{\infty}$ and $f \in \mathrm{BMO},$ there exists a dimensional constant $c_n$ independent of $f$ and $w$ such that for each cube $Q,$ 
\begin{equation*}
\begin{aligned}
\left( \frac{1}{w(Q)}  \int_Q \left|f(x)-{\langle f \rangle}_{Q} \right|^p w(x)dx \right)^{\frac{1}{p}} \leq c_n p[w]_{A_{\infty}}\|f\|_{\mathrm{BMO}}. \\
\end{aligned}
\end{equation*}
Therefore, if $w_s \in A_{\infty}$ and $b_s \in \mathrm{BMO}$ with $1\leq s \leq l,$ then $b_s \in {\mathrm{BMO}_{p_s}(w_s)}$ and
$$ \| b_s\|_{\mathrm{BMO}_{p_s}(w_s)}\leq c_n p_s[w_s]_{A_{\infty}}\|b_s\|_{\mathrm{BMO}},$$
which implies that
$$  \|\vec{b}\|_{\mathrm{BMO}}^* \leq C_{n ,p} \prod_{s =1}^{l}[w_s]_{A_{\infty}}\prod_{s =1}^{l} \|b_s\|_{\mathrm{BMO}}.$$
Plugging this estimate into Theorem \ref{thm1.4} we achieve the desired conclusion (1) in Corollary $\ref{cor1.3}$.

It remains to show the estimate (2) holds in Corollary $\ref{cor1.3}$. We will use the same notation as in the proof of Theorem \ref{thm1.4}. Note that
for every $x \in Q\subseteq 2Q,$ if $w_s\in A_\infty^{\text{weak}},$ then $v_s(x):=Mw_s(x)\geq {\langle w_s \rangle}_{2Q}$.
For a fixed $j\in \{1,\ldots, 3^n\},$ we need to estimate
\begin{equation}\label{ie25(1)}
\sum_{Q \in \mathcal{S}_{j}} \prod_{s =1}^{l_1}{\langle|f_s|\rangle}_Q^p \int_Q \prod_{s =1}^{l_1}\left|b_s(x)-{\langle b_s \rangle}_{Q} \right|^p \nu_{\vec{w}}(x)dx
 \prod_{s =l_1+1}^l {\langle|(b_s-{\langle b_s \rangle}_{Q} )f_s|\rangle}_Q^p\prod_{s =l+1}^{m}{\langle|f_s|\rangle}_Q^p.
 \end{equation}
A simple calculation yields that
 \begin{equation}\label{ie25.1}
\begin{aligned}
{}& \int_Q \prod_{s =1}^{l_1}\left|b_s(x)-{\langle b_s \rangle}_{Q} \right|^{p}\nu_{\vec{w}}(x)dx  \\
 & \leq \prod_{s =1}^{l_1}\left( \frac{1}{w(2Q)}\int_Q \left|b_s(x)-{\langle b_s \rangle}_{Q} \right|^{p_s} w_s(x)dx \right)^{\frac{p}{p_s}} \prod_{s =1}^{m}(w_s(2Q))^{\frac{p}{p_s}} \\
 &\lesssim  \prod_{s =1}^{l_1}\left( [w_s]_{A_{\infty }}^{weak} \right)^p \prod_{s =1}^{l_1}\|b_s\|_{\mathrm{BMO}}^p \prod_{s =1}^{m} (w_s(2Q))^{\frac{p}{p_s}},
\end{aligned}
\end{equation}
 where the last inequality follows from \cite[Corollary 2.4]{omb}.\par
 Therefore, using the same ideas as what have been used in (\ref{ie24}), we obtain
 \begin{equation}\label{ie26.1}
\begin{aligned}
{\langle|(b_s-{\langle b_s \rangle}_{Q} )f_s|\rangle}_Q &\lesssim \|b_s\|_{\mathrm{BMO}} {\langle|f_s|^t v_s^{\frac{t}{p_s}}\rangle}_Q^{\frac{1}{t}} {\langle w_s \rangle}_{2Q}^{-{\frac{1}{p_s}}}.\\
\end{aligned}
\end{equation}
This inequality, together with inqualities  (\ref{ie25(1)})-(\ref{ie26.1}), gives that
\begin{equation*}
\begin{aligned}
\left\| \mathcal{A}_{\mathcal{S}_{j},\vec{b}}^{\vec{\gamma}}(\vec{f}) \right\|_{L^p(\nu_{\vec{w}})}^p &\lesssim \prod_{s =1}^{l_1} \left( [w_s]_{A_{\infty }}^{weak} \right)^p \prod_{s =1}^{l}\|b_s\|_{\mathrm{BMO}}^p \prod_{s =1}^{m} {\langle|f_s|^t v_s^{\frac{t}{p_s}}\rangle}_Q^{\frac{p}{t}} \prod_{s =1}^{m}{\langle w_s \rangle}_{2Q}^{-{\frac{p}{p_s}}} \prod_{s =1}^{m}(w_s(2Q))^{\frac{p}{p_s}}  \\
&\lesssim  \prod_{s =1}^{l_1} \left( [w_s]_{A_{\infty }}^{weak} \right)^p \prod_{s =1}^{l}\|b_s\|_{\mathrm{BMO}}^p\prod_{s =1}^{m} \|f_s  \|_{L^{p_s}(Mw_s)}^p. \\
\end{aligned}
\end{equation*}\par
 Keeping this estimate in mind and applying the same reasoning to the other forms of $\vec{\gamma},$ we obtain
 \begin{equation*}
\begin{aligned}
\left\| \mathcal{T}_{\vec{b}}(\vec{f}) \right\|_{L^p(\nu_{\vec{w}})} &\lesssim \sum_{\vec{\gamma}\in \{1,2\}^l}(\prod_{s:\gamma_s=1}[w_s]_{A_\infty}^{\text{weak}})\prod_{s =1}^{l}\|b_s\|_{\mathrm{BMO}}\prod_{s =1}^{m} \|f_s  \|_{L^{p_s}(Mw_s)},
\end{aligned}
\end{equation*}
which finishes the proof of corollary $\ref{cor1.3}$.
\end{proof}

\section{ Proofs of Theorem \ref{thm1.5}}\label{Sect 7}
This section will be devoted to demonstrate Theorem \ref{thm1.5}. For this purpose, we first present the definition  of $N$-function.
We say a Young function $\phi$ is an $N$-function if it satisfies
$$\lim _{t \rightarrow 0^{+}} \frac{\phi(t)}{t}=0 \quad \text { and } \quad \lim _{t \rightarrow \infty} \frac{\phi(t)}{t}=\infty.$$\par
Moreover, an $N$-function is said to satisfy the sub-multiplicative property if $\phi(st)\leq \phi(s)\phi(t)$ for any $s,t\geq 0,$ 
For convenience, we need to state some properties as well as the lemmata of $\phi \in \Phi$ and it's complementary function $\bar{\phi}.$

\begin{itemize}
	\item  (Young's inequality) $st \leq \phi(s)+\bar{\phi}(t), s,t \geq 0.$
	\item When $\phi$ is an $N$-function, then $\bar{\phi}$ is also an $N$-function, and the following inequalities hold:
	\begin{equation}\label{ie6.1}
		t \leq \phi^{-1}(t) \bar{\phi}^{-1}(t) \leq 2 t, t \geq 0;
	\end{equation}
	\begin{equation}\label{ie6.2}
		\bar{\phi}\left(\frac{\phi(t)}{t}\right) \leq \phi(t), t>0.
	\end{equation}
	\item Let $\phi$ be an $N$-function, then there exists $0<\alpha<1$ such that $\phi^{\alpha}$ is quasi-convex if and only if $\bar{\phi} \in \Delta_{2},$ where $\phi^{\alpha}(t)=\phi(t)^{\alpha}.$
	\item  $\phi \in \Delta_{2}$  if and only if there exists some constant $C_1>0$ such that for any $\lambda \geq 2$,
	\begin{equation}\label{ie6.3}
		\phi(\lambda t) \leq 2^{C_1} \lambda^{C_1} \phi(t), t>0.
	\end{equation}
\end{itemize}\par

The next two technical lemmata allow us to prove Theorem \ref{thm1.5}. The first one is a modular inequality with respect to the Hardy-Littlewood maximal operator $M.$
\begin{lemma}[\cite{and1}]\label{lem7}
Let $\phi \in \Phi$ and  be quasi-convex. If $1<i_\phi<\infty$ and $w \in A_{i_\phi}$, then
$$
\int_{\mathbb{R}^n} \phi(M f(x)) w(x) d x \leq C \int_{\mathbb{R}^n} \phi\left(C[w]_{A_{i_\phi}}^{1 / i_\phi}|f(x)|\right) w(x) dx,
$$
where $C$ is an absolute constant which only depends on $\phi$ and $\alpha$.
\end{lemma}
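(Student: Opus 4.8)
The plan is to combine a layer-cake representation of the modular $\int_{\mathbb{R}^n}\phi(Mf)w$ with the weighted weak-type $(q,q)$ bound for $M$ at an exponent $q$ slightly below $i_\phi$, and then to fold the resulting $A_q$-constant into the argument of $\phi$ by convexity. First I would reduce to the case in which $\phi$ is itself convex: since $\phi$ is quasi-convex there are a convex function $\widetilde\phi$ and $a_1\ge1$ with $\widetilde\phi(t)\le\phi(t)\le a_1\widetilde\phi(a_1t)$, and the dilation indices of $\phi$ and $\widetilde\phi$ agree, so $i_{\widetilde\phi}=i_\phi>1$; proving the estimate for $\widetilde\phi$ and transferring it to $\phi$ costs only the factor $a_1$, which, together with the parameter $\alpha$ governing the $\Delta_2$/quasi-convexity dichotomy for $\bar\phi$, may be absorbed into the constants $C$. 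So assume from now on that $\phi$ is convex and $\phi(0)=0$.

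Since $w\in A_{i_\phi}$ and $i_\phi>1$, the quantitative openness of the Muckenhoupt classes (the sharp reverse H\"older inequality for $w$) produces an exponent $q$ with $1<q<i_\phi$ for which $w\in A_q$ with $[w]_{A_q}$ controlled by $[w]_{A_{i_\phi}}$ quantitatively. For such $q$, the $A_q$ condition together with a Calder\'on--Zygmund decomposition at height $\lambda$ gives
\[
w\big(\{x\in\mathbb{R}^n:Mf(x)>\lambda\}\big)\le c_n\,[w]_{A_q}\,\lambda^{-q}\int_{\mathbb{R}^n}|f(x)|^{q}w(x)\,dx,\qquad\lambda>0.
\]
Applying this to $f\chi_{\{|f|>\lambda/2\}}$ and using the pointwise bound $M\big(f\chi_{\{|f|\le\lambda/2\}}\big)\le\lambda/2$, one may restrict the integral on the right to $\{|f|>\lambda/2\}$. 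Writing $\int_{\mathbb{R}^n}\phi(Mf)w=\int_0^\infty w(\{Mf>\lambda\})\,d\phi(\lambda)$ and using Tonelli's theorem, the matter is reduced to estimating $\int_0^{2|f(x)|}\lambda^{-q}\,d\phi(\lambda)$ for each $x$. Here the lower dilation index enters: because $q<i_\phi$, the submultiplicativity of $h_\phi$ gives $\phi(s)\,s^{-q}\lesssim\phi(t)\,t^{-q}$ for $0<s\le t$ (and $\phi(s)s^{-q}\to0$ as $s\to0$), and a routine integration by parts then yields $\int_0^{2t}\lambda^{-q}\,d\phi(\lambda)\lesssim t^{-q}\phi(2t)$. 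Assembling these pieces,
\[
\int_{\mathbb{R}^n}\phi(Mf)w\le C\,[w]_{A_q}\int_{\mathbb{R}^n}\phi\big(2|f(x)|\big)w(x)\,dx .
\]
Finally, since $\phi$ is convex with $\phi(0)=0$ one has $K\phi(2t)\le\phi(2Kt)$ for every $K\ge1$; taking $K$ to be the multiplicative weight factor produced above, i.e.\ the relevant power of $[w]_{A_{i_\phi}}$ dominating $C[w]_{A_q}$, pushes the weight constant inside $\phi$ and gives the desired inequality.

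The main obstacle is precisely this last optimization: recovering the exact exponent $1/i_\phi$ on $[w]_{A_{i_\phi}}$. A crude choice of $q$ only yields a power of $[w]_{A_{i_\phi}}$ that is too large, since as $q\uparrow i_\phi$ the gain coming from the openness of $A_{i_\phi}$ and the constant in the dilation-index estimate $\phi(s)s^{-q}\lesssim\phi(t)t^{-q}$ degrade simultaneously. To land on the sharp power one must choose $q$ so as to balance the distance $i_\phi-q$ (which the sharp reverse H\"older inequality forces to be of order $[w]_{A_{i_\phi}}^{-1}$ up to dimensional factors) against the growth of that dilation-index constant, and then track the resulting exponents with care; carrying out this bookkeeping, and in particular invoking the reverse H\"older inequality with its explicit dependence on $[w]_{A_{i_\phi}}$, is the technical heart of the argument.
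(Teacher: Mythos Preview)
The paper does not prove Lemma~\ref{lem7}; it is quoted verbatim from \cite{and1} and used as a black box, so there is no in-paper argument to compare against.

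Your outline follows the standard route for such modular inequalities: reduce to convex $\phi$, pass to a weak-type $(q,q)$ bound for $M$ at some $q<i_\phi$ produced by the openness of $A_{i_\phi}$, run the layer-cake, and use the dilation-index condition $q<i_\phi$ to integrate $\lambda^{-q}\,d\phi(\lambda)$. That skeleton is correct and is essentially how the result is obtained in the source. However, as you yourself concede in the final paragraph, what you have written is a plan rather than a proof: you never actually fix $q$, you do not write down the quantitative reverse-H\"older step (how large $i_\phi-q$ can be in terms of $[w]_{A_{i_\phi}}$, and how $[w]_{A_q}$ compares to $[w]_{A_{i_\phi}}$), and you do not estimate the constant $C_q$ in $\phi(s)s^{-q}\le C_q\,\phi(t)t^{-q}$ as $q\uparrow i_\phi$. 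Until both are made explicit and balanced against each other, the claimed exponent $1/i_\phi$ is asserted, not derived; a careless choice of $q$ gives only $[w]_{A_{i_\phi}}$ to some unspecified power. If you want a self-contained proof, the missing ingredients are exactly the sharp reverse-H\"older inequality with explicit exponent and a quantitative form of the almost-increasing property of $t\mapsto\phi(t)t^{-q}$; both are available in the literature, but they must be invoked with their precise constants.
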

The next lemma concerns on a modular inequality for sparse operators (see \cite[Lemma 3.13]{and1}), which plays a foundamental role in our analysis.
\begin{lemma}[\cite{and1}]\label{lem6}
Let $\phi$ be an $N$-function with $\phi \in \Delta_2,$ and $w \in A_{i_\phi}$. If $i_\phi>1$, for any dyadic grid $\mathcal{D}$ and $\mathcal{S} \subseteq \mathcal{D}$ a sparse family, we have
$$
\int_{\mathbb{R}^n} \phi\left(\mathcal{A}_{\mathcal{S}}(f)(x)\right) w(x) dx \leq C[w]_{A_{\infty}}^{1+\widetilde{C}} \int_{\mathbb{R}^n} \phi(M f(x)) w(x) dx,
$$
where $C$ is an absolute constant only depending on $\phi$ and $\widetilde{C}$ which satisfies
$\phi(\lambda t)\leq 2^{\widetilde{C}}\lambda^{\widetilde{C}}\phi(t)$ for $ \lambda\geq2$ and $ t>0.$
\end{lemma}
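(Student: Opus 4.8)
The plan is to pass, via Hypothesis \ref{hyp2}, from $\mathcal{T}_{\vec b}$ to the finitely many elementary commutator sparse forms $\mathcal{A}_{\mathcal{S}_j,b}^{\vec\gamma}(\vec f)$, to absorb every BMO oscillation at the level of a pointwise estimate, and then to run the two modular lemmas for $\mathcal{A}_{\mathcal{S}}$ and $M$ (Lemmas \ref{lem6} and \ref{lem7}) the appropriate number of times. Two structural facts drive everything. Being an $N$-function with the sub-multiplicative property, $\phi$ satisfies $\phi(2t)\le\phi(2)\phi(t)$, so $\phi\in\Delta_{2}$ and $C_1$ is its $\Delta_2$ constant. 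Since $1<q<i_{\phi}$, the complementary function $\bar\phi$ lies in $\Delta_2$, hence $i_{\phi}>1$ and there is $0<\alpha<1$ with $\phi^{\alpha}$ quasi-convex, whose $\Delta_2$ constant is $\alpha C_1$; this is the source of the $\alpha C_1$ exponents. By Hypothesis \ref{hyp2}, $|\mathcal{T}_{\vec b}(\vec f)|\lesssim\sum_{j=1}^{3^n}\sum_{\vec\gamma\in\{1,2\}^l}\mathcal{A}_{\mathcal{S}_j,b}^{\vec\gamma}(\vec f)$; using convexity of $\phi$ (so $\phi(\sum_k a_k)\le N^{-1}\sum_k\phi(Na_k)$ with $N=3^n2^l$) followed by $\phi(Na_k)\le\phi(N)\phi(a_k)$, it suffices to bound $\int_{\mathbb{R}^n}\phi(\mathcal{A}_{\mathcal{S}_j,b}^{\vec\gamma}(\vec f))\,w$ for one fixed $j$ and, by symmetry, for $\vec\gamma=(\overbrace{1,\dots,1}^{l_1},\overbrace{2,\dots,2}^{l-l_1})$.

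For the slots with $\gamma_s=1$ I would apply Lemma \ref{lem4} successively to $b_1,\dots,b_{l_1}$, just as in the proof of Theorem \ref{thm1.1}, to obtain one enlarged sparse family $\widetilde{\mathcal{S}}\supseteq\mathcal{S}_j$ with $|b_s(x)-\langle b_s\rangle_Q|\le 2^{n+2}\|b_s\|_{\mathrm{BMO}}\sum_{R\in\widetilde{\mathcal{S}},\,R\subseteq Q}\chi_R(x)$ on $Q$. For the slots with $\gamma_s=2$ the dichotomy of the theorem enters: when $r<i_{\phi}$ I would estimate, by Hölder and the John--Nirenberg inequality, $\langle|(b_s-\langle b_s\rangle_Q)f_s|\rangle_Q\le\langle|b_s-\langle b_s\rangle_Q|^{r'}\rangle_Q^{1/r'}\langle|f_s|^{r}\rangle_Q^{1/r}\lesssim\|b_s\|_{\mathrm{BMO}}\langle|f_s|^{r}\rangle_Q^{1/r}$, so that only $L^{r}$-averages of the $f_s$ appear; when $i_{\phi}\le r$ I would instead use the generalized Hölder inequality in Orlicz spaces together with Lemma \ref{lem3}, getting $\langle|(b_s-\langle b_s\rangle_Q)f_s|\rangle_Q\lesssim\|b_s\|_{\mathrm{BMO}}\|f_s\|_{L(\log L),Q}$. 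Pushing the scalar $\prod_{s=1}^{l}\|b_s\|_{\mathrm{BMO}}$ inside by homogeneity of the sparse operators, using $\big(\sum_{R\subseteq Q}\chi_R\big)^{l_1}\chi_Q$ to generate an $(l_1{+}1)$-fold iterate of $\mathcal{A}_{\widetilde{\mathcal{S}}}$, and recalling $\|f_s\|_{L(\log L),Q}\le M_{L(\log L)}f_s\simeq M^2 f_s$, I arrive at a pointwise bound of $\mathcal{A}_{\mathcal{S}_j,b}^{\vec\gamma}(\vec f)$ by a constant multiple of an $(l_1{+}1)$-fold iterated sparse operator applied to the multilinear expression $\prod_{i=1}^{m}|f_i|$, read in the appropriate ($L^{r}$ or $L(\log L)$) sense, with the factor $\prod_s\|b_s\|_{\mathrm{BMO}}$ inside.

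Next I would reduce the multilinear sparse expression one variable at a time. By Hölder's inequality in the $Q$-sum (exponent $m$) together with Jensen, $\sum_{Q}\prod_{i=1}^{m}\langle|f_i|\rangle_Q\chi_Q\le\prod_{i=1}^{m}\big(\mathcal{A}_{\mathcal{S}}(|f_i|^m)\big)^{1/m}$, and similarly for the $L^{r}$/$L(\log L)$ and iterated versions. Then sub-multiplicativity gives $\phi\big(\prod_i t_i^{1/m}\big)\le\prod_i\phi(t_i^{1/m})$, and the ordinary Hölder inequality for the measure $w\,dx$ yields $\int\phi(\mathcal{A}_{\mathcal{S}_j,b}^{\vec\gamma}(\vec f))\,w\le\prod_{i=1}^{m}\big(\int\Psi(\Lambda_i)\,w\big)^{1/m}$, where $\Psi(u):=\phi(u^{1/m})^m$ and $\Lambda_i$ is the iterated one-variable sparse operator evaluated at $|f_i|^m$; one checks that $\Psi$ is again an $N$-function in $\Delta_2$ with $i_{\Psi}=i_{\phi}$ and with $\Psi(|f_i|^m)=\phi^m(|f_i|)$, which is exactly the object on the right of the theorem. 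It then remains to peel off the sparse operators by an iterated form of Lemma \ref{lem6} applied to $\Psi$ — $l_1+1\le l+1$ times, each time contributing a factor $[w]_{A_\infty}^{1+\alpha C_1}$ and turning one sparse operator into an $M$ (the iteration being legitimate because $\phi^{\alpha}$ is quasi-convex with $\Delta_2$ constant $\alpha C_1$) — and then to remove the remaining maximal operators by Lemma \ref{lem7}: once in case (1), where the $L^{r}$-averages let one work with the Young function $t\mapsto\phi(t^{1/r})$, whose lower index is $i_{\phi}/r>1$, thereby forcing the range $1<q<i_{\phi}/r$ and the scaling $[w]_{A_q}^{1/(qr)}$ inside $\phi^m$; and twice in case (2), where $L(\log L)$-averaging produces $M^2\simeq M_{L(\log L)}$, each $M$ costing a factor $[w]_{A_q}^{1/q}$ and an extra $[w]_{A_\infty}^{1+mC_1}$, which is what upgrades the exponents to $(l+1)(\alpha C_1+1)+1+mC_1$ and $[w]_{A_q}^{2/q}$. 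The factor $\prod_s\|b_s\|_{\mathrm{BMO}}^{1+\alpha C_1}$ comes out of its one homogeneous occurrence together with the $\Delta_2$-scaling of $\phi^{\alpha}$. Collecting constants gives the two displayed inequalities.

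The step I expect to be the main obstacle is reconciling the \emph{non-homogeneity} of the modular $\phi$ with the iterated-sparse/overlap structure produced by the commutator: unlike in the $L^p$ theory of Theorems \ref{thm1.2}--\ref{thm1.4}, one cannot simply factor the unbounded overlap function $\sum_{Q\ni x}\chi_Q$ out of $\phi$, so the whole $(l_1+1)$-fold iterate must be fed into Lemma \ref{lem6}; making this iteration work with uniform constants — which is exactly what requires the quasi-convexity of $\phi^{\alpha}$ and a careful accounting of its $\Delta_2$ constant $\alpha C_1$ — is the delicate point. A secondary difficulty is threading the $r$-versus-$i_{\phi}$ dichotomy consistently all the way from the choice of $L^{r}$ or $L(\log L)$ averages in the oscillation step through the reduction to $\Psi$ and the applications of Lemma \ref{lem7}, so that the ranges of $q$ and the weight exponents come out precisely as stated.
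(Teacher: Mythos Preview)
Your proposal is not a proof of the stated lemma at all: you have written a proof sketch of Theorem~\ref{thm1.5} (the multilinear modular inequality for the iterated commutator $\mathcal{T}_{\vec b}$), not of Lemma~\ref{lem6}. The statement you were asked to prove concerns a \emph{single} linear sparse operator $\mathcal{A}_{\mathcal{S}}(f)$ applied to \emph{one} function $f$, with no commutators, no BMO functions, no multilinear structure, and no Hypothesis~\ref{hyp2}. Everything you invoke --- the reduction to $\mathcal{A}_{\mathcal{S}_j,b}^{\vec\gamma}$, Lemma~\ref{lem4} for the $b_s$, the $L^r$ versus $L(\log L)$ dichotomy, the multilinear H\"older step, the final two displayed inequalities with $\prod_s\|b_s\|_{\mathrm{BMO}}^{1+\alpha C_1}$ --- is machinery for Theorem~\ref{thm1.5}, and none of it is even formulated in the setting of Lemma~\ref{lem6}. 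In particular, you repeatedly \emph{use} Lemma~\ref{lem6} as a black box (``peel off the sparse operators by an iterated form of Lemma~\ref{lem6}''), which is circular if the goal is to prove that very lemma.

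For the record, the paper does not give its own proof of Lemma~\ref{lem6}; it is quoted verbatim from \cite[Lemma~3.13]{and1}. A proof of the lemma itself proceeds by a duality argument in the modular setting: one writes $\phi(\mathcal{A}_{\mathcal{S}}f)=\mathcal{A}_{\mathcal{S}}f\cdot h$ with $h=\phi(\mathcal{A}_{\mathcal{S}}f)/\mathcal{A}_{\mathcal{S}}f$, uses the Carleson embedding theorem to replace the sparse sum by $\int Mf\cdot M_w^{\mathcal{D}}h\,w$, then applies Young's inequality and the quasi-convexity of $\phi^{\alpha}$ (together with the modular bound for $M_w^{\mathcal{D}}$ with respect to $\bar\phi$) to absorb the $h$-term back into the left-hand side, picking up the factor $[w]_{A_\infty}^{1+\widetilde C}$. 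That is the argument you should be supplying; your sketch addresses a different theorem entirely.
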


\begin{proof}[Proof of Theorem  $\ref{thm1.5}$]
For $1\leq j\leq 3^n$ and $\vec{\gamma}=(\overbrace{1, \ldots ,1}^{l_1}, \overbrace{2, \ldots ,2}^{l-l_1}),$ we first consider the contribution of $ \mathcal{A}_{\mathcal{S}_{j},\vec{b}}^{\vec{\gamma}}$ and try to show that there exists a constant $K$ such that
\begin{equation}\label{ie1.5(1)}
\int_{\mathbb{R}^n} \phi\left( \mathcal{A}_{\mathcal{S}_{j},\vec{b}}^{\vec{\gamma}}(\vec{f})(x)\right) w(x) d x \lesssim  \left(\prod_{i=1}^m\int_{\mathbb{R}^n} \phi^m\left(K\left|f_i(x)\right|\right) w(x) d x\right)^{\frac{1}{m}}.
\end{equation}\par
Note that if
$\mathcal{A}_{\mathcal{S}_{j},\vec{b}}^{\vec{\gamma}}(\vec{f})(x)=0$, then
$\phi( \mathcal{A}_{\mathcal{S}_{j},\vec{b}}^{\vec{\gamma}}(\vec{f})(x))=0$ since $\phi$ is an $N$-function. Define a function $h$ on $\mathbb{R}^n$ by 
\begin{equation*}
h(x)= \begin{cases}0, &  \mathcal{A}_{\mathcal{S}_{j},\vec{b}}^{\vec{\gamma}}(\vec{f})(x)=0, \\ \frac{\phi(\mathcal{A}_{\mathcal{S}_{j},\vec{b}}^{\vec{\gamma}}(\vec{f})(x))}{\mathcal{A}_{\mathcal{S}_{j},\vec{b}}
^{\vec{\gamma}}(\vec{f})(x)}, & otherwise.\end{cases}
\end{equation*}
Let $H_{l_1}(x)=\prod_{s =1}^{l_1}\big| b_s(x)-{\langle b_s \rangle}_{Q} \big| {\langle|f_s|\rangle}_Q$ and denote
\begin{equation*}
\begin{aligned}
\mathcal{A}_{\mathcal{S}_{j},\vec{b}}^{\vec{\gamma}}(\vec{f})(x)=: \sum_{Q \in \mathcal{S}_{j}} H_{l_1}(x)\prod_{s =l_1+1}^{l}{\langle|(b_s-{\langle b_s \rangle}_{Q} )f_s|\rangle}_Q \prod_{s =l+1}^{m}{\langle|f_s|\rangle}_Q \chi_Q(x), \\
\end{aligned}
\end{equation*}
Then the left-hand side of (\ref{ie1.5(1)}) can be rewritten as
\begin{equation}
\begin{aligned}
\int_{\mathbb{R}^{n}}\phi\left( \mathcal{A}_{\mathcal{S}_{j},\vec{b}}^{\vec{\gamma}}(\vec{f})(x)\right)w(x)dx=&\sum_{Q \in \mathcal{S}_{j}}\prod_{s =l_1+1}^{l}{\langle\big|(b_s-{\langle b_s \rangle}_{Q} )f_s\big|\rangle}_Q \prod_{s =l+1}^{m}{\langle|f_s|\rangle}_Q \\
&\quad \times \int_Q H_{l_1}(x)h(x)w(x)dx.
\end{aligned}
\end{equation}
Hence for any $r>1,$ Lemma \ref{lem2} with $s_1=s_2=\cdots=s_{l_1}=1,s=\frac{1}{l_1}$ yields
\begin{equation}\label{ie29}
\begin{aligned}
\int_{\mathbb{R}^{n}}\phi\left( \mathcal{A}_{\mathcal{S}_{j},\vec{b}}^{\vec{\gamma}}(\vec{f})(x)\right)w(x)dx &\lesssim \sum_{Q \in \mathcal{S}_{j}}\prod_{s =1}^{l_1}{\langle|f_s|\rangle}_Q \prod_{s =l+1}^{m}{\langle|f_s|\rangle}_Q \prod_{s =l_1+1}^{l}{\langle\big|(b_s-{\langle b_s \rangle}_{Q} )f_s\big|\rangle}_Q  \\
&\quad \quad\times \prod_{s =1}^{l_1}\left\|b_s-{\langle b_s \rangle}_{Q} \right\|_{\exp L(w),Q}\|h\|_{L(\log L)^{l_1}(w),Q} w(Q) \\
& \lesssim [w]_{A_{\infty }}^{l_1}\sum_{Q \in \mathcal{S}_{j}}\prod_{s =1}^{l_1} \|b_s\|_{\mathrm{BMO}}\prod_{s =l_1+1}^{l} {\langle\big|b_s-{\langle b_s \rangle}_{Q} \big|^{r^\prime}\rangle}_Q^{\frac{1}{r^\prime}} \\
&\quad \times \prod_{s =1}^{m}{\langle|f_s|^r\rangle}_Q^{\frac{1}{r}}\|h\|_{L(\log L)^{l_1}(w),Q} w(Q) \\
& \lesssim [w]_{A_{\infty }}^{l}\prod_{s =1}^{l}
\|b_s\|_{\mathrm{BMO}}\sum_{Q \in \mathcal{S}_{j}}\prod_{s =1}^{m}{\langle|f_s|^r\rangle}_Q^{\frac{1}{r}}\|h\|_{L(\log L)^{l_1}(w),Q} w(Q),
\end{aligned}
\end{equation}
where we have used $\langle\left|b-b_{Q}\right|^{t}\rangle_{Q}^{1 / t}\leq 2^{n+1}e^{3}  t\|b\|_{\mathrm{BMO}}$ (see \cite[p. 19]{wen}) in the last inequality.\\
 Let $\mathcal{M}_r$ be the multilinear maximal operator with power $r>1$ defined by
$$\mathcal{M}_r(\vec{f})(x):=\sup\limits _{Q \ni x} \prod\limits_{i=1}^m \left(\frac{1}{|Q|} \int_Q\left|f_i(y)\right|^r d y\right)^{\frac{1}{r}}.$$
 Using Carleson embedding theorem again, one may obtain
\begin{equation}\label{ie25}
\begin{aligned}
\int_{\mathbb{R}^{n}}\phi( \mathcal{A}_{\mathcal{S}_{j},\vec{b}}^{\vec{\gamma}}(\vec{f})(x))w(x)dx &\lesssim [w]_{A_{\infty }}^{l+1}\prod_{s =1}^{l}\|b_s\|_{\mathrm{BMO}} \int_{\mathbb{R}^{n}}\mathcal{M}_r\vec{f}(x)M_{L(\log L)^{l_1}(w)}^{\mathcal{D}_j}(h)(x)w(x)dx  \\
& \leq C'[w]_{A_{\infty }}^{l+1}\prod_{s =1}^{l}\|b_s\|_{\mathrm{BMO}} \int_{\mathbb{R}^{n}} \mathcal{M}_r\vec{f}(x)(M_w^{\mathcal{D}_j})^{l+1}(h)(x)w(x)dx.
\end{aligned}
\end{equation}\par
For convenience, we give some notation for the constants $a_1, a_2$ and $\alpha.$
First, since $\phi$ has the sub-multiplicative property, it follows that $\phi(2t)\leq \phi(2)\phi(t) $ which implies that $\phi \in \Delta _2.$ Hence, there exists some $0<\alpha < 1,$ such that $\bar{\phi}^{\alpha}$ is quasi-convex, which means that, there exists some convex function $\psi$ and $a_1> 1$ such that
\begin{equation*}
\psi(t)\leq \bar\phi^\alpha(t)\leq a_1\psi(a_1t), \ t>0.
\end{equation*}
Afterwards, for $w \in A_{\infty}$ and $\phi \in \Phi$ which satisfies that there exists $0<\alpha<1$ such that  $\phi^\alpha$ be a quasi-convex function, we recall the following modular inequality  \cite[Lemma 3.12]{and1} for the weighted maximal operator
\begin{equation}\label{ie26}
\int_{\mathbb{R}^n} \phi\left(M_w^{\mathcal{D}} f(x)\right) w(x) \mathrm{d} x \leq a_2 \int_{\mathbb{R}^n} \phi\left(a_2|f(x)|\right) w(x) \mathrm{d} x,
\end{equation}
where the constant $a_2>1$ only depends on $\phi$ and $\alpha$, and is independent of $w$. \par
Using the previous notation, we take some $\varepsilon$ such that
$$0<\varepsilon\leq \min\Big\{\frac{1}{2},\frac{1}{a_1a_2^{l+1}},\Big(\frac{1}{2C^\prime a_2^{l+1}[w]_{A_{\infty }}^{l+1}\prod_{s =1}^{l}\|b_s\|_{\mathrm{BMO}} }\Big)^\alpha\cdot\frac{1}{a_1^2a_2^{l+1}}\Big\},$$
where $C^\prime$ is determined by (\ref{ie25}).\\
Combining (\ref{ie25}) with the Young's inequality and applying (\ref{ie26}) $l+1$ times, one obtains
\begin{equation}\label{ie27}
\begin{aligned}
\int_{\mathbb{R}^{n}}\phi\left( \mathcal{A}_{\mathcal{S}_{j},\vec{b}}^{\vec{\gamma}}(\vec{f})(x)\right)w(x)dx &\leq C'[w]_{A_{\infty} }^{l+1}\prod_{s =1}^{l}\|b_s\|_{\mathrm{BMO}}\int_{\mathbb{R}^{n}}\frac{\mathcal{M}_r\vec{f}(x)}{\varepsilon}(M_w^{\mathcal{D}_j})^{l+1}
(\varepsilon h)(x)w(x)dx \\
&\leq C'[w]_{A_{\infty} }^{l+1}\prod_{s =1}^{l}\|b_s\|_{\mathrm{BMO}}\bigg[\int_{\mathbb{R}^{n}}\phi\Big(\frac{\mathcal{M}_r\vec{f}(x)}{\varepsilon} \Big )w(x)dx\\
& \hspace{0.5cm} +\int_{\mathbb{R}^{n}}\bar{\phi}\left((M_w^{\mathcal{D}_j})^{l+1}
(\varepsilon h)(x) \right)w(x)dx\bigg] \\
&\leq  C'[w]_{A_{\infty} }^{l+1}\prod_{s =1}^{l}\|b_s\|_{\mathrm{BMO}} \cdot 2^{C_1}\varepsilon^{-C_1}\int_{\mathbb{R}^{n}} \phi \left(\mathcal{M}_r\vec{f}(x)\right)w(x)dx \\
& \hspace{0.5cm} +  C'[w]_{A_{\infty} }^{l+1}\prod_{s =1}^{l}\|b_s\|_{\mathrm{BMO}}  \cdot a_2^{l+1}\int_{\mathbb{R}^{n}} \bar{\phi} \left(a_2^{l+1}\varepsilon h(x)\right)w(x)dx.
\end{aligned}
\end{equation}
Consider to estimate $\int_{\mathbb{R}^{n}}\bar\phi(a_2^{l+1}\varepsilon h(x))w(x)dx.$
Noting that $\bar\phi^{\alpha}$ is a quasi-convex function, it follows that
\begin{equation*}
\begin{aligned}
\bar{\phi}^\alpha \left(a_2^{l+1}\varepsilon h(x)\right) \leq a_1\psi\left(a_1a_2^{l+1}\varepsilon h(x)\right) \leq a_1^2a_2^{l+1}\varepsilon\psi\left(h(x)\right)\leq a_1^2a_2^{l+1}\varepsilon\bar{\phi}^\alpha\left(h(x)\right),
\end{aligned}
\end{equation*}
where in the above inequality  we have used $a_1a_2^{l+1}\varepsilon\leq 1$ and $ \psi(\lambda t)\leq\lambda\psi(t)(0\leq \lambda\leq1).$
 Thus, the definition of $h$ together with the fact that $\bar{\phi}\left(\frac{\phi(t)}{t}\right) \leq \phi(t)$ for $t>0,$ gives that
\begin{equation*}
\begin{aligned}
\int_{\mathbb{R}^{n}} \bar{\phi} \left(a_2^{l+1}\varepsilon h(x)\right)w(x)dx &\leq \left(a_1^2a_2^{l+1}\varepsilon  \right)^{\frac{1}{\alpha}}\int_{\mathbb{R}^{n}} \bar{\phi}\bigg(\frac{\phi( \mathcal{A}_{\mathcal{S}_{j},\vec{b}}^{\vec{\gamma}}(\vec{f})(x))}{\mathcal{A}_{\mathcal{S}_{j},
\vec{b}}^{\vec{\gamma}}(\vec{f})(x)}\bigg)w(x)dx \\
& \leq \left(a_1^2a_2^{l+1}\varepsilon  \right)^{\frac{1}{\alpha}}\int_{\mathbb{R}^{n}} \phi\left(\mathcal{A}_{\mathcal{S}_{j},
\vec{b}}^{\vec{\gamma}}(\vec{f})(x) \right)w(x)dx.
\end{aligned}
\end{equation*}
Plugging the above estimate into (\ref{ie27}), we obtain
\begin{equation*}
\begin{aligned}
\int_{\mathbb{R}^{n}}\phi\left( \mathcal{A}_{\mathcal{S}_{j},\vec{b}}^{\vec{\gamma}}(\vec{f})(x)\right)w(x)dx &\leq C'[w]_{A_{\infty} }^{l+1}\prod_{s =1}^{l}\|b_s\|_{\mathrm{BMO}}\cdot 2^{C_1}\varepsilon^{-C_1}\int_{\mathbb{R}^{n}} \phi \left(\mathcal{M}_r(\vec{f})(x)\right)w(x)dx \\
&\quad +  C'[w]_{A_{\infty} }^{l+1}\prod_{s =1}^{l}\|b_s\|_{\mathrm{BMO}}\cdot a_2^{l+1} \\
& \quad \quad \times \left(a_1^2a_2^{l+1}\varepsilon  \right)^{\frac{1}{\alpha}}\int_{\mathbb{R}^{n}} \phi\left(\mathcal{A}_{\mathcal{S}_{j},
\vec{b}}^{\vec{\gamma}}(\vec{f})(x) \right)w(x)dx,\\
\end{aligned}
\end{equation*}
which further implies that for $1\leq j\leq 3^n,$
\begin{equation}\label{ie28}
\begin{aligned}
\int_{\mathbb{R}^{n}}\phi\left( \mathcal{A}_{\mathcal{S}_{j},\vec{b}}^{\vec{\gamma}}(\vec{f})(x)\right)w(x)dx \lesssim \prod_{s =1}^{l}\|b_s\|_{\mathrm{BMO}} \cdot [w]_{A_{\infty} }^{(l+1)(\alpha {C_1}+1)}\int_{\mathbb{R}^{n}} \phi \left(\mathcal{M}_r(\vec{f})(x)\right)w(x)dx.
\end{aligned}
\end{equation}\par
We will give another proof of the modular inequality of $ \mathcal{A}_{\mathcal{S}_{j},\vec{b}}^{\vec{\gamma}},$
which has the advantage to give the best possible range of $i_\phi.$ \par
Similarly as the argument for (\ref{ie1}), one can verfy that there exists a  sparse family $\tilde{{\mathcal{S}}}$ such that for any $ l_1+1 \leq t \leq l,$
\begin{equation*}
\begin{aligned}
\big |b_t(x)-{\langle b_t \rangle}_{Q} \big| \leq 2^{n+2}\sum_{R \in\tilde{{\mathcal{S}}} , R\subseteq Q} {\langle \left|b_t-{\langle b_t \rangle}_{R} \right| \rangle}_R\chi_R(x). \\
\end{aligned}
\end{equation*}
Therefore
\begin{equation*}
\begin{aligned}
\prod_{s =l_1+1}^{l}{\langle\big|(b_s-{\langle b_s \rangle}_{Q} )f_s\big|\rangle}_Q &\lesssim\prod_{s =l_1+1}^{l}\frac{1}{|Q|} \int_Q \sum_{R \in\tilde{{\mathcal{S}}} , R\subseteq Q} {\langle \left|b_s-{\langle b_s \rangle}_{R} \right| \rangle}_R\chi_R(x)|f_s(x)|dx \\
&\lesssim \prod_{s =l_1+1}^{l}\|b_s\|_{\mathrm{BMO}}\sum_{R \in\tilde{{\mathcal{S}}} , R\subseteq Q}\frac{1}{|Q|}\int_R |f_s(x)|dx \\
&\lesssim \prod_{s =l_1+1}^{l}\|b_s\|_{\mathrm{BMO}}\frac{1}{|Q|}  \int_Q \mathcal{A}_{\tilde{\mathcal{S}}}(f_s)(x)dx.
\end{aligned}
\end{equation*}
It then follows from the above estimate and (\ref{ie29}) that
\begin{equation*}
\begin{aligned}
\int_{\mathbb{R}^{n}}\phi\left( \mathcal{A}_{\mathcal{S}_{j},\vec{b}}^{\vec{\gamma}}(\vec{f})(x)\right)w(x)dx &\lesssim \sum_{Q \in\tilde{\mathcal{S}}_j}\prod_{s =1}^{l_1}{\langle|f_s|\rangle}_Q \prod_{s =l+1}^{m}{\langle|f_s|\rangle}_Q\prod_{s =l_1+1}^{l}{\langle\big|(b_s-{\langle b_s \rangle}_{Q} )f_s\big|\rangle}_Q \\
&\quad \quad \times \prod_{s =1}^{l_1}\left\|b_s-{\langle b_s \rangle}_{Q} \right\|_{\exp L(w),Q}\|h\|_{L(\log L)^{l_1}(w),Q} w(Q) \\
&\lesssim [w]_{A_{\infty} }^{l_1}\prod_{s =1}^{l}\|b_s\|_{\mathrm{BMO}}\prod_{s =1}^{l_l}{\langle|f_s|\rangle}_Q \prod_{s =l+1}^{m}{\langle|f_s|\rangle}_Q \prod_{s =l_1+1}^{l}{\langle \mathcal{A}_{\tilde{\mathcal{S}}}f_s\rangle}_{Q}\\
&\quad \times  \|h\|_{L(\log L)^{l_1}(w),Q} w(Q) \\
&\lesssim [w]_{A_{\infty} }^{l+1}\prod_{s =1}^{l}\|b_s\|_{\mathrm{BMO}} \int_{\mathbb{R}^{n}}  \mathcal{M}(\vec{f^*})(x)M_{L(\log L)^{l_1}(w)}^{\mathcal{D}_j}(h)(x)w(x)dx.
\end{aligned}
\end{equation*}
where $\vec{f^*}=\left(f_1,\ldots,f_{l_1},\mathcal{A}_{\tilde{\mathcal{S}}}(f_{l_1+1}),\ldots,
\mathcal{A}_{\tilde{\mathcal{S}}}(f_l),f_{l+1},\ldots, f_m\right).$\par
The same reasoning as what we have done with the case for $\mathcal{M}_r$ then gives 
\begin{equation}\label{ie29.1}
\begin{aligned}
\int_{\mathbb{R}^{n}}\phi\left( \mathcal{A}_{\mathcal{S}_{j},\vec{b}}^{\vec{\gamma}}(\vec{f})(x)\right)w(x)dx &\leq C' [w]_{A_{\infty} }^{(1+l)(\alpha {C_1}+1)}\prod_{s =1}^{l}\|b_s\|_{\mathrm{BMO}} \int_{\mathbb{R}^{n}} \phi \left( \mathcal{M}(\vec{f^*})(x)\right)w(x)dx.
\end{aligned}
\end{equation}
Hence, by (\ref{ie28}) and (\ref{ie29.1}), we can dominate the left side of \ref {ie29.1} by a constant times
\begin{equation}\label{ie30}
\begin{aligned}{}&
	 [w]_{A_{\infty} }^{(1+l)(\alpha {C_1}+1)}\prod_{s =1}^{l}\|b_s\|_{\mathrm{BMO}} \min \left\{ \int_{\mathbb{R}^{n}} \phi \left( \mathcal{M}_r(\vec{f})(x)\right)w(x)dx,\int_{\mathbb{R}^{n}} \phi \left( \mathcal{M}(\vec{f^*})(x)\right)w(x)dx \right\}.
\end{aligned}
\end{equation}\par
Having obtained the above estimate, we are in a position to finish our proof. First, for any $r\geq 1,$ $w\in A_q $ with $1<q<\frac{i_{\phi}}{r}$, in \cite[Lemma 5.3] {tan2}, it was proved that there exists a constant $a_3>1$ such that
\begin{equation}\label{ie31}
\begin{aligned}
\int_{\mathbb{R}^{n}} \phi \left( \mathcal{M}_r(\vec{f})(x)\right)w(x)dx \leq  a_3 \left( \prod_{i =1}^{m} \int_{\mathbb{R}^{n}} \phi^m \big(a_3[w]_{A_q}^{\frac{1}{q_r}}|f_i(x)|\big)w(x)dx\right)^{\frac{1}{m}}.
\end{aligned}
\end{equation}
In particular, when $r=1$ then it holds for every $w\in A_q $ with $1<q<i_{\phi}$ that
\begin{equation*}
\begin{aligned}
\int_{\mathbb{R}^{n}} \phi \left( \mathcal{M}(\vec{f^*})(x)\right)w(x)dx &\leq a_3 \Bigg(\prod_{i \in \{1,\cdots, m\} \atop i \notin \{l_1+1,\cdots, l\}} \int_{\mathbb{R}^{n}} \phi^m \Big(a_3[w]_{A_q}^{\frac{1}{q}}|f_i(x)|\Big )w(x)dx\Bigg)^{\frac{1}{m}}\\
&\quad \times \Bigg( \prod_{i =l_1+1}^{l} \int_{\mathbb{R}^{n}} \phi^m \left(a_3[w]_{A_q}^{\frac{1}{q}}\mathcal{A}_{\tilde{\mathcal{S}}}(f_i)(x)\right)w(x)dx\Bigg)^{\frac{1}{m}}.
\end{aligned}
\end{equation*}
In order to apply $\int_{\mathbb{R}^{n}} \phi^m \big(a_3[w]_{A_q}^{\frac{1}{q}}\mathcal{A}_{\tilde{\mathcal{S}}}(f_i)(x)\big)w(x)dx$ $(i=l_1+1,\cdots,l)$ to Lemma \ref{lem6}, we need the following observation: for any $m\in \mathbb{N}^*,$ $\phi^m$  is an $N$-function and $I_{\phi^m}<\infty.$
First we show that $\phi^m$ is a convex function. To see this, it suffices to prove that if $f,g$ are $N$-functions then
$fg$ is a convex function, this means for every $x\geq y>0, \lambda\in (0,1),$
\begin{equation}\label{ie31.1}
f(\lambda x+(1-\lambda) y) g(\lambda x+(1-\lambda) y) \leq \lambda f(x) g(x)+(1-\lambda) f(y) g(y).
\end{equation}
 In deed, we can dominate $f(\lambda x+(1-\lambda) y) g(\lambda x+(1-\lambda) y)$ by
\begin{equation*}
\begin{aligned}
\lambda^2 f(x) g(x)+(1-\lambda)^2 f(y) g(y)+\lambda(1-\lambda)(f(x) g(y)+f(y) g(x)).
\end{aligned}
\end{equation*}
Then, a simple calculation gives
\begin{equation*}
\begin{aligned}
f(\lambda x+(1-\lambda) y) g(\lambda x+(1-\lambda) y) \leq\lambda f(x) g(x)+(1-\lambda)f(y) g(y),
\end{aligned}
\end{equation*}
where in the last inequality we have used the fact that $f,g$ are $N$-functions. This proves (\ref{ie31.1}).\\
On the one hand, it is easy to verify that
$$\lim _{t \rightarrow 0^{+}} \frac{\phi^m(t)}{t}=0.$$
On the other hand, using $ \lim _{t \rightarrow \infty}\phi(t)=\infty,$ we obtain that there exists $M>0$ such that
$\phi(t)>1$ holds for any $t>M.$ Therefore,
$$ \frac{\phi^m(t)}{t} \geq \frac{\phi(t)}{t}\rightarrow \infty (t\rightarrow\infty),$$
which implies that $\phi^m$ is an $N$-function. \\
It remains to prove that $I_{\phi^m}<\infty$ is valid. Recall that $ h_{\phi}(t)=\sup _{s>0} \frac{\phi(s t)}{\phi(s)}, t>0,$ and
$$I_{\phi}=\lim _{t \rightarrow \infty} \frac{\log h_{\phi}(t)}{\log t}=\inf _{1<t<\infty} \frac{\log h_{\phi}(t)}{\log t}.$$ Thus, we have $h_{{\phi}^m}(t)=h_{\phi}^m(t), $
which indicates that
$$ I_{{\phi}^m}=\inf _{1<t<\infty} \frac{\log h_{\phi}^m(t)}{\log t}=mI_{\phi}<\infty.$$\par
In virtue of the preceding observation, applying Lemma \ref{lem6} with $\tilde{C}=mC_1$, we have
\begin{equation*}
\begin{aligned}
{}&\Bigg(\prod_{i =l_1+1}^{l} \int_{\mathbb{R}^{n}} \phi^m \left(a_3[w]_{A_q}^{\frac{1}{q}}\mathcal{A}_{\tilde{\mathcal{S}}}(f_i)(x)\right)w(x)dx\Bigg)^{\frac{1}{m}}\\
&\quad\quad\quad\quad\lesssim [w]_{A_\infty}^{1+mC_1} \left( \prod_{i =l_1+1}^{l} \int_{\mathbb{R}^{n}} \phi^m \Big(a_3^2[w]_{A_q}^{\frac{2}{q}}|f_i(x)|\Big)w(x)dx\right)^{\frac{1}{m}}.
\end{aligned}
\end{equation*}
Therefore
\begin{equation*}
\int_{\mathbb{R}^{n}} \phi \left( \mathcal{M}(\vec{f^*})(x)\right)w(x)dx \lesssim [w]_{A_\infty}^{1+mC_1}\left(\prod_{i=1 }^m \int_{\mathbb{R}^{n}} \phi^m \left([w]_{A_q}^{\frac{2}{q}}|f_i(x)|\right)w(x)dx\right)^{\frac{1}{m}}.
\end{equation*}
This inequality, together with (\ref{ie30}) and (\ref{ie31}), yields that
\begin{enumerate}[(i)]
	\item if $r<i_{\phi}<\infty,$ then for
every~$1<q<\frac{i_{\phi}}{r}$ and ~$w\in A_q ,$
	 \begin{equation}\label{ie32}
\begin{aligned}
		\int_{\mathbb{R}^n} \phi\left( \mathcal{A}_{\mathcal{S}_{j},\vec{b}}^{\vec{\gamma}}(\vec{f})(x)\right) w(x) d x \lesssim & [w]_{A_\infty}^{(1+l)(\alpha C_1+1)}\prod_{s=1}^l\|b_s\|_{\mathrm{BMO}}^{1+\alpha C_1}\\
&\times\left(\prod_{i=1}^m\int_{\mathbb{R}^n} \phi^m\left([w]_{A_q}^{\frac{1}{qr}}\left|f_i(x)\right|\right) w(x) d x\right)^{\frac{1}{m}};
\end{aligned}
	\end{equation}
	\item if $1<i_{\phi}\leq r<\infty,$ then for
every~$1<q<i_{\phi}$ and ~$w\in A_q ,$
	 \begin{equation}\label{ie33}
\begin{aligned}
		\int_{\mathbb{R}^n} \phi\left( \mathcal{A}_{\mathcal{S}_{j},\vec{b}}^{\vec{\gamma}}(\vec{f})(x)\right) w(x) d x \lesssim & [w]_{A_\infty}^{(l+1)(\alpha C_1+1)+1+mC_1}\prod_{s=1}^l\|b_s\|_{\mathrm{BMO}}^{1+\alpha C_1}\\
&\times\left(\prod_{i=1}^m\int_{\mathbb{R}^n} \phi^m\left([w]_{A_q}^{\frac{2}{q}}\left|f_i(x)\right|\right) w(x) d x\right)^{\frac{1}{m}}.
\end{aligned}
	\end{equation}
\end{enumerate}
Finally, applying the convexity of $\phi$ and replacing $\mathcal{A}_{\mathcal{S}_{j},\vec{b}}^{\vec{\gamma}}(\vec{f})$ with $\mathcal{T}_{\vec{b}}(\vec{f})$, inequalities (\ref{ie32}) and (\ref{ie33}) still hold, which completes the proof of Theorem \ref{thm1.5}.

\end{proof}

\section{ applications}\label{Sect 8}

In this section, we present some applications of the results obtained in Section \ref{sub1.3}. We
will see that the Hypothesis \ref{hyp1} or Hypothesis \ref{hyp2} hold for multilinear $\omega$-Calder\'{o}n-Zygmund operators, multilinear pseudo-differential operators, higher order Calder\'{o}n commutators, and Stein's square functions. Furthermore, we will establish weighted modular estimates for them.

\subsection{ Multilinear $\omega$-Calder\'{o}n-Zygmund  operators} \label{S_1}
We recall the definition of multilinear Calder\'{o}n-Zygmund operator of type $\omega$.
\begin{definition}[\textbf{Multilinear $\omega$-Calder\'{o}n-Zygmund  operator}]\label{def8.1}
 Let $\omega(t):[0, \infty) \rightarrow[0, \infty)$ be a nondecreasing function. A locally integrable function $K\left(x, y_1, \ldots, y_m\right)$, defined away from the diagonal $x=y_1=$ $\cdots=y_m$ in $\left(\mathbb{R}^n\right)^{m+1}$, is called an $m$-linear Calder\'{o}n-Zygmund kernel of type $\omega$ if, for some constants $0<\tau<1$, there exists a constant $A>0$ such that
$$
\left|K\left(x, y_1, \ldots, y_m\right)\right| \leq \frac{A}{\left(\left|x-y_1\right|+\cdots+\left|x-y_m\right|\right)^{m n}}
$$
for all $\left(x, y_1, \ldots, y_m\right) \in\left(\mathbb{R}^n\right)^{m+1}$ with $x \neq y_j$ for some $1\leq j \leq m$, and
$$
\begin{aligned}
& \left|K\left(x, y_1, \ldots, y_m\right)-K\left(x^{\prime}, y_1,\ldots, y_m\right)\right| \\
& \qquad \leq \frac{A}{\left(\left|x-y_1\right|+\cdots+\left|x-y_m\right|\right)^{m n}} \omega\left(\frac{\left|x-x^{\prime}\right|}{\left|x-y_1\right|+\cdots+\left|x-y_m\right|}\right)
\end{aligned}
$$
whenever $\left|x-x^{\prime}\right| \leq \tau \max _{1 \leq j \leq m}\left|x-y_j\right|$, and for all $1\leq i \leq m$
$$
\begin{aligned}
& \left|K\left(x, y_1, \ldots, y_i, \ldots, y_m\right)-K\left(x, y_1, \ldots, y_i^{\prime}, \ldots, y_m\right)\right| \\
& \qquad\leq \frac{A}{\left(\left|x-y_1\right|+\cdots+\left|x-y_m\right|\right)^{m n}} \omega\left(\frac{|y_i-y_i^{\prime}|}{\left|x-y_1\right|+\cdots+\left|x-y_m\right|}\right)
\end{aligned}
$$
whenever $\left|y_i-y_i^{\prime}\right| \leq \tau \max _{1 \leq j \leq m}\left|x-y_j\right|$. Particularly, when $\omega(t)=t^\delta$ with $\delta \in(0,1], K$ is called an $m$-linear standard Calder\'{o}n-Zygmund kernel.\\
We say $T$: $\mathscr{S}\left(\mathbb{R}^n\right) \times \cdots \times \mathscr{S}\left(\mathbb{R}^n\right) \rightarrow \mathscr{S}^{\prime}\left(\mathbb{R}^n\right)$ is an $m$-linear operator with an $m$-linear Calder\'{o}n-Zygmund kernel of type $\omega, K\left(x, y_1, \ldots, y_m\right)$, if
$$
T\left(f_1, \ldots, f_m\right)(x)=\int_{(\mathbb{R}^n)^m} K\left(x, y_1, \ldots, y_m\right) f_1\left(y_1\right) \cdots f_m\left(y_m\right) d y_1 \cdots d y_m
$$
whenever $x \notin \bigcap_{j=1}^m \operatorname{supp} f_j$ and each $f_j$ with $j=1, \ldots, m$ is a bounded function with compact support.
If $T$ can be extended to a bounded multilinear operator from $L^{q_1}\left(\mathbb{R}^n\right) \times \cdots \times$ $L^{q_m}\left(\mathbb{R}^n\right)$ to $L^{q, \infty}\left(\mathbb{R}^n\right)$ for some $1 / q=1 / q_1+\cdots+1 / q_m$ with $1 \leq q_1, \ldots, q_m<\infty$, then $T$ is called an $m$-linear Calder\'{o}n-Zygmund operator of type $\omega$.
\end{definition}

\begin{definition}[\textbf{$\log$-Dini condition }]\label{def8.2}
 Let $\omega(t):[0, \infty) \rightarrow[0, \infty)$ be a nondecreasing function. For $a>0, m\in \mathbb{N}$, we say that $\omega$ satisfies the $\log$-Dini$(a,m)$ condition, denote $\omega \in$ $\log$-Dini$(a,m)$, if
$$
\|\omega\|_{\log\mathrm{-Dini}(a,m)}:=\int_0^1 \frac{\omega^a(t)}{t} \left(1+\log \frac{1}{t}\right)^m d t<\infty .
$$
\end{definition}
It is worth mentioning that, in 2014, Lu and Zhang \cite{zha} obtained the weighted inequalities for commutators of multilinear Calder\'{o}n-Zygmund operators $T$ of type $\omega$ with $\omega \in$ $\log$-Dini$(1,m).$ Very recently, Cao et al. \cite{cao1} proved the local exponential decay, mixed weak type estimate for $T$ with $\omega \in$ $\log$-Dini$(1,0),$ and obtained the weighted compactness for commutators.\par
As was shown in \cite[Theorem 2.5]{cao1}, both Hypothesis \ref{hyp1} and Hypothesis  \ref{hyp2} hold. By Theorems \ref{thm1.1}, \ref{thm1.3} and \ref{thm1.4}, we obtain
\begin{theorem}\label{thm8.1}
Let $I=\{i_1,\ldots,i_l\}=\{1,\ldots,l\}\subseteq \{1,\ldots,m\},$ $T$ be an $m$-linear $\omega$-Calder\'{o}n-Zygmund operator with $\omega \in$ $\log$-Dini$(1,m).$ If $\vec{b} \in \mathrm{BMO}^l,$ then we have
\begin{enumerate}[(a).]
		\item Let $w\in A_{\infty}^{\text{weak}},$ $Q_0$ be a cube and $f_s \in L_c^{\infty}\left(\mathbb{R}^n\right)$ such that $\operatorname{supp}\left(f_s\right) \subset Q_0$ for $1 \leq s\leq m.$ Then there are constants $\alpha, c>0$ independent of $w$ such that
\begin{equation*}
\begin{aligned}
&w\left(\left\{x \in Q_0:\left|T_{\vec{b}}(\vec{f})(x)\right|>t \mathcal{M}_{L(\log L)}(\vec{f})(x)\right\}\right) \\
&\hspace{5cm} \qquad\leq c e^{-{\frac{\alpha}{[w]_{A_\infty}^{weak}+1}}\left({\frac{t}{\prod_{s =1}^{l} \|b_s\|_{\mathrm{BMO}}}}\right)^{\frac{1}{l+1}}} w(2Q_0), \quad t>0.
\end{aligned}
\end{equation*}
      \item Let $\vec{w}=\left(w_1, \ldots, w_m\right)$ and $u=\prod_{i=1}^m w_i^{1 / m}$. If $\vec{w} \in A_{\vec{1}}$ and $v \in A_{\infty},$ then there exists $t>1 $ depending only on $v$, such that
$$
\left\|\frac{T_{\vec{b}}(\vec{f})}{v}\right\|_{L^{\frac{1}{m}, \infty}(u v^{\frac{1}{m}})} \lesssim K_0^{2l+6m}[v^{\frac{1}{m}}]_{A_t}^{2l+4m}\prod_{s =1}^{l} \|b_s\|_{\mathrm{BMO}}\left\|\frac{ \mathcal{M}_{L(\log L)} (\vec{f})}{v}\right\|_{L^{\frac{1}{m}, \infty}(u v^{\frac{1}{m}})},
$$
where $K_0=4C_np_0p_0^{\prime}([u]_{A_1}+2^{p_0-1}C_n^t[v^{\frac{1}{m}}]_{A_t}^2[u]_{A_1}^{p_0-1})+1$ with $p_0=2^{n+3}(t-1)[u]_{A_1}+1.$
      \item Let $1<p_1, \ldots, p_m<\infty$ and $\frac{1}{p}=\frac{1}{p_1}+\cdots+\frac{1}{p_m}.$ Assume that $m\geq 2$ and for all weights $\vec{w}=\left(w_1, \ldots, w_m\right), \nu_{\vec{w}}=\prod_{s=1}^m w_s^{p / p_s},$ $b_s \in \mathrm{BMO}_{p_s}(w_s)\cap \mathrm{BMO}$ with $1\leq s\leq l.$ If $0<p\leq1,$ then

$$
\left\|T_{\vec{b}}(\vec{f})\right\|_{L^p\left(\nu_{\vec{w}}\right)} \leq C \|\vec{b}\|_{\mathrm{BMO}}^*\prod_{s=1}^m\left\|f_s\right\|_{L^{p_s}(M w_s)},
$$
where $C$ is independent of $\vec{w}$ and $\vec{b},$ and $$\|\vec{b}\|_{\mathrm{BMO}}^*=\max_{\vec{\gamma}\in \{1,2\}^l}\{\prod_{s:\gamma_s=1}\|b_s\|_{\mathrm{BMO}_{p_s}(w_s)}\prod_{s:\gamma_s=2}\|b_s\|_{\mathrm{BMO}}\}.$$
\end{enumerate}

\end{theorem}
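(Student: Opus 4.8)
The strategy is to recognize Theorem \ref{thm8.1} as a pure application of the abstract results of Sections \ref{Sect 3}, \ref{Sect 5} and \ref{Sect 6}: all three assertions follow once one knows that, for $\vec b\in\mathrm{BMO}^l$, the iterated commutator $T_{\vec b}$ of an $m$-linear $\omega$-Calder\'{o}n-Zygmund operator $T$ with $\omega\in$ $\log$-Dini$(1,m)$ satisfies both Hypothesis \ref{hyp1} and Hypothesis \ref{hyp2}. Granting that, item (a) is Corollary \ref{cor1.1} applied to $T_{\vec b}$ (using $\mathcal{M}_{L(\log L)}^{(1,l)}(\vec f)\le\mathcal{M}_{L(\log L)}(\vec f)$ to match the left-hand side), item (b) is exactly Theorem \ref{thm1.3}, and item (c) is exactly Theorem \ref{thm1.4} in the regime $m\ge 2$, $0<p\le 1$, $b_s\in\mathrm{BMO}_{p_s}(w_s)\cap\mathrm{BMO}$. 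In each case the weight dependence is precisely the one produced by the abstract statements, so nothing further needs to be tracked.

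Thus the only genuine work is the verification of the two hypotheses. I would run the standard stopping-time construction for multilinear singular integrals: fix a cube $Q_0$ and bounded compactly supported $\vec f$ with $\mathrm{supp}(f_s)\subset 3Q_0$, and recursively build a sparse family $\mathcal F\subset\mathcal{D}(Q_0)$ controlling the local part of $T_{\vec b}(\vec f\chi_{3Q_0})$ pointwise on $Q_0$ by the corresponding sparse sum. Expanding the $l$-fold commutator and splitting each factor $b_{i_s}(x)-b_{i_s}(y_{i_s})$ as $\big(b_{i_s}(x)-\langle b_{i_s}\rangle_{3Q}\big)+\big(\langle b_{i_s}\rangle_{3Q}-b_{i_s}(y_{i_s})\big)$ produces exactly the two factor types $\mathcal{R}(b,f,Q,1)$ and $\mathcal{R}(b,f,Q,2)$ of Hypothesis \ref{hyp1}, while the remaining $m-l$ inputs contribute the averages $\langle|f_s|\rangle_{3Q}$. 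The hypothesis on $\omega$ enters only through the truncation/error estimate in the recursion: the modulus of continuity must be integrated against several levels of dyadic averaging, and the logarithmic weight in the $\log$-Dini$(1,m)$ condition is precisely what makes these error terms summable for the $m$-linear commutator. This sparse domination for $T_{\vec b}$ is established in \cite[Theorem 2.5]{cao1} (see also \cite{zha}), so I would simply invoke it; the Three Lattice Theorem of Section \ref{sub1} then upgrades the single-cube bound to the global $3^n$-family estimate of Hypothesis \ref{hyp2}.

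With both hypotheses verified, (a), (b) and (c) are immediate as indicated above, and no new constants appear. I expect the main — and essentially the only — obstacle to be the bookkeeping in the sparse domination step: one must check that the $\log$-Dini$(1,m)$ hypothesis (rather than the mere Dini condition that suffices for $T$ without commutators) is genuinely what closes the recursion with an $l$-fold $\mathrm{BMO}$ commutator, and that the resulting implied constant is controlled by the kernel constant $A$ and $\|\omega\|_{\log\mathrm{-Dini}(1,m)}$. Everything downstream of that is mechanical, since the abstract theorems do all the quantitative work.
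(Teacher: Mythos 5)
Your proposal is correct and follows essentially the same route as the paper: the paper likewise verifies Hypotheses \ref{hyp1} and \ref{hyp2} for $T_{\vec b}$ by citing \cite[Theorem 2.5]{cao1} and then obtains (a), (b), (c) directly from Corollary \ref{cor1.1} (via Theorem \ref{thm1.1}), Theorem \ref{thm1.3}, and Theorem \ref{thm1.4}, with no additional constant-tracking. Your sketch of the stopping-time verification of the sparse bounds is extra detail the paper does not reproduce, but since you ultimately invoke the same citation, the arguments coincide.
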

\subsection{ Multilinear maximal singular integral operators}
In this section, we will consider the weighted estimates of the commutators of multilinear maximal operator $T^*$, which is defined as
$$
T^*(\vec{f})(x):=\sup _{\delta>0}\left|\int_{\sum_{i=1}^m\left|y_i-x\right|^2>\delta^2} K\left(x, y_1, \ldots, y_m\right) f_1\left(y_1\right) \cdots f_m\left(y_m\right) d \vec{y}\right|,
$$
for $x \notin \bigcap_{j=1}^m \operatorname{supp} f_j$ and each $f_j \in L_c^{\infty}(j=1, \ldots, m)$, where $d \vec{y}=d y_1 \cdots d y_m$ and $K(x,\vec{y})$ is an $m$-linear Calder\'{o}n-Zygmund kernel of type $\omega,$ which is defined in Subsection \ref{S_1}.
In this subsection, we consider the following commutators of multilinear maximal singular integral operators:
$$
T_{\vec{b}}^*(\vec{f})(x):=\sum_{j=1}^m \sup _{\delta>0}\left|\int_{\sum_{i=1}^m\left|y_i-x\right|^2>\delta^2}\left(b_j(x)-b_j\left(y_j\right)\right) K\left(x,\vec{y}\right) f_1\left(y_1\right) \cdots f_m\left(y_m\right) d \vec{y}\right|.
$$
Now, we use \cite[Theorems 1.8]{zhan}, Theorem \ref{thm1.2}-\ref{thm1.3} and Theorem \ref{thm1.5} to conclude the following results.
\begin{theorem}\label{thm8.1}
	Let $T$ be an $m$-linear $\omega$-Calder\'{o}n-Zygmund operator with $\omega \in$ $\log$-Dini$(1,0).$ If $\vec{b} \in \mathrm{BMO}^m,$ then we have:
	\begin{enumerate}[(a).]
		\item Let $\vec{w}=\left(w_1, \ldots, w_m\right)$ and $u=\prod_{i=1}^m w_i^{1 / m}$. If $\vec{w} \in A_{\vec{1}}$ and $v \in A_{\infty},$ then there exists $t>1 $ depending only on $v$, such that
		$$
		\left\|\frac{T^*_{\vec{b}}(\vec{f})}{v}\right\|_{L^{\frac{1}{m}, \infty}(u v^{\frac{1}{m}})} \lesssim K_0^{2+6m}[v^{\frac{1}{m}}]_{A_t}^{2+4m}\|\vec{b}\|_{\mathrm{BMO}}\left\|\frac{ \mathcal{M}_{L(\log L)} (\vec{f})}{v}\right\|_{L^{\frac{1}{m}, \infty}(u v^{\frac{1}{m}})},
		$$
		where $K_0=4C_np_0p_0^{\prime}([u]_{A_1}+2^{p_0-1}C_n^t[v^{\frac{1}{m}}]_{A_t}^2[u]_{A_1}^{p_0-1})+1$ with $p_0=2^{n+3}(t-1)[u]_{A_1}+1$ and $\|\vec{b}\|_{\mathrm{BMO}}:=\sup _{1 \leq j \leq m}\left\|b_j\right\|_{\mathrm{BMO}}.$
		\item For any $0<p<\infty, w\in A_\infty,$
		\begin{equation*}
			\int_{\mathbb{R}^n}\left|T^*_{\vec{b}}(\vec{f})(x)\right|^pw(x)dx \lesssim \|\vec{b}\|_{\mathrm{BMO}}[w]_{A_\infty}^{p}
			[w]_{A_\infty}^{\max\{2,p\}}\int_{\mathbb{R}^n}\left(\mathcal{M}_{L(\log L)}(\vec{f})(x)\right)^pw(x)dx,
		\end{equation*}
		where $\|\vec{b}\|_{\mathrm{BMO}}:=\sup _{1 \leq j \leq m}\left\|b_j\right\|_{\mathrm{BMO}}.$
		\item Let~$\phi$ be a~$N$-function with sub-multiplicative property. For any $1<r<\infty,$
		\begin{enumerate}[(1)]
			\item if $r<i_{\phi}<\infty,$ then there exists constant~$\alpha$ such that for
			every~$1<q<\frac{i_{\phi}}{r}$ and ~$w\in A_q ,$
			\begin{equation*}
				\begin{aligned}
					\int_{\mathbb{R}^n} \phi\left(T^*_{\vec{b}}(\vec{f})(x)\right) w(x) d x \lesssim & [w]_{A_\infty}^{2\alpha C_1+1}\|\vec{b}\|_{\mathrm{BMO}}^{1+\alpha C_1}\\
					&\times\left(\prod_{i=1}^m\int_{\mathbb{R}^n} \phi^m\left([w]_{A_q}^{\frac{1}{qr}}\left|f_i(x)\right|\right) w(x) d x\right)^{\frac{1}{m}};
				\end{aligned}
			\end{equation*}
			\item if $1<i_{\phi}\leq r,$ then there exists constant~$\alpha$ such that for
			every~$1<q<i_{\phi}$ and ~$w\in A_q ,$
			\begin{equation*}
				\begin{aligned}
					\int_{\mathbb{R}^n} \phi\left(T^*_{\vec{b}}(\vec{f})(x)\right) w(x) d x \lesssim & [w]_{A_\infty}^{2\alpha C_1+mC_1+2}\|\vec{b}\|_{\mathrm{BMO}}^{1+\alpha C_1}\\
					&\times\left(\prod_{i=1}^m\int_{\mathbb{R}^n} \phi^m\left([w]_{A_q}^{\frac{2}{q}}\left|f_i(x)\right|\right) w(x) d x\right)^{\frac{1}{m}}.
				\end{aligned}
			\end{equation*}
			
		\end{enumerate}
	\end{enumerate}
	
\end{theorem}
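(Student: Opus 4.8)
The plan is to peel off the summation over $j$ built into the definition of $T^*_{\vec{b}}$ and reduce everything to the case $l=1$ of the general results proved above. For each $1\le j\le m$ I would write
\begin{equation*}
T^*_{b_j,j}(\vec{f})(x):=\sup_{\delta>0}\Big|\int_{\sum_{i=1}^m|y_i-x|^2>\delta^2}\bigl(b_j(x)-b_j(y_j)\bigr)K(x,\vec{y})\prod_{i=1}^m f_i(y_i)\,d\vec{y}\Big|,
\end{equation*}
so that $T^*_{\vec{b}}=\sum_{j=1}^m T^*_{b_j,j}$ and each summand $T^*_{b_j,j}$ is exactly a first-order commutator of the maximal truncation $T^*$ in the $j$-th entry, i.e.\ the situation $I=\{j\}$, $l=1$, $\vec{b}=(b_j)$. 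The first step is then to record that $T^*_{b_j,j}$ satisfies the pointwise sparse bound of Hypothesis \ref{hyp2} with $l=1$; this is precisely the content of \cite[Theorem 1.8]{zhan} for an $m$-linear $\omega$-Calder\'{o}n--Zygmund operator whose kernel obeys the $\log$-Dini$(1,0)$ condition, the small amount of regularity beyond a bare Dini condition being what is needed to control the truncation error uniformly in $\delta$.

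Granting the sparse domination, parts (b) and (c) follow by applying the corresponding general theorems to each $T^*_{b_j,j}$ and summing over $j$. For (b) I would invoke Theorem \ref{thm1.2} with $l=1$ to get, for every $0<p<\infty$ and $w\in A_\infty$,
\begin{equation*}
\int_{\mathbb{R}^n}\bigl|T^*_{b_j,j}(\vec{f})(x)\bigr|^p w(x)\,dx\lesssim \|b_j\|_{\mathrm{BMO}}[w]_{A_\infty}^{p}[w]_{A_\infty}^{\max\{2,p\}}\int_{\mathbb{R}^n}\bigl(\mathcal{M}_{L(\log L)}(\vec{f})(x)\bigr)^p w(x)\,dx,
\end{equation*}
and then sum over $j=1,\dots,m$, bounding $\|b_j\|_{\mathrm{BMO}}\le\|\vec{b}\|_{\mathrm{BMO}}$ and using $\bigl(\sum_j a_j\bigr)^p\le C_{m,p}\sum_j a_j^p$, so the finite-sum loss is absorbed into the implicit constant. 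For (c) I would apply Theorem \ref{thm1.5} with $l=1$ to each $T^*_{b_j,j}$ and again sum over $j$; since $\phi$ is convex and sub-multiplicative the finite sum can be pulled inside $\phi$, and setting $l=1$ in the exponents of $[w]_{A_\infty}$ produces the powers stated in (c).

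Part (a) proceeds in the same way via Theorem \ref{thm1.3}: for $\vec{w}\in A_{\vec{1}}$, $u=\prod_{i=1}^m w_i^{1/m}$ and $v\in A_\infty$ there is $t>1$ depending only on $v$ such that, for each $j$,
\begin{equation*}
\Big\|\frac{T^*_{b_j,j}(\vec{f})}{v}\Big\|_{L^{1/m,\infty}(uv^{1/m})}\lesssim K_0^{2+6m}[v^{1/m}]_{A_t}^{2+4m}\|b_j\|_{\mathrm{BMO}}\Big\|\frac{\mathcal{M}_{L(\log L)}(\vec{f})}{v}\Big\|_{L^{1/m,\infty}(uv^{1/m})},
\end{equation*}
with $K_0$ and $p_0$ exactly as in Theorem \ref{thm1.3} specialized to $l=1$; summing over $j$ costs only the quasi-triangle-inequality constant of $L^{1/m,\infty}$, which depends on $m$ alone and is harmless. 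I expect the only genuinely non-formal ingredient to be the sparse domination for the maximal truncated operator under $\log$-Dini$(1,0)$, which is imported from \cite{zhan}; once that is in place, the statement is a mechanical $l=1$ specialization of Theorems \ref{thm1.2}, \ref{thm1.3} and \ref{thm1.5} followed by a summation over the $m$ entries, so the main obstacle is really tracking the weight constants through the summation and, if the sharpest exponents are wanted, verifying that the single-commutator sparse form costs no extra power of $[w]_{A_\infty}$.
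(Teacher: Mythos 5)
Your proposal takes essentially the same route as the paper: the paper also verifies Hypothesis \ref{hyp2} for these maximal truncated commutators by importing the sparse domination from \cite[Theorem 1.8]{zhan} and then applies Theorems \ref{thm1.2}, \ref{thm1.3} and \ref{thm1.5} in the single-commutator case $l=1$, entrywise in $j$, exactly as you do. The only cosmetic caveat is that each summand $T^*_{b_j,j}$ is the maximal truncation of a commutator rather than literally a commutator of $T^*$, but since only the sparse bound of Hypothesis \ref{hyp2} enters the general theorems, this does not affect the argument.
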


\subsection{ Multilinear pseudo-differential operators}

Given a function $\sigma$ on $\mathbb{R}^n \times \mathbb{R}^{n m}$, the $m$-linear pseudo-differential operator $T_\sigma$ is defined by
$$
T_\sigma(\vec{f})(x):=\int_{(\mathbb{R}^{n})^ {m}} \sigma(x, \vec{\xi}) e^{2 \pi i x \cdot\left(\xi_1+\cdots+\xi_m\right)} \widehat{f_1}\left(\xi_1\right) \cdots \widehat{f_m}\left(\xi_m\right) d \vec{\xi}
$$
for all $f_i \in \mathscr{S}\left(\mathbb{R}^n\right), i=1, \ldots, m$, where $d\vec{\xi}=d\xi_1\cdots d\xi_m,$ and $\widehat{f}$ is the Fourier transform  of the function $f$ defined by
 $$\widehat{f}(\xi)=\int_{\mathbb{R}^n} f(x) e^{-2\pi i x\cdot \xi} dx.$$

Given $m \in \mathbb{N},$ $r \in \mathbb{R}$ and $0\leq \rho, \delta \leq 1.$
We say a smooth function $\sigma$ belongs to the H\"{o}rmander class $S_{\rho, \delta}^r(n, m)$ if for each triple of multi-indices $\alpha:=\left(\alpha_1, \ldots, \alpha_n\right)$ and $\beta_1, \ldots, \beta_m$, there exists a constant $C_{\alpha, \beta}$ such that
$$
\left|\partial_x^\alpha \partial_{\xi_1}^{\beta_1} \cdots \partial_{\xi_m}^{\beta_m} \sigma(x, \vec{\xi})\right| \leq C_{\alpha, \beta}\left(1+\left|\xi_1\right|+\cdots+\left|\xi_m\right|\right)^{r-\rho \sum_{j=1}^m\left|\beta_j\right|+\delta|\alpha|}.
$$
It was shown in \cite{wen1} that if $\sigma\in S_{\rho, \delta}^r(n, m)$, then the iterated commutator of the pseudo-differential operator $T_\sigma$ is weighted bounded from $L^{p_1}\times \cdots \times L^{p_m}$ to $L^{p}$ with $1 / p=1 / p_1+\cdots+1 / p_m$. Using Theorem \ref{thm1.1}, \ref{thm1.2} and Theorem \ref{thm1.5}, combined with Theorem 1.1 in \cite{wen1}, we know that both Hypothesis \ref{hyp1} and Hypothesis  \ref{hyp2} hold. Therefore, we obtain
\begin{theorem}\label{thm8.2}
Let $T_\sigma$ be an m-linear pseudo-differential operator, $\sigma \in S_{\rho, \delta}^r(n$, $m)$ with $0 \leq \rho, \delta \leq 1$ and $r<2 n(\rho-1)$.
Let $I=\{i_1,\ldots,i_l\}=\{1,\ldots,l\}\subseteq \{1,\ldots,m\}.$  If $\vec{b} \in \mathrm{BMO}^l,$ then the following hold:
\begin{enumerate}[(a).]
		\item Let $Q_0$ be a cube and $f_s \in L_c^{\infty}\left(\mathbb{R}^n\right)$ such that $\operatorname{supp}\left(f_s\right) \subset Q_0$ for $1 \leq s\leq m.$ If $w\in A_{\infty},$ then there are constants $\alpha, c>0$ independent of $w$ such that
\begin{equation*}
\begin{aligned}
&w\left(\left\{x \in Q_0:\left|T_{\sigma,\vec{b}}(\vec{f})(x)\right|>t \mathcal{M}_{L(\log L)}(\vec{f})(x)\right\}\right) \\
&\hspace{5cm} \qquad\leq c e^{-{\frac{\alpha}{[w]_{A_\infty}}}\left({\frac{t}{\prod_{s =1}^{l} \|b_s\|_{\mathrm{BMO}}}}\right)^{\frac{1}{l+1}}} w(Q_0), \quad t>0.
\end{aligned}
\end{equation*}

      \item For any $0<p<\infty, w\in A_\infty,$
\begin{equation*}
\int_{\mathbb{R}^n}\left|T_{\sigma,\vec{b}}(\vec{f})(x)\right|^pw(x)dx \lesssim \prod_{s =1}^{l} \|b_s\|_{\mathrm{BMO}}[w]_{A_\infty}^{pl}
[w]_{A_\infty}^{\max\{2,p\}}\int_{\mathbb{R}^n}\left(\mathcal{M}_{L(\log L)}(\vec{f})(x)\right)^pw(x)dx.
\end{equation*}

      \item Let~$\phi$ be a~$N$-function with sub-multiplicative property. For any $1<r<\infty,$
\begin{enumerate}[(1)]
	\item if $r<i_{\phi}<\infty,$ then there exists constant~$\alpha$ such that for
every~$1<q<\frac{i_{\phi}}{r}$ and ~$w\in A_q ,$
	 \begin{equation*}
\begin{aligned}
		\int_{\mathbb{R}^n} \phi\left(T_{\sigma,\vec{b}}(\vec{f})(x)\right) w(x) d x \lesssim & [w]_{A_\infty}^{(l+1)(\alpha C_1+1)}\prod_{s=1}^l\|b_s\|_{\mathrm{BMO}}^{1+\alpha C_1}\\
&\times\left(\prod_{i=1}^m\int_{\mathbb{R}^n} \phi^m\left([w]_{A_q}^{\frac{1}{qr}}\left|f_i(x)\right|\right) w(x) d x\right)^{\frac{1}{m}};
\end{aligned}
	\end{equation*}
	\item if $1<i_{\phi}\leq r,$ then there exists constant~$\alpha$ such that for
every~$1<q<i_{\phi}$ and ~$w\in A_q ,$
	 \begin{equation*}
\begin{aligned}
		\int_{\mathbb{R}^n} \phi\left(T_{\sigma,\vec{b}}(\vec{f})(x)\right) w(x) d x \lesssim & [w]_{A_\infty}^{(l+1)(\alpha C_1+1)+1+mC_1}\prod_{s=1}^l\|b_s\|_{\mathrm{BMO}}^{1+\alpha C_1}\\
&\times\left(\prod_{i=1}^m\int_{\mathbb{R}^n} \phi^m\left([w]_{A_q}^{\frac{2}{q}}\left|f_i(x)\right|\right) w(x) d x\right)^{\frac{1}{m}}.
\end{aligned}
	\end{equation*}

\end{enumerate}
\end{enumerate}
\end{theorem}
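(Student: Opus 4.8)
The plan is to deduce Theorem \ref{thm8.2} entirely from the abstract machinery of Subsection \ref{sub1.3}, so that the only genuine work is to check that the iterated commutator $T_{\sigma,\vec{b}}$ of the pseudo-differential operator falls within the scope of Hypotheses \ref{hyp1} and \ref{hyp2}. Once that is done, each of the three conclusions is obtained by specializing one of the general theorems.

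First I would recall the pointwise sparse domination available for $T_\sigma$ in the H\"{o}rmander class $S_{\rho,\delta}^r(n,m)$. In the range $r<2n(\rho-1)$, which is precisely the regime in which the Schwartz kernel of $T_\sigma$ obeys the size and regularity estimates of a multilinear Calder\'{o}n-Zygmund kernel of Dini type, \cite[Theorem 1.1]{wen1} provides both a localized sparse bound over $\mathcal{D}(Q_0)$ for an arbitrary cube $Q_0$ and a global bound by $3^n$ sparse families $\{\mathcal{S}_j\}$. I would then transfer these bounds to the commutator $T_{\sigma,\vec{b}}$: writing the kernel representation of $T_{\sigma,\vec{b}}$ with the factors $\prod_{s=1}^l (b_{i_s}(x)-b_{i_s}(y_{i_s}))$ inserted, splitting each such factor around the cube average $\langle b_{i_s}\rangle_{3Q}$ (respectively $\langle b_{i_s}\rangle_Q$ in the global version), and running the localization argument, one recovers exactly the oscillation quantities $\mathcal{R}(b,f,Q,\gamma)$ of Hypothesis \ref{hyp1} and $\mathcal{U}(b,f,Q,\gamma)$ of Hypothesis \ref{hyp2}. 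This is the step that carries essentially all the analytic content, and it is where one must be careful to match the exact power of $\|b_s\|_{\mathrm{BMO}}$ and the $[w]_{A_\infty}$ dependence appearing in the abstract theorems; the generalized H\"{o}lder inequality (Lemma \ref{lem2}) and the weighted John--Nirenberg estimate (Lemma \ref{lem3}) are the tools needed to keep track of the constants.

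With Hypotheses \ref{hyp1} and \ref{hyp2} verified for $\mathcal{T}_{\vec{b}}=T_{\sigma,\vec{b}}$, the three assertions follow at once. Part (a) is Corollary \ref{cor1.1(1)}, the $A_\infty$ form of the local exponential decay estimate, with the constants $\alpha$, $c$ inherited and independent of $w$; part (b) is the conclusion of Theorem \ref{thm1.2} applied verbatim; and part (c) is the conclusion of Theorem \ref{thm1.5} applied verbatim, using that the condition $r<2n(\rho-1)$ already ensures Hypothesis \ref{hyp2}. The main obstacle is therefore not located in this section at all, but is packaged inside \cite{wen1}: namely, establishing the pointwise sparse domination for pseudo-differential operators in the sharp range of $r$. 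Granting that input, the present proof is a routine application of the general theorems, and the same template applies uniformly to the other operators treated in Section \ref{Sect 8}.

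\begin{proof}[Proof of Theorem \ref{thm8.2}]
By \cite[Theorem 1.1]{wen1}, in the range $r<2n(\rho-1)$ the operator $T_\sigma$ admits a localized sparse bound over $\mathcal{D}(Q_0)$ for every cube $Q_0$ and a global bound by $3^n$ sparse families. Inserting the factors $\prod_{s=1}^l(b_{i_s}(x)-b_{i_s}(y_{i_s}))$ into the kernel representation of $T_{\sigma,\vec{b}}$ and splitting each factor around the corresponding cube average yields that $T_{\sigma,\vec{b}}$ satisfies both Hypothesis \ref{hyp1} and Hypothesis \ref{hyp2}, with the oscillation terms $\mathcal{R}(b,f,Q,\gamma)$ and $\mathcal{U}(b,f,Q,\gamma)$ as stated there. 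Consequently, part (a) follows from Corollary \ref{cor1.1(1)}, part (b) from Theorem \ref{thm1.2}, and part (c) from Theorem \ref{thm1.5}, applied with $\mathcal{T}_{\vec{b}}=T_{\sigma,\vec{b}}$.
\end{proof}
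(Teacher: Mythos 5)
Your proposal follows the paper's route exactly: the paper likewise verifies Hypotheses \ref{hyp1} and \ref{hyp2} for $T_{\sigma,\vec b}$ by invoking \cite[Theorem 1.1]{wen1}, and then reads off part (a) from Corollary \ref{cor1.1(1)}, part (b) from Theorem \ref{thm1.2}, and part (c) from Theorem \ref{thm1.5}. One caveat: your intermediate step of ``inserting the factors $(b_{i_s}(x)-b_{i_s}(y_{i_s}))$ into the sparse bound for $T_\sigma$ and splitting around cube averages'' is not, as stated, a valid derivation of commutator sparse domination (that requires re-running the Lerner-type argument with grand maximal truncations and endpoint bounds, not manipulating an already-established sparse bound), but this is immaterial here since the cited result in \cite{wen1} already covers the iterated commutators directly, which is precisely how the paper proceeds.
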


\subsection{ Higher order Calder\'{o}n commutators}
 In this subsection, we apply our results to derive the quantitative weak $A_\infty$ decay estimates,  mixed weak type estimates and  Fefferman-Stein inequalities with arbitrary weights of higher order Calder\'{o}n commutators on $\mathbb{R}$. These operators and its higher-order counterpart first appeared in the investigation of Cauchy integrals along Lipschitz curves, in the proof of the $L^2$ boundedness of the latter. \par
 For our purpose, we first present some definitions.
Given functions $A_1, \ldots, A_m$ defined on $\mathbb{R},$ let $a_j=\frac{dA_j}{dt}, j=1, \ldots, m$. The higher order Calder\'{o}n commutators are defined by
$$
\mathcal{C}_{m+1}\left(a_1, \ldots, a_{m}, f\right)(x):=\text { p.v. } \int_{\mathbb{R}} \frac{\prod_{j=1}^{m}\left(A_j(x)-A_j(y)\right)}{(x-y)^{m}} f(y) d y .
$$
Using the method in \cite[p. 2106]{duo}, we can rewrite $\mathcal{C}_{m+1}$ in the form of the multilinear singular integral as follows.
$$
\mathcal{C}_{m+1}\left(a_1, \ldots, a_{m}, f\right)(x)=\int_{\mathbb{R}^{m+1}} K(x, y_1,\ldots,y_{m+1}) f\left(y_{m+1}\right)\prod_{j=1}^{m} a_j\left(y_j\right)  dy_1\cdots dy_{m+1},
$$
where the kernel
\begin{equation}\label{c1}
 K(x, y_1,\ldots,y_{m+1})=\frac{(-1)^{m e\left(y_{m+1}-x\right)}}{\left(x-y_{m+1}\right)^{m+1}} \prod_{j=1}^{m} \chi_{\left(x \wedge y_{m+1}, x \vee y_{m+1}\right)}\left(y_j\right)
\end{equation}
with $ x \wedge y=\min \{x, y\}$, $x \vee y=\max \{x, y\},$ and
$$
e(x)= \begin{cases}1, & x>0, \\ 0, & x<0 .\end{cases}
$$
Whenever $\left|x-x^{\prime}\right| \leq \frac{1}{8} \min _{1 \leq j \leq m+1}\left|x-y_j\right|$, it was shown in \cite{hu} that
$$
|K(x, y_1,\ldots,y_{m+1})| \lesssim \frac{1}{\left(\sum_{j=1}^{m+1}\left|x-y_j\right|\right)^{m+1}}
$$
and
$$
\left|K(x, y_1,\ldots,y_{m+1})-K(x^\prime, y_1,\ldots,y_{m+1})\right| \lesssim \frac{\left|x-x^{\prime}\right|}{\left(\sum_{j=1}^{m+1}\left|x-y_j\right|\right)^{m+2}}.
$$
It is natural to generalize $\mathcal{C}_{m+1}$ to multilinear version as following.

\begin{equation}\label{c2}
\mathscr{C}(\vec{f})(x):=\int_{\mathbb{R}^{m+1}}K(x, y_1,\ldots,y_{m+1}) \prod_{j=1}^{m+1} f_j\left(y_j\right) dy_1\cdots dy_{m+1},
\end{equation}
where the kernel $K$ is given in (\ref{c1}).\par
With \cite[Theorem 2.24 ]{cao1} in hand, both Hypothesis \ref{hyp1} and Hypothesis  \ref{hyp2} hold. One can obtain the following results by Theorems \ref{thm1.1}, \ref{thm1.3} and \ref{thm1.4}.
\begin{theorem}\label{thm8.3}
Let $\mathscr{C}$ be the operator in (\ref{c2}) with the kernel $K$ given by (\ref{c1}). If $I=\{i_1,\ldots,i_l\}=\{1,\ldots,l\}\subseteq \{1,\ldots,m+1\},$  $\vec{b} \in \mathrm{BMO}^l,$ then the following statement are true
\begin{enumerate}[(a).]
		\item Let $w\in A_{\infty}^{\text{weak}},$ $Q_0$ be an interval and $f_s \in L_c^{\infty}\left(\mathbb{R}\right)$ such that $\operatorname{supp}\left(f_s\right) \subset Q_0$ for $1 \leq s\leq m+1.$ Then there are constants $\alpha, c>0$ independent of $w$ such that
\begin{equation*}
\begin{aligned}
&w\left(\left\{x \in Q_0:\left|\mathscr{C}_{\vec{b}}(\vec{f})(x)\right|>t \mathcal{M}_{L(\log L)}(\vec{f})(x)\right\}\right) \\
&\hspace{5cm} \qquad\leq c e^{-{\frac{\alpha}{[w]_{A_\infty}^{weak}+1}}\left({\frac{t}{\prod_{s =1}^{l} \|b_s\|_{\mathrm{BMO}}}}\right)^{\frac{1}{l+1}}} w(2Q_0), \quad t>0.
\end{aligned}
\end{equation*}
      \item Let $\vec{w}=\left(w_1, \ldots, w_{m+1}\right)$ and $u=\prod_{i=1}^{m+1} w_i^{1 / (m+1)}$. If $\vec{w} \in A_{\vec{1}}$ and $v \in A_{\infty},$ then there exists $t>1 $ depending only on $v$, such that
$$
\left\|\frac{\mathscr{C}_{\vec{b}}(\vec{f})}{v}\right\|_{L^{\frac{1}{m+1}, \infty}(u v^{\frac{1}{m+1}})} \lesssim \mathcal{K}\prod_{s =1}^{l} \|b_s\|_{\mathrm{BMO}}\left\|\frac{ \mathcal{M}_{L(\log L)} (\vec{f})}{v}\right\|_{L^{\frac{1}{m+1}, \infty}(u v^{\frac{1}{m+1}})},
$$
where
$\mathcal{K}=K_0^{2l+6(m+1)}[v^{\frac{1}{m+1}}]_{A_t}^{2l+4(m+1)}$ with
$$K_0=Cp_0p_0^{\prime}([u]_{A_1}+2^{p_0-1}C^t[v^{\frac{1}{m+1}}]_{A_t}^2[u]_{A_1}^{p_0-1})+1, p_0=16(t-1)[u]_{A_1}+1.$$
      \item Let $1<p_1, \ldots, p_{m+1}<\infty$ and $\frac{1}{p}=\frac{1}{p_1}+\cdots+\frac{1}{p_{m+1}}.$ Assume that $m\geq 1$ and for all weights $\vec{w}=\left(w_1, \ldots, w_{m+1}\right), \nu_{\vec{w}}=\prod_{s=1}^{m+1} w_s^{p / p_s},$ $b_s \in \mathrm{BMO}_{p_s}(w_s)\cap \mathrm{BMO}$ with $1\leq s\leq l.$ If $0<p\leq1,$ then

$$
\left\|\mathscr{C}_{\vec{b}}(\vec{f})\right\|_{L^p\left(\nu_{\vec{w}}\right)} \leq C \|\vec{b}\|_{\mathrm{BMO}}^*\prod_{s=1}^{m+1}\left\|f_s\right\|_{L^{p_s}(M w_s)},
$$
where $C$ is independent of $\vec{w}$ and $\vec{b},$ and $$\|\vec{b}\|_{\mathrm{BMO}}^*=\max_{\vec{\gamma}\in \{1,2\}^l}\{\prod_{s:\gamma_s=1}\|b_s\|_{\mathrm{BMO}_{p_s}(w_s)}\prod_{s:\gamma_s=2}\|b_s\|_{\mathrm{BMO}}\}.$$
\end{enumerate}

\end{theorem}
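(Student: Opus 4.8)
The plan is to obtain Theorem~\ref{thm8.3} as a direct specialization of Corollary~\ref{cor1.1}, Theorem~\ref{thm1.3} and Theorem~\ref{thm1.4} to the $(m+1)$-linear operator $\mathscr{C}$ defined in (\ref{c2}); the only point genuinely requiring argument is that $\mathscr{C}_{\vec b}$ falls within the scope of Hypotheses~\ref{hyp1} and~\ref{hyp2}. First I would record that the kernel $K$ in (\ref{c1}) satisfies the size estimate $|K(x,\vec y)|\lesssim(\sum_{j=1}^{m+1}|x-y_j|)^{-(m+1)}$ together with the first-variable Lipschitz bound $|K(x,\vec y)-K(x',\vec y)|\lesssim |x-x'|(\sum_{j=1}^{m+1}|x-y_j|)^{-(m+2)}$ whenever $|x-x'|\le\tfrac18\min_j|x-y_j|$, both of which are recalled after (\ref{c2}) from \cite{hu}. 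Hence $\mathscr{C}$ is an $(m+1)$-linear $\omega$-Calder\'on--Zygmund operator with modulus $\omega(t)=t$, so in particular $\omega\in\log\mathrm{-Dini}(1,m)$ for every $m$; moreover $\mathscr{C}$ maps $L^{q_1}\times\cdots\times L^{q_{m+1}}$ into $L^{q,\infty}$ by the classical $L^2$ theory of Cauchy integrals along Lipschitz curves and higher order Calder\'on commutators \cite{duo}. Consequently \cite[Theorem~2.24]{cao1} applies and furnishes exactly the local pointwise sparse domination of Hypothesis~\ref{hyp1} and the global $3^n$-lattice sparse domination of Hypothesis~\ref{hyp2} for $\mathscr{C}_{\vec b}$, for any $I=\{1,\ldots,l\}\subseteq\{1,\ldots,m+1\}$ and any $\vec b\in\mathrm{BMO}^l$.

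With the two hypotheses in place, part~(a) is immediate from Corollary~\ref{cor1.1} applied on $\mathbb{R}$, with the interval $Q_0$ playing the role of the cube, since that corollary already admits weights in $A_\infty^{\text{weak}}$ and the constants $\alpha,c$ there depend only on the sparse domination. Part~(b) is Theorem~\ref{thm1.3} with $m$ replaced by $m+1$: one only has to observe that in dimension $n=1$ the dimensional constants $C_n$ entering $K_0$ become absolute constants and $2^{n+3}=16$, which converts the expressions of Theorem~\ref{thm1.3} into the stated $K_0$, $p_0=16(t-1)[u]_{A_1}+1$ and $\mathcal{K}=K_0^{2l+6(m+1)}[v^{1/(m+1)}]_{A_t}^{2l+4(m+1)}$. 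Part~(c) is Theorem~\ref{thm1.4} for $m+1$ functions: the assumptions $b_s\in\mathrm{BMO}_{p_s}(w_s)\cap\mathrm{BMO}$ for $1\le s\le l$ together with arbitrary $\vec w$ are precisely those of that theorem (note $m+1\ge 2$), and $\|\vec b\|_{\mathrm{BMO}}^*$ is the same maximum over $\vec\gamma\in\{1,2\}^l$.

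Since the heavy analytic input — the sparse bound for $\mathscr{C}_{\vec b}$ — is imported from \cite{cao1}, the main (and essentially only) obstacle is bookkeeping: checking carefully that the explicit kernel (\ref{c1}) meets the $\omega$-Calder\'on--Zygmund / $\log$-Dini requirements of \cite[Theorem~2.24]{cao1} with the correct order of regularity, and then propagating the weight constants of Theorem~\ref{thm1.3} through the one-dimensional setting of part~(b). I expect the constant tracking in~(b) to be the fussiest step, but it involves no new ideas and is routine.
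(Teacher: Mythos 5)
Your overall strategy is the same as the paper's: import the sparse domination of $\mathscr{C}_{\vec b}$ from \cite[Theorem 2.24]{cao1}, conclude that Hypotheses \ref{hyp1} and \ref{hyp2} hold, and then read off (a), (b), (c) from Corollary \ref{cor1.1}, Theorem \ref{thm1.3} and Theorem \ref{thm1.4} applied with $m+1$ inputs in dimension $n=1$ (the paper's proof is exactly this one sentence). Your constant bookkeeping in (b) (in particular $2^{n+3}=16$ when $n=1$) and the remark that $m+1\ge 2$ makes Theorem \ref{thm1.4} applicable in (c) are correct.

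There is, however, a genuine flaw in the way you justify invoking \cite[Theorem 2.24]{cao1}: you assert that the size bound and the first-variable regularity recalled from \cite{hu} make $\mathscr{C}$ an $(m+1)$-linear $\omega$-Calder\'on--Zygmund operator with $\omega(t)=t$, hence of $\log$-Dini type. This is not true. Definition \ref{def8.1} also requires the regularity estimate in each variable $y_i$, and the kernel (\ref{c1}) contains the factors $\chi_{(x\wedge y_{m+1},\,x\vee y_{m+1})}(y_j)$, which are not even continuous in the $y$-variables; only smoothness in $x$ is available. This is precisely why the Calder\'on commutator is presented in this paper (and in \cite{duo,cao1}) as an operator with non-smooth kernel, beyond multilinear Calder\'on--Zygmund theory, and why the sparse domination cannot be obtained through the $\omega$-CZ route of Subsection \ref{S_1} (i.e.\ via \cite[Theorem 2.5]{cao1}). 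Your argument survives only because \cite[Theorem 2.24]{cao1} is a sparse domination result stated directly for (commutators of) the higher order Calder\'on commutators within the bounded oscillation framework, so it applies to $\mathscr{C}_{\vec b}$ without any $y$-regularity. If you drop the erroneous intermediate claim and quote \cite[Theorem 2.24]{cao1} as the direct source of Hypotheses \ref{hyp1}--\ref{hyp2} for $\mathscr{C}_{\vec b}$, your deduction of (a), (b), (c) coincides with the paper's.
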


\subsection{ Stein's square functions}
The Stein's square function $G_\alpha$ is defined by
$$
G_\alpha f(x)=\left(\int_0^{\infty}\left|\frac{\partial}{\partial t} B_\alpha^t f(x)\right|^2 t d t\right)^{1 / 2}, \qquad \hbox{for }  \alpha>0,
$$
where $B_\alpha^t$ is the Bochner-Riesz multiplier
$
\widehat{B_\alpha^t f}(\xi)=\left(1-\frac{|\xi|^2}{t^2}\right)_{+}^\alpha \widehat{f}(\xi).
$
A simple calculation gives that
$$
\frac{\partial}{\partial t} B_\alpha^t f(x)=\frac{2 \alpha}{t} \int_{\mathbb{R}^n} \frac{|\xi|^2}{t^2}\left(1-\frac{|\xi|^2}{t^2}\right)_{+}^{\alpha-1} \widehat{f}(\xi) e^{2 \pi i x \xi} d \xi.
$$
Let $\widehat{K_t^\alpha}(\xi)=\frac{|\xi|^2}{t^2}\left(1-\frac{|\xi|^2}{t^2}\right)_{+}^{\alpha-1}$. Then, $G_\alpha$ can be rewritten as
$$
G_\alpha f(x)=\left(\int_0^{\infty}\left|K_t^\alpha * f(x)\right|^2 \frac{d t}{t}\right)^{1 / 2}.
$$
The function $G_\alpha$ was first introduced by Stein \cite{ste} to study $L^2$ properties of the maximal Bochner-Riesz operator and deduce almost everywhere convergence for Bochner-Riesz means of Fourier series.
 Invoking \cite[Theorem 1.1]{car1} and Theorems \ref{thm1.3} and \ref{thm1.5}, we know Hypothesis  \ref{hyp2} hold and thus we may obtain the following results.
\begin{theorem}\label{thm8.4}
Let $\alpha > \frac{n+1}{2},$ then the following hold:
\begin{enumerate}[(a).]
		\item If $w \in A_1$ and $v \in A_{\infty}$, then
$$
\left\|\frac{G_\alpha f}{v}\right\|_{L^{1, \infty}\left(w v\right)} \lesssim \left\|f\right\|_{L^1\left( w\right)} .
$$
      \item Let $\phi$ be an $N$-function belonging to $\Delta_2$ and $w \in A_{i_\phi}$. If $i_\phi>1$, then there exists $C_0>0$ such that
$$
\int_{\mathbb{R}^n} \phi\left(\left|G_\alpha f(x)\right|\right) w(x) dx \lesssim C(\phi, w) \int_{\mathbb{R}^n} \phi(|f(x)|) w(x) dx,
$$
where
$$
C(\phi, w)= \begin{cases}[w]_{A_{\infty}}^{1+\alpha C_1}, & C_0[w]_{A_{i_\phi}}^{1 / i_\phi}<2, \\ [w]_{A_{\infty}}^{1+\alpha C_1}\left([w]_{A_{i_\phi}}^{1 / i_\phi}\right)^{C_1}, & C_0[w]_{A_{i_\phi}}^{1 / i_\phi} \geq 2.\end{cases}
$$
\end{enumerate}

\end{theorem}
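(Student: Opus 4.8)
The plan is to reduce Theorem~\ref{thm8.4} to a pointwise sparse bound for $G_\alpha$ and then feed it into the abstract results of the previous sections. First I would invoke \cite[Theorem 1.1]{car1}: since $\alpha>\tfrac{n+1}{2}$, the convolution kernels $K_t^\alpha$ of $\partial_t B_\alpha^t$ are regular enough (a Dini-type condition for the $t$-valued operator $f\mapsto(K_t^\alpha*f)_{t>0}$) that for every $f\in L_c^\infty(\mathbb{R}^n)$ there exist $3^n$ sparse families $\{\mathcal{S}_j\}_{j=1}^{3^n}$ with
$$
|G_\alpha f(x)|\le C\sum_{j=1}^{3^n}\mathcal{A}_{\mathcal{S}_j}f(x)\quad\text{for a.e. }x\in\mathbb{R}^n,
$$
where $\mathcal{A}_{\mathcal{S}_j}f=\sum_{Q\in\mathcal{S}_j}\langle|f|\rangle_Q\chi_Q$. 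This is exactly Hypothesis~\ref{hyp2} in the degenerate case $m=1$, $I=\emptyset$: then $\vec b$ is void, $\mathcal{T}_{\vec b}=G_\alpha$, every product over the indices of $I$ is empty (so every factor $\prod_s\|b_s\|_{\mathrm{BMO}}$ equals $1$), and $\mathcal{A}_{\mathcal{S}_j,b}^{\vec\gamma}$ collapses to $\mathcal{A}_{\mathcal{S}_j}$. Consequently Theorems~\ref{thm1.3} and~\ref{thm1.5}, along with their proofs, apply with one simplification that shapes the conclusions: since the sparse bound is by the plain operator $\mathcal{A}_{\mathcal{S}_j}$ rather than by an $L\log L$ sparse operator, the controlling maximal operator everywhere is the Hardy--Littlewood $M$ instead of $\mathcal{M}_{L(\log L)}$, which is what upgrades the right-hand sides from $L\log L$ averages to ordinary averages.

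For part~(a) I would run the proof of Theorem~\ref{thm1.3} with $m=1$, $l=0$. The Rubio de Francia construction is unchanged, and at the Coifman--Fefferman step one uses the $l=0$ instance of Theorem~\ref{thm1.2}, namely $\|\mathcal{A}_{\mathcal{S}_j}f\|_{L^p(W)}\lesssim[W]_{A_\infty}^{\max\{2/p,\,1\}}\|Mf\|_{L^p(W)}$ for $W\in A_\infty$. With $u=w\in A_1$ and $v\in A_\infty$ this produces
$$
\bigg\|\frac{G_\alpha f}{v}\bigg\|_{L^{1,\infty}(wv)}\lesssim K_0^{6}\,[v]_{A_t}^{4}\,\bigg\|\frac{Mf}{v}\bigg\|_{L^{1,\infty}(wv)},
$$
with $K_0,t$ as in Theorem~\ref{thm1.3} (for $m=1$, $l=0$ one has $K_0^{2l+6m}=K_0^6$ and $[v^{1/m}]_{A_t}^{2l+4m}=[v]_{A_t}^4$). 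It then remains only to absorb the maximal operator: the classical Sawyer-type estimate $\|Mf/v\|_{L^{1,\infty}(uv)}\lesssim\|f\|_{L^{1}(u)}$ valid for $u\in A_1$ and $v\in A_\infty$ (the baseline inequality underlying the entire framework, cf.\ \cite{cru,li1}) gives, with $u=w$, the assertion in~(a).

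For part~(b) I would specialize the argument behind Theorem~\ref{thm1.5} to $m=1$, $l=0$ (no BMO factors, so $\mathcal{M}_r$ becomes $M$). Using $\phi\in\Delta_2$ to pull the finite sum $\sum_j$ outside the $N$-function, it suffices to bound $\int_{\mathbb{R}^n}\phi(\mathcal{A}_{\mathcal{S}_j}f)\,w\,dx$ for each $j$. Here one reproduces the self-improvement step of the proof of Theorem~\ref{thm1.5} (the passage from (\ref{ie25}) to (\ref{ie28})): setting $h=\phi(\mathcal{A}_{\mathcal{S}_j}f)/\mathcal{A}_{\mathcal{S}_j}f$, testing $\mathcal{A}_{\mathcal{S}_j}f$ against $h$, and then using the weighted dyadic maximal modular inequality (\ref{ie26}), Young's inequality, (\ref{ie6.2}), and a judicious choice $\varepsilon\simeq[w]_{A_\infty}^{-\alpha}$ (with $\alpha$ the exponent for which $\phi^\alpha$ and $\bar\phi^\alpha$ are quasi-convex, available since $i_\phi>1$), one gets
$$
\int_{\mathbb{R}^n}\phi\big(\mathcal{A}_{\mathcal{S}_j}f\big)\,w\,dx\lesssim[w]_{A_\infty}^{1+\alpha C_1}\int_{\mathbb{R}^n}\phi(Mf)\,w\,dx,
$$
which is sharper than a direct application of Lemma~\ref{lem6} (it yields $\alpha C_1$ in place of $C_1$). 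Since $w\in A_{i_\phi}$ and $1<i_\phi<\infty$, Lemma~\ref{lem7} then gives $\int\phi(Mf)\,w\le C_0\int\phi(C_0[w]_{A_{i_\phi}}^{1/i_\phi}|f|)\,w$, and the stated $C(\phi,w)$ comes from an elementary split: if $C_0[w]_{A_{i_\phi}}^{1/i_\phi}<2$ then $\phi(C_0[w]_{A_{i_\phi}}^{1/i_\phi}|f|)\le\phi(2|f|)\lesssim\phi(|f|)$, whereas if $C_0[w]_{A_{i_\phi}}^{1/i_\phi}\ge2$ then (\ref{ie6.3}) gives $\phi(C_0[w]_{A_{i_\phi}}^{1/i_\phi}|f|)\lesssim\big([w]_{A_{i_\phi}}^{1/i_\phi}\big)^{C_1}\phi(|f|)$. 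Finally, replacing $\mathcal{A}_{\mathcal{S}_j}f$ by $G_\alpha f$ through the pointwise domination and the convexity of $\phi$ finishes part~(b).

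The main obstacle is concentrated entirely at the input step, namely establishing the pointwise sparse bound $|G_\alpha f|\lesssim\sum_j\mathcal{A}_{\mathcal{S}_j}f$ for $\alpha>\tfrac{n+1}{2}$: this is where the oscillatory analysis of the Bochner--Riesz kernels $K_t^\alpha$ is needed, and it is exactly what \cite[Theorem 1.1]{car1} provides. Everything downstream is a routine specialization of the abstract theorems; the only genuinely new bookkeeping is (i) verifying that in the degenerate case $l=0$ of Hypothesis~\ref{hyp2} one may replace $\mathcal{M}_{L(\log L)}$ by $M$, and (ii) the two-case split on the size of $C_0[w]_{A_{i_\phi}}^{1/i_\phi}$ relative to $2$, which is forced by the $\Delta_2$ inequality (\ref{ie6.3}) being available only for dilations $\lambda\ge2$.
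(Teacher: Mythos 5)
Your proposal is correct and takes essentially the same route as the paper, whose proof of Theorem \ref{thm8.4} consists precisely of citing \cite[Theorem 1.1]{car1} for the pointwise sparse domination (i.e.\ Hypothesis \ref{hyp2} with $m=1$ and $I=\emptyset$) and then invoking Theorems \ref{thm1.3} and \ref{thm1.5}. The steps you make explicit --- that in the commutator-free case the controlling maximal operator is $M$ rather than $\mathcal{M}_{L(\log L)}$, the absorption of $M$ through the known mixed weak-type bound of \cite{cru,li1}, and the combination of the modular estimate with Lemma \ref{lem7} plus the $\Delta_2$ case split producing $C(\phi,w)$ --- are exactly the implicit modifications the paper alludes to when specializing its general theorems.
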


\vspace {1cm}


\begin{thebibliography}{99}
\bibitem{and1}T. C. Anderson and B. Hu,
	A unified method for maximal truncated Calder\'{o}n-Zygmund operators in general function spaces by sparse domination,
	Proc. Edinb. Math. Soc.  \textbf{63}(2) (2020) 229-247.

\bibitem{and}T. C. Anderson, T. Hyt\"{o}nen and O. Tapiola,
	Weak $A_{\infty}$ weights and weak reverse H\"{o}lder property in a space of homogeneous type,
	Journal of Geometric Analysis 27 (2017), 95-119.

\bibitem{ben}C. Benea, F. Bernicot and T. Luque,
	Sparse bilinear forms for Bochner Riesz multipliers and applications,
	Trans. Lond. Math. Soc. 4 (2017), 110–128.

\bibitem{beny}\'{A}. B\'{e}nyi,  J. M. Martell, K. Moen, E. Stachura and R. H. Torres,
	Boundedness results for commutators with BMO functions via weighted estimates: a comprehensive approach,
	Math. Ann. 376 (2020), 61-102.

\bibitem{ber}F. Bernicot, D. Frey and S. Petermichl,
	Sharp weighted norm estimates beyond Calder\'{o}n-Zygmund theory,
	Anal. PDE 9 (2016), 1079-1113.

\bibitem{boj}B. Bojarski and T. Iwaniec,
	 $p$-harmonic equation and quasiregular mappings,
	in Partial Differential Equations (Warsaw, 1984), Banach Center Publications, Vol. 19, PWN, Warsaw, 1987, pp. 25-38.

\bibitem{buk}S. M. Buckley,
	Estimates for operator norms on weighted spaces and reverse Jensen inequalities,
	Trans. Am.Math.Soc.340(1) (1993), 253–272.


\bibitem{cao1}	M. Cao, G. Ibanez-Firnkorn, I. Rivera-Rios, Q. Xue and K. Yabuta, A class of multilinear bounded oscillation operators on measure spaces and applications, Math. Ann. to appear.

\bibitem{cao}M. Cao, Q. Xue and K. Yabuta,
	Weak and strong type estimates for the multilinear pseudo-differential operators,
	J. Funct. Anal. \textbf{278} (2020), no. 10, 108454, 46 pp.

\bibitem{car1}M. J. Carro, C. Domingo-Salazar,
	Stein's square function $G_\alpha$ and sparse operators,
	J. Geom. Anal. 27(2)  (2017), 1624-1635.

\bibitem{car}M. J. Carro, C. P\'{e}rez, F. Soria, J. Soria,
	Maximal functions and the control of weighted inequalities for the fractional integral operator,
	Indiana Univ. Math. J. 54 (2005), no. 3, 627–644.

\bibitem{cha}S. Chanillo and R. L. Wheeden,
	Some weighted norm inequalities for the area integral,
	Indiana Univ. Math. J. 36 (1987), no. 2, 277–294.

\bibitem{CJSK} S. Choudhary, K. Jotsaroop, S. Shrivastava and K. Shuin,  Bilinear Bochner–Riesz Square Function and Applications,  J Fourier Anal Appl 29, 63 (2023),  https://doi.org/10.1007/s00041-023-10049-9.

\bibitem{coi}R. R. Coifman, C. Fefferman,
	Weighted norm inequalities for maximal functions and singular integrals,
	Studia Math. \textbf{51} (1974), 241-250.



\bibitem{con1}J. M. Conde-Alonso, A. Culiuc, F. Di Plinio and Y. Ou,
 A sparse domination principle for rough singular integrals,
 Anal. PDE. \textbf{10}(5)  (2017) 1255-1284.

\bibitem{con}J. M. Conde-Alonso and G. Rey,
	A pointwise estimate for positive dyadic shifts and some applications,
	Math. Ann. 365(3–4) (2016), 1111-1135.


 \bibitem{cru}D. Cruz-Uribe, J. M. Martell, C. P\'{e}rez,
	Weighted weak-type inequalities and a conjecture of Sawyer,
	Int. Math. Res. Not. 30 (2005) 1849-1871.

\bibitem{cru2}D. Cruz-Uribe, J. M. Martell, C. P\'{e}rez,
	Sharp weighted estimates for classical operators,
	Adv. Math. 229(1) (2012) 408–441.

\bibitem{cur}G. Curbera, J.Cuerva, J. Martell and C. P\'{e}rez,
Extrapolation with weights, rearrangement invariant function spaces, modular inequalities and applications to singular integrals,
Adv. Math. \textbf{203} (2006) 256-318.

\bibitem{dam}W. Dami\'{a}n, A.K. Lerner, C. P\'{e}rez,
Sharp weighted bounds for multilinear maximal functions and Calder\'{o}n-Zygmund operators,
J. Fourier Anal. Appl. 21 (2015), no.1, 161-181.
\bibitem{DL2019} Y. Ding, X. Lai, {\it Weak type $(1,1)$ bound criterion for singular integrals with rough kernel and its applications}, Trans. Amer. Math. Soc. {\bf 371} (2019), no. 3, 1649-1675.

\bibitem{duo}X. Duong, L. Grafakos and L. Yan,
	Multilinear operators with non-smooth kernels and commutators of singular integrals,
	 Trans. Amer. Math. Soc. 362  (2010), 2089-2113.

\bibitem{fef}C. Fefferman and E. M. Stein,
	Some maximal inequalities,
	 Amer. J. Math. 93 (1971), 107-115.

\bibitem{fuj}N. Fujii,
	Weighted bounded mean oscillation and singular integrals,
	 Math. Jpn. 22 (1977), no. 5, 529–534.

 \bibitem{gra}L. Grafakos,
	Classical Fourier Analysis,
	GTM249, 3rd Edition, Springer, New York, 2014.

 \bibitem{gra2}L. Grafakos,
	Modern Fourier Analysis,
	GTM250, 3rd Edition, Springer, New York, 2014.

\bibitem{gra1}L. Grafakos and R. H. Torres,
	Multilinear Calder\'{o}n–Zygmund theory,
	Adv. Math. 165  (2002), 124–164.

\bibitem{hoa}C. Hoang and K.  Moen,
	Muckenhoupt-Wheeden conjectures for sparse operators,
	Arch. Math. (Basel) 109 (2017), no. 1, 49-58.

\bibitem{hu}G. Hu and Y. Zhu,
	 Weighted norm inequalities with general weights for the commutator of Calder\'{o}n,
	Acta Math. Sin. Engl. Ser. 29  (2013), 505-514.

\bibitem{hyt0}T. Hyt\"{o}nen,
	The sharp weighted bound for general Calder\'{o}n-Zygmund operators,
	 Ann. Math. \textbf{175} (2012), 1473-1506.

 \bibitem{iba}G. H. Ib\'{a}\~{n}ez-Firnkorn and I. P. Rivera-R\'{\i}os,
	Sparse and weighted estimates for generalized H\"{o}rmander operators and commutators,
	Monatsh. Math. \textbf{191}(1) (2020) 125-173.

\bibitem{iba1}G. H. Ib\'{a}\~{n}ez-Firnkorn and I. P. Rivera-R\'{\i}os,
	Mixed weak type inequalities in Euclidean spaces and in spaces of homogeneous type,
	Math. Nachr. (2023), 1-37.

\bibitem{kar}G. A. Karagulyan,
	Exponential estimates for the Calder\'{o}n-Zygmund operator and related problems of Fourier series,
	Mat. Zametki 71(3)  (2002), 398–411.

\bibitem{kok}V. Kokilashvili, M. Krbec,
	Weighted Inequalities in Lorentz and Orlicz Spaces,
	World Scientific, Singapore, 1991.


\bibitem{ler3}A. K. Lerner,
	A simple proof of the $A_2$ conjecture,
Int. Math. Res. Not.  \textbf{24}(14)  (2013) 3159-3170.

\bibitem{ler5}A. K. Lerner,
	On an estimate of Calder\'{o}n-Zygmund operators by dyadic positive operators,
	J. Anal. Math. \textbf{121}  (2013) 141-161.

\bibitem{ler2}A. K. Lerner, F. Nazarov,
	Intuitive dyadic calculus: the basics,
	Expo. Math. 37(3), (2019) 225-265.

\bibitem{ler4}A. K. Lerner, S. Ombrosi, C. P\'{e}rez, R. H. Torres and R. Trujillo-Gonz\'{a}lez,
	New maximal functions and multiple weights for the multilinear Calder\'{o}n-Zygmund theory,
	Adv. Math. 220  (2009), 1222-1264.


\bibitem{ler1}A. K. Lerner, S. Ombrosi, I. P. Rivera-R\'{\i}os,
	On pointwise and weighted estimates for commutators of Calder\'{o}n-Zygmund operators,
	Adv. Math. 319 (2017), 153-181.

\bibitem{li2}K. Li,
	Sparse domination theorem for multilinear singular integral operators with $L^r$-H\"{o}rmander condition,
	Michigan Math. J.  67(2) (2018), 253–265.


\bibitem{li1}K. Li, S. Ombrosi, B. Picardi,
	Weighted mixed weak-type inequalities for multilinear operators,
	Stud. Math. 244 (2019) 203-215.

\bibitem{zha}G. Lu, P. Zhang,
	Multilinear Calder\'{o}n-Zygmund operators with kernels of Dini's type and applications,
	Nonlinear Analysis, 107 (2014), 92-117.

\bibitem{muc}B. Muckenhoupt, R. L. Wheeden,
	Some weighted weak-type inequalities for the Hardy-Littlewood maximal function and the Hilbert transform,
	Indiana Univ. Math. J. 26(5)  (1977), 801–816.


\bibitem{omb}S. Ombrosi, C. P\'{e}rez, E. Rela, I. P. Rivera-R\'{\i}os,
	A note on generalized Fujii-Wilson conditions and BMO spaces,
	Israel J. Math. 238(2) (2020), 571–591.

\bibitem{ort}C. Ortiz-Caraballo, C. P\'{e}rez, E. Rela,
	Exponential decay estimates for singular integral operators,
	Math. Ann. 357 (2013), 1217-1243.

\bibitem{per2}C. P\'{e}rez,
	Endpoint estimates for commutators of singular integral operators,
	J. Funct. Anal. 128 (1995) 163-185.

\bibitem{per8}C. P\'{e}rez,
	Sharp estimates for commutators of singular integrals via iterations of the Hardy-Littlewood maximal function,
	J. Fourier Anal. Appl. 3 (1997) 743-756.

\bibitem{per7}C. P\'{e}rez,
	Weighted norm inequalities for singular integral operators,
	J. London Math. Soc. (2) 49 (1994), no. 2, 296-308.


\bibitem{per3}C. P\'{e}rez,
	A Course on Singular Integrals and Weights,
	Advanced Courses in Mathematics CRM Barcelona. Birkhäuser, Basel (2013).

\bibitem{per5}C. P\'{e}rez, G. Pradolini, R. H. Torres and R. Trujillo-Gonz\'{a}lez,
	End-point estimates for iterated commutators of multilinears ingular integrals,
	Bull. Lond. Math. Soc., 46 (2014), 26-42.

\bibitem{per9}C. P\'{e}rez, I. P. Rivera-R\'{\i}os,
Borderline weighted estimates for commutators of singular integrals,
 Isr. J. Math. 217 (2017) 435-475.

\bibitem{per1}C. P\'{e}rez, I. P. Rivera-R\'{\i}os,
 Three observations on commutators of singular integral operators with BMO functions,
 Harmonic analysis, partial differential equations, Banach spaces, and operator theory. Vol. 2, 287-304, Assoc. Women Math. Ser., 5, Springer, Cham (2017).

\bibitem{per6}C. P\'{e}rez, E. Roure-Perdices,
 Sawyer-type inequalities for Lorentz spaces,
  Math. Ann. 383  (2022), 493-528.

\bibitem{per4}C. P\'{e}rez, R. H. Torres,
	Sharp maximal function estimates for multilinear singular integrals,
	Contemp. Math. \textbf{320} (2003), 323-331.

\bibitem{rah}R. Rahm,
	Borderline weak-type estimates for sparse bilinear forms involving $A_\infty$ maximal functions,
J. Math. Anal. Appl. 504 (2021), no. 1, Paper No. 125372, 10 pp.

\bibitem{rao}M. M. Rao, Z. Ren,
 Theory of Orlicz Spaces,
 volume 146 of Monographs and Textbooks in Pure and Applied Mathematics. Marcel Dekker, Inc., New York, 1991

\bibitem{reg}M. C. Reguera, C. Thiele,
	The Hilbert transform does not map $L^1(Mw)$ to $L^{1,\infty}(w)$,
	Math. Res. Lett. 19 (1) (2012), 1-7.

\bibitem{saw}E. Sawyer,
	A weighted weak type inequality for the maximal function,
	Proc. Am. Math. Soc. 93(4)  (1985), 610-614.

\bibitem{saw1}E. T. Sawyer,
	Two weight norm inequalities for certain maximal and integral operators,
	in Harmonic Analysis (Minneapolis, MN, 1981), Lecture Notes in Mathematics, Vol. 908, Springer, Berlin–New York, 1982, pp. 102-127.

\bibitem{ste}E. M. Stein,
	Localization and summability of multiple Fourier series,
	Acta Math. 100 (1958), 92–147.

\bibitem{tan}J. Tan, Q. Xue,
Weighted variation inequalities for singular integrals and commutators in rearrangement invariant Banach and quasi-Banach spaces,
Acta Math. Sin. (Engl. Ser.) 39 (2023), no. 7, 1389-1413.



\bibitem{tan2}J. Tan, Q. Xue, Quantitative weighted estimates for the multilinear pseudo-differential operators in function spaces, submitted,
https://doi.org/10.48550/arXiv.2312.08938.

\bibitem{wen1} Y. Wen, H. Wu and Q. Xue,
	 A note on multilinear pseudo-differential operators and iterated commutators,
	Bull. Korean Math. Soc. 57(4) (2020), 851–864.

\bibitem{wen} Y. Wen, H. Wu and Q. Xue,
	 Sparse domination and weighted inequalities for the $\rho$-variation of singular integrals and commutators,
	J. Geom. Anal. \textbf{32} (2022),  no. 12, 30 pp.
\bibitem{zhan} C. Zhang,
Two weighted estimates for the commutators of multilinear maximal singular integral operators with Dini-type kernels,
Acta Math. Sin. (Engl. Ser.) 39 (2023), no. 3, 459-480.
\end{thebibliography}
\end{document}